\numberwithin{equation}{section}
\newtheorem{theorem}{Theorem}[section]
\newtheorem{lemma}[theorem]{Lemma}
\newtheorem{remark}[theorem]{Remark}
\newtheorem{proposition}[theorem]{Proposition}
\newtheorem{corollary}[theorem]{Corollary}
\newcommand{\R}{\mathbb{R}}
\newcommand{\Z}{\mathbb{Z}}
\newcommand{\N}{\mathbb{N}}
\newcommand{\QQQ}{\mathcal{Q}}
\renewcommand{\epsilon}{\varepsilon}
\newcommand{\eps}{\varepsilon}
\renewcommand{\le}{\leqslant}
\renewcommand{\geq}{\geqslant}
\renewcommand{\ge}{\geqslant}
\renewcommand{\emptyset}{\varnothing}
\title{Improvement of flatness for nonlocal phase transitions}
\thanks{This work has been supported by Alexander von Humboldt Foundation and ERC grant 277749 ``EPSILON Elliptic PDE's and Symmetry of Interfaces and Layers for Odd Nonlinearities". 
It is a pleasure to thank Alberto Farina
for his comments on a previous version
of this manuscript.}
\author{Serena Dipierro}
\address[Serena Dipierro]{
Dipartimento di Matematica, 
Universit\`a degli studi di Milano,
Via Saldini 50, 20133 Milan, Italy,
and
School of Mathematics and Statistics,
University of Melbourne,
Peter Hall Building,
Parkville VIC 3010,
Australia, and
School of Mathematics and Statistics,
University of Western Australia,
35 Stirling Highway
Crawley, Perth
WA 6009,
Australia.}
\email{sdipierro@unimelb.edu.au}
\author{Joaquim Serra}
\address[Joaquim Serra]{Weierstra{\ss} Institut f\"ur Angewandte Analysis
und Stochastik, Mohrenstra{\ss}e 39, 10117 Berlin, Germany, and
Eidgen\"ossische Technische Hochschule Z\"urich,
R\"amistrasse 101,
8092 Zurich,
Switzerland}
\email{joaquim.serra@upc.edu}
\author{Enrico Valdinoci}
\address[Enrico Valdinoci]{School of Mathematics and Statistics,
University of Melbourne,
Peter Hall Building,
Parkville VIC 3010,
Australia, and
School of Mathematics and Statistics,
University of Western Australia,
35 Stirling Highway
Crawley, Perth
WA 6009,
Australia, and
Weierstra{\ss} Institut f\"ur Angewandte Analysis
und Stochastik, Mohrenstra{\ss}e 39, 10117 Berlin, Germany,
and Dipartimento di Matematica, 
Universit\`a degli studi di Milano,
Via Saldini 50, 20133 Milan, Italy, and
Istituto di Matematica Applicata e Tecnologie Informatiche,
Consiglio Nazionale delle Ricerche,
Via Ferrata 1, 27100 Pavia, Italy}
\email{enrico@mat.uniroma3.it}
\begin{document}

\begin{abstract}
We establish an improvement of flatness result for critical points of Ginzburg-Landau energies with long-range interactions. It applies in particular to solutions of  $(-\Delta)^{s/2} u = u-u^3$ in $\R^n$ with~$s\in(0,1)$. 
As a corollary, we establish that  solutions with asymptotically 
flat level sets are $1$D  and prove the analogue of the De Giorgi conjecture (in the setting of minimizers) in dimension $n=3$ for all $s\in(0,1)$ and  in dimensions  $4\le n\le 8$ for  $s\in(0,1)$ sufficiently close to $1$.

The robustness of the proofs, which do not rely on the extension of Caffarelli and Silvestre,  allows us to include anisotropic functionals in our analysis.

Our improvement of flatness result holds for all solutions, and not only minimizers. This cannot be achieved in the classical case $-\Delta u = u-u^3$ 
(in view of the solutions bifurcating from catenoids constructed in~\cite{CAT1}).


\end{abstract}

\subjclass[2010]{35R11, 60G22, 82B26.}
\keywords{Nonlocal phase transitions, rigidity results, sliding methods.}

\maketitle

\tableofcontents

\section{Introduction}

\subsection{Ginzburg-Landau energy with long range interactions}

The paper is concerned with critical points of the following Ginzburg-Landau energy with long range interactions
\[
J(v) := \frac{1}{4}\iint_{\R^n\times\R^n}  \frac{| v(x)-v(z)|^2}{\|x-z\|_{K}^{n+s}}\,dx\,dz 
+ \int_{\R^n} W(v)\,dx.
\]
Here, $K$ is an even\footnote{For~$K$ being even, as customary,
we mean that~$\{-x: x\in K\} =K$.} and $C^{1,1}$ convex body, $\|\cdot\|_K$ denotes the norm of $\R^n$ with unit ball $K$, and $W$ is a double-well potential.

Such energies naturally arise in
several contexts, such as phase transitions, atom dislocations in crystals,
mathematical biology, etc. (see e.g. 
Section~2 in~\cite{giampi}, the Appendix in~\cite{DDDV},
the Introduction in~\cite{BIO}, and also~\cite{bucur} and the references therein
for a series of motivations
under different perspectives).\medskip

We establish a improvement of flatness result for the level sets of critical points $J$. That is, for solutions  $u$ of
\begin{equation}\label{EQ}
L u = f(u)  \quad \mbox{in }\R^n,
\end{equation}
where~$L$ is an elliptic scaling invariant operator  of order~$s\in(0,1)$, of the form
\begin{equation}\label{formL1}
 Lu(x):= \int_{\R^n} \frac{u(x)-u(x+y)}{\|y\|_{K}^{n+s}}\,dy.
\end{equation}
Throughout the paper $f = -W'$ will be the derivative of the double-well potential.
Note that the case of the fractional Laplacian $L=(-\Delta)^{s/2}$ corresponds to $K$ being a ball ---and thus $\|\cdot\|_{K} = |\,\cdot\,|$.

\subsection{Large scale behavior and De Giorgi conjecture}

If $v$ is a  minimizer of $J$ in $\R^n$ (locally) then $v_{\eps}(x)= v(x/\eps)$ is a minimizer of 
\[
J_\eps(v) := \frac{1}{4}\iint_{\R^n\times\R^n}  \frac{| v(x)-v(z)|^2}{\|x-z\|_{K}^{n+s}}\,dx\,dz + \eps^{-s}\int_{\R^n} W(v)\,dx
\]
and solves the equation 
\[L v_{\eps} = \eps^{-s}f(v_{\eps}).\]  

By the methods introduced in \cite{CSV} for the analysis of nonlocal phase transitions, one can prove that $\| \nabla v_{\eps}\|_{L^1(B_R)} \le CR^{n-1}$ for all $R\ge 1$, with $C$ depending only on $n,s, K$. In particular, one can show that
(up to subsequence)  
\begin{equation}\label{limitE}
v_{\eps} \rightarrow \chi_{E}-\chi_{E^c} \quad \mbox{ in  } L^1_{\rm loc}(\R^n)
\end{equation} where $E$ is a 
minimizer of a fractional perimeter. More precisely, when $K$ is a ball one obtains the isotropic fractional perimeter (Caffarelli, Roquejoffre, and Savin  \cite{CRS}) while for other $K$ one obtains the anisotropic fractional perimeters (Ludwig in \cite{Ludwig}).

In the range $s\in[1,2]$, in the isotropic setting one still has \eqref{limitE} but then $E$ is known to be a minimizer of the classical perimeter. 
This was proven in~\cite{MM, SaVgamma} through $\Gamma-$convergence results. 

We thus see that there is a striking difference between the two regimes $s\in(0,1)$ and $s\in[1,2]$ 
as far as asymptotic behavior at large scales is concernd.
We use the wording {\em genuinely nonlocal regime} to refer to the case $s\in(0,1)$ because the long-range interactions survive in the asymptotic limit.

For $s=2$, the link between minimizers of the Ginzburg-Landau energy and minimizers 
of the classical perimeter motivates a famous conjecture of  Ennio De~Giorgi ~\cite{DG}. 
This conjecture states that 
``every bounded solution of $-\Delta u = u-u^3$ in $\R^n$ that is monotone in one variable, 
say $\partial_{x_n}u>0$, is $1$D in dimension $n\le 8$. Namely, its level sets are parallel hyperplanes''.

The threshold $n=8$ is related the classical results on entire minimal graphs:  affine functions (hyperplanes) are the only entire minimal graphs up to dimension $n=8$ (see~\cite{SIMO}) while  non-affine examples can be found in dimensions $n=9$ or higher 
(see~\cite{BOMB}).

Positive answers to De Giorgi conjecture have been established for $n=2$ in  \cite{GG}, $n=3$ in \cite{AC, AAC} and, in the setting of minimizers, for $4\le n\le 8$ in \cite{Savin}. A non- $1$D example was constructed in \cite{DKW} for $n= 9$. See also the
excellent survey~\cite{Savin-Survey} for the history of the conjecture and the known results.
\medskip

To prove the conjecture (in the setting of minimizers) for $4\le n\le 8$, Savin established in the celebrated paper \cite{Savin} that
\begin{equation}\label{asymp}
\left.
\begin{array}{c}
\mbox{ $u$ is a minimizing
 solution of $-\Delta u = u-u^3$,} 
\\
\mbox{and the level sets of $u$  are asymptotically flat}
\end{array}
\right\} 
\quad \Rightarrow \quad 
\mbox{$u$ is $1$D. }
\end{equation}
Here, the word ``minimizing'' refers to the associated energy
$$ \int_{\R^n}\frac{1}{2}|\nabla u| +\frac{1}{4}(1-u^2)^2\,dx$$ while  ``the level sets of $u$  are asymptotically flat''  means that $\{u_\epsilon=\theta\}$ converges uniformly on compact sets to a hyperplane for all $\theta\in(-1,1)$ as $\eps\downarrow0$.

Since, as explained above, for $s\in[1,2]$ the Ginzburg-Landau energy behaves asymptotically like the classical perimeter, it is natural to conjecture that the  statement of De Giorgi holds also true for  $(-\Delta)^{s/2} u = u-u^3$ whenever $s\in[1,2]$.

In this direction the cases $s\in[1,2]$ are currently as well understood as the case $s=2$ ---only the construction of a counterexample for $n=9$ is missing to have a full parallelism of results. These results have been obtain in ~\cite{CSM, SirV, CCi1, CCi2, MONOO, Onew2}. In particular, 
the analogue of \eqref{asymp} for  $s\in(1,2)$ has been recently established by Savin in \cite{Onew2} where also the important case of the half Laplacian $s=1$ has been announced.


For $s\in(0,1)$ it is natural to expect a analogue of the De Giorgi conjecture in sufficiently low dimensions. The heuristic giving $n=8$ as a critical dimension is only valid in the case $s\in[1,2]$ and the $1$D symmetry up to dimension $n=8$ is not expected for all $s\in(0,1)$ but just for $s$ sufficiently close to $1$.  
Despite of several works in that direction, up to now a positive result to the 
conjecture for $s\in(0,1)$ was only known in dimension $n=2$ ---as established in \cite{SirV, cab}.

In this paper we prove  ---see the forthcoming Theorem \ref{C:1}---
\begin{equation}\label{asymp2}
\left.
\begin{array}{c}
\mbox{ $u$ is a solution of  $L u = u-u^3$ with $s\in(0,1)$,} 
\\
\mbox{and the level sets of $u$  are asymptotically flat}
\end{array}
\right\} 
\quad \Rightarrow \quad 
\mbox{$u$ is $1$D. }
\end{equation}

As a consequence, we establish the De Giorgi conjecture (in the minimizer setting)  in the following cases
\begin{itemize}
\item in dimension $n=3$,  for all $s\in(0,1)$
\item in dimension $4\le n\le 8$ for $s\in (0,1)$ sufficiently close to $1$.
\end{itemize}

Let us stress a fundamental difference between the classical case in~\eqref{asymp} ---or similarly the cases $s\in[1,2)$---
and the nonlocal case in~\eqref{asymp2}. Namely, the results for $s\in[1,2]$ are for minimizers while our result for $s\in(0,1)$ holds in the more general setting of solution (critical points). This is a feature of the  genuinely nonlocal regime $s\in(0,1)$ that is not expected to be true in the case $s\in[1,2]$ (in view of the solutions bifurcating from catenoids constructed in~\cite{CAT1} for $s=2$).

In the cases $s\in [1,2]$, the implication \eqref{asymp} follows as a direct consequence of an important improvement of flatness result for level sets of solutions to $(-\Delta)^{s/2} u = u-u^3$. This result is in the same spirit of the one of De Giorgi for classical minimal surfaces. 
Similarly,  for $s \in(0,1)$ the implication \eqref{asymp2} follows
from an improvement of flatness result for solutions of $Lu =f(u)$, stated next in Subsection \ref{SUB13}.
As we will see, however, in the case $s \in(0,1)$ the improvement of flatness does not yield as a direct consequence \eqref{asymp2} as for  $s\in [1,2]$.
\medskip

Before stating our main results let us make quantitative versions of our assumptions.
\subsection{Quantitative assumptions on $L$ and $f$}\label{SUB12}

We assume that the convex set $K$ defining the operator $L$ satisfies
\begin{equation}\tag{H1}\label{assumpK}
\mbox{$K\subset B_1$ and each point of $\partial K$ can be touched by a ball of radius $r_K >0$ contained in $K$.}
\end{equation}
This is a quantitative version of $K$ being $C^{1,1}$.

We assume that $f$ belongs to $C^1\big([-1,1]\big)$ 
and satisfies, for some $\kappa>0$ and $c_\kappa>0$,  
\begin{equation}\tag{H2}\label{assumpf}
f(-1)=f(1)=0 \quad \mbox{and}\quad f'(t)<-c_\kappa 
\quad \mbox{for } t\in [-1,-1+\kappa]\cup[1-\kappa, 1].
\end{equation}

Moreover, we assume that
\begin{equation}\tag{H3}\label{existslayer}
\mbox{there exists $\phi_0$ satisfying}\quad  
\begin{cases}
\mathcal L\phi_0= f(\phi_0) \  &\mbox{in }\R,
\\
\phi_0'>0 &\mbox{in }\R,
\\
\phi_0(0) =0,
\\
\displaystyle\lim_{x\to \pm\infty} \phi_0 = \pm 1,
\end{cases}
\end{equation}
where $\mathcal L$ denotes (here and throughout the paper)
the fractional Laplacian in dimension one (without normalization constant)--- see \eqref{1dfraclap}.\medskip

We remark
that assumption  \eqref{assumpf} and \eqref{existslayer} are satisfied when $f=-W'$, 
with $W$ being a $C^2$ double-well potential with wells (i.e. minima) at $\pm 1$ and satisfying that $W''>0$ near $\pm 1$. 
Indeed, the existence a one-dimensional heteroclinic solution is proven  in \cite{PSV, cab} (see also \cite{cozzi}
for the case of general kernels) and thus  \eqref{existslayer} is satisfied. \medskip

The constants in the estimates will also depend on 
\begin{equation}\label{lkappa}
l_\kappa := \inf \big\{\, l>0\ : \ \phi_0\big([-l,l]\big).
\supset [-1+\kappa, 1-\kappa]\big\}.\end{equation}
Note that $l_\kappa$ is  (half of) the length of the symmetric interval where the transition of $\phi_0$ essentially occurs.

\subsection{Improvement of flatness result}\label{SUB13}
In the framework that we have just introduced, we are now in the position
of stating our main result as follows. 

Throughout the paper, we call a constant {\em universal} if it depends only on~$n$, 
$s$, $r_K$ , $\kappa$, $c_\kappa$ and $l_\kappa$, see Subsections~\ref{SUB11} and~\ref{SUB12}.  
In particular, universal constants depend only on $n$, $L$, and $f$.

In the statement of the next theorem, for fixed $\alpha_0>0$, given $a\in(0,1)$ we define
\begin{equation}\label{ja}
j_a := \left\lfloor  \frac{\log a}{\log(2^{-\alpha_0})} \right\rfloor .
\end{equation}
Note that $j_a$ is a nonnegative integer and that $2^{\alpha_0 j_a}$ is comparable to $1/a$. 
 
\begin{theorem}\label{thm:improvement}
Let $s\in(0,1)$.
Assume that $L$ satisfies $\eqref{assumpK}$ and that $f$ satisfies \eqref{assumpf} and \eqref{existslayer}.
Then there exist universal constants~$\alpha_0\in (0,s/2)$, $
p_0\in(2,\infty)$ and~$a_0\in(0,1/4)$ such that the following statement holds.
\smallskip

Let $a\in(0,a_0]$ and $\eps \in (0, a^{p_0}]$. 
Let $u: \R^n \rightarrow (-1,1)$ be a solution 
of $Lu=\varepsilon^{-s}f(u)$ in $B_{2^{j_a}}$ such 
that $0\in \{-1+\kappa \le u \le  1-\kappa\}$
and
\[ 
\{\omega_j \cdot x\le -a 2^{j(1+\alpha_0)}\}\, \subset\, \{u\le -1+\kappa\} \,\subset \,\{u\le 1-\kappa\}  \,\subset \,\{\omega_j\cdot x\le a 2^{j(1+\alpha_0)}\} \quad \mbox{in }B_{2^j}, 
\]
for $0\le j\le j_a$, where  $\omega_j\in S^{n-1}$.

Then, 
\[ 
\left\{\omega\cdot x \le - 
\frac{a}{2^{1+\alpha_0}} \right\} \,\subset \,\{u\le -1+\kappa\} \,\subset\, \{u\le 1-\kappa\}  \,\subset\, \left\{\omega\cdot x\le  \frac{a}{2^{1+\alpha_0}} \right\} \quad \mbox{in }B_{1 /2},
\]
for some $\omega\in S^{n-1}$.
\end{theorem}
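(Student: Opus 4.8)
The plan is to run a compactness-and-contradiction argument in the spirit of De Giorgi's improvement-of-flatness theorem for minimal surfaces, adapted to the nonlocal elliptic setting. Suppose the conclusion fails. Then for every choice of universal constants $\alpha_0$, $p_0$, $a_0$ there is a sequence $a_k\downarrow 0$, parameters $\eps_k\in(0,a_k^{p_0}]$, unit vectors $\omega_{j,k}\in S^{n-1}$, and solutions $u_k$ of $Lu_k=\eps_k^{-s}f(u_k)$ in $B_{2^{j_{a_k}}}$ satisfying the hypotheses at every scale $0\le j\le j_{a_k}$ but violating the halved-flatness conclusion in $B_{1/2}$. After a rotation we may assume $\omega_{0,k}=e_n$. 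The key object is the rescaled and normalized graph of the interface: roughly, $v_k(x) := \frac{1}{a_k}\bigl(x_n-\gamma_k(x')\bigr)$-type quantities, or more robustly the blow-down of the level set $\{-1+\kappa\le u_k\le 1-\kappa\}$ divided by $a_k$. The hypotheses say precisely that this normalized interface lies in a slab of width $O(2^{j\alpha_0})$ at scale $2^j$, i.e. it has controlled growth; one wants to show it converges (along a subsequence, locally uniformly after the natural rescaling) to a function $w$ defined on $\R^{n-1}$ which is \emph{harmonic} (or caloric with respect to the appropriate fractional operator's limiting profile) and has the same sublinear-at-infinity growth $|w(x')|\le C(1+|x'|)^{1+\alpha_0}$ with $1+\alpha_0<2$. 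By the mean value property / Liouville-type classification, such a $w$ must be affine, and then a Taylor expansion of the affine limit at the origin gives, at the unit scale, flatness with a constant strictly smaller than $2^{-(1+\alpha_0)}$ once $\alpha_0$ is chosen small — contradicting the failure of the conclusion for $k$ large. This is where the choice $\alpha_0\in(0,s/2)$ and $p_0$ enter: $\alpha_0$ must beat the ``error'' in the linearization, and $\eps_k\le a_k^{p_0}$ must be small enough (relative to $a_k$) that the potential term $\eps^{-s}W$ does not interfere at the relevant scales — effectively the transition layer of $u_k$ has width $\sim\eps_k$, which is much smaller than the flatness $a_k$, so after blow-up the diffuse interface collapses to a sharp one governed by the fractional minimal surface / linearized operator.

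Concretely, the steps I would carry out are: (1) \emph{Set-up and normalization.} Assume by contradiction a sequence as above; reduce to $\omega_{0,k}=e_n$; record the multiscale flatness hypotheses. (2) \emph{Regularity and a uniform interface estimate.} Use interior regularity for $L$ (Schauder-type estimates for the kernel $\|\cdot\|_K^{-n-s}$, available since $K$ is $C^{1,1}$ by \eqref{assumpK}) together with the sliding/barrier arguments built from $\phi_0$ in \eqref{existslayer} to show that inside each dyadic ball the level set $\{u_k=\theta\}$ is a Lipschitz (indeed $C^{1,\alpha}$) graph $x_n=\gamma_{k,\theta}(x')$ with $C^{1,\alpha}$ bounds that are \emph{uniform in $\eps_k$ once $\eps_k\ll a_k$}; this is the quantitative version of the layer solution dominating the equation at scales $\gg\eps_k$, and it is what makes the argument work for all solutions and not just minimizers. (3) \emph{Compactness.} Normalize $w_k(x'):=a_k^{-1}\gamma_{k,0}(x')$; the multiscale hypothesis gives $|w_k(x')|\le C 2^{j\alpha_0}\sim C|x'|^{1+\alpha_0}$ on $|x'|\sim 2^j$, and the uniform $C^{1,\alpha}$ bound upgrades this to precompactness in $C^1_{\rm loc}(\R^{n-1})$; extract $w_k\to w$. (4) \emph{Limiting equation.} Pass to the limit in the (linearized) equation: the graph function of a level set of a solution to $Lu=\eps^{-s}f(u)$, after dividing by the flatness and letting $\eps/a\to0$, solves in the limit the equation linearizing the fractional mean curvature operator — a fractional-Laplacian-type equation of order $s$ on $\R^{n-1}$; the linearization is where the structure of $\phi_0$ and the spectral gap hypothesis \eqref{assumpf} (via $f'<-c_\kappa$ near $\pm1$) guarantee the relevant eigenvalue / non-degeneracy so that the limit is a genuine solution of a linear translation-invariant nonlocal equation, not something degenerate. (5) \emph{Liouville.} Show that a solution $w$ of this linear nonlocal equation on $\R^{n-1}$ with growth $|w(x')|\le C(1+|x'|)^{1+\alpha_0}$, $1+\alpha_0<2\le$ (order-dependent threshold), must be affine — a Liouville theorem for the limiting operator, proved by rescaling and the standard nonlocal Liouville argument (e.g. via the Poisson kernel and the growth exponent being below the critical one). (6) \emph{Contradiction.} Since $w$ is affine and $w(0)$ is controlled (from $0\in\{-1+\kappa\le u_k\le1-\kappa\}$), at the unit scale $w$ fits in a slab of width $\to0$ relative to the $2^{-(1+\alpha_0)}$ threshold; translating back, $u_k$ satisfies the conclusion in $B_{1/2}$ with the vector $\omega$ given by the normal to the affine limit, for all large $k$ — contradicting the standing assumption.

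The main obstacle I expect is Step (2)–(4): obtaining the uniform-in-$\eps$ interface regularity and, above all, identifying and justifying the passage to the \emph{correct} linearized limiting equation. Unlike the classical case, one cannot invoke the Caffarelli–Silvestre extension (the authors stress robustness), so the linearization must be done intrinsically: one needs to quantify how a level set of $u$ moves under the nonlocal operator $L$ in terms of a fractional Laplacian acting on the graph function, controlling all the nonlinear and nonlocal remainder terms uniformly. Making the error terms in this expansion summable across the dyadic scales — so that the growth hypothesis with exponent $1+\alpha_0$ is \emph{preserved} in the limit rather than degraded — is the delicate quantitative heart of the proof, and it is precisely what forces the restrictions $\alpha_0<s/2$ and $\eps\le a^{p_0}$ with $p_0$ large. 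A secondary but real difficulty is that the family of blow-up vectors $\{\omega_{j,k}\}_j$ may rotate with $j$; one must show this rotation is controlled (the ``tilt excess'' decays geometrically), so that a single limiting normal $\omega$ emerges — this is the analogue of the tilt-decay in De Giorgi's original argument and is usually handled by iterating the one-step improvement, but here it has to be threaded into the compactness since the one-step improvement is exactly what we are proving.
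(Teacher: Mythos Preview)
Your overall architecture is right and matches the paper: compactness of vertical rescalings of the interface, identification of a linear limiting equation, Liouville, contradiction. But Step~(2) as you describe it is a genuine gap. You propose to obtain, from Schauder estimates and sliding with $\phi_0$, that each level set $\{u_k=\theta\}$ is a $C^{1,\alpha}$ graph with bounds \emph{uniform in $\eps_k$}. This is essentially circular: uniform $C^{1,\alpha}$ regularity of the interface is the content of the theorem you are proving, not an input. No off-the-shelf Schauder or sliding argument gives it --- the equation is $Lu=\eps^{-s}f(u)$ with the right-hand side blowing up as $\eps\downarrow 0$, and the level sets are a~priori only trapped in slabs, with no smoothness. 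What the paper actually proves (and what your argument needs) is much weaker but sufficient: a one-step \emph{improvement of oscillation} of the interface, of Harnack type (Theorem~\ref{harnack}). The mechanism is to build barriers by composing the 1D layer $\phi_0(\cdot/\eps)$ with the anisotropic signed distance to a slightly curved graph $\{x_n=b\,\xi(x')\}$ and to show, via a layer-cake computation (Lemma~\ref{lemdeform}), that this composite is a supersolution up to an error $O(b+\eps^{\gamma_0})$; sliding this barrier and using a nonlocal positivity lemma then forces the interface to drop by a definite fraction of $a$ on $B_{1/2}$. Iterating this across scales gives only $C^\sigma$ control of the rescaled interface for some small $\sigma>0$ (Corollary~\ref{cor:globalCalpha}), hence compactness in $C^0_{\rm loc}$ --- not $C^1_{\rm loc}$ --- and the limit $g$ is a~priori merely H\"older with growth $|g(x')|\le C(1+|x'|)^{1+\alpha(1+\sigma)}$.

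Two smaller points. First, the limiting equation is of order $1+s$ on $\R^{n-1}$, i.e.\ $\bar L g=0$ with the kernel $\mathcal K(x'-y',0)\sim |x'-y'|^{-(n+s)}$ restricted to the hyperplane (in the isotropic case this is $(-\Delta)^{\frac{1+s}{2}}g=0$), not order $s$ as you wrote; getting the order right matters for the Liouville step since the growth exponent $1+\alpha(1+\sigma)$ must lie strictly below $1+s$. Second, because compactness is only in $C^0$, the passage to the limiting equation has to be done in the \emph{viscosity} sense: one touches $g$ by a paraboloid, transplants it back to a barrier for $u_a$ via the same distance-function construction, and derives a contradiction from the sign of $\bar L$ at the touching point (Proposition~\ref{prop:limiteqn} and the chain of approximations in Section~\ref{SECT5}). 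Your identification of the rotating normals $\omega_{j,k}$ as a difficulty is correct; the paper handles it by an elementary geometric lemma (Lemma~\ref{geomseries}) showing $|\omega_{j+1}-\omega_j|\le C a 2^{j\alpha}$, whence all slabs can be taken parallel to $\omega_0$ at the cost of enlarging the constant.
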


\medskip

In order to explain more intuitively
of Theorem~\ref{thm:improvement}, let us introduce 
some (informal) terminology.
We call {\em transition level sets} (of~$u$) 
all the level sets~$\{u=\theta\}$ 
for~$\theta \in(-1+\kappa, 1-\kappa)$.
We say that the transition level sets are flat at a scale~$R$ 
if they are trapped, after some rotation,
in a cylinder~$B'_R\times (-aR, aR)$. 
We call {\em flatness} the adimensional quantity $a$.

With this terminology, Theorem~\ref{thm:improvement} 
says that if the transition level sets are flat enough at 
a very large scale, then its flatness improves geometrically 
at smaller scales.
However, as we will see in more detail in Section~\ref{SECT7}, 
the geometric improvement of the flatness does not hold up to scale 1 but only up to some (still very large) mesoscale. 
This is because we need to assume $\eps\le  a^{p_0}$ with $p_0$ large and not just  $\eps\le  ca$. 
This is related to the fact that the $1$D solution $\phi_0$ from \eqref{existslayer} decays to $\pm 1$ when $x\to \pm \infty$ only at a slow algebraic rate comparable to
$|x|^{-s}$.
We will comment more on this important difference with respect to~\cite{Savin} later on.

Theorem~\ref{thm:improvement} can be also understood as an approximate $C^{1,\alpha}$ regularity result
for level sets. 
Namely, if the transition level sets of the solution of~$L u = \eps^{-s}f(u)$ in $B_1$ 
are trapped between two parallel planes close enough to the origin, 
and~$\epsilon$ is small enough,  then the transition occurs 
essentially on a $C^{1,\alpha}$ graph in $B_{1/2}$ up to errors 
that decay algebraically (in $\eps$) as $\eps\downarrow 0$. 
The limit case 
as $\eps\downarrow 0$ of this result 
plays a crucial role in the regularity theory of nonlocal minimal surfaces; 
see  Theorem~6.8 in~\cite{CRS}.

An analogue of Theorem~\ref{thm:improvement} for  $s\in(1,2)$ has been obtained very recently by Savin in \cite[Theorem 6.1]{Onew2} by using a robust version  \cite{Onew1} of the original proof in \cite{Savin}.  The important case of the half Laplacian ($s=1$) turns out to be a borderline case for the method in \cite{Onew1, Onew2}, and a similar improvement of flatness result for $s=1$ has been announced also in \cite{Onew2}. 
Despite of the analogy in the statements,  there exist fundamental difference between Theorem~\ref{thm:improvement}
and Theorem 6.1 of \cite{Onew2}. Indeed, 
\begin{enumerate}
\item Theorem~\ref{thm:improvement} is for solutions (not necessarily minimizers)
\item  $\epsilon \le a^{p_0}$  is assumed Theorem~\ref{thm:improvement} which is stronger than $\epsilon \le ca$  in  \cite{Onew2} (the result for $s\in(0,1)$ is probably not true under the assumption $\epsilon \le ca$ due to the very slow decay of $u$ to $\pm1$)
\item Theorem~\ref{thm:improvement}  includes anisotropic functionals (in particular our proofs do not make us of the Caffarelli Silvestre extension).
\end{enumerate}

\subsection{$1$D symmetry of asymptotically 
flat solutions}\label{SE:14}

An important application Theorem~\ref{thm:improvement}
is the following rigidity result for solutions  in the whole space with asymptotically flat level sets.

We say that a function~$u:\R^n\to\R$ is $1$D  if there exist~$\bar u:\R\to\R$
and~$
\bar\omega\in S^{n-1}$ such 
that~$u(x)=\bar u(\bar\omega\cdot x)$ for any~$x\in\R^n$.


\begin{theorem}[One-dimensional symmetry 
for asymptotically flat solutions]\label{C:1}
Let $s\in(0,1)$. Assume that $L$ satisfies $\eqref{assumpK}$ and that $f$ 
satisfies \eqref{assumpf} and \eqref{existslayer} and let $u$ be a solution of~$
L u = f(u)$ in~$\R^n$. 

Assume that there exist $R_0\ge 1$ 
and $a:(R_0,+\infty) \rightarrow (0,1]$ such 
that~$a(R)\downarrow 0$ as $R\uparrow+\infty$ and 
such that, for all $R>R_0$, we have
\begin{equation}\label{ASS-R}
\{ \omega\cdot x\le -a(R)R\} \subset \{u\le -1+\kappa\}\subset  \{u\le 1-\kappa\} \subset \{ \omega\cdot x\le a(R)R\} \quad \mbox{in }B_{R},
\end{equation}
for some $\omega\in S^{n-1}$,
which may depend on $R$.

Then, $u$ is $1$D.
\end{theorem}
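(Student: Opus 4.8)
The plan is to derive Theorem~\ref{C:1} from the improvement of flatness result, Theorem~\ref{thm:improvement}, via an iteration/rescaling argument. The main subtlety, compared with the classical case, is that Theorem~\ref{thm:improvement} requires the smallness assumption $\eps \le a^{p_0}$, while the solution of $Lu=f(u)$ in $\R^n$ corresponds to $\eps=1$ after rescaling; so a naive single rescaling is not enough, and one must be careful about which scales one is allowed to iterate on.

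\medskip

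First I would set up the rescaling. Given the solution $u$ on $\R^n$ and a large scale $\rho>0$, the function $u_\rho(x):=u(\rho x)$ solves $L u_\rho = \rho^{s} f(u_\rho) = \eps^{-s} f(u_\rho)$ with $\eps := \rho^{-1}$. By hypothesis \eqref{ASS-R}, applied at scales $R=\rho 2^j$, the transition level sets of $u_\rho$ are trapped, in $B_{2^j}$, between the planes $\{\omega_j\cdot x = \pm a(\rho 2^j) 2^j\}$. We want to massage this into the hypothesis of Theorem~\ref{thm:improvement}, which asks for trapping between $\{\omega_j\cdot x=\pm a\,2^{j(1+\alpha_0)}\}$ for $0\le j\le j_a$ with a \emph{single} flatness parameter $a$ and with the geometric growth factor $2^{j\alpha_0}$ built in. The point is that since $a(R)\downarrow 0$, for any prescribed target flatness $a\le a_0$ we may choose $\rho=\rho(a)$ so large that $a(\rho 2^{j_a})$, hence $a(\rho 2^j)$ for all $0\le j\le j_a$, is $\le a$; then in particular $a(\rho 2^j)\,2^j \le a\,2^{j}\le a\,2^{j(1+\alpha_0)}$, so the hypothesis of Theorem~\ref{thm:improvement} is met for $u_\rho$ provided also $\eps=\rho^{-1}\le a^{p_0}$, i.e.\ $\rho\ge a^{-p_0}$. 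So: fix $a=a_0$, choose $\rho_0\ge \max\{a_0^{-p_0},\ \rho \text{ making } a(\rho 2^{j_{a_0}})\le a_0\}$, and normalize so that $0\in\{-1+\kappa\le u_{\rho_0}\le 1-\kappa\}$ by translating (the set $\{-1+\kappa\le u\le 1-\kappa\}$ is nonempty and by \eqref{ASS-R} lies within bounded distance of the hyperplanes, so some translate works; translating does not affect the PDE since $L$ is translation invariant, and only shifts the planes by a bounded amount, which is absorbed).

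\medskip

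Then I would iterate Theorem~\ref{thm:improvement}. One application gives that, after a rotation $\omega^{(1)}$, the transition level sets of $u_{\rho_0}$ are trapped in $B_{1/2}$ between $\{\omega^{(1)}\cdot x = \pm a_0 2^{-(1+\alpha_0)}\}$, i.e.\ in $B_{1/2}$ the flatness has improved by the factor $2^{-(1+\alpha_0)}$, better than the naive scaling factor $2^{-1}$. Rescaling $B_{1/2}$ back to $B_1$ (i.e.\ considering $u_{\rho_0/2}$, which has $\eps$ halved, hence still $\le a^{p_0}$ after updating $a\mapsto a_0 2^{-\alpha_0}/2\cdot 2 = a_0 2^{-\alpha_0}$... one checks the bookkeeping: new flatness $a_1 = a_0 2^{-\alpha_0}$ at scale $1$ and new $\eps_1 = 2\eps_0$), one re-verifies the chain of inclusions at scales $2^j$, $0\le j\le j_{a_1}$ — here one needs that the trapping at \emph{larger} scales $2^j$ for $u_{\rho_0/2}$ still holds with parameter $a_1 2^{j(1+\alpha_0)}$, which follows from the original global hypothesis \eqref{ASS-R} since $a(R)$ only decreases — and applies Theorem~\ref{thm:improvement} again. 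Iterating, one gets a sequence of unit vectors $\omega^{(k)}$ and flatnesses $a_k = a_0 2^{-k\alpha_0}\to 0$ such that the transition level sets of $u(\rho_k\,\cdot)$, with $\rho_k = \rho_0 2^{-k}$ — wait, that makes $\rho_k\to 0$; rather one should think of it as: in the ball $B_{\rho_0 2^{-k}}$ (back in the original coordinates for $u$) the transition set is trapped between planes at distance $a_k\rho_0 2^{-k}$, normal $\omega^{(k)}$. The $\omega^{(k)}$ form a Cauchy sequence (consecutive ones differ by $O(a_k)$, summable), converging to some $\bar\omega$, and one concludes that the transition level set of $u$, in every ball $B_r$ with $r\le \rho_0$, lies within $o(r)$ of a \emph{fixed} hyperplane through (a bounded neighborhood of) the origin with normal $\bar\omega$ — in fact within distance $\lesssim a_{k(r)} r = o(r)$ with $k(r)\sim \log(1/r)$, giving a quantitative modulus.

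\medskip

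Finally, from ``the transition level sets are contained in an arbitrarily thin cone/slab around a fixed hyperplane at small scales,'' together with the validity of the trapping at \emph{all} scales $\le\rho_0$, I would upgrade to genuine $1$D symmetry. Having a single hyperplane $H=\{\bar\omega\cdot x = c\}$ such that $\{-1+\kappa\le u\le 1-\kappa\}\subset \{|\bar\omega\cdot x - c|\le \delta(r) r\}$ in $B_r$ with $\delta(r)\to 0$, one deduces $\{u\ge 1-\kappa\}\supset\{\bar\omega\cdot x > c\}$ and $\{u\le -1+\kappa\}\supset\{\bar\omega\cdot x<c\}$ globally (letting $r\to\infty$ is not available, but letting $r$ range and using the geometric improvement the slab stays sublinear so the two half-spaces are as claimed up to the hyperplane itself). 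Then a sliding method in the direction $\bar\omega$ — comparing $u$ with its translates $u(x+t\bar\omega)$ and using the strong maximum principle for $L$ together with assumption \eqref{assumpf} (which controls $f'$ near $\pm1$) and the existence and monotonicity of the $1$D layer $\phi_0$ from \eqref{existslayer} as a barrier/building block — forces $u$ to be monotone in the $\bar\omega$ direction and in fact to coincide with a $1$D profile; alternatively one invokes the known $1$D symmetry/classification of monotone or stable-type solutions, or a Liouville-type argument, to conclude $u(x)=\phi_0$-like profile composed with $\bar\omega\cdot x$. The \textbf{main obstacle} is precisely this last passage and the careful bookkeeping in the iteration: ensuring at every step that the rescaled problem still satisfies $\eps_k\le a_k^{p_0}$ (which works only because each iteration halves $\eps$ while $a$ shrinks by the factor $2^{-\alpha_0}<1/2$... in fact $\eps_k = \eps_0 2^k$ \emph{grows}, so one must instead keep $\rho$ \emph{fixed} and only rescale the ball, re-reading the hypothesis — this is the delicate point and is exactly why the theorem cannot be iterated all the way down to scale $1$ but the global hypothesis \eqref{ASS-R}, valid at \emph{all} large scales with $a(R)\to0$, lets one restart the iteration from an ever-larger base scale $\rho$), so that the limiting direction $\bar\omega$ is genuinely well-defined and the $o(r)$ trapping propagates to all scales, not merely a mesoscopic range.
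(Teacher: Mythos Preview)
Your overall plan---iterate Theorem~\ref{thm:improvement} and then slide---is the right one, but the iteration you describe does not yield the conclusion you need, and you flag the obstruction without actually resolving it. Zooming in from $B_{\rho_0}$ to $B_{\rho_0 2^{-k}}$ doubles the effective $\eps$ at each step while shrinking $a$ by $2^{-\alpha_0}$, so the constraint $\eps\le a^{p_0}$ breaks after $O(\log\rho_0)$ steps and the improvement halts at a mesoscale $\sim\rho_0^{\alpha_0 p_0/(1+\alpha_0 p_0)}$, not at scale~$1$. Restarting from a larger $\rho$ only shifts the mesoscale up proportionally; it never drives the flatness to zero at any \emph{fixed} scale, so the statement ``the transition set lies within $o(r)$ of a fixed hyperplane in $B_r$ for $r\le\rho_0$'' is not obtainable this way. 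What the paper does instead is run \emph{two} iterations of Theorem~\ref{thm:improvement} (Corollaries~\ref{0NL:VV} and~\ref{NL:VV}): a ``preservation'' step that propagates the crude flatness $a_0$ down to every scale $\ge$ a universal threshold, and then, at each target scale $2^j$, an ``improvement'' step launched from the larger scale $2^k$ with $k\approx j(1+\alpha_0 p_0)/(\alpha_0 p_0)$. The outcome is that at every large scale $2^j$ the transition set sits in a slab of width $C\,2^{j(1-\delta)}$, $\delta=1/p_0$, normal to a \emph{single} limiting direction, i.e.\ it lies between $\{x_n=G(x')\pm C\}$ for a Lipschitz $G$ with the sublinear bound $|G(x')-G(y')|\le \bar C(|x'-y'|^{1-\delta}+1)$.

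That sublinear bound is precisely what powers the sliding, and here there is a second gap: sliding in the direction $\bar\omega$ gives only monotonicity, not $1$D symmetry. One must instead slide in nearly \emph{horizontal} directions $e=(e'_o,\epsilon)/\sqrt{1+\epsilon^2}$: for large $t$ the vertical displacement $\epsilon t/\sqrt{1+\epsilon^2}$ eventually dominates the horizontal drift $|G(x'-te')-G(x')|\lesssim t^{1-\delta}$, which is what makes $\{u\le 1-\kappa\}\subset\{u^t\le -1+\kappa\}$ hold initially. The inequality $u^t\le u$ is then propagated to all $t>0$ via a maximum principle in the whole space (using \eqref{assumpf} near $\pm1$) combined with a compactness/translation argument to exclude contact points escaping to infinity---the sublinear growth of $G$ is used once more here to derive a contradiction from periodicity of any limit profile. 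Sending $\epsilon\downarrow0$ yields $\partial_{e'_o}u\ge0$ for every horizontal $e'_o$, hence $u=u(x_n)$. Your sketch of the sliding step does not engage with either of these two uses of the sublinear estimate, and without them the argument does not close.
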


A similar result for~$s\in(1,2)$ is given in \cite[Theorem 1.1]{Onew2}.
In \cite{Onew2}, this asymptotic result is a direct application of the improvement of flatness result \cite[Theorem 6.1]{Onew2} and rescaling. 
In our case,  Theorem~\ref{C:1} 
will still be a consequence of Theorem~\ref{thm:improvement},
but not an immediate one.  Indeed:
\begin{itemize}
\item In~\cite{Savin, Onew2}  the improvement of flatness only requires $\eps\le ca$, and can be iterated from a large scale $R$ all the way up to scale~$1$ (thus giving the rigidity when letting $R\to \infty$). 
\item In contrast, Theorem~\ref{thm:improvement} requires $\eps\le a^{p_0}$ and thus 
we can only improve the flatness geometrically from a large ball~$B_R$ up to a still large mesoscopic ball~$B_r$ with~$r=R^{1-\delta}$.
\end{itemize}

Due to this, the proof of Theorem~\ref{C:1} becomes more interesting, and
requires a suitable multiscale iteration 
of Theorem~\ref{thm:improvement}, 
combined with the use of the sliding method of 
Berestycki, Caffarelli and Nirenberg~\cite{BCN, BCN2} in its full strength.
See Subsection~\ref{sec:proofs} 
for further details on the proofs.
\medskip





\subsection{Application to the De Giorgi conjecture for $s\in(0,1)$}

Let us now
consider the concrete case of minimizing solutions
of the nonlocal Allen-Cahn equation~$(-\Delta)^{s/2} u=u-u^3$,
with~$s\in(0,1)$. We remark that the problem is variational, with associate energy functional given by
\begin{equation}\label{energyfunctional}
 {\mathcal{E}}(u,\Omega):={\mathcal{E}^{\rm Dir}}(u,\Omega)
+\int_\Omega (1-u^2(x))^2\,dx,
\end{equation}
where, for some appropriate constant $C_{n,s}>0$,
\begin{equation}\label{KIN}
{\mathcal{E}^{\rm Dir}}(u,\Omega):= C_{n,s}\iint_{\R^{2n}\setminus(\R^n\setminus\Omega)^2}
\frac{|u(x)-u(y)|^2}{|x-y|^{n+s}}\,dx\,dy.\end{equation}

We say that a solution $u$ of~$(-\Delta)^{s/2} u=u-u^3$ is a {\em minimizer} of $\mathcal E$
in~$\R^n$ if
$$ {\mathcal{E}}(u,B)\le {\mathcal{E}}(u+\varphi,B),$$
for any ball~$B\subset\R^n$ and any~$\varphi\in C^\infty_0(B)$
(notice that, for simplicity, we are dropping the normalization constant in the
fractional Laplace framework).

In this setting, we have:

\begin{theorem}[One-dimensional symmetry in the plane]\label{C:2}
Let~$u$ be a minimizer of $\mathcal E$ 
in~$\R^2$.

Then, $u$ is  $1$D.
\end{theorem}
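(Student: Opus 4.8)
The plan is to derive Theorem~\ref{C:2} from Theorem~\ref{C:1} by verifying that a minimizer $u$ of $\mathcal E$ in $\R^2$ automatically has asymptotically flat level sets, i.e.\ satisfies hypothesis~\eqref{ASS-R}. The key input is the blow-down analysis sketched in Subsection~\ref{SUB11}/the introduction: after rescaling, $u_\eps(x)=u(x/\eps)$ is a minimizer of $\mathcal E_\eps$, and by the density/energy estimates of \cite{CSV} together with the $\Gamma$-convergence picture, up to a subsequence $u_\eps\to \chi_E-\chi_{E^c}$ in $L^1_{\rm loc}(\R^2)$ where $E$ is a minimizer of the (isotropic) fractional perimeter in $\R^2$. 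Here one uses that $n=2$ is below the dimension threshold for regularity of nonlocal minimal surfaces --- indeed, in $\R^2$ a minimizing nonlocal minimal set is a halfplane (this is the fractional analogue of the fact that minimizing cones in $\R^2$ are halfplanes, cf.\ \cite{CRS} and the references therein). Hence every blow-down limit of $\{u=0\}$ is a line.

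Next I would upgrade this qualitative convergence to the quantitative uniform flatness required by~\eqref{ASS-R}. The standard argument: if~\eqref{ASS-R} failed, there would be a sequence $R_k\uparrow\infty$ and $\delta>0$ so that the transition level sets of $u$ in $B_{R_k}$ are \emph{not} trapped in any slab of half-width $\delta R_k$; rescaling by $R_k$ and passing to the limit, the limit set $E$ would be a minimizing nonlocal minimal set in $\R^2$ that is not a halfplane, contradicting the classification. One must be a little careful that the convergence $\{u_{\eps_k}=0\}\to\partial E$ is in the Hausdorff sense on compact sets (not merely $L^1_{\rm loc}$); this follows from the uniform density estimates and the uniform regularity theory for these equations (clean transition layers), exactly as in \cite{CRS, Savin}. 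This yields a function $a(R)\downarrow 0$ and unit vectors $\omega=\omega(R)$ for which~\eqref{ASS-R} holds for all large $R$.

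With~\eqref{ASS-R} verified, Theorem~\ref{C:1} applies directly (noting that $f(u)=u-u^3$ satisfies \eqref{assumpf} with wells at $\pm1$, that $L=(-\Delta)^{s/2}$ corresponds to $K=B_1$ which satisfies \eqref{assumpK}, and that the one-dimensional heteroclinic $\phi_0$ exists by \cite{PSV, cab}, so \eqref{existslayer} holds), and we conclude that $u$ is $1$D.

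The main obstacle, and the step requiring the most care, is the passage from the minimality of $u$ to the uniform flatness~\eqref{ASS-R}: this requires assembling (i) the uniform energy/density estimates of \cite{CSV} for minimizers of $\mathcal E_\eps$, (ii) the compactness giving convergence to a minimizing fractional minimal surface, (iii) the Hausdorff (not just $L^1$) convergence of level sets, and (iv) the classification of minimizing nonlocal minimal sets in the plane as halfplanes. Each of these is available in the literature for the isotropic fractional Laplacian, but they must be invoked in the right order and with uniform constants so that the ``not flat at scale $R_k$'' assumption can be contradicted by a single limiting object. Once flatness is in hand, the deduction via Theorem~\ref{C:1} is immediate and there is no further difficulty.
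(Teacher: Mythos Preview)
Your proposal is correct and follows essentially the same route as the paper: blow down the minimizer, identify the limit as a nonlocal minimal set which in~$\R^2$ must be a halfplane, deduce asymptotic flatness~\eqref{ASS-R}, and invoke Theorem~\ref{C:1}. The paper streamlines your steps (i)--(iii) by quoting a single result (Lemma~\ref{TEN}, i.e.\ Corollary~1.7 in~\cite{SaV-DE}) that already delivers Hausdorff convergence of the transition level sets to~$\partial E$, and it cites~\cite{SaV-R2} (rather than~\cite{CRS}) for the planar classification of $s$-minimal sets; otherwise the arguments coincide.
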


Theorem~\ref{C:2} has been also proved, by different methods,
in~\cite{cab, SirV}. On the other hand,
the following results are, as far as we know,
completely new, since they deal with higher-dimensional spaces
(indeed, the only symmetry results known for the fractional
Allen-Cahn equation are the ones
in~\cite{CCi1, CCi2}, which
hold in dimension~$n=3$ with~$s\in[1,2)$,
while we will consider now the case~$n\ge3$ and~$s\in(0,1)$,
under different assumptions).

\begin{theorem}[One-dimensional symmetry for monotone solutions
in $\R^3$]\label{C:3bis}
Let~$n\le3$ and~$u$ be a solution of~$(-\Delta)^{s/2} u=u-u^3$
in~$\R^n$.

Suppose that
$$ \frac{\partial u}{\partial x_n}(x)>0\quad{\mbox{ for any }}x\in\R^n$$
and
$$ \lim_{x_n\to\pm \infty} u(x',x_n)=\pm1.$$
Then, $u$ is $1$D.
\end{theorem}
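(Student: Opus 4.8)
The plan is to derive Theorem~\ref{C:3bis} from Theorem~\ref{C:1}: once we know that a monotone solution with the prescribed limits satisfies the asymptotic flatness condition~\eqref{ASS-R}, the conclusion is immediate. Observe that \eqref{assumpK}, \eqref{assumpf} and \eqref{existslayer} hold for $L=(-\Delta)^{s/2}$ and $f(u)=u-u^3$ for every $s\in(0,1)$ (the one-dimensional heteroclinic exists by \cite{PSV, cab}), so Theorem~\ref{C:1} applies as soon as~\eqref{ASS-R} is verified. The cases $n=1,2$ will be contained in the argument, so I concentrate on $n=3$.

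\emph{Step 1: $u$ is a minimizer.} Since $\partial_{x_n}u>0$ and $u(x',x_n)\to\pm1$ as $x_n\to\pm\infty$, the translates $\{u(\cdot+te_n)\}_{t\in\R}$ form an ordered family of solutions whose graphs foliate the slab $\R^n\times(-1,1)$. This foliation acts as a calibration and forces $u$ to minimize $\mathcal E$ under compactly supported perturbations. This is the nonlocal counterpart of the classical Alberti--Ambrosio--Cabr\'e argument and is by now standard for the fractional Allen--Cahn equation (cf.\ \cite{PSV, cab} and related works).

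\emph{Step 2: blow-down to a nonlocal minimal surface.} Set $u_\eps(x):=u(x/\eps)$, so that $u_\eps$ is a minimizer of $J_\eps$. By the energy and density estimates of \cite{CSV}, up to a subsequence $u_\eps\to\chi_E-\chi_{E^c}$ in $L^1_{\rm loc}(\R^n)$, with $E$ a minimizer of the $s$-fractional perimeter; blowing $E$ down once more if necessary, we may assume $E$ is a cone. Since monotonicity of $u$ in $x_n$ makes each set $\{u_\eps>\theta\}$ an epigraph in the $e_n$-direction, the same is true of $E$, say $E=\{x_n>\psi(x')\}$ with $\psi$ positively $1$-homogeneous. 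Here $n\le3$ enters: using the flatness of $2$D nonlocal minimal cones (Savin--Valdinoci) together with the dimension-reduction for nonlocal minimal surfaces (Caffarelli--Valdinoci) --- equivalently, the nonlocal Bernstein theorem for graphs over $\R^2$ --- one gets that such an $E$ must be a half-space, so $\partial E=\{\omega_0\cdot x=0\}$ for some $\omega_0\in S^{n-1}$ with $\omega_0\cdot e_n>0$. I expect this to be the main obstacle: the flatness of $s$-minimal cones in $\R^3$ for \emph{all} $s\in(0,1)$ is delicate, and it is precisely the monotone (epigraph) structure of $E$ that makes it available in the whole range of $s$.

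\emph{Step 3: from $L^1$-convergence to two-plane trapping.} Finally I would turn the $L^1_{\rm loc}$-convergence of $u_\eps$ to $\pm1$ off the hyperplane $\{\omega_0\cdot x=0\}$ into the quantitative inclusion~\eqref{ASS-R}. By the clean-ball/density estimates for the transition set $\{|u_\eps|\le1-\kappa\}$ (again \cite{CSV}; this is also the mechanism behind the regularity theory in \cite{CRS}), this set lies in a neighborhood of $\partial E$ whose thickness $\delta(\eps)\downarrow0$ as $\eps\downarrow0$. Undoing the scaling with $\eps=1/R$ gives, for every $R>R_0$ and with $\omega=\omega_0$,
\[
\{\omega\cdot x\le-a(R)R\}\ \subset\ \{u\le-1+\kappa\}\ \subset\ \{u\le1-\kappa\}\ \subset\ \{\omega\cdot x\le a(R)R\}\quad\text{in }B_R,
\]
for a suitable $a(R)\in(0,1]$ with $a(R)\to0$ (take $a(R)=\min\{1,\delta(1/R)\}$; after replacing it by $\sup_{R'\ge R}a(R')$ it becomes nonincreasing while the inclusions persist, the slab only getting wider). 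This is exactly the hypothesis of Theorem~\ref{C:1}, whose conclusion gives that $u$ is $1$D, completing the proof.
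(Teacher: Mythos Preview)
Your strategy matches the paper's: minimality (the paper's Lemma~\ref{MIN:MO}), blow-down to an $s$-minimal set (Lemma~\ref{TEN}), show it is a halfspace, then invoke Theorem~\ref{C:1}. The difference lies in how you argue that the blow-down $E$ is a halfspace, and there is a gap in your Step~2. Monotonicity of $u$ only gives that $E$ is an \emph{epigraph} $\{x_n>\psi(x')\}$ with $\psi:\R^{n-1}\to[-\infty,+\infty]$; it does not rule out $\psi$ taking the value $+\infty$ (or $-\infty$) on a nontrivial set, in which case $\partial E$ is not a graph over all of $\R^{n-1}$ and the nonlocal Bernstein theorem for graphs (\cite{FIG}, Corollary~1.3) does not directly apply. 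The paper handles this via a dichotomy: it forms $E_{+\infty}:=\bigcup_t(E-te_n)$ and $E_{-\infty}:=\bigcap_t(E-te_n)$, both $s$-minimizers and cylindrical in $e_n$. If either is nontrivial, its trace in $\R^{n-1}=\R^2$ is an $s$-minimizer, hence a halfplane by \cite{SaV-R2}; then $E$ lies in a halfspace, and the paper's Lemma~\ref{lemhalfspace} (a sliding-ball argument using the viscosity equation for $\partial E$) forces $E$ itself to be a halfspace. Only in the remaining case $E_{+\infty}=\R^n$, $E_{-\infty}=\varnothing$ is $\psi$ genuinely finite everywhere, and then the Bernstein result of \cite{FIG} applies.

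Two minor points. Your ``blow $E$ down once more to a cone'' step is unnecessary and does not close the gap: knowing a blow-down cone of $E$ is flat yields regularity of $\partial E$, not its global flatness, so you would still need Bernstein at the level of $E$. And since $E$ is obtained only along subsequences, in Step~3 you should allow $\omega$ in~\eqref{ASS-R} to depend on $R$ rather than fixing a single $\omega_0$; Theorem~\ref{C:1} explicitly permits this, and a standard compactness argument (every blow-down is a halfspace) then gives $a(R)\to0$ along the full family.
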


Theorem~\ref{C:3bis} has also been recently exploited in~\cite{ESSO}
in order to obtain additional results of De Giorgi type.

The next two results deal with the case in which the fractional parameter~$s$
is sufficiently close to~$1$ (that is, roughly speaking, when the nonlocal
diffusive operator is sufficiently close to~$\sqrt{-\Delta}$).
In this case, it is known that the minimizers of the corresponding geometric
problem of fractional perimeters are close to the classical minimal surfaces (see~\cite{CafVal}). This fact provides an additional rigidity of
the interfaces that we can exploit in order to obtain symmetry results.

\begin{theorem}[One-dimensional symmetry when $s$ is close to 1]\label{C:3}
Let~$n\le 7$. Then, there exists $\eta_n\in(0,1)$ such that
for any~$s\in [1-\eta_n,\,1)$ the following statement holds true.

Let~$u$ be a minimizer of $\mathcal E$ 
in~$\R^n$. Then, $u$ is $1$D.
\end{theorem}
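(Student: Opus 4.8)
\textbf{Proof proposal for Theorem~\ref{C:3}.}
The plan is to reduce the De~Giorgi statement for $4\le n\le 7$ (and $s$ close to $1$) to the $1$D symmetry result for asymptotically flat solutions, Theorem~\ref{C:1}. The key point to establish is that a minimizer $u$ of $\mathcal E$ in $\R^n$ has asymptotically flat level sets in the sense of \eqref{ASS-R}, for a suitable modulus $a(R)\downarrow 0$. Once this is known, Theorem~\ref{C:1} applies directly and gives that $u$ is $1$D.

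To prove the asymptotic flatness, I would use the blow-down analysis described in the introduction. Since $u$ is a minimizer, the rescalings $u_\eps(x)=u(x/\eps)$ are minimizers of $J_\eps$, and by the compactness and $\Gamma$-convergence theory of \cite{CSV} together with \cite{CRS}, up to a subsequence $u_\eps\to\chi_E-\chi_{E^c}$ in $L^1_{\rm loc}$ where $E$ is a minimizing (isotropic) nonlocal $s$-perimeter set, in fact a nonlocal minimal cone after a further blow-down. Here is where the hypothesis $s\in[1-\eta_n,1)$ enters: by \cite{CafVal}, when $s$ is sufficiently close to $1$ the minimizing nonlocal minimal cones in $\R^n$ are flat (hyperplanes) in exactly the dimensions where classical minimizing cones are flat, i.e.\ for $n\le 7$ (one dimension below the classical threshold, since in the limit $s\to1$ one recovers classical minimal surfaces and stays on the safe side). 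Therefore the blow-down of $u$ is a single hyperplane, and by the usual improvement-of-flatness/compactness argument for the associated geometric problem (or equivalently a standard contradiction argument using uniform density estimates and the uniqueness of the blow-down) the level sets $\{-1+\kappa\le u\le 1-\kappa\}$ satisfy \eqref{ASS-R}: for every $R>R_0$ they are trapped, after a rotation depending on $R$, in a slab of width $2a(R)R$ with $a(R)\to0$.

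With asymptotic flatness in hand, Theorem~\ref{C:1} immediately yields that $u$ is $1$D, completing the proof. I would also note that the normalization of the nonlinearity $f(u)=u-u^3$ (so $W(u)=\tfrac14(1-u^2)^2$) satisfies \eqref{assumpf}, and the existence of the $1$D layer $\phi_0$ in \eqref{existslayer} is guaranteed by \cite{PSV, cab}, so the hypotheses of Theorem~\ref{C:1} are met; the operator is $(-\Delta)^{s/2}$, i.e.\ $K$ a ball, which satisfies \eqref{assumpK}.

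The main obstacle I expect is the precise passage from ``$u$ is a minimizer'' to the quantitative flatness \eqref{ASS-R} with an \emph{explicit decaying modulus} $a(R)$: one must make sure that the uniform convergence of the rescaled level sets to the hyperplane is genuinely uniform on compact sets and comes with a rate, which requires combining the density and clean-ball estimates for nonlocal phase transitions from \cite{CSV} with the flatness-implies-smoothness theory for nonlocal minimal surfaces \cite{CRS} and the classification of minimizing cones \cite{CafVal}. A secondary technical point is that the threshold for flatness of nonlocal minimal cones degenerates as $s\to1$, so $\eta_n$ must be chosen (non-explicitly, via compactness) small enough that for $s\in[1-\eta_n,1)$ all minimizing $s$-minimal cones in $\R^n$, $n\le7$, are hyperplanes — this is exactly the content extracted from \cite{CafVal} and is what forces the restriction $n\le 7$ rather than $n\le 8$.
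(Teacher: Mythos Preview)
Your proposal is correct and follows essentially the same route as the paper: blow down, identify the limit as an $s$-perimeter minimizer which is a hyperplane by~\cite{CafVal} for $n\le 7$ and $s$ close to $1$, then invoke Theorem~\ref{C:1}. The paper's proof is two sentences because the passage from ``minimizer'' to~\eqref{ASS-R} is packaged in Lemma~\ref{TEN} (Corollary~1.7 of~\cite{SaV-DE}), which already gives locally uniform Hausdorff convergence of the level sets~$\{u_\eps\le 1-\kappa\}$ and~$\{u_\eps\le -1+\kappa\}$ to~$\R^n\setminus E$; your concern about an explicit rate for~$a(R)$ is unnecessary, since Theorem~\ref{C:1} only requires~$a(R)\downarrow 0$, not a quantitative decay.
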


\begin{theorem}[One-dimensional symmetry for monotone solutions
in $\R^8$ when $s$ is close to~$1$]\label{C:3tris}
Let $n\le8$. Then, there exists $\eta_n\in(0,1)$ such that
for any~$s\in [1-\eta_n,\,1)$ the following statement holds true.

Let~$u$ be a solution of~$(-\Delta)^{s/2} u=u-u^3$ in~$\R^n$.

Suppose that             
\begin{equation}
\label{LIM1}
\frac{\partial u}{\partial x_n}(x)>0\quad{\mbox{ for any }}x\in\R^n\end{equation}
and
\begin{equation}
\label{LIM2} \lim_{x_n\to\pm \infty} u(x',x_n)=\pm1.\end{equation}
Then, $u$ is $1$D.
\end{theorem}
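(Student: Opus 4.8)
\textbf{Proof proposal for Theorem~\ref{C:3tris}.}

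The plan is to reduce Theorem~\ref{C:3tris} to the combination of the asymptotic $1$D symmetry result of Theorem~\ref{C:1} and the classification of minimizing fractional cones (via the flatness of fractional minimal surfaces when $s$ is close to $1$). First I would exploit the monotonicity hypothesis \eqref{LIM1} together with the limits \eqref{LIM2}: by the standard sliding/moving-plane argument of Berestycki--Caffarelli--Nirenberg, a monotone solution satisfying $\lim_{x_n\to\pm\infty}u(x',x_n)=\pm1$ is automatically a \emph{minimizer} of $\mathcal E$ in $\R^n$. (This is the nonlocal analogue of the classical fact used in the $s=2$ theory; in the fractional setting it follows from the results on monotone solutions being minimizers, e.g.\ by comparing with translates in the $x_n$ direction.) Thus it suffices to prove that a minimizer of $\mathcal E$ in $\R^n$ with $n\le 8$ and $s$ close to $1$ is $1$D, which in particular will also dispose of the $n\le 7$ case needed only in the preliminary reductions.

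Next I would pass to the blow-down. Since $u$ is a minimizer, the rescalings $u_\eps(x)=u(x/\eps)$ are minimizers of $\mathcal E_\eps$, and by the density/energy estimates (the nonlocal analogue of \cite{CSV} recalled in the excerpt) a subsequence converges in $L^1_{\rm loc}$ to $\chi_E-\chi_{E^c}$ where $E$ is a minimizer of the (isotropic) fractional $s$-perimeter in $\R^n$. Because of the monotonicity in $x_n$, the limit set $E$ is a subgraph in the $x_n$ direction, i.e.\ an $s$-minimal graph. By the Savin--Valdinoci and Caffarelli--Valdinoci flatness/regularity theory for fractional minimal surfaces, and crucially by the closeness of $s$-minimizing cones to classical minimizing cones when $s\to1$ (see~\cite{CafVal}), there is $\eta_n\in(0,1)$ such that for $s\in[1-\eta_n,1)$ every $s$-minimal graph in $\R^n$ with $n\le 8$ is a half-space; this is the fractional transcription of the Bernstein-type theorem valid classically up to $n=8$. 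Hence $E$ is a half-space, which means precisely that the level sets of $u$ are asymptotically flat in the sense of hypothesis \eqref{ASS-R}: after a rotation, $\{u=\theta\}$ lies in a slab $B'_R\times(-a(R)R,a(R)R)$ with $a(R)\downarrow0$.

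With asymptotic flatness in hand, I would invoke Theorem~\ref{C:1} directly: it asserts exactly that a solution of $Lu=f(u)$ in $\R^n$ (here $L=(-\Delta)^{s/2}$, $f(u)=u-u^3$, which satisfy \eqref{assumpK}, \eqref{assumpf}, \eqref{existslayer}) whose level sets are trapped in slabs of width $a(R)R$ with $a(R)\to0$ must be $1$D. This gives the conclusion. The main obstacle I anticipate is not Theorem~\ref{C:1} itself (which is quoted) but the quantitative extraction of the slab condition \eqref{ASS-R} from the blow-down convergence: one needs that the $L^1_{\rm loc}$ convergence $u_\eps\to\chi_E-\chi_{E^c}$ to a half-space \emph{upgrades} to uniform convergence of the transition level sets $\{u_\eps=\theta\}$ to a hyperplane on compact sets, uniformly in $\theta\in(-1+\kappa,1-\kappa)$, with an explicit modulus $a(R)$. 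This requires the uniform density estimates and the clean/improvement-of-flatness dichotomy for the phase transition (so that no ``interface'' escapes to infinity or oscillates), i.e.\ the limit case $\eps\downarrow0$ of Theorem~\ref{thm:improvement}; once this uniform convergence is established the hypotheses of Theorem~\ref{C:1} hold and the proof closes. The borderline dimension $n=8$ is exactly the place where the Bernstein theorem for $s$-minimal graphs is still true (for $s$ near $1$), matching the classical threshold, and $n\le 7$ suffices for the purely minimizer statement Theorem~\ref{C:3}.
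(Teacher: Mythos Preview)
Your overall strategy---minimizer via monotonicity, blow-down to an $s$-minimal set $E$, classify $E$ as a halfspace, then invoke Theorem~\ref{C:1}---matches the paper's. The obstacle you flag at the end (upgrading $L^1_{\rm loc}$ convergence to uniform convergence of the transition region) is not the real issue: this is precisely the content of Lemma~\ref{TEN} (from \cite{SaV-DE}), which the paper simply quotes.

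The genuine gap is the step ``because of monotonicity, $E$ is an $s$-minimal \emph{graph}.'' Monotonicity of $u$ in $x_n$ only yields that the blow-down $E$ is monotone in $x_n$, i.e.\ a \emph{subgraph} $\{x_n<g(x')\}$ with $g:\R^{n-1}\to[-\infty,+\infty]$; it does not rule out vertical columns where $g\equiv\pm\infty$. The Bernstein result you invoke (\cite{FIG} combined with \cite{CafVal}) is stated for entire $s$-minimal \emph{graphs}, so you cannot apply it yet. Relatedly, your reduction ``it suffices to prove that a minimizer in $\R^n$ with $n\le 8$ is $1$D'' throws away too much: that statement is not known (Theorem~\ref{C:3} stops at $n\le7$), and the extra dimension in Theorem~\ref{C:3tris} comes precisely from the monotone structure you must retain.

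The paper closes this gap by a dichotomy on the limit sets $E_{+\infty}:=\bigcup_t(E-te_n)$ and $E_{-\infty}:=\bigcap_t(E-te_n)$. If either is nontrivial it is an $x_n$-invariant $s$-minimizer, hence a cylinder over an $s$-minimizer in $\R^{n-1}$; since $n-1\le7$, the cone classification of \cite{CafVal} forces that trace to be a halfspace, so $E_{\pm\infty}$ is a halfspace, and then Lemma~\ref{lemhalfspace} (an $s$-minimizer contained in a halfspace is itself a halfspace or empty) forces $E$ flat. If both are trivial then $E_{+\infty}=\R^n$ and $E_{-\infty}=\varnothing$, which says every vertical line both enters and leaves $E$; hence $g$ is finite everywhere, $E$ is a genuine graph, and only now does the graph Bernstein theorem apply. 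You need to insert this case split (or some equivalent argument ruling out $g=\pm\infty$) before appealing to Bernstein.
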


\subsection{Overview of the proofs and 
organization of the paper}\label{sec:proofs}

The proof of Theorem \ref{thm:improvement} 
follows the classical\footnote{It goes back to De~Giorgi; see e.g.
the retrospective in~\cite{MR3393314}} ``improvement of flatness
strategy''  that
was pioneered by Savin in \cite{Savin} 
for the case of level sets of
classical phase transitions.
The same general approach was suitably modified in~\cite{CRS} in the 
context of nonlocal minimal surfaces.
Let us give next the ``big picture'' of it in order to explain the structure of the paper ---we will assume here for simplicity $L=(-\Delta)^{s/2}$.

Very roughly, we take a sequence~$u_a$ of solutions 
of $(-\Delta)^{s/2}u_a =\eps^{-s}f(u_a)$ such that the transition level sets of $u_a$ are trapped in a very flat cylinder\footnote{An additional geometric trapping condition in dyadic balls  up to a certain larger scale is also required but this is omitted in this rough exposition } $\{|x'| \le 1, |x_n|\le a\}$. We assume that $\eps<{a^{p_0}}$ for $p_0$ large and
we show that $u_a\approx \pm1$ outside of essentially a $n-1$ dimensional surface (that is very flat but possibly very irregular).
We then consider ``vertical rescalings''
$$ (x',x_n)\mapsto \left(x', \frac{x_n}a\right)$$ of these ``transition surfaces''.
 
A main step in the proof then consists  in proving that the
vertical rescalings of the ``transition surfaces'' are compact as $a\downarrow 0$ and converge to a continuous graph  $g:\R^{n-1}\rightarrow \R$.. 
To achieve this compactness  we need a ``H\"older type'' estimate, or improvement of oscillation, for 
vertical rescalings of level sets.
The proof of this improvement of oscillation estimate is given in Section 4. It requires to build fine barriers for the semilinear equation and several auxiliary result that are given in Section 2 and 3.

A second step in the proof is to show that the limit graph $g$ is a viscosity
solution of the
linear translation invariant elliptic equation  $(-\Delta)^{\frac{1+s}{2}} g =0$ in $\R^{n-1}$. This is done in Section 5.

Finally we obtain  the improvement by compactness, inheriting  it from the $C^{1,\alpha}$ regularity of  $(-\Delta)^{\frac{1+s}{2}} g =0$. This is done in Section 6.

The rest of the paper, namely
Sections~\ref{SECT7} and~\ref{SECT8},
is devoted to the proof of Theorem~\ref{C:1} and its consequences. 
As explained before, Theorem~\ref{C:1} follows from Theorem \ref{thm:improvement} but not in a straightforward way.
Let us summarize next the
main steps of its proof.

We use two different iterations of
Theorem~\ref{thm:improvement}. 
The first iteration, that we informally call  ``preservation 
of flatness'', is given in Corollary~\ref{0NL:VV}. 
The second  iteration, really a geometric  ``improvement
of flatness'', is given in Corollary~\ref{NL:VV}. 
Corollary~\ref{NL:VV} is stronger in the sense that the flatness is improved geometrically in a sequence of dyadic balls, but only up to a large mesoscale. In Corollary~\ref{0NL:VV} the flatness does not improve but is just preserved across scales but, as a counterpart, it gives information up to scale 1.

To prove Theorem~\ref{C:1} we need to combine Corollary~\ref{0NL:VV} 
with a multi-scale application of 
Corollary~\ref{NL:VV}. Doing so, we prove that
the transition level sets are trapped, in all of $\R^n$, 
between a Lipschitz graph and a finite vertical 
translation of it. 
Then, we need to use the 
sliding method (in its full strength) to 
conclude that the level sets of the solution are indeed flat.  

\medskip
%
%
%
%
%
%
%
%
%
%

\subsection*{Notation}
For the convenience of the reader we gather here the 
notation that we will follow throughout all the paper. 
The following  list of notations is just for quick 
reference and all the notations are introduced (again) within the text at their first appearance. 
\begin{itemize} 
\item $L$, $f$  are the nonlocal elliptic operator and the nonlinearity,
respectively, see \eqref{EQ}.
\item $n\ge 2$, $s\in(0,1)$, $K$ are, respectively, the dimension, the order of the operator, and
the even, $C^{1,1}$ convex set defining the norm $\|\cdot \|_K$ in the definition of $L$.
\item ${\mathcal{L}}$ denotes the one-dimensional fractional Laplacian as in \eqref{1dfraclap}.
\item  $r_K$ is the inner curvature radius of $\partial K$; see \eqref{assumpK}.
\item  $\kappa$, $c_\kappa$ and $l_\kappa$ are the 
constants in the quantitative assumptions of $f$, see Subsection~\ref{SUB13}.
\item  We will call a constant {\em universal} if it depends only 
on $n$, 
$s$, $r_K$ , $\kappa$, $c_\kappa$ and $l_\kappa$.
In particular, universal constants depend only on $n$, $L$, and $f$.
\item $\lambda,\Lambda$ are the ellipticity constants of $L$, $\mathcal C = \mathcal C_L$ is the convex body with support 
function $h_L$, and $\rho'>\rho>0$ are the two constants in its 
curvature bounds, see Subsection~\ref{SUB12}.
\item  We write   
\begin{equation*}\label{setinclusion}
X\subset Y \quad \mbox{in }B\qquad \mbox{if} \qquad X\cap B \,\subset\, Y \cap B.
\end{equation*}
\item We denote by $\|\, \cdot\,\|_{\mathcal C}$ the norm with unit ball $\mathcal C$.
We also denote by $\mathcal C_r(y)$ the ball of radius $r$ and center~$y$ 
with respect to this norm, namely
\[\mathcal C_r(y) := y+r\mathcal C.\]
Notice that when $L$ is the fractional Laplacian $\mathcal C_r(y)$ is simply $B_r(y)$.
\item Points in $\R^{n-1}$ will be denoted by $x'$ and  $x=(x',x_n)$ 
denotes a point in $\R^{n}$ with $n$-th coordinate $x_n$.
{F}rom now on, we also denote by $B_r'$ the $(n-1)$-dimensional
ball of radius $r>0$.
\item $\xi$ denotes the function $\xi:\R^{n-1}\rightarrow \R$ which is defined by
\begin{equation}\label{xi}
\xi(x') =\xi(|x'|) :=\bigl(1+|x'|^2 \bigr)^{\frac{1+\alpha}{2}}-1.
\end{equation}
\item Given $b>0$, we denote by $d_b$
the signed distance function to the set $\{x_n\ge b\,\xi(x')\}$ with respect to the norm $\|\, \cdot\,\|_{\mathcal C}$, that is, 
\begin{equation*}
d_b(x) = 
\begin{cases}
+\inf \big\{ \| z-x\|_{\mathcal C}\ :\  z_n = b\,\xi(z')\big\},
\quad &\mbox{for  }  x_n \ge b\,\xi(x'),
\\
-\inf \big\{ \| z-x\|_{\mathcal C}\ :\    
z_n = b\,\xi(z') \big\},		 
&\mbox{for  }  x_n \le b\,\xi(x').
\end{cases}
\end{equation*}
\item  Given $\phi: \R\rightarrow (-1,1)$,
for any $x\in\R$, we set
\[ \phi^{b}(x) := \phi\bigl(d_{b}(x)\bigr).\]
Notice that $\phi^b: \R\rightarrow (-1,1)$,
and it may be seen as a 
``rearrangement'' of the layer solution $\phi$ with respect to the signed distance function.
\end{itemize}

In addition to the previous notations we use also the following very standard ones.
\begin{itemize}
\item Given~$r\in\R$, we denote by~$r_+:=\max\{r,0\}$ and~$r_-:=\max\{-r,0\}$.
\item Given a measurable function~$f:X_1\times \dots\times X_m\to\R$, we use the repeated integral notation
$$ \int_{X_1}\,dx_1 \,\dots\,\int_{X_m}\,dx_m\, f(x_1,\dots,x_m):=
\int_{X_1} \left[ \dots \int_{X_m} f(x_1,\dots,x_m)\,dx_m\dots\right]
\,dx_1.$$
\end{itemize}

\section{Approximate solutions via deformation
of level sets}\label{SECTION2}

In this section we construct approximate solutions in $B_1$ by deforming (slightly curving) the flat level sets of a one-dimensional solution.

\subsection{A layer cake formula}

The main results of this paper are valid for an operator $L$ of the form
\begin{equation}\label{formL}
 Lu(x):= \int_{\R^n} \big( u(x)-u(x+y)\big) \frac{\mu\big(y/|y|\big)}{|y|^{n+s}}\,dy.
\end{equation}
The measure $\mu$ in~\eqref{formL} is often called in jargon the
``spectral measure''. By assumption ---see \eqref{assumpK} on page~\pageref{assumpK}--- we have 
that~$\mu$ satisfies 
\begin{equation}
\label{muL0}
\mu(z)=\mu(-z)\quad\mbox{and}\quad 
0< \lambda \le \mu(z) \le \Lambda< +\infty  
\quad\mbox{for all }z\in  S^{n-1}.
\end{equation}
where $\lambda$, $\Lambda$ are positive constants depending only on $n$, $s$ and $r_K$ and are called the ellipticity constants.
\medskip

Now we give a simple layer cake
representation for the integro-differential operators.

We use the notation
\begin{equation}\label{CHInot}
\chi_{[a_1,a_2]}(\theta) :=
\left\{
\begin{matrix}
1 & {\mbox{ if }} a_1\le a_2 {\mbox{ and }} \theta\in [a_1,a_2],\\
\, \\
0 & \begin{matrix} {\mbox{ if either }} a_1>a_2,
\\ {\mbox{ or }}
a_1\le a_2 {\mbox{ and }} \theta\not\in [a_1,a_2].\end{matrix}
\end{matrix}
\right.
\end{equation}
Using this,
we have the following simple layer cake type representation for nonlocal operators:

\begin{lemma}\label{layer cake}
It holds that
\begin{equation}\label{RP:1}
L v(x) = \int_{\R^n}\,dy\,
\int_\R \,d\theta
\Big(\chi_{[v(x+y),v(x)]}(\theta)-\chi_{[v(x),v(x+y)]}(\theta)\Big)
\,\frac{\mu(y/|y|)}{|y|^{n+s}}.
\end{equation}
Furthermore, if~$x\in\R^n$ is such that~$v(x)=w(x)$, then
\begin{equation}\label{RP:2}
L v(x)-L w(x) = \int_{\R^n}\,dy\,
\int_\R \,d\theta
\Big(\chi_{[v(x+y),w(x+y)]}(\theta)-\chi_{[w(x+y),v(x+y)]}(\theta)\Big)
\,\frac{\mu(y/|y|)}{|y|^{n+s}}.
\end{equation}
\end{lemma}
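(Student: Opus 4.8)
The plan is to establish the two identities by the standard ``layer cake'' (or ``bathtub'') principle, which expresses a number $t\in\R$ as a signed integral of indicator functions. Concretely, for any $a,b\in\R$ one has the elementary identity
\[
b-a = \int_\R \Big( \chi_{[a,b]}(\theta) - \chi_{[b,a]}(\theta) \Big)\,d\theta,
\]
where $\chi$ is as in~\eqref{CHInot}: if $a\le b$ the first indicator contributes the length $b-a$ and the second vanishes, while if $b<a$ the second indicator contributes $a-b$ and the first vanishes, giving $-(a-b)=b-a$ in both cases. This is just the statement that the (signed) length of the interval between $a$ and $b$ equals $b-a$.

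For~\eqref{RP:1}, I would apply this pointwise identity with $a = v(x+y)$ and $b = v(x)$, obtaining
\[
v(x) - v(x+y) = \int_\R \Big( \chi_{[v(x+y),v(x)]}(\theta) - \chi_{[v(x),v(x+y)]}(\theta) \Big)\,d\theta
\]
for every $y\in\R^n$. Substituting this into the definition~\eqref{formL} of $Lv(x)$ and using Fubini--Tonelli to justify interchanging the $dy$ and $d\theta$ integrations yields~\eqref{RP:1}. For~\eqref{RP:2}, note that when $v(x)=w(x)$ we have
\[
Lv(x) - Lw(x) = \int_{\R^n} \big( w(x+y) - v(x+y) \big)\,\frac{\mu(y/|y|)}{|y|^{n+s}}\,dy,
\]
since the $u(x)$ terms cancel; then apply the elementary identity with $a = v(x+y)$, $b = w(x+y)$ and again swap the order of integration.

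The only genuine point requiring care — and the step I expect to be the main (minor) obstacle — is the justification of Fubini's theorem, i.e. the absolute integrability of the integrand $(y,\theta)\mapsto \big|\chi_{[v(x+y),v(x)]}(\theta)-\chi_{[v(x),v(x+y)]}(\theta)\big|\,\mu(y/|y|)\,|y|^{-n-s}$ over $\R^n\times\R$. Its $\theta$-integral is exactly $|v(x)-v(x+y)|\,\mu(y/|y|)\,|y|^{-n-s}$, which is integrable in $y$ precisely under the hypotheses that make $Lv(x)$ well-defined (for instance $v$ bounded and $C^{1,1}$, or more generally $C^s$, near $x$, together with the ellipticity bounds~\eqref{muL0} on $\mu$); the near-diagonal singularity is controlled by the regularity of $v$ at $x$ and the far-field by boundedness of $v$ and $s>0$. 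Once this integrability is in hand, Fubini--Tonelli applies and both identities follow immediately; no further computation is needed.
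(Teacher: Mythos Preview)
Your proposal is correct and follows essentially the same approach as the paper: both use the elementary identity $b-a=\int_\R(\chi_{[a,b]}-\chi_{[b,a]})\,d\theta$ applied pointwise to $v(x)-v(x+y)$ (respectively $w(x+y)-v(x+y)$) and then integrate against the kernel. The only difference is that you explicitly discuss the Fubini justification, which the paper simply leaves implicit with the phrase ``we integrate''.
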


\begin{proof} By~\eqref{CHInot},
$$ (a_1-a_2)_- = (a_2-a_1)_+ =\int_\R \chi_{[a_1,a_2]}
(\theta)\,d\theta$$
and therefore
\begin{eqnarray*}
v(x)-v(x+y) &=& \big( v(x)-v(x+y)\big)_+ - \big( v(x)-v(x+y)\big)_-
\\ &=& \int_\R \chi_{[v(x+y),v(x)]}(\theta)\,d\theta
- \int_\R \chi_{[v(x),v(x+y)]}(\theta)\,d\theta
.\end{eqnarray*}
So, we integrate and we find~\eqref{RP:1}.

Similarly, we write
$$ w(x+y)-v(x+y)
= \int_\R \chi_{[v(x+y),w(x+y)]}(\theta)\,d\theta
- \int_\R \chi_{[w(x+y),v(x+y)]}(\theta)\,d\theta,$$
which gives~\eqref{RP:2} after integration.
\end{proof}

\subsection{The operator $L$ and the convex set $\mathcal C_L$}\label{SUB11} 

Throughout the paper  the fractional Laplacian in dimension~$1$ 
(without normalization constant) will be denoted $\mathcal L$.  Namely, given a bounded $\psi\in C^2(\R)$, we define
\begin{equation}\label{1dfraclap}
\mathcal L \psi(z) := \int_{-\infty}^\infty 
\frac{\psi(z)-\psi( z+\zeta)}{|\zeta|^{1+s}} \,d\zeta, \quad z\in \R.
\end{equation}

For $\psi$ as above,  $\omega\in S^{n-1}$ and $h>0$, 
we define, for any~$x\in\R^n$,  
\begin{equation}\label{NOoh}
\bar\psi_{\omega,h}\,(x)  := \psi\left(\omega\cdot \frac{x}{h}\,\right).\end{equation}
Then, for each operator $L$ of the form \eqref{formL}, let $h_L : 
S^{n-1} \rightarrow (0,\infty)$ be defined as follows. We set $h_L(\omega) :=h$, where $h>0$ satisfies 
\begin{equation}\label{oKAJ:11} 
L \bar\psi_{\omega,h} (x)= \mathcal L \psi\left(\omega\cdot \frac{x}{h}\right) \ 
\mbox{for all }\psi\in C^2(\R)\cap L^\infty(\R).\end{equation}
Using the function $h_L$, we define the closed convex set
\begin{equation}\label{CAL C} \mathcal C= \mathcal C_L := 
\bigcap_{\omega\in S^{n-1}} \big\{ x\in \R^n\,:\, 
x\cdot \omega \le h_L(\omega) \big\}.\end{equation}
We notice also that, since $L$ is even, both $h_L$ and $\mathcal C_L$ are 
even, i.e. symmetric with respect to the origin. 
In addition, we remark that,
when $L=(-\Delta)^{s/2}$,  $\mathcal C_L$ is a ball (centered at $0$).

Our assumption \eqref{assumpK} on $K$ is made in order to guarantee that
\begin{equation}\label{C C1}
{\mbox{$\partial\,\mathcal C_L$ is $C^{1,1}$ and strictly convex.}}\end{equation}
More quantitatively, that there are constants ~$\rho'>\rho>0$ depending only on $n$, $s$, $K$ such that
\begin{equation} \tag{H1'}\label{assumpL}
{\mbox{the curvatures of  $\partial\,\mathcal C_L$ are bounded above 
by  $\displaystyle \frac 1\rho$ and below by $\displaystyle\frac{1}{\rho'}$.}}
\end{equation}
%

We remark that the definition of~$h_L$ in \eqref{oKAJ:11} is well posed,
and indeed an explicit expression of~$h_L(\omega)$ 
is obtained through the formula
\begin{equation}\label{formula1hL} 
h_L(\omega) = 
\left(\frac12
\int_{S^{n-1}}  |\omega\cdot \theta|^s \, 
\mu( \theta)\,d\theta\right)^{1/s} .
\end{equation}
To prove \eqref{formula1hL}, we proceed as follows
\begin{eqnarray*}
L\bar\psi_{\omega,h}\,(x) &=&
\int_{\R^n} \left( \psi\left(\omega\cdot \frac{x}{h}\,\right)-
\psi\left(\omega\cdot \frac{x+y}{h}\,\right)
\right)\,
\frac{\mu\big(y/|y|\big)}{|y|^{n+s}}\,dy\\
&=&
\int_{0}^{+\infty} \,d\varrho\,\int_{S^{n-1}}\,d\theta\,
\left( \psi\left(\omega\cdot \frac{x}{h}\,\right)-
\psi\left(\omega\cdot \frac{x}{h}+\omega\cdot \frac{\varrho\theta}{h}
\,\right)
\right)\,
\frac{\mu(\theta)}{\varrho^{1+s}}
\\
&=&\frac12\,
\int_{-\infty}^{+\infty} \,d\varrho\,\int_{S^{n-1}}\,d\theta\,
\left( \psi\left(\omega\cdot \frac{x}{h}\,\right)-
\psi\left(\omega\cdot \frac{x}{h}+\omega\cdot \frac{\varrho\theta}{h}
\,\right)
\right)\,
\frac{\mu(\theta)}{|\varrho|^{1+s}} \\
&=&\frac12\,
\int_{-\infty}^{+\infty} \,d\zeta\,\int_{S^{n-1}}\,d\theta\,
\left( \psi\left(\omega\cdot \frac{x}{h}\,\right)-
\psi\left(\omega\cdot \frac{x}{h}+\zeta
\,\right)
\right)\,
\frac{|\omega\cdot\theta|^s\,\mu(\theta)}{h^s \,|\zeta|^{1+s}},\end{eqnarray*}
where we used the change of variables $\zeta = \frac{\varrho\omega\cdot\theta}{h}$.
Hence, if $h=h_L(\omega)$ is given by \eqref{formula1hL},
\begin{eqnarray*}
L\bar\psi_{\omega,h}\,(x) = 
\int_{-\infty}^{+\infty} 
\left( \psi\left(\omega\cdot \frac{x}{h}\,\right)-
\psi\left(\omega\cdot \frac{x}{h}+\zeta
\,\right)
\right)\,
\frac{d\zeta
}{|\zeta|^{1+s}}
=
\mathcal L \psi\left(\omega\cdot\frac{ x}{h}\right),
\end{eqnarray*}
that is \eqref{oKAJ:11}.

A special case of \eqref{formula1hL} occurs when 
the spectral measure is induced by
a convex set, namely when
\[ \frac{\mu\big(y/|y|\big)}{|y|^{n+s}} = \frac{1}{\|y\|_K^{n+s}}  \]
for some convex set $K$, where $\| \cdot \|_K$ 
is the norm with unit ball~$K$, that is,
for any~$p\in\R^n$,
\begin{equation}\label{pKnorma}
\| p\|_K := \inf\{ t>0 {\mbox{ s.t. }} p/t\notin K\}.\end{equation}
Then, in this case, an integration in polar coordinates yields 
\begin{eqnarray*} 
&&
h_L(\omega) = \left( \frac12\,\int_{S^{n-1}}  d\theta  \frac{|\omega\cdot \theta|^s}{\|\theta\|_K^{n+s}} \right)^{1/s}  =
\left( \frac{n+s}2\,\int_{S^{n-1}}\,d\theta
\int_0^{1/\| \theta\|_K}\,d\varrho\,
|\omega\cdot \theta|^s\,\varrho^{n+s-1} \right)^{1/s} 
\\&&\qquad\qquad\qquad
=
\left( \frac{n+s}{2}\,\int_{K}   |\omega\cdot x|^s dx  \right)^{1/s}
.\end{eqnarray*}
As pointed out to us by M. Ludwig, 
to whom we are indebted for this comment and 
the interesting references provided, 
the convex body associated to this support function is the so 
called ``$L_p$-intersection body'' of $K$. 
These convex bodies are well studied in convex geometry, 
in relation to the important Busemann-Petty problem,
see~\cite{Berck} and references therein for more information on this subject. 

It is proved in \cite{Berck} that, for any given convex 
set $K$ (bounded and with nonempty interior) 
which is symmetric with respect to the origin, the 
function $h_L$ is strictly convex in all the nonradial directions. 
Also, from \eqref{formula1hL} it follows 
that $h_L$ is $C^{1,1}$ in $\R^n\setminus \{0\}$ 
when $\mu$ is $C^{1,1}$. Actually $\mu\in C^{2-s+\epsilon}$ 
suffices since the ``kernel''  $|\omega\cdot\theta|^s$ 
is $C^s$ and this yields a regularity improvement.

When $K$ is any $C^{1,1}$ convex set, the previous observations imply 
that the set  
\[\mathcal C^*_L := \{ h_L =1\}\] is a $C^{1,1}$, even with respect to $0$, strictly convex set.
Noting that $\mathcal C_L$ and $\mathcal C^*_L$ are one the polar body 
of the other, one can show that $\mathcal C_L$ 
is also a $C^{1,1}$, even, strictly convex set. 
Indeed, since $\mathcal C^*_L$ 
is a $C^{1,1}$, even, strictly convex set, 
any point of its boundary can be touched by two even ellipsoids, 
one contained in, and the other one containing,  $\mathcal C^*_L$.  
Considering the polar transformations of these ellipsoids 
we show the same property for $\mathcal C_L$.

The previous discussion can be summarized in the following 
\begin{lemma} [\cite{Berck}]
If $L$ is of the form \eqref{formL1} with $K$ even and satisfying \eqref{assumpK} 
then $\mathcal C_L$ satisfies \eqref{assumpL}
for some ~$\rho'>\rho>0$ depending only on $n$, $s$, $K$.
\end{lemma}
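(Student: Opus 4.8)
The statement asks for two‑sided curvature bounds on $\partial\mathcal C_L$, so the plan is to translate \eqref{assumpL} into two bounds on the support function $h_L$ of $\mathcal C_L$ and then establish each. By \cite{Berck} the function $h_L$ of \eqref{formula1hL} is convex and $1$‑homogeneous, hence it is the support function of $\mathcal C_L$ (cf.\ \eqref{CAL C}); once we know $h_L\in C^{1,1}(\R^n\setminus\{0\})$, Alexandrov's theorem gives that the principal radii of curvature of $\partial\mathcal C_L$ at the boundary point with outer normal $\omega$ are, for a.e.\ $\omega\in S^{n-1}$, the eigenvalues of the restricted Hessian $D^2h_L(\omega)\big|_{\omega^\perp}$ (the radial direction lies in $\ker D^2h_L(\omega)$ by homogeneity). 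Thus \eqref{assumpL} is equivalent to
\[
\rho\,\mathrm{Id}\ \le\ D^2h_L(\omega)\big|_{\omega^\perp}\ \le\ \rho'\,\mathrm{Id}\qquad\text{for a.e. }\omega\in S^{n-1},
\]
and it suffices to prove these with $\rho,\rho'$ depending only on $n,s,r_K$ (alternatively one may pass to the polar body $\mathcal C^*_L=(\mathcal C_L)^\circ=\{h_L\le 1\}$ and use the touching‑ellipsoid argument sketched above). It is convenient to record what \eqref{assumpK} provides: since $K$ is even and each point of $\partial K$ carries an interior tangent ball of radius $r_K$, convexity and symmetry force $B_{r_K}(0)\subseteq K\subseteq B_1$, so for $m(\theta):=\|\theta\|_K^{-(n+s)}$ one has $r_K^{n+s}\le m\le 1$ on $S^{n-1}$; moreover $\partial K\in C^{1,1}$ quantitatively, so the gauge $\|\cdot\|_K$ and hence $m$ lie in $C^{1,1}(S^{n-1})$ with $\|m\|_{C^{1,1}}\le C(n,s,r_K)$. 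I will use throughout the polar form $h_L(\omega)=\bigl(\tfrac12\int_{S^{n-1}}|\omega\cdot\theta|^{s}m(\theta)\,d\theta\bigr)^{1/s}$ of \eqref{formula1hL}.

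\textbf{The upper bound ($h_L\in C^{1,1}$, giving curvature $\ge 1/\rho'$).}
Put $F(\omega):=\int_{S^{n-1}}|\omega\cdot\theta|^{s}m(\theta)\,d\theta$, which is $s$‑homogeneous and, by the above, satisfies $c(n,s,r_K)\le F\le C(n,s)$ on $S^{n-1}$. One derivative passes under the integral sign, because $\nabla_\omega|\omega\cdot\theta|^{s}=s|\omega\cdot\theta|^{s-1}\mathrm{sgn}(\omega\cdot\theta)\,\theta$ is integrable in $\theta$ (it has a singularity of order $-(1-s)>-1$ along the subsphere $\{\omega\cdot\theta=0\}$), with a bound uniform in $\omega$; so $F\in C^1$ with $|\nabla F|\le C(n,s,r_K)$. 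A naive second differentiation produces the non‑integrable factor $|\omega\cdot\theta|^{s-2}$; instead I integrate by parts across the equatorial subsphere, transferring one derivative off the singular factor $|\omega\cdot\theta|^{s}$ and onto $m$ (whose gradient is Lipschitz), which improves the kernel to the integrable $|\omega\cdot\theta|^{s-1}$. A further $\omega$‑derivative then only meets integrable singularities, giving $\|D^2F\|_{L^\infty(S^{n-1})}\le C(n,s,r_K)$. Since $F$ is bounded away from $0$, $h_L=(F/2)^{1/s}\in C^{1,1}(\R^n\setminus\{0\})$ and $|D^2h_L(\omega)|\le C(n,s,r_K)=:\rho'$ for $|\omega|=1$, which is the desired upper bound on the restricted Hessian.

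\textbf{The lower bound (uniform convexity of $h_L$ in nonradial directions) — the crux.}
The convexity of $h_L$ is driven by the singular behaviour of $|\omega\cdot\theta|^{s}$ on the equatorial subsphere $\Sigma_\omega:=\omega^\perp\cap S^{n-1}$, and strict convexity in a direction $v\not\parallel\omega$ is forced by the fact that $v\cdot\theta$ is not identically zero on $\Sigma_\omega$; away from $\Sigma_\omega$ the kernel $t\mapsto|t|^{s}$ is, since $s\in(0,1)$, strictly concave, so there is no pointwise sign reason why the equatorial contribution should dominate, and it is precisely here that one must invoke \cite{Berck}: its strict convexity theorem for $L_p$‑intersection bodies shows $D^2h_L(\omega)[v,v]>0$ for every nonradial $v$, and — tracking the constants, which depend only on $n$, $s$, and the two‑sided bounds $r_K^{n+s}\le m\le 1$ together with $\|m\|_{C^{1,1}}\le C(n,s,r_K)$, hence only on $n,s,r_K$ — this is quantitative: $D^2h_L(\omega)[v,v]\ge\rho\,|v|^2$ for $|\omega|=1$ and $v\perp\omega$, with $\rho=\rho(n,s,r_K)>0$ (and automatically $\rho\le\rho'$, since both bounds hold for the same matrix). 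Combining this with the previous paragraph and the support‑function description of curvature, $\partial\mathcal C_L$ is $C^{1,1}$ with principal curvatures a.e.\ in $[1/\rho',1/\rho]$, i.e.\ \eqref{assumpL}; evenness of $\mathcal C_L$ follows from that of $K$, as already noted. I expect this last step to be the genuine obstacle: obtaining the uniform, $(n,s,r_K)$‑quantified lower bound on the equatorial part of $D^2h_L$ and its dominance over the concave remainder — which is exactly where one leans on the convex‑geometric analysis of \cite{Berck}, all the more as $h_L$ is only $C^{1,1}$, so $D^2h_L$ need not be continuous and a soft compactness argument over $\{(\omega,v):|\omega|=|v|=1,\ v\perp\omega\}$ is not available.
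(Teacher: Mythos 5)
Your proposal is correct in outline and takes a genuinely different route from the paper's. You argue directly on the support function: the two-sided curvature bound \eqref{assumpL} is equivalent to a two-sided bound on $D^2 h_L|_{\omega^\perp}$, with the upper Hessian bound coming from a $C^{1,1}$ estimate on $h_L$ and the lower bound from the strict convexity result of \cite{Berck}. The paper instead first establishes (with the same two ingredients — $h_L\in C^{1,1}$ via the same kernel-regularity gain you describe, and strict convexity via \cite{Berck}) that the polar body $\mathcal C^*_L=\{h_L\le 1\}$ is a $C^{1,1}$, even, strictly convex body, touches $\partial\mathcal C^*_L$ by even ellipsoids from inside and outside, and then applies the polar transform to carry these touching ellipsoids over to $\mathcal C_L$, which yields \eqref{assumpL}. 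Your route avoids the detour through the polar body and the ellipsoid-transport step, at the cost of having to interpret $D^2 h_L$ (only in $L^\infty$) via Alexandrov; the paper's route keeps everything at the level of the bodies themselves but still implicitly needs the same quantification. On the quantitative gap you flag at the end: you are right that a naive pointwise infimum of $D^2h_L[v,v]$ over the compact set of $(\omega,v)$ does not obviously work since $D^2h_L$ need not be continuous, but this is resolved by using the modulus of uniform convexity $\inf\bigl(h_L(\omega+tv)+h_L(\omega-tv)-2h_L(\omega)\bigr)/t^2$ rather than a pointwise Hessian, which is lower semicontinuous under $C^1$-convergence of the kernel $m$ and therefore amenable to the compactness argument over $\{r_K^{n+s}\le m\le 1,\ \|m\|_{C^{1,1}}\le C(n,s,r_K)\}$ combined with the qualitative strictness from \cite{Berck}; also note your own phrasing is internally inconsistent (you both claim the bound "is quantitative" by "tracking constants" and then call it "the genuine obstacle"). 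The paper is equally sketchy on this uniformity, so the content you supply matches the paper's level of detail while following the dual route.
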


\begin{remark}\label{REMAA}
Theorems~\ref{thm:improvement}
and~\ref{C:1} are valid (and proved here) for general operators of the form~\eqref{formL}
and under the more general assumption in~\eqref{assumpL} on page~\pageref{assumpL}
replacing~\eqref{assumpK} on page~\pageref{assumpK}.
\end{remark}

In view of Remark~\ref{REMAA}, throughout the proofs, the general setting
in~\eqref{formL} will be assumed, together with~\eqref{assumpL}.

\subsection{Touching the level sets of the distance function by
concentric spheres}

This section discuss some geometric features
related to the signed anisotropic distance function to a convex set.
To this aim, we recall some basic properties of
the support function $h_L$ defined in \eqref{formula1hL}.
First of all, for any $x$, $y\in\R^n$, 
the following
inequality of
Cauchy-Schwarz type holds true
\begin{equation}\label{CSani}
x\cdot y\le h_L(y)\,\|x\|_{\mathcal{C}}.\end{equation}
See e.g. Lemma~1
in \cite{torino} for an elementary proof.
Note that here $\|\cdot\|_{\mathcal{C}}$ denotes the norm with unit ball $\mathcal C = \mathcal C_L$, that is a convex set different from (although related to) $K$.

As a counterpart of \eqref{CSani}, we have that
equality holds when one of the two vectors is normal
to the sphere to which the other vector belongs. More precisely,
we have that
if $z_0\in\R^n$, $R>0$,
$z\in\partial {\mathcal{C}}_R(z_0)$
and $\omega_0\in S^{n-1}$ is the inner normal
of $\partial {\mathcal{C}}_R(z_0)$
at the point $z$, then
\begin{equation}\label{LEM:JJ}
\omega_0\cdot (z_0-z)= R\,h_L(\omega_0),\end{equation}
see for example Lemma~3
in \cite{torino}.

Moreover, it is useful
to recall that $h_L$ is the
``support function'' of the convex body~${\mathcal C}$, namely
for any~$\omega\in S^{n-1}$ we have that
\begin{equation}\label{LE:DESE}
h_L(\omega)=\sup_{x\in {\mathcal C}}
x\cdot\omega,\end{equation}
see for instance Lemma~2
in \cite{torino}.

We recall also here that both $h_L$ and $\mathcal C$ are even.

Given a nonempty, closed and convex set~$K\subset\R^n$,
we define the anisotropic signed
distance function from $K$ as
\begin{equation}\label{dk defin}
d_K(x):=\inf\big\{ \ell(x) \,:\, \ell(x) =
\omega\cdot x + c ,\quad h_L(\omega)= 1,  \quad c\in\R \quad 
\mbox{and} \quad \ell\ge 0 \  \mbox{ in all of }K\big\}.  
\end{equation}
Notice that $d_K$ is a concave function,
since it is the infimum of affine functions.
Moreover, as shown 
for instance in Proposition~1
of \cite{torino}, 
it holds that
\begin{equation}\label{902e3iowy222hkj}
d_K(x) =
\begin{cases}
+\inf \big\{ \| z-x\|_{\mathcal C}\ :\  z\in\partial K\big\}
\quad &\mbox{for  }  x \in K,
\\
-\inf \big\{ \| z-x\|_{\mathcal C}\ :\    z\in\partial K\big\}
&\mbox{for  }  x\in\R^n\setminus K.
\end{cases}
\end{equation}
We have that $d_K$
is a Lipschitz function,
with Lipschitz constant~$1$ with respect to the anisotropic norm, namely,
for any~$p$, $q\in\R^n$,
\begin{equation}\label{lip lemma app}
|d_K(p)-d_K(q)|\le \| p-q\|_{\mathcal C},\end{equation}
see e.g. Lemma~4
in \cite{torino}.
\medskip

With this setting, we can now prove
that the level sets of~$d_K$ are touched by 
appropriate concentric anisotropic
spheres:

\begin{lemma}\label{TPQ}
Let $z_0\in K =\{ d_K>0\}$.
Assume that~$\mathcal C_R(z_0)\subset\{d_K>0\}$  touches $\partial K = \{d_K=0\}$ at some
point~$\bar z\in\{d_K=0\}$.

Then, for any $t\in(-\infty,R)$, 
\begin{equation}\label{LK}\begin{split}
&
{\mbox{the set $\mathcal C_{R-t}(z_0)$ 
is contained in $\{d_K>t\}$}}\\ &{\mbox{and
touches $\{d_K=t\}$ at the point }}\\ & 
z:= z_0+\frac{R-t}{R} (z-z_0)\in \big( \partial\mathcal C_{R-t}(z_0)\big)
\cap\{d_K=t\}.\end{split}\end{equation}

Furthermore, if we denote by~$\omega_0\in S^{n-1}$
the inner normal 
of $\partial {\mathcal{C}}_R(z_0)$
at the point $\bar z$, it holds that
\begin{equation}\label{123456dfghjk9210903:BIS}\begin{split}
&{\mbox{$R-\| x-z_0\|_{\mathcal{C}}\le d_K(x)\le$}}
\frac{\omega_0}{h_L(\omega_0)}\cdot (x-\bar z)
{\mbox{ for any $x\in\R^n$,}}\\
&{\mbox{and equalities hold when }} \ x= z_0+\frac{R-t}{R} (z-z_0) ,
{\mbox{ for some }}t\in(-\infty,R).\end{split}\end{equation}
In particular,
\begin{equation}\label{123456dfghjk9210903}
d_K \left( z_0+\frac{R-t}{R} (\bar z-z_0) \right)
=t .\end{equation}

In addition, if~$\tau\in(-\infty,R)$
and~$z_\tau:=z_0+\frac{R-\tau}{R} (z-z_0)$, then
\begin{equation}\label{123456dfghjk9210903:TRIS}\begin{split}&
{\mbox{ ${\mathcal{C}}_{|t-\tau|}(z_\tau)$
is tangent from the outside to the set}}\\ &\left\{x\in\R^n{\mbox{ s.t. }}
d_K(x)\le t\le\frac{\omega_0}{h_L(\omega_0)}\cdot (x-z)\right\}
\\&{\mbox{at the point~$z$.
}}\end{split}\end{equation}
\end{lemma}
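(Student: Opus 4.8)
The plan is to reduce everything to the single geometric fact that, for a convex set, concentric shrinking anisotropic balls ``follow'' the level sets of the signed distance. I would proceed in the order in which the conclusions are stated, each step feeding the next.

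\textbf{Step 1: the level sets of $d_K$ are touched by shrunk concentric balls.}
Starting from the hypothesis that $\mathcal C_R(z_0)\subset\{d_K>0\}$ and touches $\partial K$ at $\bar z$, I would first argue that $\{d_K>t\}$ is convex (since $d_K$ is concave by \eqref{dk defin}) and contains $z_0$, and that $\mathcal C_{R-t}(z_0)\subset\{d_K>t\}$. The inclusion follows from \eqref{lip lemma app}: any $x\in\mathcal C_{R-t}(z_0)$ satisfies $\|x-z_0\|_{\mathcal C}\le R-t$, while $d_K(z_0)\ge R$ because $\mathcal C_R(z_0)\subset K$ forces $\inf\{\|z-z_0\|_{\mathcal C}: z\in\partial K\}\ge R$; hence $d_K(x)\ge d_K(z_0)-\|x-z_0\|_{\mathcal C}\ge R-(R-t)=t$. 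For the touching point I take $z:=z_0+\frac{R-t}{R}(\bar z-z_0)$ (note $z=\bar z$ when $t=0$, so the displayed ``$z$'' in \eqref{LK} is this point with $t=0$ already, i.e. $z=\bar z$; I keep the paper's notation). Along the ray from $z_0$ through $\bar z$, the function $t\mapsto d_K\big(z_0+\frac{R-t}{R}(\bar z-z_0)\big)$ is exactly $t$: it is concave and $\le t$ (by the outer half of \eqref{123456dfghjk9210903:BIS}, proved in Step 2) and $\ge t$ (by the inclusion just shown), so it equals $t$, giving \eqref{123456dfghjk9210903} and the touching claim in \eqref{LK}.

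\textbf{Step 2: the two-sided bound \eqref{123456dfghjk9210903:BIS}.}
The left inequality $R-\|x-z_0\|_{\mathcal C}\le d_K(x)$ is exactly the Lipschitz estimate \eqref{lip lemma app} combined with $d_K(z_0)\ge R$, as above. For the right inequality, the affine function $\ell(x):=\frac{\omega_0}{h_L(\omega_0)}\cdot(x-\bar z)$ has $h_L\!\big(\omega_0/h_L(\omega_0)\big)=1$ by positive homogeneity of $h_L$, and I must check $\ell\ge 0$ on $K$, i.e. that $\omega_0$ is (a normal giving) a supporting hyperplane of $K$ at $\bar z$. Since $\mathcal C_R(z_0)\subset K$ touches $\partial K$ at $\bar z$ with inner normal $\omega_0$, the supporting hyperplane of the ball $\mathcal C_R(z_0)$ at $\bar z$ also supports $K$ (a ball tangent from inside shares its supporting hyperplane with the bigger convex body); convexity of $K$ then gives $\ell\ge 0$ on $K$, hence $d_K\le\ell$ by the very definition \eqref{dk defin}. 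Equality on the ray is because both sides equal $t$ there — the left by Step 1, and $\ell\big(z_0+\frac{R-t}{R}(\bar z-z_0)\big)=\frac{R-t}{R}\cdot\frac{\omega_0\cdot(z_0-\bar z)}{h_L(\omega_0)}=\frac{R-t}{R}\cdot\frac{R\,h_L(\omega_0)}{h_L(\omega_0)}=R-t$... which is not $t$; so I must be careful: the correct base point for $\ell$ must be $z$ (the level-$t$ touching point), not $\bar z$. Re-reading \eqref{123456dfghjk9210903:BIS}, the displayed formula uses $\bar z$ but equality is asserted at $z_0+\frac{R-t}{R}(z-z_0)$ where in context $z=\bar z$, so $\ell$ vanishes at $\bar z$ and equals $-(\text{something})$ along the ray; the consistent reading is that $\ell(x)$ measures signed anisotropic distance and indeed $\ell\big(z_0+\frac{R-t}{R}(\bar z-z_0)\big)=t$ once one notes $\omega_0\cdot(\bar z - z_0)=-R\,h_L(\omega_0)$ by \eqref{LEM:JJ}, giving $\frac{\omega_0\cdot(z-\bar z)}{h_L(\omega_0)}$ with $z-\bar z=\frac{R-t}{R}(\bar z-z_0)-(\bar z - z_0)=-\frac{t}{R}(\bar z-z_0)$, so the value is $-\frac{t}{R}\cdot(-R)=t$. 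Good — I would present this computation cleanly, using \eqref{LEM:JJ} as the key identity.

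\textbf{Step 3: the outer-tangency claim \eqref{123456dfghjk9210903:TRIS}.}
Let $A:=\{x: d_K(x)\le t\le \ell(x)\}$ with $\ell$ as above (base point $z$). The set $\{d_K\le t\}$ has, at $z$, an outer-tangent anisotropic ball of radius anything $\le$ what the curvature of the concentric family allows — but here I only need tangency of $A$, a smaller set. I would show $\mathcal C_{|t-\tau|}(z_\tau)\cap \mathrm{int}\,A=\emptyset$ and $z\in\partial\mathcal C_{|t-\tau|}(z_\tau)\cap A$. That $z\in\partial\mathcal C_{|t-\tau|}(z_\tau)$ is immediate since $\|z-z_\tau\|_{\mathcal C}=\|\frac{R-t}{R}(\bar z-z_0)-\frac{R-\tau}{R}(\bar z - z_0)\|_{\mathcal C}=\frac{|t-\tau|}{R}\|\bar z - z_0\|_{\mathcal C}=\frac{|t-\tau|}{R}\cdot R=|t-\tau|$, using $\|\bar z - z_0\|_{\mathcal C}=R$. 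For the emptiness: if $x\in\mathcal C_{|t-\tau|}(z_\tau)$ then by \eqref{lip lemma app}, $d_K(x)\ge d_K(z_\tau)-|t-\tau|=\tau-|t-\tau|$; more usefully, I test against $\ell$: since $z_\tau$ lies on the ray and $\ell(z_\tau)=\tau$, and $\ell$ is $1$-Lipschitz in $\|\cdot\|_{\mathcal C}$ (because $h_L(\omega_0/h_L(\omega_0))=1$ together with \eqref{CSani}), we get $\ell(x)\le \ell(z_\tau)+|t-\tau|=\tau+|t-\tau|$. When $\tau\ge t$: $\ell(x)\le \tau+(\tau-t)$, not obviously $\le t$ — so I instead combine with $d_K\le \ell$: on $A$ we need $\ell\ge t$; if also $x$ is interior to the ball we'd want a strict reverse. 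The cleanest route: $A\subset\{\ell\ge t\}$, and $\{\ell\ge t\}$ is a half-space whose boundary hyperplane $\{\ell=t\}$ passes through $z$ with anisotropic-normal $\omega_0$; the ball $\mathcal C_{|t-\tau|}(z_\tau)$ lies entirely on the side $\{\ell\le t\}$ because its center $z_\tau$ has $\ell(z_\tau)=\tau\le t$ (when $\tau\le t$) and its $\|\cdot\|_{\mathcal C}$-radius $|t-\tau|=t-\tau$ equals the gap $\ell(z)-\ell(z_\tau)$, so the ball is tangent to $\{\ell=t\}$ at $z$ from the side $\{\ell\le t\}$ by the equality case \eqref{LEM:JJ}; for $\tau\ge t$ one argues symmetrically with $\{\ell\ge t\}$ and the set $\{d_K\le t\}$, or observes the statement is about $\tau<R$ with the relevant geometry symmetric. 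Hence $\mathcal C_{|t-\tau|}(z_\tau)\cap\{x:\ell(x)>t\}=\emptyset$, so it meets $A\subset\{\ell\ge t\}$ only in $\{\ell=t\}$, and there only at $z$; tangency follows.

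\textbf{Main obstacle.} The delicate point is Step 3 and the bookkeeping of which base point ($\bar z$ versus the shifted $z$) and which sign of $t-\tau$ is in play, since the paper's displayed formulas are stated with $\bar z$ but used along the whole ray. I would resolve this once and for all by proving the single clean identity $\ell(x)=\frac{\omega_0}{h_L(\omega_0)}\cdot(x-z_0)-R$ (so that $\ell$ is literally affine with anisotropic slope $1$ and $\ell\equiv d_K$ on the ray), deduce from \eqref{LEM:JJ} and \eqref{CSani} that $\ell$ is a $1$-Lipschitz affine minorant of $d_K$ touching it along the ray, and then read off \eqref{123456dfghjk9210903:BIS}, \eqref{123456dfghjk9210903} and the outer-tangency \eqref{123456dfghjk9210903:TRIS} as elementary consequences of ``a ball tangent to a supporting hyperplane from the correct side contains no point strictly past that hyperplane.'' All the analytic content is already packaged in \eqref{CSani}, \eqref{LEM:JJ}, \eqref{LE:DESE} and \eqref{lip lemma app}; the proof is pure convex geometry from there.
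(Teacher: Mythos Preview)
Your approach is essentially the same as the paper's and is correct. One genuine improvement: in Step~1 you prove $\mathcal C_{R-t}(z_0)\subset\{d_K\ge t\}$ for all $t<R$ at once via the Lipschitz bound $d_K(x)\ge d_K(z_0)-\|x-z_0\|_{\mathcal C}\ge R-(R-t)=t$, whereas the paper splits into $t\ge0$ (using the affine definition of $d_K$ and a sliding argument) and $t<0$ (using Lipschitz). Your uniform argument is cleaner.

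Two small slips to fix before writing it up. In the ``Main obstacle'' paragraph you write $\ell(x)=\frac{\omega_0}{h_L(\omega_0)}\cdot(x-z_0)-R$; the sign is wrong, since \eqref{LEM:JJ} gives $\frac{\omega_0}{h_L(\omega_0)}\cdot(z_0-\bar z)=R$, so $\ell(x)=\frac{\omega_0}{h_L(\omega_0)}\cdot(x-z_0)+R$. Relatedly, $\ell$ is a \emph{majorant} of $d_K$ (you have $d_K\le\ell$), not a minorant. Also, in Step~3, your treatment of $\tau\ge t$ is correct but should be stated cleanly: if $x$ lies in the open ball and $d_K(x)\le t$, then $\tau-t = d_K(z_\tau)-t \le d_K(z_\tau)-d_K(x)\le\|x-z_\tau\|_{\mathcal C}<\tau-t$, a contradiction; this is exactly the paper's argument, and your phrase ``argues symmetrically with $\{d_K\le t\}$'' is the right idea but deserves this one line. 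The $\tau<t$ case via the half-space $\{\ell\ge t\}$ and \eqref{CSani} is fine.
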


\begin{proof} 
The geometric setting of Lemma \ref{TPQ}
is depicted in Figure~\ref{GG1}.

\begin{figure}[ht]
\includegraphics[width=14.5cm]{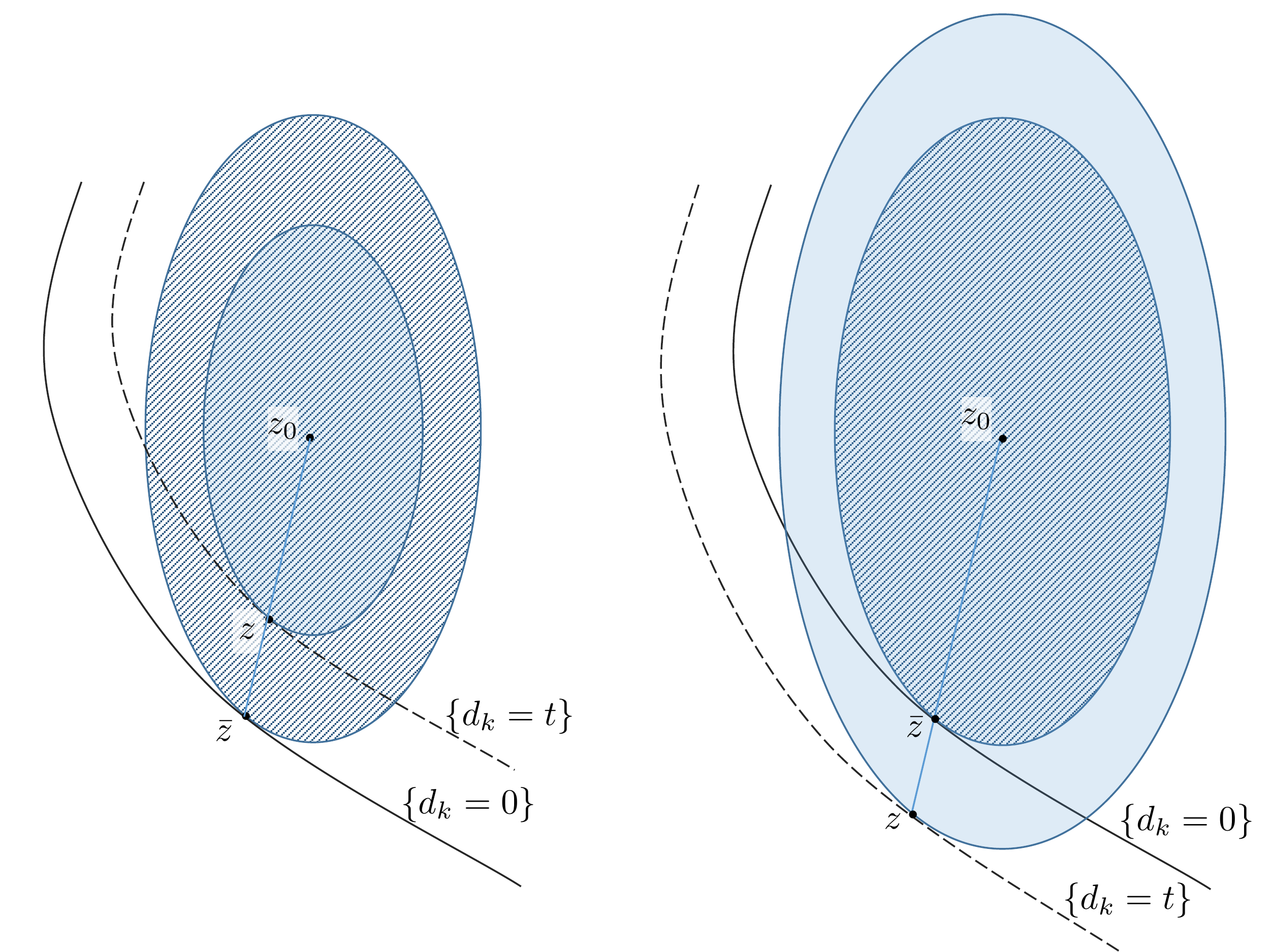}
\caption{\sl The geometry of Lemma \ref{TPQ}
when $t\in(0,R)$
and when $t\in(-\infty,0)$.}
\label{GG1}
\end{figure}

The proof goes like this.
For every~$t\in(-\infty,R)$, we have that
\begin{equation} \label{51022}
\| z-z_0\|_{\mathcal C}
=\frac{R-t}{R} \|\bar z-z_0\|_{\mathcal C}=R-t,\end{equation}
and therefore
\begin{equation}\label{TPQ:1}
z\in\partial \mathcal C_{R-t}(z_0).\end{equation}
In addition, we point out that, for every~$t\in(-\infty,R)$,
\begin{equation}\label{TPQ:2}
\mathcal C_{R-t}(z_0)\subset\{d_K\ge t\}.\end{equation}
To check this, we distinguish two cases: either $t\ge0$ (i.e. $t\in[0,R)$)
or~$t<0$.
If~$t\ge0$, we argue as follows. Let $p\in \mathcal C_{R-t}(z_0)$.
Then, for any $q$ with $\|q\|_{\mathcal C}\le t$ we have that $p+q\in
\mathcal C_{R}(z_0)\subset\{d_K\ge 0\}$.

Consequently, in light of \eqref{902e3iowy222hkj},
for any affine function $\ell(x) =
\omega\cdot x + c$, with $h_L(\omega)= 1$,  
$c\in\R$, and such that $\ell\ge 0$ in $\{d_K>0\}$, 
it holds that
\begin{equation}\label{iol5789oihg123f}
\ell(p+q)\ge 0.\end{equation} 
Therefore, 
we slide the halfspace with inner normal~$\frac{\omega}{|\omega|}$
till it touches~$\partial{\mathcal C}$ 
and we take this touching point~$q$.
Namely, we have~$q\in\partial{\mathcal C}_t$,
with~$\frac{\omega}{|\omega|}$ as inner normal of~$\partial{\mathcal C}_t$
at~$q$. Hence, by~\eqref{LEM:JJ},
$$ -\frac{\omega}{|\omega|}\cdot q= t\,h_L\left( \frac{\omega}{|\omega|} \right)
=\frac{t\,h_L(\omega)}{|\omega|}=\frac{t}{|\omega|}.$$
This and~\eqref{iol5789oihg123f} give that
$$ 0 \le \ell(p+q)=\omega\cdot p + c+\omega\cdot q
=
\omega\cdot p+c - t.$$
This shows that $\ell(p)\ge t$ and so,
in view of \eqref{defd}, that $d_K(p)\ge t$,
that establishes \eqref{TPQ:2} in this case.

So, we now check \eqref{TPQ:2} in the case in which $t<0$.
For this, let $p\in\mathcal C_{R-t}(z_0)$.
If $p\in \mathcal C_{R}(z_0)$, then $d_K(p)\ge 0\ge t$,
and we are done, so we can suppose that $p\in
\mathcal C_{R-t}(z_0)\setminus \mathcal C_{R}(z_0)$, hence
$$ \|p-z_0\|_{\mathcal{C}} \in \big[ R,\,R-t\big].$$
We take
$$ q:= z_0 + \frac{R\,(p-z_0)}{ \|p-z_0\|_{\mathcal{C}} }.$$
Notice that
$\|q-z_0\|_{\mathcal{C}}=R$,
hence $q\in
\mathcal C_{R}(z_0)\subset\{d_K\ge 0\}$.
This and \eqref{lip lemma app} imply that
$$ -d_K(p)\le d_K(q)-d_K(p) \le 
\|q-p\|_{\mathcal{C}} =
\big| R-\|p-z_0\|_{\mathcal{C}} \big|
=\|p-z_0\|_{\mathcal{C}}-R\le (R-t)-R,$$
that gives $d_K(p)\ge t$, as desired.
This completes the proof of \eqref{TPQ:2}.

Now we check that
\begin{equation}\label{TPQ:3}
d_K(z)=t.\end{equation}
To this aim, we observe that
\begin{equation*}
z\in{\mathcal C_{R-t}(z_0)}\subset
\{d_K\geq t\},\end{equation*}
thanks to~\eqref{TPQ:1} and~\eqref{TPQ:2}.
Consequently, to establish~\eqref{TPQ:3}, we only need to prove that
\begin{equation}\label{TPQ:4}
d_K( z)\le t.\end{equation}
To this goal, if $t\ge0$
we use \eqref{lip lemma app} and we see that
$$ d_K(z)=d_K(z)-d_K(\bar z)\le
\|z-\bar z\|_{\mathcal{C}}=
\frac{t}{R}\,\|\bar z-z _0\|_{\mathcal{C}}\le t,$$
which is \eqref{TPQ:3} in this case.

If instead $t<0$, we denote by~$\omega_0\in S^{n-1}$
the inner normal 
of $\partial {\mathcal{C}}_R(z_0)$
at the point $z$, and we exploit~\eqref{LEM:JJ} 
(recall also \eqref{dconc}) to see that
\begin{equation}\label{XdfX67}
\begin{split}
& d_K(z)\le \frac{\omega_0}{h_L(\omega_0)}\cdot(z-\bar z)
=\frac{\omega_0}{h_L(\omega_0)}\cdot\left(z_0-\bar z+\frac{R-t}{R}(\bar z-z_0)
\right)\\
&\qquad=\frac{t}{R}\frac{\omega_0}{h_L(\omega_0)}\cdot(z_0-\bar z)=t.
\end{split}\end{equation}
This finishes the proof of~\eqref{TPQ:3}.

Then, \eqref{LK} follows from~\eqref{TPQ:1},
\eqref{TPQ:2} and~\eqref{TPQ:3}. In turn, \eqref{LK} also implies~\eqref{123456dfghjk9210903}.

We also observe that, from the previous considerations,
\eqref{123456dfghjk9210903:BIS} follows in a
straightforward way using~\eqref{902e3iowy222hkj}.

Now we prove~\eqref{123456dfghjk9210903:TRIS}.
First of all, we notice that $\|z_\tau-\bar z \|_{\mathcal{C}}
=|t-\tau|$,
due to~\eqref{51022}, so~$\bar z$ lies on~$\partial
{\mathcal{C}}_{|t-\tau|}(z_\tau)$. Thus, to prove the result
in~\eqref{123456dfghjk9210903:TRIS}, we need to show that
\begin{equation}\label{PA901562734812}
\left\{x\in\R^n{\mbox{ s.t. }}
\|x-z_\tau\|_{\mathcal{C}}<|t-\tau|
{\mbox{ and }}
d_K(x)\le t \le\frac{\omega_0}{h_L(\omega_0)}\cdot (x-z) \right\}=
\varnothing.
\end{equation}

\begin{figure}[ht]
\includegraphics[width=14.5cm]{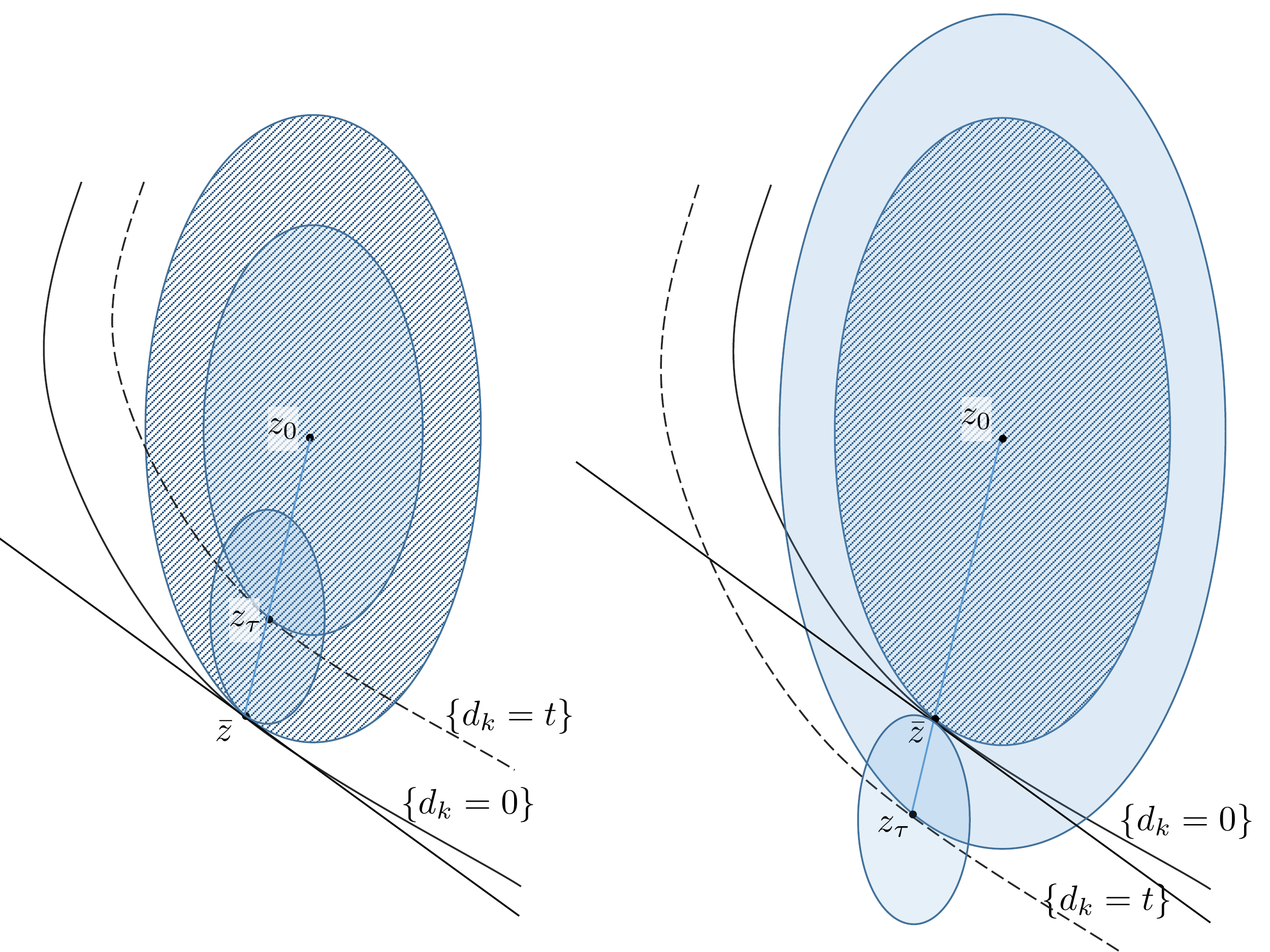}
\caption{\sl Proof of \eqref{PA901562734812}
when $\tau\ge t$
and when $\tau<t$.}
\label{dFIG789}
\end{figure}

For this, we refer to Figure~\ref{dFIG789},
we argue by contradiction
and we suppose that there exists $x$ in the set on the left hand side of
\eqref{PA901562734812}. Then, we distinguish two cases,
either $\tau\ge t$ or $\tau<t$. If $\tau\ge t$,
we use \eqref{123456dfghjk9210903} to see that
$d_K(z_\tau) =\tau$ and so, exploiting \eqref{lip lemma app},
\begin{eqnarray*}
&& 0\le t-d_K(x)=t-\tau +d_K(z_\tau)-d_K(x)
\le -|t-\tau|+\|x-z_\tau\|_{\mathcal{C}}<0,
\end{eqnarray*}
which is a contradiction. If instead $\tau<t$,
using \eqref{CSani} and \eqref{LEM:JJ} we find that
\begin{eqnarray*}
&& t\le\frac{\omega_0}{h_L(\omega_0)}\cdot (x-\bar z)
=\frac{\omega_0}{h_L(\omega_0)}\cdot (z_\tau-\bar z)
+\frac{\omega_0}{h_L(\omega_0)}\cdot (x-z_\tau)\\
&&\qquad\le 
\tau\frac{\omega_0}{h_L(\omega_0)}\cdot\frac{z_0-\bar z}{R}
 +\|x-z_\tau\|_{\mathcal{C}}
=\tau +\|x-z_\tau\|_{\mathcal{C}}<
\tau +|t-\tau|=t,\end{eqnarray*}
which is a contradiction.
This proves \eqref{PA901562734812}, which in turn gives~\eqref{123456dfghjk9210903:TRIS}.
\end{proof}

\subsection{Distance function from a convex graph}
Here, we look at the special case of the distance function
from a sufficiently flat graph with an appropriate growth.
For this, let $\alpha\in (0,s)$ be a fixed constant. 
Let us introduce the function $\xi:\R^{n-1}\rightarrow \R$ defined by
\[
\xi(x') = \bigl(1+|x'|^2 \bigr)^{\frac{1+\alpha}{2}}-1.
\]
Note that $\xi(0)=0$ and that $\xi$ is convex with 
\[D^2\xi ={\rm diag}\left(\frac{1+\alpha r^2}{1+ r^2}, 1,1,\dots, 1\right) (1+\alpha)(1+r^2)^{\frac{\alpha-1}{2}} \ge 0\]
in a coordinate system with the first axis pointing in the radial direction.

Given some orthonormal coordinates $x= (x',x_n)$ in $\R^n$ and $b>0$, 
let us define 
\[
\Gamma_b := \big\{\,x_n \ge b \,\xi(x')\,\big\}.
\]

From the convex set $\Gamma_{b}$ we define the following anisotropic signed distance function 
\begin{equation}\label{defd}
 d_{b}(x) := 
\inf\big\{ \ell(x) \,:\, \ell(x) =\omega\cdot x + c ,\quad h_L(\omega)= 1,  \quad c\in\R, \  \mbox{and} \quad \ell\ge 0 \  \mbox{ in all of }\Gamma_{b} \big\}.  
\end{equation}
By comparing with \eqref{dk defin}, we have that $d_b$ coincides
with $d_K$ with the particular choice $K:=\Gamma_b$.
Hence, in view of \eqref{902e3iowy222hkj}, it holds that
\begin{equation}\label{formulad}
 d_b(x) = 
\begin{cases}
+\inf \big\{ \| x-z\|_{\mathcal C}\ :\  z\in 
\partial \Gamma_b \big\} \quad &\mbox{for  }  
x\in \Gamma_b,
\\
-\inf \big\{ \| x-z\|_{\mathcal C}\ :\   z\in \partial \Gamma_b \big\} 		   &\mbox{for  }  x\in \R^n\setminus \Gamma_b,
\end{cases}
\end{equation}
where $\|\,\cdot\,\|_{\mathcal C}$ denotes the norm with unit ball $\mathcal C$;
for this, we use the notations in~\eqref{CAL C} 
and~\eqref{pKnorma} and we
recall that,
throughout the paper,  ${\mathcal{C}}= {\mathcal{C}}_L$ is the convex body associated 
to $L$ and, for any $r>0$ and any~$y\in\R^n$, we set
\begin{equation}\label{crp}
\mathcal C_r(y): = y+r\mathcal C.\end{equation}

The following result states that under the 
hypothesis~\eqref{assumpL}, and for~$b$ small enough,
all the level sets of~$d_b$ passing close enough 
to the origin are~$C^{1,1}$ graphs, with their 
second derivatives bounded by~$Cb$ near the 
origin and with growth at infinity controlled by~$Cb|x|^{1+\alpha}$.

\begin{lemma}\label{lemcurvbounds}
There exist~$b_0>0$ and $C_0>0$, depending only
on $\alpha$, $\rho$ and $\rho'$, such that for any~$b\in(0,b_0)$ and 
any~$t\in\R$, with~$\{d_b=t\}\cap \mathcal C_{4/\rho}\neq\varnothing$, we have that
\[ \{d_b=t\} \,= \, \{y_n = G(y')\} \]
where $G:\R^{n-1}\rightarrow \R$ is a suitable 
convex function satisfying
\begin{equation}\label{secondder}
\big|D^2 G\big| \le C_0 b  \quad\mbox{in }B'_{4\rho'/\rho}
\end{equation}
and
\begin{equation}\label{differenceG}
\big|G(y')-G(0)\big| \le C_0b (1+ |y'|)^{1+\alpha}   \quad\mbox{for all  }y'\in\R^{n-1}.
\end{equation}
\end{lemma}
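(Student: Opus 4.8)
The strategy is to transfer the favorable geometry of the convex set $\Gamma_b$ --- whose boundary $\partial\Gamma_b=\{x_n=b\,\xi(x')\}$ is $C^{1,1}$ with principal curvatures of size comparable to $b$ --- to the level sets of $d_b$, by means of the anisotropic touching-ball Lemma~\ref{TPQ}. I would start by recording the elementary facts $0\le D^2\xi(x')\le(1+\alpha)(1+|x'|^2)^{(\alpha-1)/2}\,\mathrm{Id}\le(1+\alpha)\,\mathrm{Id}$, $|\nabla\xi(x')|\le(1+\alpha)(1+|x'|)^{\alpha}$, and $0\le\xi(x')\le(1+|x'|)^{1+\alpha}$. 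Since the principal curvatures of a graph are bounded by the operator norm of the Hessian of the graph function, $\partial\Gamma_b$ has principal curvatures (in absolute value) at most $(1+\alpha)b$. Using this and the convexity of $\Gamma_b$ (via the paraboloid bound $b\,\xi(y')\le b\,\xi(\bar y')+b\,\nabla\xi(\bar y')\cdot(y'-\bar y')+\tfrac{(1+\alpha)b}{2}|y'-\bar y'|^2$), one checks that, provided $(1+\alpha)b_0$ is small compared to $\rho$, at every $\bar z\in\partial\Gamma_b$ there is an interior anisotropic ball $\mathcal C_R(z_0)\subset\Gamma_b$ touching $\partial\Gamma_b$ at $\bar z$ with $R\ge c/b$, where $c=c(\alpha,\rho,\rho')$; while the complement $\{x_n\le b\,\xi(x')\}$, being the subgraph of a convex function, lies below the tangent hyperplane at each of its boundary points, so $\partial\Gamma_b$ also satisfies an exterior anisotropic ball condition with radius as large as one wishes. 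Here and below I would use that \eqref{assumpL} forces $B_{c_1\rho}\subset\mathcal C\subset B_{c_2\rho'}$, so anisotropic and Euclidean balls are comparable (with a loss governed by $\rho,\rho'$), and that, by the $C^{1,1}$ strict convexity of $\partial\mathcal C$, an anisotropic ball is touched at each boundary point, from inside and from outside, by Euclidean balls of comparable radius.

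Next I would fix $t$ with $\{d_b=t\}\cap\mathcal C_{4/\rho}\neq\varnothing$. Since $0\in\partial\Gamma_b$ and, by \eqref{formulad}, $d_b$ is the signed anisotropic distance to $\Gamma_b$, each point $z$ of this level set satisfies $|t|=|d_b(z)|\le\|z\|_{\mathcal C}\le 4/\rho$. The set $\{d_b\ge t\}$ is an intersection of translates of $\Gamma_b$ when $t>0$, and equals $\Gamma_b+|t|\,\mathcal C$ when $t\le0$; in either case it is convex, and since every vertical line eventually enters it, it is the epigraph $\{x_n\ge G(x')\}$ of a convex function $G:\R^{n-1}\to\R$. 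As $|t|\le 4/\rho$ is much smaller than the reach $\gtrsim 1/b$ of $\Gamma_b$, the anisotropic nearest-point projection $z\mapsto\bar z$ onto $\partial\Gamma_b$ is single-valued on $\{d_b=t\}$, hence $G\in C^{1,1}_{\rm loc}$; moreover by \eqref{123456dfghjk9210903:BIS} the function $d_b$ is affine along each projection ray, so $\nabla d_b$ is constant along it and the tangent hyperplane of $\{d_b=t\}$ at $z=(y',G(y'))$ is parallel to that of $\partial\Gamma_b$ at $\bar z=(\bar y',b\,\xi(\bar y'))$; in particular $\nabla G(y')=b\,\nabla\xi(\bar y')$. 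Finally, $\|z-\bar z\|_{\mathcal C}=|t|\le4/\rho$ together with $\mathcal C\subset B_{c_2\rho'}$ gives $|y'-\bar y'|\le|z-\bar z|\le 4c_2\rho'/\rho$.

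Now I apply Lemma~\ref{TPQ} with $K=\Gamma_b$: the interior ball $\mathcal C_R(z_0)\subset\Gamma_b$ at $\bar z$ yields an interior ball $\mathcal C_{R-t}(z_0)\subset\{d_b\ge t\}$ touching $\{d_b=t\}$ at $z$; and, by the mirror statement (or by letting the auxiliary parameter $\tau\to-\infty$ in \eqref{123456dfghjk9210903:TRIS}), the large exterior ball of $\partial\Gamma_b$ at $\bar z$ yields an exterior ball of $\{d_b\le t\}$ touching $\{d_b=t\}$ at $z$. Since $R\ge c/b$, $|t|\le4/\rho$, and $b_0$ is small, both radii are $\ge c/(2b)$, so after comparing with Euclidean balls $\{d_b=t\}$ satisfies two-sided Euclidean ball conditions of radius $\ge c'/b$ at each of its points; as $|\nabla G(y')|=b\,|\nabla\xi(\bar y')|$ is bounded by a constant multiple of $b$ when $y'\in B'_{4\rho'/\rho}$ (because then $|\bar y'|\lesssim\rho'/\rho$), the standard fact that a graph obeying a two-sided Euclidean ball condition of radius $r$ with bounded gradient is $C^{1,1}$ with $|D^2G|\le C/r$ gives \eqref{secondder}. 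For \eqref{differenceG}, the identity $\nabla G(y')=b\,\nabla\xi(\bar y')$ with $|y'-\bar y'|\le 4c_2\rho'/\rho$ yields $|\nabla G(y')|\le C_1 b(1+|y'|)^\alpha$ for every $y'\in\R^{n-1}$, with $C_1=C_1(\alpha,\rho,\rho')$, and therefore $|G(y')-G(0)|\le\int_0^1|\nabla G(sy')|\,|y'|\,ds\le C_1 b\,(1+|y'|)^{1+\alpha}$; one then takes $C_0$ the larger of the two constants produced.

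The step I expect to require the most care is the propagation of the \emph{exterior} ball condition to $\{d_b=t\}$: Lemma~\ref{TPQ} is stated for the convex set $\Gamma_b$, while the exterior ball lives in the non-convex set $\{d_b\le t\}$, so one must either prove the mirror version of Lemma~\ref{TPQ} for subgraphs of convex functions or extract it carefully from \eqref{123456dfghjk9210903:TRIS} in the limit $\tau\to-\infty$. A secondary, routine point is the quantitative comparison between anisotropic and Euclidean balls at a common tangent point, which rests on the curvature bounds \eqref{assumpL} for $\partial\mathcal C$.
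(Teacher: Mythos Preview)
Your proposal is essentially correct and follows the same line as the paper's proof: both use the interior anisotropic ball of radius $\gtrsim 1/b$ in $\Gamma_b$, propagate it to $\{d_b\ge t\}$ via Lemma~\ref{TPQ}, read off the parallelism of tangent planes to obtain $\nabla G(y')=b\,\nabla\xi(\bar y')$, and then integrate to get \eqref{differenceG}.

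The one place where you do more than necessary is precisely the step you flag as delicate. You worry about propagating an \emph{exterior} ball condition to $\{d_b=t\}$, and invoke a two-sided ball argument to bound $|D^2G|$. This is not needed: you have already observed that $\{d_b\ge t\}$ is convex and hence $G$ is convex, so $D^2G\ge 0$ is automatic. The only nontrivial inequality is $D^2G\le C_0 b\,\mathrm{Id}$, and this follows directly from the \emph{interior} anisotropic ball $\mathcal C_{R-t}(z_0)\subset\{d_b\ge t\}$ of radius $R-t\gtrsim 1/b$ touching the graph at $(y',G(y'))$, together with the $C^{1,1}$ bound on $\partial\mathcal C$ from \eqref{assumpL}. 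The paper's proof does exactly this and never mentions an exterior ball; your mirror version of Lemma~\ref{TPQ} and the limit $\tau\to-\infty$ in \eqref{123456dfghjk9210903:TRIS} are unnecessary here. So your ``step requiring the most care'' in fact requires none.
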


To prove Lemma~\ref{lemcurvbounds} 
we need the following simple preliminary result:
 
\begin{lemma}\label{lemma:claim}
We have the following inequalities between the anisotropic and the Euclidean norm
\begin{equation}\label{2.4bis}
 \frac{1}{\rho'} |\,\cdot\,|  \le \| \,\cdot\,\|_{\mathcal C} 
\le \frac 1 \rho |\,\cdot\,|.
\end{equation}
\end{lemma}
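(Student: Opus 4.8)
The plan is to translate $\eqref{2.4bis}$ into two inclusions of convex bodies and then exploit the fact (recorded in Subsection~\ref{SUB11}) that $\mathcal C=\mathcal C_L$ is symmetric with respect to the origin. Recall that $\|\,\cdot\,\|_{\mathcal C}$ is the Minkowski gauge of the closed convex body $\mathcal C$, so it is positively $1$-homogeneous and satisfies $\|x\|_{\mathcal C}\le 1\iff x\in\mathcal C$. Consequently the right-hand inequality in $\eqref{2.4bis}$ is equivalent to $\overline{B_\rho(0)}\subseteq\mathcal C$ and the left-hand one to $\mathcal C\subseteq\overline{B_{\rho'}(0)}$; thus it suffices to establish these two inclusions under $\eqref{assumpL}$.

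For the first inclusion I would invoke the upper bound on the curvatures of $\partial\mathcal C$ in $\eqref{assumpL}$: a $C^{1,1}$ convex body whose boundary curvatures are all $\le 1/\rho$ satisfies a uniform interior ball condition of radius $\rho$ (classical Blaschke rolling theorem; this is also contained in the ellipsoid-touching property recorded right before Remark~\ref{REMAA}). In particular there is $c\in\R^n$ with $\overline{B_\rho(c)}\subseteq\mathcal C$. Since $\mathcal C=-\mathcal C$ we also have $\overline{B_\rho(-c)}\subseteq\mathcal C$, and by convexity $\mathcal C$ contains the convex hull of these two balls, which in turn contains $\tfrac12\overline{B_\rho(c)}+\tfrac12\overline{B_\rho(-c)}=\overline{B_\rho(0)}$. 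Hence $\overline{B_\rho(0)}\subseteq\mathcal C$.

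For the second inclusion I would use the lower curvature bound in $\eqref{assumpL}$ in the dual way: a convex body whose boundary curvatures are all $\ge 1/\rho'$ is contained in some ball of radius $\rho'$ through one of its boundary points (the other half of Blaschke's theorem). Fixing such a ball, $\mathcal C\subseteq\overline{B_{\rho'}(c')}$, central symmetry gives also $\mathcal C\subseteq\overline{B_{\rho'}(-c')}$, so $\mathcal C\subseteq\overline{B_{\rho'}(c')}\cap\overline{B_{\rho'}(-c')}$. For any $x$ in this intersection the parallelogram law yields $2|x|^2+2|c'|^2=|x-c'|^2+|x+c'|^2\le 2(\rho')^2$, hence $|x|\le\rho'$; therefore $\mathcal C\subseteq\overline{B_{\rho'}(0)}$. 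Together with the previous paragraph and the reformulation above, this proves $\eqref{2.4bis}$, i.e. Lemma~\ref{lemma:claim}.

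The only delicate point — the ``main obstacle'', modest as it is — is citing the correct direction of the dictionary between pointwise curvature bounds and global ball inclusions (interior ball of radius $\rho$ from curvature $\le 1/\rho$; enclosing ball of radius $\rho'$ from curvature $\ge 1/\rho'$); once that is in hand, each inclusion is a two-line symmetrization argument. If one wishes to avoid quoting Blaschke for the outer inclusion, one may instead show directly that $\mathrm{diam}(\mathcal C)\le 2\rho'$, which forces $\mathcal C\subseteq\overline{B_{\rho'}(0)}$ since for a centrally symmetric body the circumradius about the centre equals half the diameter: along a planar normal section of $\partial\mathcal C$ through a diametral pair the tangent turns by $\pi$ and the planar curvature is $\ge 1/\rho'$, so the chord length is at most $\int_{-\pi/2}^{\pi/2}(\cos\theta)\,\rho'\,d\theta=2\rho'$.
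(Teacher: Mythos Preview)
Your proof is correct and follows the same reduction as the paper: both translate the norm inequalities into the inclusions $B_\rho\subset\mathcal C\subset B_{\rho'}$ and read off \eqref{2.4bis} from the definition of the Minkowski gauge. The paper's own proof is terser—it simply asserts $B_\rho\subset\mathcal C\subset B_{\rho'}$ as an immediate consequence of \eqref{assumpL}—whereas you supply the Blaschke-rolling and symmetrization steps that justify these inclusions from the pointwise curvature bounds; in that sense your argument is strictly more detailed than the paper's, not different from it.
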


\begin{proof}
By \eqref{assumpL}, we have
\[
 B_{\rho}\subset \mathcal C \subset B_{\rho'}.
\]
Therefore, recalling~\eqref{pKnorma},
\[
 \|x\|_{\mathcal C} = \sup \{t>0 {\mbox{ s.t. }}
x/t \notin \mathcal C\} \le  \sup \{t>0 {\mbox{ s.t. }}
x/t \notin B_\rho\} =  \frac 1 \rho |x|,
\]
which proves the second inequality in~\eqref{2.4bis}.
The second inequality is proven likewise.
\end{proof}

\begin{proof}[Proof of Lemma \ref{lemcurvbounds}] We have
\begin{equation}\label{2ndderxi}
 \big|D^2 \xi(x')\big| \le C\, (1+|x'|^2)^{\frac{\alpha-1}{2}},
\end{equation}
for some $C>0$ depending only on $\alpha$.

Using that $0\in\partial \Gamma_b =\{d_b=0\}$ 
and that, by assumption, there exists~$p\in 
\mathcal C_{4/\rho}$ such that $p\in \{d_b =t\}$, we have that
\begin{equation}\label{rhoaso}
|t|  \le \|p-0\|_{\mathcal C} \le \frac{4}{\rho}.
\end{equation}

Choose $y\in\{d_b =t\}$. Recalling Lemma~\ref{lemma:claim},
let $\bar y$ be a point on $\partial\Gamma_b$ for which
\[
\frac{1}{\rho'}|\bar y-y|  \le \|\bar y-y\|_{\mathcal C}=
 |d_b(y)| =|t|.
\]

By \eqref{2ndderxi} there exists a ball of radius $R\ge c/b$ 
contained in $\Gamma_b$ and touching $\partial\Gamma_b$ at the point $y$, where $c>0$ depends only on $\alpha$. Since $\mathcal C_r\subset B_{r\rho'}$ there exists $z_0$ in $\Gamma_b$ such that 
\begin{equation}\label{thetouching}
 \mathcal C_{R/\rho'}(y_0) \subset \Gamma_b\quad\mbox{ and touches }\partial\Gamma_b \mbox{ at }\bar y.
\end{equation}

Then, by Lemma \ref{TPQ} we have that
\begin{equation}\label{thetouching2}
\mathcal C_{R/\rho'-t}(y_0) \subset \{d_b>t\}\quad\mbox{ and touches } \{d_b=t\} \mbox{ at } y.
\end{equation}
Since $\mathcal C$ is assumed to be $C^{1,1}$, this shows that the boundary of the convex set $\{d_b>t\}$ is $C^{1,1}$.

Let us prove that, indeed,
the boundary of~$\{d_b>t\}$ is a graph and control 
the gradient and the second derivatives of this graph. 
We assume that $b_0$ is small enough so that 
\[  R/\rho' -t \ge \frac{c}{b\rho'} - \frac{4}{\rho}  \ge \frac{c}{b} \] 
where~$c$ denotes a positive universal constant (that may change each time).

Now, denoting $y= (y',y_n)$ and $\bar y= (\bar y',\bar y_n)$, we have
\[ |\bar y'| \le |y'| + |y-\bar y| \le   |y'| + \rho' |t| \le   |y'| + \frac{4}{\rho}  \le |y'| +C.\]

The tangent plane to $\mathcal C_{R/\rho'-t}(y_0)$ 
at~$\bar y$ is parallel to the tangent plane 
to~$\mathcal C_{R/\rho'}(y_0)$ at~$y$ and, 
by~\eqref{thetouching}, this slope is given by 
\[  b(1+\alpha) |\bar y'|(1+|\bar y'|^2)^{\frac{\alpha-1}{2}}  \le 2(1+|\bar y'|^2)^{\frac{\alpha}{2}} \le  C_0(1+|y'|^2)^{\frac{\alpha}{2}},\]
where $C_0$ is a universal constant and where we have used that 
\[ (b\xi(r))' = b(1+\alpha) r(1+r^2)^{\frac{\alpha-1}{2}}.\] 
Since the point $y$ can be chosen arbitrarily on 
the surface~$\{d_b=t\}$, this proves that this surface is an entire graph. Namely, that
\[ \{d_b=t\} \ =\  \{y_n =G(y')\} 
\qquad\mbox{where }|DG(y')| \le C_0(1+
|\bar y|^2)^{\frac{\alpha}{2}}.\]

Finally, the estimate for the second derivative 
in~\eqref{secondder} follows from \eqref{thetouching2} 
recalling that $R\ge cb$. On the other hand, 
\eqref{differenceG} follows from the fact that
\[ \big|G(y')-G(0)\big| \le\sup_{|z'|\le |y'|} |DG(z')| |y'| \le  
C_0|y'|b(1+|y'|^2)^{\frac{\alpha}{2}} \le C_0b (1+|y'|)^{
1+\alpha} \quad\mbox{for all  }y'\in\R^{n-1}. \]  
This completes the proof of Lemma~\ref{lemcurvbounds}.
\end{proof}


\subsection{Modeling solutions with the distance function}

We now construct useful barriers by using the level sets
of the distance function as a profile and controlling
the error produced in the equation by such procedure. For this, we
let  $\phi : \R \rightarrow (-1,1)$ be a $C^2$ and increasing function with $$
\lim_{z\to \pm \infty} \phi(z)= \pm 1.$$ Note that any such $\phi$ solves 
an equation of the type
\[ \mathcal L\phi  = f_\phi (\phi) \quad \mbox{in } \R,\]
where $f_\phi:(-1,1)\rightarrow \R$ is defined by
\begin{equation} \label{f phi}
f_\phi :=  (\mathcal L\phi)\circ\phi^{-1}.\end{equation}

Now we define a suitable rearrangement procedure
that produces a function $\phi_{b}: \R^n\rightarrow (-1,1)$ from any 
given $\phi$ as above and modeled along the level sets of the distance function~$d_b$,
as introduced in~\eqref{defd}. Namely, we set
\begin{equation}\label{phi b}
\phi^{b}(x) := \phi\bigl(d_{b}(x)\bigr).\end{equation}
Then, we have that~$\phi^{b}$ is ``almost'' a solution
of the equation with nonlinearity~$f_\phi$, as given by the following result:

\begin{lemma}\label{lemdeform}
Let $L$ satisfy \eqref{assumpL}. Then, there 
exist positive quantities~$b_0$ and $C_0$ 
depending only on $n$, $s$, $\lambda$, $\Lambda$, 
$\rho$ and $\rho'$ (and thus independent of $\phi$), 
such that the 
following holds.

Assume that 
\begin{equation}\label{imageassumption}
[-1+\delta, 1-\delta]\,\subset 
\,\phi\left( \left[-  \frac{1}{\rho'},\frac{1}{\rho'} \right]\right).
\end{equation}

Then, for all $\omega\in S^{n-1}$ and $b\in(0,b_0)$ we have
\begin{equation}\label{FAH111}
0\le L \phi^{b}  -f_\phi\big( \phi^{b} \big) \le
C_0 (b+ \delta)\quad \mbox{in }B_1.
\end{equation}
\end{lemma}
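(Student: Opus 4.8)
The plan is to exploit the layer cake representation of Lemma \ref{layer cake} together with the fact that $d_b$ is, by construction, the signed anisotropic distance to a convex set, so that its level sets are nested $C^{1,1}$ hypersurfaces touched from inside and outside by translates of $\mathcal C$. Fix $x\in B_1$ and set $t:=d_b(x)$; write $\phi^b = \phi\circ d_b$. The key geometric input is that, by Lemma \ref{TPQ} applied to $K=\Gamma_b$ (via Lemma \ref{lemcurvbounds}, which guarantees the interior touching ball of radius $R\ge c/b$ when $b<b_0$), the level set $\{d_b=t\}$ is touched at $x$ from the convex side by an anisotropic sphere $\mathcal C_{R-t}(z_0)$ and, correspondingly, the half-space $H:=\{y:\ \omega_0\cdot(y-x)\ge 0\}$ with $\omega_0$ the inner normal is tangent from the other side. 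I would use the comparison, valid pointwise in $y$,
\[
d_b(x) + \tfrac{\omega_0}{h_L(\omega_0)}\cdot(y-x)\ \le\ d_b(y)\ \le\ R-\|y-z_0\|_{\mathcal C},
\]
which is exactly the content of \eqref{123456dfghjk9210903:BIS}. Since $\phi$ is increasing, this sandwiches $\phi^b(y)$ between $\phi$ evaluated on an affine (flat) profile and $\phi$ evaluated on a spherical profile. Plugging this into \eqref{RP:1} and comparing with the value of $\mathcal L$ on the exact $1$D solution $\phi$ and on a ``spherical'' competitor gives the two-sided bound.

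The lower bound $L\phi^b - f_\phi(\phi^b)\ge 0$ should come from concavity of $d_b$: because $\{d_b=t\}$ is convex, $\phi^b$ is ``more curved outward'' than the flat profile whose operator value is $\mathcal L\phi = f_\phi(\phi)$. Concretely, the affine lower bound for $d_b$ above, after applying the increasing function $\phi$ and using the layer cake identity \eqref{RP:2} with $v=\phi^b$ and $w(y)=\phi\big(t+\tfrac{\omega_0}{h_L(\omega_0)}\cdot(y-x)\big)$ — which agree at $y=x$ — yields $L\phi^b(x) - Lw(x)\ge 0$ because $v\ge w$ everywhere and the kernel is positive. And $Lw(x)=\mathcal L\phi(t)=f_\phi(\phi(t))=f_\phi(\phi^b(x))$ by the defining property \eqref{oKAJ:11} of $h_L$ (the relevant $w$ is precisely a one-dimensional profile in direction $\omega_0$ rescaled by $h_L(\omega_0)$, so $L$ acts on it as $\mathcal L$). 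This gives the left inequality in \eqref{FAH111} with no error term.

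For the upper bound one uses the spherical barrier instead: with $\bar w(y):=\phi\big(R-\|y-z_0\|_{\mathcal C}\big)$ one has $\phi^b\le \bar w$, equality at $x$, hence $L\phi^b(x)-L\bar w(x)\le 0$, so $L\phi^b(x)\le L\bar w(x)$. It then remains to estimate $L\bar w(x) - f_\phi(\phi^b(x)) = L\bar w(x)-\mathcal L\phi(t)$. This is the error produced by replacing a flat profile with a spherical one of radius $R\ge c/b$: one splits the integral defining $L\bar w(x)$ into a near region (say $\|y\|_{\mathcal C}\le 1$), where a Taylor expansion of the distance-to-a-sphere function against the flat profile produces a curvature term of size $O(1/R)=O(b)$ times $\int |y|^{2-n-s}\,dy$ over a bounded region (integrable since $s<1<2$, modulo the usual principal-value cancellation of the odd part), and a far region $\|y\|_{\mathcal C}>1$, where one only needs $|\bar w - \phi|\le C$ and crude integrability $\int_{|y|>1}|y|^{-n-s}\,dy<\infty$. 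The role of hypothesis \eqref{imageassumption} and the parameter $\delta$ enters here: for $|d_b(y)|\ge 1/\rho'$ one has $\phi(d_b(y))$ within $\delta$ of $\pm1$, which is what lets one control the far-field contribution and the region where the spherical and flat profiles see opposite wells, contributing the $O(\delta)$ term; combined with the $O(b)$ curvature term this gives $L\phi^b(x)-f_\phi(\phi^b(x))\le C_0(b+\delta)$.

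The main obstacle I expect is making the spherical-barrier error estimate $L\bar w(x)-\mathcal L\phi(t)\le C_0(b+\delta)$ clean and genuinely uniform in $\phi$: one must control the near-field second-order term using only $\|D^2 d_b\|\le C_0 b$ on $\mathcal C_{4/\rho}$ from Lemma \ref{lemcurvbounds} together with monotonicity and boundedness of $\phi$ (not any quantitative modulus of $\phi'$ or $\phi''$, since the constants must be independent of $\phi$). The trick is to never differentiate $\phi$ twice: instead use the layer cake form \eqref{RP:1}–\eqref{RP:2}, so that all one ever needs about $\phi$ is that it is increasing with limits $\pm1$ and that \eqref{imageassumption} holds, transferring all the quantitative work onto the geometry of the level sets of $d_b$, which is controlled purely by $\rho,\rho',\alpha$ and the smallness of $b$.
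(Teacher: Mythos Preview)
Your overall strategy is sound and close to the paper's, but you have the sandwich inequality from \eqref{123456dfghjk9210903:BIS} reversed: since $d_b$ is concave, the correct ordering is
\[
R - \|y-z_0\|_{\mathcal C} \;\le\; d_b(y) \;\le\; t + \tfrac{\omega_0}{h_L(\omega_0)}\cdot(y-x),
\]
so the affine tangent is an \emph{upper} bound and the spherical profile a \emph{lower} bound for $d_b$. This propagates: the correct comparisons are $\phi^b \le w$ (flat) and $\phi^b \ge \bar w$ (spherical), and the maximum-principle implication is that $v\le w$ with $v(x)=w(x)$ yields $Lv(x)\ge Lw(x)$, not the other direction. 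Your two sign errors happen to cancel, so the conclusions $L\phi^b(x)-Lw(x)\ge 0$ and $L\phi^b(x)-L\bar w(x)\le 0$ are right, but the stated reasoning is backwards in both places.

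Setting this aside, your spherical-barrier route for the upper bound is a legitimate variant, but the paper is more direct: it compares $\phi^b$ only with the flat profile $\tilde\phi$, writes via \eqref{RP:2}
\[
L\phi^b(z)-f_\phi(\phi^b(z)) \;=\; \int_{-1}^1 I_z(\theta)\,d\theta, \qquad I_z(\theta)=\int_{\R^n} \mathcal K(z-y)\,\chi_{S_\theta}(y)\,dy,\quad S_\theta=\{d_b\le t_\theta\le \tilde d\},
\]
and splits the levels $\theta$. When $\{d_b=t_\theta\}$ meets $\mathcal C_{3/\rho}(z)$, Lemma~\ref{lemcurvbounds} gives that it is a convex graph with $|D^2G|\le C_0 b$ and controlled growth, so $S_\theta$ is a thin lune and a vertical-slice computation yields $I_z(\theta)\le Cb$. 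When $\{d_b=t_\theta\}$ misses $\mathcal C_{3/\rho}(z)$, \eqref{123456dfghjk9210903:TRIS} forces $S_\theta\cap B_2=\varnothing$, so $I_z(\theta)\le C$; hypothesis~\eqref{imageassumption} then confines such $\theta$ to a set of measure $\le 2\delta$. This is where $\delta$ enters---not as a far-field tail bound on $\phi$, but as the measure of levels with uncontrolled geometry. Your version replaces $S_\theta$ by the (larger) lune between a $\mathcal C$-sphere and its tangent plane; this works too, but you must choose $z_0$ so that the sphere touches $\{d_b=t\}$ precisely at the evaluation point $x$ (otherwise $\bar w(x)\ne\phi^b(x)$ and the touching argument fails), a subtlety absent from the paper's direct approach.
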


\begin{proof}
Let us fix $z\in B_1$.  Let $\theta_0= \phi^b(z)$ be the level of $\phi^b$ at $z$. 
By~\eqref{phi b}, we know that~$d_b(z)=
\phi^{-1}(\phi^b(z))=\phi^{-1}(\theta_0)=:t_0$.

We also recall that~$h_L$ was introduced 
in~\eqref{oKAJ:11} (or, equivalently,
in~\eqref{formula1hL}) and we let $\omega$
be the unit vector normal to~$\{d_b=t_0\}$ at~$z$
and pointing towards~$\{d_b>t_0\}$.
Then, we define
\begin{equation}\label{d tilde}
\tilde d(x) := \frac{\omega}{h_L(\omega)}\cdot(x-z) +t_0.\end{equation}
We also set~$\tilde\phi:=\phi\circ\tilde d$.
Using the notation in~\eqref{NOoh}, we have that
$$ \tilde\phi(x)=\phi\left(\frac{\omega}{h_L(\omega)}\cdot(x-z) +t_0\right)
=\phi\left(\frac{\omega}{h_L(\omega)}\cdot
\big(x-z+\omega\,h_L(\omega)\,t_0\big)\right)
=\bar\phi_{\omega,h}(x-z+\omega\,h\,t_0),$$
with~$h:=h_L(\omega)$.

Consequently, by \eqref{oKAJ:11} and~\eqref{f phi},
for any~$x\in\R^n$,
\begin{equation}\label{2.20}
\begin{split}
& L\tilde\phi(x)=L\bar\phi_{\omega,h}(x-z-\omega\,h\,t_0)=
A\phi\left(
\frac\omega{h}\cdot(x-z+\omega\,h\,t_0)
\right)\\
&\qquad\qquad=f_\phi\left(
\phi\left(
\frac\omega{h}\cdot(x-z+\omega\,h\,t_0)
\right)\right)
=
f_\phi\left(
\phi\left(
\frac\omega{h}\cdot(x-z)+t_0)
\right)\right)=f_\phi(\tilde\phi(x)).
\end{split}
\end{equation}

Now, by \eqref{123456dfghjk9210903:BIS} in Lemma~\ref{TPQ}  we have
\begin{equation}\label{dconc}
d_b\le \tilde d \quad \mbox{in }\R^n,
\end{equation}
see also Lemma~6
in~\cite{torino} for the elementary
proof of this and related facts. 
Moreover, 
\begin{equation}\label{dconc:X2X}
d_b= \tilde d \quad \mbox{along the ray}  \quad 
\mathcal R:= \{z_0+t'(z-z_0), \ t'\ge 0\}.\end{equation}
{F}rom the observations in~\eqref{dconc} and~\eqref{dconc:X2X}
it follows that  
\begin{equation}\label{uok78o8er83yfusdigfsjhv1}
\{d_b= t\} \quad \mbox{is tangent to} \quad \{\tilde d= t\} \quad \mbox{at some point on }\mathcal R.
\end{equation}
Notice that, by construction, 
\begin{equation}\label{0ipoks678900101}
\phi^b(z)=\phi(t_0)= \tilde\phi(z)\end{equation}
and, by~\eqref{dconc} and the monotonicity of $\phi$, it holds that~$
\phi^b\le \tilde\phi$. Accordingly, $L \phi^b(z)-L \tilde\phi(z)\ge0$.
Thus, we apply
the layer cake formula in~\eqref{RP:2} of Lemma~\ref{layer cake} and use that
the image of $\phi$ is contained in~$[-1,1]$ to conclude that
\begin{equation}\label{7:67987611}
\begin{split}
0\le L \phi^b(z)-L \tilde\phi(z)
&=\int_{\R^n}\,dy\,\int_\R \,d\theta
\chi_{[\phi^b(z+y),\,\tilde \phi(z+y)]}(\theta)
\,\frac{\mu(y/|y|)}{|y|^{n+s}}
\\&=\int_{-1}^1 d\theta \int_{\R^n } dy\, \frac{\mu(y/|y|)}{|y|^{n+s}}
\,{\chi_{S_\theta}}(z+y) = \int_{-1}^1 d\theta \,I_z(\theta)
\end{split}
\end{equation}
where
\[ 
S_\theta:=\big\{ x\in\R^n \ :\  \phi^b(x)\le\theta\le\tilde \phi(x)\big \} =
\big\{ x\in\R^n \ :\  
d_b(x)\le  \phi^{-1}(\theta)\le \tilde d(x)\big\}
\]
and 
\[
I_z(\theta) := 
\int_{\R^n }  \frac{\mu(y/|y|)}{|y|^{n+s}}
\,{\chi_{S_\theta}}(z+y) \,dy.\] 
Now we recall \eqref{2.20} and \eqref{0ipoks678900101} 
to see that
$L\tilde \phi(z)=f_\phi(\tilde \phi(z))=f_\phi(\phi^b(z))$
and so we can rewrite~\eqref{7:67987611} as
\begin{equation}\label{ST:FOND}
0\le L \phi^b(z)- f_\phi(\phi^b(z))=
\int_{-1}^1 d\theta \,I_z(\theta)
.\end{equation}
Now,
given $\theta\in(-1,1)$, let us define 
\[  t_\theta  := \phi^{-1}(\theta).\]

In the next steps of the proof we will establish different estimates for~$I_z(\theta)$
by distinguishing the two
cases $\{d_b = t_\theta\}\cap {\mathcal{C}}_{3/\rho}(z)
= \emptyset$  and $\{d_b = t_\theta\}\cap {\mathcal{C}}_{3/\rho}(z) \neq \emptyset$. \medskip

{\em Case 1}. Let $\{d_b = t_\theta\}\cap \mathcal C_{3/\rho}(z) = \emptyset$.
We take~$b\in(0,b_0)$ 
with $b_0$ small enough, depending only on~$\rho$ and~$\rho'$, 
and we claim that
we have that 
\begin{equation}\label{B32}
S_\theta\cap B_{2}= \emptyset.\end{equation} 
Indeed, by \eqref{123456dfghjk9210903:TRIS},
$\{d_b = t_\theta\}\cap \mathcal C_{3/\rho}(z) = 
\emptyset$ implies that~$S_\theta \cap \mathcal 
C_{3/\rho}(z) = \emptyset$.
Hence, recalling that~$z\in B_1$, we have that~$B_2\subset B_3(z)\subset \mathcal C_{3/\rho}(z)$ and hence \eqref{B32} follows.

Thus, since $z\in B_1$, using~\eqref{B32} we conclude that 
\begin{equation}\label{77:0129837asdf4}
I_z(\theta) = 
\int_{\R^n\setminus B_{1} }  \frac{\mu(y/|y|)}{|y|^{n+s}}
\,{\chi_{S_\theta}}(z+y) \,dy
\le  \int_{\R^n \setminus B_{1}}
\frac{\Lambda}{|y|^{n+s}}\,dy\le C,
\end{equation} for some $C>0$.

Now we claim that in this case we have
\begin{equation}\label{346dfiodbh10938y4t}
\theta\in [-1,\,-1+\delta)\cup(1-\delta,\,1].
\end{equation}
Indeed, if not, by \eqref{imageassumption},
$$ \theta\in [-1+\delta, 1-\delta]\subset
\phi\left( \left[-  \frac{1}{\rho'},\frac{1}{\rho'} \right]\right)$$
and so
$$ t_\theta =\phi^{-1}(\theta)\in \left[-  \frac{1}{\rho'},\frac{1}{\rho'} \right].$$
Then, using that $0\in\{d_b =0\}$ we find that
\[
 \inf \left\{ \frac{1}{\rho'} |y-0|\ :\ y\in \{d_b = t_\theta\}\right\}  \le \inf \big\{ \|y-0\|_{\mathcal C}\ :\ y\in \{d_b = t_\theta\}\big\} =|t_\theta| \le \frac{1}{\rho'}
 \]
and thus $\{d_b = t_\theta\}$ intersects $B_2$, which is 
a contradiction. This proves~\eqref{346dfiodbh10938y4t}.

{\em Case 2}.  Now we deal with the case
$\{d_b = t_\theta\}\cap \mathcal C_{3/\rho}(z) \neq \emptyset$ and $b\in(0,b_0)$, 
with $b_0$ small enough. 
Note that we have $\{d_b = t_\theta\}\cap \mathcal 
C_{4/\rho}\neq\emptyset$ since $z\in B_1\subset \mathcal C_{1/\rho}$.

In this case,
we recall \eqref{dconc:X2X} and \eqref{uok78o8er83yfusdigfsjhv1} and
we take
$\bar z = (\bar z', \bar z_n )$ to be
the triple intersection point described there, that is
\begin{equation}\label{ih7w8e639128:0}
\bar z \in \{d_b= t_\theta\}\cap
\{\tilde d= t_\theta\} \cap \mathcal R.
\end{equation}
With this notation, we can write the set $S_\theta$
as a suitable portion of space trapped between a linear function
and a convex one with small detachment one 
from the other.
For this, we exploit
Lemma~\ref{lemcurvbounds} to see that
\begin{equation}\label{ih7w8e639128}
\{d_b=t_\theta\}\ \,= \, \{y_n = G(y')\} \end{equation}
with $G$ convex and satisfying 
\begin{equation}\label{boundsG}
|DG(y')|\le C_0 b \quad\mbox{in }|y'|<\frac{4\rho'}{\rho}\quad \mbox{and}\quad \big|G(y')-G(0)\big| \le C_0 b (1+|y'|)^{1+ \alpha}\quad \mbox{for all }y'.
\end{equation}
Therefore,
the condition $d_b(x)\le t_\theta$ is equivalent to
the fact that the point $x$ lies below the graph of $G$, namely
that $x_n\le G(x')$. Similarly, from \eqref{ih7w8e639128:0},
we have that $\omega$
is normal to both $\{\tilde d=t_\theta\}$ and $\{d_b=t_\theta\}$
at $\bar z$ and so,
by \eqref{ih7w8e639128},
the condition that $t_\theta\le\tilde d(x)$ is equivalent to
$$ x_n \ge G(\bar z')+\nabla G(\bar z')\cdot (x'-\bar z').$$
In consequence of these observations, we have that
\begin{equation}\label{whoisStheta}
S_\theta \ = \ \big\{ G(\bar z') + \nabla G(\bar z')\cdot(x'-\bar z') 
\le x_n \le G(x') \big\}.
\end{equation}

Next we observe that, as a consequence 
of~\eqref{123456dfghjk9210903:TRIS},
for~$r= \|z-\bar z\|_{\mathcal C}$, we have
\begin{equation}\label{EL01}
\mathcal C_r(z) \subset \R^n\setminus S_{\theta}.
\end{equation}
Therefore, for all $y$ in $S_\theta$, recalling Lemma~\ref{lemma:claim},
$$ |y-\bar z|\le \rho' \|y-\bar z\|_{\mathcal C}   \le C \big( \|y- z\|_{\mathcal C}+r\big)\le
C|y-z|.$$
Accordingly, 
if~$z+y\in S_\theta$, then~$|z+y-\bar z|\le C|y|$.
As a consequence of this and~\eqref{whoisStheta}, we have that,
for any fixed $y'\in\R^{n-1}$,
\begin{eqnarray*}
&&
\int_{\R}  \frac{{\chi_{S_\theta}}(z+y)}{|y|^{n+s}} \,dy_n\le
C\int_{\R}  \frac{{\chi_{S_\theta}}(z+y)}{|z+y-\bar z|^{n+s}} \,dy_n
\le
C\int_{\R}  \frac{{\chi_{S_\theta}}(z+y)}{|z'+y'-\bar z'|^{n+s}} \,dy_n
\\ &&\qquad=C\,
\int_{\{ G(\bar z')+\nabla G(\bar z')\cdot (z'+y'-\bar z')
\le z_n+y_n\le
G(z'+y')\}}  \frac{dy_n}{|z'+y'-\bar z'|^{n+s}} \\&&\qquad=
C\,\frac{G(z'+y')-
G(\bar z')-\nabla G(\bar z')\cdot (z'+y'-\bar z')
}{|z'+y'-\bar z'|^{n+s}} 
.\end{eqnarray*}
Hence, if we integrate in $y'\in\R^{n-1}$ and use the change
of variable~$Y':=z'+y'-\bar z'$, up to renaming~$C>0$ we have that
\begin{equation}\label{cas2}
\begin{split}
I_z(\theta)\,&\le C
\int_{\R^n}  \frac{{\chi_{S_\theta}}(z+y)}{|y|^{n+s}} \,dy
\le C\int_{\R^{n-1}}
\frac{G(z'+y')-
G(\bar z')-\nabla G(\bar z')\cdot (z'+y'-\bar z')
}{|z'+y'-\bar z'|^{n+s}}\,dy'\\
&= C\int_{\R^{n-1}}
\frac{G(Y'+\bar z')-
G(\bar z')-\nabla G(\bar z')\cdot Y'
}{|Y'|^{n+s}}\,dY'
\le Cb
,\end{split}\end{equation}
where \eqref{boundsG} has been used 
in the last estimate ---note that $\bar z\in \mathcal C_{3/\rho}(z)$ and thus 
\[
 |\bar z'| \le |\bar z| \le \rho' \big( \|z-\bar z\|_{\mathcal C}   + \|z\|_{\mathcal C}  \big) \le \rho' \big( 3/\rho  + 1/\rho  \big) \le 4\rho'/\rho. 
\] 

{\em Final estimate.}  We recall that, from~\eqref{ST:FOND},
\[
0\le L \phi^b(z)- f_\phi(\phi^b(z))=
\int_{-1}^1 d\theta \,I_z(\theta) = \int_{{\mathcal{A}}} 
d\theta \,I_z(\theta) + \int_{{\mathcal{B}}} 
d\theta  \,I_z(\theta),
\]
where $\mathcal A$ is the set of levels~$\theta$ 
as in {\em Case 1} and~$\mathcal B$ is the set of 
levels~$\theta$ as in {\em Case 2}.
Then, on the one hand, \eqref{346dfiodbh10938y4t} 
implies that $|\mathcal A|\le 2\delta$, 
and, for each~$\theta \in {\mathcal{A}}$, we have 
that~$I_z(\theta)\le C$. On the other hand, 
\eqref{cas2} yields that, for each $\theta \in {\mathcal{B}}$, 
we have that~$I_z(\theta)\le Cb$.
Therefore, 
\[
0\le L \phi^b(z)- f_\phi(\phi^b(z))=
 \int_{\mathcal A} d\theta \,I_z(\theta) + 
\int_{\mathcal B} d\theta  \,I_z(\theta) \le C\delta +Cb,
\]
which proves~\eqref{FAH111}, as desired.
\end{proof}

\section{Decay estimates for solutions}\label{SECT3}

The goal of this section is to provide suitable decay estimates for our solutions.
For this, we start with
a preliminary result:

\begin{lemma}\label{lemgamma0}
Let $w$ be such that $Lw \le -k w$ in $B_R$, 
where $R\in[2,\infty)$ and~$k\in[1,\infty)$. Suppose that $0\le w\le 2$ in all of $\R^n$, then
\[ 0\le w \le \frac{C}{ (k^{1/s}R) ^{\gamma_0}} \quad \mbox{in }B_{1},\]
where $C$, $\gamma_0>0$ depend only on $n$,  $s$, and on the ellipticity constants.
\end{lemma}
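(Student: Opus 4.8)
The plan is to construct an explicit supersolution barrier of the form $C|x|^{-\gamma_0}$ (suitably truncated near the origin so it is bounded) and compare it with $w$. The key algebraic fact is that for the isotropic-type nonlocal operators here, the power function $|x|^{-\gamma}$ is (up to a positive multiplicative constant depending on $n$, $s$, $\gamma$ and the ellipticity constants $\lambda,\Lambda$) an eigenfunction-like object away from the origin: one has $L(|x|^{-\gamma})(x) = c(\gamma)\,|x|^{-\gamma-s}$ where $c(\gamma)$ stays bounded and bounded away from $0$ for $\gamma$ in a compact subinterval of $(0,\text{something})$. Because $L$ has the form \eqref{formL} with spectral density pinched between $\lambda$ and $\Lambda$, this computation reduces to the corresponding one for $(-\Delta)^{s/2}$ by comparison of kernels. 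First I would fix a small universal $\gamma_0>0$ so that this computation is valid and so that $c(\gamma_0)>0$.

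Next I would set up the barrier precisely. For a scaling parameter $R\ge 2$ and $k\ge 1$, define $\varphi(x) := A\,\min\{1,\ (k^{1/s}|x|)^{-\gamma_0}\}$ where $A$ is a large universal constant, or more cleanly work with the rescaled function $\tilde w(x) := w(k^{-1/s}x)$, which satisfies $L\tilde w(x) = k^{-1}(Lw)(k^{-1/s}x) \le -\tilde w(x)$ in $B_{k^{1/s}R}$ by the scaling invariance of $L$ (order $s$). So it suffices to prove: if $Lv\le -v$ in $B_\rho$ with $\rho\ge 2$ and $0\le v\le 2$ everywhere, then $v\le C\rho^{-\gamma_0}$ in $B_1$. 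For this comparison one uses the barrier $\Psi(x)=C\rho^{-\gamma_0}\,\eta(x)$ where $\eta(x)=\min\{1, |x|^{-\gamma_0}\}$ (so $\Psi\ge 2\ge v$ outside $B_\rho$ after choosing $C$ large, noting $\rho^{-\gamma_0}|x|^{-\gamma_0}\cdot C \ge 2$ is automatic since we want the reverse — one should instead bound on the exterior using $v\le 2$ and arrange $\Psi\ge v$ there, which forces $C\rho^{-\gamma_0}\ge 2$, i.e.\ we truncate at radius $\rho$ and use the crude bound). More carefully: set $\Psi(x) = 2\,(\rho/|x|)^{\gamma_0}$ for $|x|\ge \rho^{2/3}$-type inner cutoff — the cleanest choice is $\Psi(x)=2\max\{(\rho/|x|)^{\gamma_0},\ \text{const}\}$ truncated so that $L\Psi \ge -\Psi$ inside $B_\rho$; the truncation at the origin only adds a bounded positive amount to $L\Psi$ by the explicit power computation, which is absorbed because $k\ge 1$ and the right-hand side $-\Psi$ is of lower order there. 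Then $v-\Psi\le 0$ outside $B_\rho$ (since $\Psi\ge 2$ there in the region $|x|\le\rho$... ) — the honest version: $v\le 2$ and $\Psi=2$ on $\{|x|=\rho\}$, $\Psi\ge 2$ for $|x|\le \rho$ is false, so one instead extends $\Psi$ to equal $2$ on all of $\R^n\setminus B_\rho$ and checks $L\Psi\le 0\le -\Psi+$ wait $-\Psi<0$. The technically correct barrier needs $\Psi\ge v$ on $\R^n\setminus B_\rho$ AND $L\Psi + \Psi\ge 0 \ge L v + v$ in $B_\rho$: take $\Psi(x)=2\bigl((\rho/|x|)^{\gamma_0}\wedge 1\bigr)$... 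I realize the truncation should be at $|x|=\rho$ going \emph{outward}, i.e.\ $\Psi\equiv 2$ for $|x|\ge\rho$ and $\Psi(x)=2(\rho/|x|)^{\gamma_0}$-ish for $|x|\le\rho$, so that $\Psi\ge 2\ge v$ outside and $\Psi$ is large near $0$; then the nonlocal term $L\Psi$ in $B_\rho$ gets a favorable (positive) contribution from the inner singularity and a controlled contribution from the outer truncation, and choosing $\gamma_0$ small makes $L\Psi\ge -\Psi$ hold. Applying the maximum principle for $L$ on $B_\rho$ to $w_1:=v-\Psi$, which satisfies $L w_1 \le -v+\Psi$ with sign information, yields $v\le\Psi$ in $B_\rho$, and evaluating at $|x|\le 1\le\rho$ gives $v(x)\le 2(\rho/1)^{\gamma_0}$ — wrong direction again, it gives $v\le 2\rho^{\gamma_0}\cdot$ no: $\Psi(x)=2(\rho/|x|)^{\gamma_0}$ for $|x|\le\rho$ is \emph{decreasing} in $|x|$... hmm so it's large at the origin, which is wrong for an upper bound on $v$ at $|x|\le 1$.

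Let me reconsider: the correct barrier decays \emph{away} from $B_1$ toward the origin — no. The point is $v$ is small in $B_1$ because it is "pushed down" by $Lv\le -v$ over the large domain $B_\rho$; the barrier must be small in $B_1$ and grow toward $\partial B_\rho$ up to the value $2$. So take $\Psi(x)=2\,|x|^{\gamma_0}/\rho^{\gamma_0}$ for $|x|\le\rho$ and $\Psi\equiv 2$ for $|x|\ge\rho$: then $\Psi\ge v$ outside $B_\rho$, $\Psi(x)\le 2\rho^{-\gamma_0}$ for $|x|\le 1$, and in $B_\rho$ one computes $L\Psi$: the nonlocal operator applied to the increasing power $|x|^{\gamma_0}$ (times truncation) produces $L\Psi(x)\le C|x|^{\gamma_0-s}\le C\Psi(x)\cdot|x|^{-s}\cdot\rho^{\gamma_0}$, and since we need $L\Psi\le -\Psi$ which fails for positive $\Psi$ — so one needs $Lw\le -kw$ with $k\ge1$ large enough to beat the constant, OR (the actual mechanism) the statement should be read as: a positive function with $Lw\le -kw$ cannot be too large, proven by noting $0\ge \int (Lw)\cdot(\text{test}) + k\int w\cdot(\text{test})$. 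The hard part, and the step I expect to be the genuine obstacle, is pinning down the exact barrier so that the truncation errors in computing $L\Psi$ are of the right order and absorbed by the $k\ge 1$ factor after rescaling; this is a standard but delicate computation with nonlocal operators, relying on the kernel bounds \eqref{muL0}, the scaling $Lw(k^{-1/s}x)=k^{-1}L\tilde w$, and the explicit evaluation of $L$ on truncated powers. Once the barrier inequality $L\Psi\le -k\Psi$ in $B_R$ with $\Psi\ge w$ on $\R^n\setminus B_R$ is established, the maximum principle for $L$ (applied to $w-\Psi$) concludes $w\le\Psi\le C(k^{1/s}R)^{-\gamma_0}$ in $B_1$.
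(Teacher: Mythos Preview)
Your barrier approach is a legitimate alternative to the paper's method, but the execution contains a genuine sign error that derails it, and the key computation is never carried out.

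The comparison principle you need is this: if $Lw\le -kw$ and you want $w\le\Psi$, then $\Psi$ must be a \emph{supersolution}, i.e.\ $L\Psi\ge -k\Psi$ in $B_R$ together with $\Psi\ge w$ outside $B_R$. You write at the end ``once the barrier inequality $L\Psi\le -k\Psi$ \dots is established,'' which is the wrong direction; with that inequality $\Psi$ would be a subsolution and the maximum principle gives nothing. This is not a typo in your write-up: the same reversal appears earlier when you try to check $L\Psi\le -\Psi$ and observe it ``fails for positive $\Psi$.'' The correct requirement $L\Psi+\Psi\ge 0$ (after rescaling to $k=1$) is much easier to satisfy. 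For the candidate $\Psi(x)=2(|x|/\rho)^{\gamma_0}$ on $\{1\le|x|\le\rho\}$, constant outside this annulus, one has $L\Psi(x)\sim -c\,\rho^{-\gamma_0}|x|^{\gamma_0-s}$ while $\Psi(x)=2\rho^{-\gamma_0}|x|^{\gamma_0}$, so $L\Psi+\Psi\ge 0$ once $|x|^s\ge c/2$; the inner truncation at $|x|=1$ then handles the region near the origin. This computation, together with control of the outer truncation error, is exactly the ``delicate computation'' you defer, and without it the proof is not complete.

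The paper takes a different route that sidesteps the continuous barrier entirely. It proves an \emph{improvement-of-oscillation} step: if $\bar w$ satisfies $L\bar w\le -\bar w$ in $B_1$ and $\bar w\le 2^{\gamma_0 j}$ in $B_{2^j}$ for all $j\ge 0$, then the same bound holds for $j=-1$. The barrier here is a piecewise-constant radial function $\phi=(1-h_0\eta)\chi_{B_1}+\sum_{j\ge 1}2^{\gamma_0 j}\chi_{B_{2^j}\setminus B_{2^{j-1}}}$, and one checks directly (by splitting the sum at $j\sim\gamma_0^{-1/3}$) that $-L\phi\le\phi$ in $B_{3/4}$ for $\gamma_0$ small. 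This single step is then iterated across dyadic scales by induction, and a final rescaling introduces the parameter $k$. The advantage of the paper's approach is that the barrier computation is entirely explicit and elementary (sums of step functions, no evaluation of $L$ on powers), and the smallness of $\gamma_0$ is used in a transparent way. Your approach, once corrected, would give the result in one shot without iteration, but at the cost of a more delicate computation of $L$ on truncated powers and a less explicit dependence of $\gamma_0$ on the ellipticity constants.
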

\begin{proof} The idea of the proof is
to use a barrier argument at the different scales.
For the reader's convenience, we split the proof into three steps.\smallskip

\noindent{\em Step 1.} We prove the following statement. {\em Assume that $L\bar w  \le - \bar w$ in $B_1$ and
\begin{equation}\label{impro}
0\le \bar w \le 2^{\gamma_0j}\quad \mbox{in } B_{2^j}
\end{equation}
for all $j \ge 0$. Then, \eqref{impro} holds also for $j=-1$.}\smallskip

For this, we take~$\eta\in C^\infty_0(B_{3/4})$ 
radially nonincreasing, with $\eta= 1$ in $B_{1/2}$.
Let also~$\gamma_0\in (0,1)$, to be taken appropriately small, and set~$h_0 := 1-2^{-\gamma_0}>0$.
We define the function
\[\phi: = (1-h_0\eta) \chi_{B_1} + \sum_{j=1}^{\infty} 2^{\gamma_0j} \chi_{B_{2^j}\setminus B_{2^{j-1}}}.\]
We observe that $\phi=1-h_0\eta$ in~$B_1$
and~$\phi=2^{\gamma_0j}$ in~${B_{2^j}\setminus B_{2^{j-1}}}$ for any~$j\ge1$.
As a consequence, for any~$x\in B_{3/4}$,
\begin{eqnarray*}
-L\phi(x) &=& \int_{B_1} \frac{(1-h_0\eta)(z)-(1-h_0\eta)(x)}{|z-x|^{n+s}}\,\mu\left( \frac{z-x}{|z-x|} \right)\,dz\\&&\qquad
+\sum_{j=1}^{+\infty}
\int_{{B_{2^j}\setminus B_{2^{j-1}}}} \frac{2^{\gamma_0j}-(1-h_0\eta)(x)}{|z-x|^{n+s}}\,\mu\left( \frac{z-x}{|z-x|} \right)\,dz
\\ &\le& h_0\left[
\left| 
\int_{B_1} \frac{\eta(x)-\eta(z)}{|z-x|^{n+s}}\,\mu\left( \frac{z-x}{|z-x|} \right)\,dz\right|+\sum_{j=1}^{+\infty}\left|
\int_{{B_{2^j}\setminus B_{2^{j-1}}}} \frac{2^{\gamma_0j}-1-h_0}{|z-x|^{n+s}}\,\mu\left( \frac{z-x}{|z-x|} \right)\,dz\right| \right]
\\ 
&\le& Ch_0+C \sum_{1\le j\le\gamma_0^{-1/3}}(2^{\gamma_0j}-1)+C 
\sum_{ j\ge\gamma_0^{-1/3}}\frac{2^{\gamma_0j}}{2^{j(1+s)}}\\
&\le& Ch_0+C\frac{ 2^{\gamma_0^{2/3}}-1}{\gamma_0^{1/3}}+
\frac{C}{2^{\frac{1+s}{2\gamma_0^{1/3}}}},
\end{eqnarray*}
with~$C>0$ possibly varying from line to line.
In particular, when~$\gamma_0$ (and so~$h_0$) is small, we have that~$-L\phi\le 1/2 \le \phi$ in~$B_{3/4}$.

Since also~$\phi\ge\bar w$ outside~$B_{3/4}$,
using the maximum principle we have that $\bar w \le \phi$ in $B_{3/4}$.
Consequently, $\bar w \le 1-h_0= 2^{-\gamma_0}$ in $B_{1/2}$.
This completes the proof of the statement in {\em Step 1}. 
\smallskip

\noindent{\em Step 2.} Now we prove the following statement. {\em 
Let $\tilde w$ be such that $L\tilde w \le -\tilde w$ in $B_{\tilde R}$, where $\tilde R\ge1$. Suppose that $0\le \tilde w\le 2$ in all of $\R^n$, then, for any~$\tilde\rho\in\left[\frac{1}{2},\,\tilde R\right)$, we have
\[ 0\le \tilde w \le C\,\left(\frac{\tilde\rho}{\tilde R}\right)^{\gamma_0}\quad \mbox{in }B_{\tilde\rho},\]
for some~$C$, $\gamma_0>0$.
}\smallskip

The proof of this claim
is an iteration of {\em Step 1}. Namely, we take~$N\in\N$ such
that~$2^N\le\tilde R<2^{N+1}$. For any~$i\in\N$, $i\in[1,\,N+1]$,
we set
\begin{equation}\label{PAL0} \bar w_i(x) = 2^{(i-1)\gamma_0-1}\,\tilde w(2^{N-i+1}x).\end{equation}
Notice that, by construction,
\begin{equation}\label{PAL2}
{\mbox{$
L\bar w_i\le -2^{(N-i+1)s}\,\bar w_i\le -\bar w_i$ in~$B_{2^{i-1}}\supset B_1$}}
\end{equation}
and, if~$i\in\N$, $i\in[1,\,N]$,
\begin{equation}\label{PAL3} \bar w_{i+1}(x)=2^{\gamma_0}\,\bar w_i(x/2)
.\end{equation}
We claim that
\begin{equation}\label{PAL}
{\mbox{for any~$0\le j\le i-1$, we have that $\bar w_i \le 2^{(j-1)\gamma_0}$ in $B_{2^{j-1}}$.}}
\end{equation}
The proof of \eqref{PAL} is by induction. First, we observe that, for any~$j\ge0$,
in~$B_{2^j}$ we have that
$$ \bar w_1\le 2^{-1}\,\sup_{\R^n}\tilde w\le 1\le 2^j.$$
{F}rom this and~\eqref{PAL2}, we can use {\em Step 1} with~$\bar w:=\bar w_1$ and find that
$ \bar w_1 \le 2^{-\gamma_0}$ in~$B_{1/2}$.
This is \eqref{PAL} when~$i=1$.

Now, we suppose that~\eqref{PAL} holds true for the index~$i\in[1,\,N]$, and we prove it for
the index~$i+1$. To this aim, we claim that, for any~$j\ge0$,
\begin{equation}\label{PAL4}
{\mbox{$\bar w_{i+1}\le 2^{\gamma_0 j}$
in $B_{2^j}$.}}
\end{equation}
To check this, we distinguish two cases. If~$j\ge i$, then we recall~\eqref{PAL0} and we see that
$$ \sup_{B_{2^j}} \bar w_{i+1} \le 2^{i\gamma_0-1}\,\sup_{\R^n} \tilde w\le
2^{i\gamma_0}\le 2^{j\gamma_0},$$
as desired. If instead~$j\le i-1$, then we exploit~\eqref{PAL} with index~$i$
together with~\eqref{PAL3} and we obtain
$$ \sup_{B_{2^j}} \bar w_{i+1}=2^{\gamma_0}\,
\sup_{B_{2^{j-1}}} \bar w_i \le 2^{\gamma_0}\cdot
2^{(j-1)\gamma_0}= 2^{j\gamma_0}.$$ 
This proves~\eqref{PAL4}. 

So, by~\eqref{PAL2} and~\eqref{PAL4}, we can use {\em Step 1} with~$\bar w:=\bar w_{i+1}$
and conclude that~$\bar w_{i+1}\le 2^{-\gamma_0}$ in~$B_{1/2}$.
This inequality and~\eqref{PAL4} imply that
\begin{equation*}
{\mbox{for any~$0\le j\le i$, we have that $\bar w_{i+1} \le 2^{(j-1)\gamma_0}$ in $B_{2^{j-1}}$,}}
\end{equation*}
that is~\eqref{PAL} for the index~$i+1$, as desired.
This completes the inductive proof of~\eqref{PAL}.

Hence, using the notation~$m:=i-j$, we deduce from~\eqref{PAL} that
\begin{equation}\label{0a01q2} \sup_{B_{2^{N-m}}} \tilde w\le 2^{1-m\gamma_0},\end{equation}
for any~$m\in\Z$ with~$m\le N+1$.

Now we take~$M\in\Z$ such that~$2^{-M-1}\le2^{-N}\tilde\rho<2^{-M}$.
Notice that
$$ \frac12\le \tilde\rho\le 2^{N-M},$$
hence~$M\le N+1$.
Then, we can apply~\eqref{0a01q2} with~$m:= M$ and we obtain that
$$ \sup_{B_{\tilde\rho}} \tilde w\le 
\sup_{B_{2^{N-M}}} \tilde w=
\le
2^{1-M\gamma_0} = \frac{2^{1+2\gamma_0} \cdot 2^{(N-M-1)\gamma_0} }{2^{(N+1)\gamma_0}}\le 
\frac{2^{1+2\gamma_0} \cdot \tilde\rho^{\gamma_0} }{\tilde R^{\gamma_0}}.$$
This establishes the claim in {\em Step 2}.\smallskip

\noindent{\em Step 3.} Now we complete the proof of Lemma~\ref{lemgamma0} scaling the statement proven in  {\em Step 2}. 
To this aim,
we take~$w$ as in the statement of Lemma~\ref{lemgamma0} and~$p\in B_1$.
We define~$\tilde R:= (R-1)k^{1/s}$ and
$$ \tilde w(x):= w\left( p+\frac{x}{k^{1/s}}\right).$$
Notice that~$\tilde R\ge k^{1/s}\ge1$. Furthermore, for any~$x\in B_{\tilde R}$ we have that
$$ \left| p+\frac{x}{k^{1/s}}\right|\le
|p|+\frac{|x|}{k^{1/s}}\le 1+\frac{\tilde R}{k^{1/s}}=R,$$
and therefore, for any~$x\in B_{\tilde R}$,
$$ L\tilde w(x)= \frac1{k}\, Lw\left( p+\frac{x}{k^{1/s}}\right)\le -w\left( p+\frac{x}{k^{1/s}}\right)=-\tilde w(x).$$
So, we can use {\em Step~2} with~$\tilde \rho:=1/2$ and obtain that
$$ w(p)=\tilde w(0)\le\sup_{B_{1/2}} \tilde w\le \frac{C}{(2\tilde R)^{\gamma_0}}=
\frac{C}{(2(R-1)k^{1/s})^{\gamma_0}}\le
\frac{C}{(Rk^{1/s})^{\gamma_0}}
,$$ which is the desired result.
\end{proof}

As a consequence of the previous preliminary result, we have:

\begin{lemma}\label{lemdecay}
Let $R\ge2$ and~$\eps\in(0,1]$. Let~$u:\R^n\to[-1,1]$ be a solution of $Lu = \eps^{-s} f(u)$ in $\R^n$. Then, 
if~$\eps$ is sufficiently small,
\[ u(x)\ge 1- C \,\left(\frac\eps{R}\right)^{\gamma_0} \quad \mbox{whenever}\quad B_R(x)\subset \{u\ge 1-\kappa\}\]
and 
\[ u(x)\le -1+ C \,\left(\frac\eps{R}\right)^{\gamma_0} \quad \mbox{whenever}\quad B_R(x)\subset \{u\le -1+\kappa\},\]
for some~$C$, $\gamma_0>0$.

In particular, for $n=1$, the profile $\phi_0$ satisfies 
\begin{equation}\label{DEC}
\big|\phi_0-(-1)| \le C_{f} |x|^{-\gamma_0} \  \mbox{in  }(-\infty,-1]
\qquad \mbox{and}\qquad \big|\phi_0-1| \le C_{f} |x|^{-\gamma_0} \  \mbox{in  }[1,+\infty).
\end{equation}
\end{lemma}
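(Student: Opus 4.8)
The plan is to deduce Lemma~\ref{lemdecay} as a direct consequence of Lemma~\ref{lemgamma0}, by the usual trick of converting the semilinear information near the wells into a linear differential inequality for the deficit $w := 1-u$ (respectively $w := 1+u$). First I would set $w := 1-u$ and observe that if $B_R(x)\subset\{u\ge 1-\kappa\}$ then $0\le w\le \kappa\le 2$ on $B_R(x)$; of course $0\le w\le 2$ holds on all of $\R^n$ since $u$ takes values in $[-1,1]$. On $B_R(x)$ we use assumption~\eqref{assumpf}: since $u\in[1-\kappa,1]$ there, the mean value theorem and $f(1)=0$ give $f(u)=f(1)-f(u)\cdot(-1)$, more precisely $f(u) = f(u)-f(1) = f'(\zeta)(u-1) = -f'(\zeta)\,w$ for some $\zeta\in[1-\kappa,1]$, and by \eqref{assumpf} we have $-f'(\zeta)> c_\kappa$, hence $f(u)\ge c_\kappa\,w$. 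Since $Lu = \eps^{-s}f(u)$ and $L(1-u) = -Lu$ (the operator $L$ annihilates constants, by \eqref{formL}), we obtain
\[
 L w = -\eps^{-s} f(u) \le -\eps^{-s} c_\kappa\, w \qquad\mbox{in } B_R(x).
\]

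Next I would rescale to normalize the constant in front of $w$. Set $k := \eps^{-s} c_\kappa$ and $v(y) := w(x + \varepsilon\,c_\kappa^{-1/s}\, y)$; then $L v(y) = \varepsilon^{s} c_\kappa^{-1} \, (Lw)(x+\varepsilon c_\kappa^{-1/s} y) \le -\,v(y)$ on the ball $B_{\tilde R}$ with $\tilde R := c_\kappa^{1/s}\,R/\varepsilon$, and $0\le v\le 2$ everywhere. Provided $\varepsilon$ is small enough that $\tilde R\ge 2$ (which is where the hypothesis ``$\eps$ sufficiently small'' enters, together with $R\ge 2$), Lemma~\ref{lemgamma0} applied with $k=1$ gives $v\le C\tilde R^{-\gamma_0}$ on $B_1$, hence in particular
\[
 w(x) = v(0) \le \frac{C}{\tilde R^{\gamma_0}} = \frac{C}{(c_\kappa^{1/s} R/\varepsilon)^{\gamma_0}} \le C\left(\frac{\varepsilon}{R}\right)^{\gamma_0},
\]
after absorbing the harmless constant $c_\kappa^{-\gamma_0/s}$ into $C$. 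This is exactly $u(x)\ge 1 - C(\varepsilon/R)^{\gamma_0}$. The case $B_R(x)\subset\{u\le -1+\kappa\}$ is symmetric: apply the same argument to $w:=1+u$, using $f(-1)=0$ and $f'<-c_\kappa$ on $[-1,-1+\kappa]$ so that $f(u) = f(u)-f(-1) = f'(\zeta)(u+1) \le -c_\kappa(u+1) = -c_\kappa w$, hence $Lw = \eps^{-s}f(u)\le -\eps^{-s}c_\kappa w$, and conclude as before.

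Finally, for the statement~\eqref{DEC} about the one-dimensional layer $\phi_0$ from~\eqref{existslayer}, I would apply the just-proved estimate in dimension $n=1$ with $\eps = 1$: since $\phi_0$ is increasing with $\phi_0(\pm\infty)=\pm 1$ and $\phi_0(0)=0$, there is some $x_\kappa>0$ (depending only on $f$ through $l_\kappa$, or simply on $\kappa$) such that $\phi_0\ge 1-\kappa$ on $[x_\kappa,\infty)$. Then for any $x\ge 2x_\kappa$, say, the interval $B_{x/2}(x)=(x/2,3x/2)$ lies inside $\{\phi_0\ge 1-\kappa\}$, so the estimate yields $1-\phi_0(x)\le C(1/(x/2))^{\gamma_0}\le C_f\,x^{-\gamma_0}$; adjusting $C_f$ covers the remaining bounded range $[1,2x_\kappa]$ trivially since $1-\phi_0<1\le C_f x^{-\gamma_0}$ there for $C_f$ large. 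The bound near $-\infty$ is symmetric. I expect no serious obstacle here: the only points requiring a little care are (i) checking that $L$ kills constants and that therefore $L(1-u)=-Lu$, which is immediate from the form \eqref{formL}, and (ii) tracking that ``$\varepsilon$ small'' plus ``$R\ge 2$'' indeed forces $\tilde R\ge 2$ so that Lemma~\ref{lemgamma0} applies — everything else is bookkeeping of constants.
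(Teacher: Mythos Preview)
Your proof is correct and follows essentially the same route as the paper's: both set $w=1-u$, use \eqref{assumpf} to get $Lw\le -\eps^{-s}c_\kappa w$ on the good ball, and invoke Lemma~\ref{lemgamma0}. The only cosmetic difference is that the paper applies Lemma~\ref{lemgamma0} directly with $k=\eps^{-s}c_\kappa$, whereas you rescale space first to reduce to $k=1$; these are equivalent. One small remark: when you deduce \eqref{DEC} by taking $\eps=1$, note that the hypothesis ``$\eps$ sufficiently small'' in the statement may fail, but your own argument shows the real requirement is $\tilde R=c_\kappa^{1/s}R/\eps\ge 2$, which for $\phi_0$ is ensured by taking $R=x/2$ with $|x|$ large --- so the deduction is fine once phrased that way.
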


\begin{proof}
Using assumption \eqref{assumpf} we have
\[ -f(u) =f(1)-f(u)\le -c_\kappa (1-u)\quad\mbox{for }  u \ge 1-\kappa\]
and therefore
\[
L(1-u) =-Lu = -\eps^{-s}f(u) \le -\eps^{-s} c_\kappa(1-u)\quad\mbox{in } \{u \ge 1-\kappa\}.
\]
Thus, from Lemma \ref{lemgamma0} with $w:=1-u$ and $k:=\eps^{-s}c_\kappa$ we obtain the desired
decay estimates. 
\end{proof}

\section{Improvement of oscillation for level sets of solutions}\label{SECT4}

The goal of this section is
to establish 
the following improvement of oscillation result for level sets,
which is one of the cornerstones of this paper. 
This result is crucial since it gives compactness of sequences of vertical rescaling of the level sets.

For fixed $\alpha\in(0,s)$, 
$m_0\in \N$ and $a>0$, let us introduce
\begin{equation}\label{kdia} k_a :=  
\left\lfloor  \frac{\log a}{\log(2^{-\alpha})}\right\rfloor-m_0 ,\quad \mbox{which belongs to $\N$ for $a$ small}.
\end{equation}
Notice that $k_a\uparrow +\infty$ as $a\downarrow0$,
and
\begin{equation}\label{a2ka}
\frac{1}{2} 2^{-\alpha m_0} 2^{-\alpha k_a}\le a \le 2^{-\alpha m_0} 2^{-\alpha k_a}.
\end{equation}

\begin{theorem} \label{harnack}
Assume that $L$ satisfies $\eqref{assumpL}$ and that $f$ satisfies \eqref{assumpf} and \eqref{existslayer}.
Then, given $\alpha\in(0,s)$ there exist $p_0\in(2,\infty)$,  $a_0\in(0,1/4)$, and $\eta_0 \in(0,1)$, depending only on $\alpha$, $m_0$, and on the universal constants, such that the following statement holds.

\vspace{6pt}

Let $a\in(0,a_0)$ and $\eps \in (0, a^{p_0})$. 
Let $u: \R^n \rightarrow (-1,1)$ be a solution of $
Lu=\epsilon^{-s}  f(u)$ in $B'_{2^{k_a}}\times(-2^{k_a},2^{k_a})$ such that
\[ 
\{x_n \le -a 2^{j(1+\alpha)}\}\, \subset\, \{u\le -1+\kappa\} \,\subset \,\{u\le 1-\kappa\}  \,\subset \,\{x_n\le a 2^{j(1+\alpha)}\} \quad \mbox{in }B'_{2^j}\times (-2^{k_a}, 2^{k_a}) , 
\]
for $j= \{0,1,2,\dots, k_a\}$.

Then, either 
\[ 
\left\{x_n \le - a (1-\eta_0) \right\} \,\subset \,\{u\le -1+\kappa\} \quad \mbox{in }B'_{1/2}\times (-2^{k_a}, 2^{k_a}) \]
or
\[ \{u\le 1-\kappa\}  \,\subset\, \left\{x_n \le a (1-\eta_0) \right\} \quad \mbox{in }B'_{1/2}\times (-2^{k_a}, 2^{k_a}).
\]
\end{theorem}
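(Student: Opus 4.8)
The strategy is the classical dichotomy argument for improvement of oscillation, adapted to the nonlocal semilinear setting. The hypothesis traps the transition layer of $u$ between two parallel hyperplanes $\{x_n = \pm a\}$ in $B'_1 \times (-2^{k_a},2^{k_a})$, and between $\{x_n = \pm a 2^{j(1+\alpha)}\}$ at the dyadic scales $2^j$, $0 \le j \le k_a$; the growth exponent $1+\alpha$ with $\alpha\in(0,s)$ is exactly tuned so that the curved barriers built in Section~\ref{SECTION2} (the functions $\phi^b$ rearranged along the level sets of $d_b$, with $\Gamma_b = \{x_n \ge b\,\xi(x')\}$) are admissible comparison functions. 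The plan is: first establish a \emph{measure-theoretic alternative} at the unit scale --- roughly, the layer either spends ``most'' of its time in $\{x_n < 0\}$ or ``most'' of its time in $\{x_n > 0\}$ inside a fixed neighborhood of the origin --- and then, in whichever case occurs, slide a curved barrier $\phi^b_{d_{b}}(\pm x_n + {\rm const})$ down from above (or push one up from below) to conclude that the layer is actually trapped below $\{x_n = a(1-\eta_0)\}$ (resp. above $\{x_n = -a(1-\eta_0)\}$).

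First I would set up the barrier. Fix a small $b$ comparable to $a$ (so that $b \in (0,b_0)$ for $a$ small) and consider the competitor $v(x) := \phi^b(-x_n + c + b\,\xi(x'))$-type function built from Lemma~\ref{lemdeform}; by that lemma, $0 \le Lv - f_\phi(v) \le C_0(b+\delta)$ in $B_1$, so $v$ is a genuine supersolution of $Lu = \eps^{-s}f(u)$ up to an error $C_0(b+\delta)$ --- and here is the first place the hypothesis $\eps \le a^{p_0}$ with $p_0$ large is used: by the decay estimate Lemma~\ref{lemdecay}, $u \ge 1 - C(\eps/R)^{\gamma_0}$ wherever $B_R \subset \{u \ge 1-\kappa\}$, and similarly from below, so on the far region $x_n \gg a 2^{j(1+\alpha)}$ the solution $u$ is within $C(\eps/\text{dist})^{\gamma_0}$ of $\pm 1$; choosing $p_0$ large forces this error (and the $\delta$ in the admissibility of $\phi^b$, which we take $\delta \sim \eps^{\gamma_0/(1+\alpha)}$ or similar) to be negligible compared to the gain $\eta_0 a$ we want. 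The dyadic trapping at all scales $2^j \le 2^{k_a}$ is what lets us control $u$ on the \emph{entire} slab $B'_{2^{k_a}} \times (-2^{k_a},2^{k_a})$: outside $B'_{2^j}$ the layer is confined to $|x_n| \le a 2^{j(1+\alpha)} \le b\,\xi$ for suitable $b$ (by the algebraic growth $|x'|^{1+\alpha}$ of $\xi$), so the curved barrier lies on the correct side of $u$ on all of $\R^n$, not just near the origin, and the nonlocal comparison principle applies.

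Next comes the dichotomy itself. I would argue by contradiction: suppose neither conclusion holds, i.e. there is a point in $B'_{1/2}\times(-2^{k_a},2^{k_a})$ where $u > -1+\kappa$ but $x_n < -a(1-\eta_0)$ (violating the first alternative) \emph{and} a point where $u > 1-\kappa$ with $x_n > a(1-\eta_0)$ (violating the second). The first says the $\{u = -1+\kappa\}$ level set dips below $-a(1-\eta_0)$ somewhere near the origin; the second says the $\{u = 1-\kappa\}$ level set rises above $a(1-\eta_0)$ somewhere near the origin. Now translate the curved supersolution $v$ vertically downward until it first touches $u$ from above: the touching cannot occur on the far region (where $v$ is strictly above $u$ by the decay estimate as above, with room to spare once $p_0$ is large), nor in the bulk $\{u \approx 1\}$ or $\{u \approx -1\}$ region away from the transition (again by Lemma~\ref{lemdecay} and the slow algebraic decay, which is why $\phi_0$ itself satisfies $|\phi_0 \mp 1| \le C|x|^{-\gamma_0}$, and why the barrier has enough ``thickness'' to absorb the error); so it must occur at a transition point, contradicting the strict supersolution inequality $Lv - f_\phi(v) \ge 0 > L u - \eps^{-s}f(u)$ at the contact point (using the layer-cake formula of Lemma~\ref{layer cake} and that $f = f_\phi + (f - f_\phi)$ with the discrepancy controlled via \eqref{assumpf} near $\pm 1$). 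Quantifying ``how far down'' $v$ can be slid before touching --- which is governed by the curvature $Cb$ of the level sets of $d_b$ near the origin (Lemma~\ref{lemcurvbounds}) and the position of $\phi_0$'s transition, i.e. the constant $l_\kappa$ --- gives precisely the gain $\eta_0 a$. Symmetrically, pushing a curved \emph{sub}solution up from below against the $\{u = 1-\kappa\}$ side. Having the layer pinched from both the $-1+\kappa$ side (pushed up by $\eta_0 a$) and the $1-\kappa$ side (pushed down by $\eta_0 a$) simultaneously is impossible once $\eta_0$ is chosen so that $2\eta_0 a$ exceeds the thickness $2 l_\kappa (b/\text{something})$ of the region where the barrier's profile lies in $(-1+\kappa, 1-\kappa)$ --- this is the contradiction, and unwinding it yields exactly the stated alternative (whichever side the layer is ``really'' on).

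\textbf{Main obstacle.} The delicate point is the interplay between the \emph{slow algebraic decay} $|\phi_0 \mp 1| \sim |x|^{-\gamma_0}$ and the requirement that the barrier be a valid comparison function on the \emph{entire} unbounded slab. In Savin's local case the exponential decay of the $1$D profile makes the far-field error harmless under merely $\eps \le ca$; here that error only decays polynomially, so one must (i) quantify very carefully, via Lemma~\ref{lemdecay} applied at the correct scales, that the error terms are $\le C(\eps/\eps)^{?}\cdots$ --- more precisely $\lesssim \eps^{\gamma_0}$ against the target gain $\sim a$ --- which forces the unavoidable hypothesis $\eps \le a^{p_0}$ with $p_0 = p_0(\gamma_0,\alpha,\dots)$ large, and (ii) verify that the curved barrier, whose level sets grow like $b|x'|^{1+\alpha}$, genuinely stays above (below) $u$ at \emph{every} dyadic annulus $B'_{2^{j+1}}\setminus B'_{2^j}$ for $0\le j\le k_a$, using the hypothesis's scale-$j$ trapping by $a 2^{j(1+\alpha)}$ matched against $b\,\xi(2^j) \sim b\, 2^{j(1+\alpha)}$ --- this matching is exactly why the exponent in $\xi$ is $1+\alpha$ and why $k_a$ is defined as it is in \eqref{kdia}. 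Getting all these constants to line up (the barrier admissibility threshold $b_0$, the decay constant $\gamma_0$, the profile width $l_\kappa$, the curvature constant $C_0 b$, and the final gain $\eta_0$) into a consistent choice of $p_0, a_0, \eta_0$ is the technical heart of the argument.
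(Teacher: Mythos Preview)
Your proposal identifies the right ingredients --- the curved barrier $\phi^b$ from Lemma~\ref{lemdeform}, the decay estimates of Lemma~\ref{lemdecay}, the role of $\eps\le a^{p_0}$ in absorbing the slow algebraic tails, and the matching of the $|x'|^{1+\alpha}$ growth of $\xi$ against the dyadic trapping hypothesis --- and your ``main obstacle'' paragraph is accurate. But the logical skeleton of your dichotomy argument has a gap.

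The paper does \emph{not} argue by contradiction assuming both alternatives fail and pushing barriers from both sides. Instead, it first decides which alternative to prove by the sign of $\int_{B_2} u\,dx$: if this integral is nonnegative one proves the second alternative directly (this is Proposition~\ref{propimpliesharnack}), and if it is negative one applies the same proposition to $-u$. The integral condition is not a soft ``measure-theoretic alternative'' that one establishes and then forgets; it is the quantitative input that makes the sliding argument close. Concretely, after translating the barrier $\phi^b(\cdot - h e_n)$ down and setting $w:=\bar u - \phi^b(\cdot - h e_n)$, one has $Lw \ge -C(b+\eps^{\gamma_0})$ only in $\{w\le 0\}$, with $w\ge -C(\eps/b)^{\gamma_0}$ in the bulk and $w\ge -2$ far away. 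To conclude $w>0$ one invokes the quantitative positivity Lemma~\ref{maxpr}, and that lemma \emph{requires} a lower bound $\int_{B_2} w_+ \ge c>0$. This lower bound is supplied precisely by $\int_{B_2} u \ge 0$ (combined with the fact that the translated barrier sits well below zero). Your sketch replaces this with a ``touching at a transition point, contradicting the strict supersolution inequality'' argument, but that fails as written: the barrier is only an \emph{approximate} supersolution with error $C(b+\eps^{\gamma_0})$, so at a touching point you only get $L(\bar u - \phi^b) \ge -C(b+\eps^{\gamma_0})$, not a strict sign, and there is nothing to contradict without the integral positivity feeding into Lemma~\ref{maxpr}.

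There is also a minor sign slip: the negation of the second alternative is that some point with $u\le 1-\kappa$ has $x_n > a(1-\eta_0)$, not $u>1-\kappa$.
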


We will deduce Theorem \ref{harnack} from the following result:

\begin{proposition}\label{propimpliesharnack}
Assume that $L$ satisfies $\eqref{assumpL}$ and that $f$ satisfies \eqref{assumpf} and \eqref{existslayer}.
Then, given $\alpha\in(0,s)$ there exist $p_0\in(2,\infty)$,  $a_0\in(0,1/4)$, and $\eta_0 \in(0,1)$, depending only on $\alpha$, $m_0$, and on the universal constants, such that the following statement holds.

\vspace{6pt}

Let $a\in(0,a_0)$ and $\eps \in (0, a^{p_0})$. 
Let $u: \R^n \rightarrow (-1,1)$ be a solution of~$
Lu=\epsilon^{-s}  f(u)$
in $B'_{2^{k_a}}\times(-2^{k_a},2^{k_a})$ such that
\begin{equation}\label{01L:A}
\{u\le 1-\kappa\}  \,\subset \,
\left\{x_n \le  a 2^{j(1+\alpha )}\right\}
\qquad{\mbox{in }}\;B'_{2^j}\times(-2^{k_a},2^{k_a})
\end{equation}
for  $j= \{0,1,2,\dots, k_a\}$, and 
\begin{equation}\label{signchoice}
 \int_{B_2} u\,dx\ge0.
\end{equation}
Then, we have that
\begin{equation}\label{conclusionPropHar}
 \{u\le 1-\kappa\}  \,\subset\, \left\{ x_n \le a (1-\eta_0) \right\} \quad \mbox{in }B'_{1/2}\times(-2^{k_a},2^{k_a}).
\end{equation}
\end{proposition}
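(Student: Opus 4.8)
The plan is to run a sliding-barrier argument built on the approximate solutions of Section \ref{SECTION2} and the decay estimates of Section \ref{SECT3}. First I would normalise by setting $\phi:=\phi_0(\cdot/\eps)$, so that $\mathcal L\phi=\eps^{-s}f(\phi)$ and, in the notation of Lemma \ref{lemdeform}, $f_\phi=\eps^{-s}f$; by the decay estimate \eqref{DEC} the hypothesis \eqref{imageassumption} holds with $\delta=\delta(\eps):=C(\rho'\eps)^{\gamma_0}$, which tends to $0$ as $\eps\downarrow0$. Hence Lemma \ref{lemdeform} — applied also on the dyadic balls $B_{2^j}$, on which after rescaling the effective parameter is $\eps/2^j$, and in the mirror version producing sub-solutions whose level sets are concave graphs (curving the level sets downward, into the convex complement) — provides barriers $\phi^b$ (resp.\ $\psi^b$) which are super- (resp.\ sub-) solutions of $Lv=\eps^{-s}f(v)$ up to an error $\le C_0(b+\delta(\eps))$, with level sets that are convex (resp.\ concave) graphs of curvature $\le C_0 b$ by Lemma \ref{lemcurvbounds}; a further additive correction of size $O(\eps^s(b+\delta(\eps)))$, active only where the barrier is $\eps$-close to $\pm1$, turns these into \emph{strict} sub/supersolutions there. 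Using \eqref{01L:A}, \eqref{a2ka} and Lemma \ref{lemdecay} one gets that $\{u\le1-\kappa\}$ stays below the envelope $E_a:=\{x_n=2^{1+\alpha}a(1+|x'|^{1+\alpha})\}$ in the whole cylinder $B'_{2^{k_a}}\times(-2^{k_a},2^{k_a})$, that $u$ is within $\delta(\eps)$ of $+1$ at distance $\ge1$ above $E_a$, and that the transition set $\{-1+\kappa<u<1-\kappa\}$ is contained in a slab of vertical thickness of order $\eps$ around $\{u=1-\kappa\}$; combining this with the comparison barriers one also gets that, over $B'_1$, the level set $\{u=1-\kappa\}$ is a graph $x_n=g(x')$ with $g\le a$ and with small (universal) Lipschitz and one-sided curvature bounds, so that \eqref{conclusionPropHar} reduces to $g\le a(1-\eta_0)$ on $B'_{1/2}$.

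Up to the errors just listed, $u$ agrees in $B_2$ with $\chi_{\{x_n>g(x')\}}-\chi_{\{x_n<g(x')\}}$, so \eqref{signchoice} reads $0\le\int_{B_2}u= -2\int_{B'_2}g(x')\,dx'+O(\eps+\delta(\eps))$, that is $\fint_{B'_2}g\le C(\eps+\delta(\eps))$. Combined with the Lipschitz and one-sided curvature control on $g$ and the dyadic bound $g\le a2^{j(1+\alpha)}$ on $B'_{2^j}$, this forces $g$ to be small throughout a neighbourhood of $B'_1$, and in particular allows me to produce a concave graph $v(x')$ of curvature $\le C_0 b$ (with $b=\nu a$, $\nu$ a small universal constant) lying below $g$ on $B'_2$ with $\max_{B'_2}v\le C(\eps+\delta(\eps))+C_0\nu a\le a(1-\eta_0)$ once $\nu,a_0$ are small. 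Taking $\psi^b$ to be the strict subsolution whose $\{=1-\kappa\}$ level set is this $v$, I would check $\psi^b\le u$ in the working cylinder: below $v$ and above $E_a$ this is immediate from the thin-layer picture and $v\le g$, while on the remaining exterior region the prescribed ordering may fail only on a set whose nonlocal contribution is dominated by the large coercive term $\eps^{-s}c_\kappa$ (this is where $\eps\ll a$ enters). Since $\psi^b$ is a strict subsolution wherever it is $\ge1-\kappa$, the strong maximum principle for $L+\eps^{-s}c_\kappa$ (a consequence of \eqref{assumpf} and Lemma \ref{lemgamma0}) together with $v\le g$ keeps $\psi^b\le u$, whence $u\ge\psi^b>1-\kappa$ on $\{x_n>v(x')+C\eps\}$; for $x'\in B'_{1/2}$ this set contains $\{x_n>a(1-\eta_0)\}$, which is exactly \eqref{conclusionPropHar}.

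The thresholds are then fixed at the end: $\nu$ (hence $b_0$) small for Lemma \ref{lemdeform} and Lemma \ref{lemcurvbounds}, $a_0$ and $\eta_0$ small, and — crucially — $p_0$ large, so that $\eps\le a^{p_0}$ makes both $\delta(\eps)=C(\rho'\eps)^{\gamma_0}$ and the transition thickness (of order $\eps$) much smaller than $\eta_0 a$; this requirement is dictated by the slow algebraic decay rate $|x|^{-\gamma_0}$ of $\phi_0$ in \eqref{DEC}, and is precisely why $\eps\le a^{p_0}$ cannot be weakened to $\eps\le ca$. I expect the genuinely delicate points to be: (i) the nonlocal comparison, where one may invoke the maximum principle only where the nonlinearity is monotone — i.e.\ away from the thin transition layer — so the tails coming from that layer and from the region where the prescribed exterior ordering fails must all be absorbed into $\eps^{-s}c_\kappa$; and (ii) establishing enough a priori regularity of $\{u=1-\kappa\}$ (small Lipschitz and one-sided curvature bounds, via the barriers of Section \ref{SECTION2}) to convert the purely averaged information $\fint_{B'_2}g\le C(\eps+\delta(\eps))$ coming from \eqref{signchoice} into the pointwise bound $g\le a(1-\eta_0)$ on $B'_{1/2}$.
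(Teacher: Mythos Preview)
There is a genuine gap. The hypothesis \eqref{01L:A} is \emph{one-sided}: it only says $\{u\le 1-\kappa\}\subset\{x_n\le a2^{j(1+\alpha)}\}$, i.e.\ above these graphs $u>1-\kappa$. It gives \emph{no} information about where $\{u\le -1+\kappa\}$ sits, so your two key reductions both fail. First, you cannot conclude that the transition set $\{-1+\kappa<u<1-\kappa\}$ is a thin slab of order~$\eps$: below the putative graph $g$, the solution $u$ could be anywhere in $(-1,1-\kappa]$. Second, and for the same reason, the identity $\int_{B_2}u\approx -2\int_{B'_2}g$ is not available, so \eqref{signchoice} does \emph{not} yield $\fint_{B'_2}g\le C(\eps+\delta(\eps))$. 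Your argument hinges on converting the averaged condition \eqref{signchoice} into pointwise control on a level-set graph, and both the ``graph'' part and the ``averaged $\Rightarrow$ pointwise'' part rest on this missing two-sided control. (The claim that $\{u=1-\kappa\}$ is a Lipschitz graph with curvature bounds is also circular: it is essentially the output of the whole flatness program, cf.\ Corollary~\ref{cor:globalCalpha}, not something the barriers of Section~\ref{SECTION2} give you a priori.)

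The paper's proof avoids all of this by a direct sliding argument. One fixes $z'\in B'_{1/2}$, translates, and slides the single barrier $\phi^b(\,\cdot\,-he_n)$ (built from the convex graph $b\,\xi$ with $b=Ca$) upward from $h$ large. The sign condition \eqref{signchoice} is used only in the crude form: when $h$ is large, $\phi^b(\,\cdot\,-he_n)\le -\tfrac12$ on $B_2$, so $\int_{B_2}(\bar u-\phi^b(\,\cdot\,-he_n))\ge\int_{B_2}\bar u+\tfrac12|B_2|\ge c>0$. This feeds the nonlocal weak Harnack/maximum principle (Lemma~\ref{maxpr}), which needs only a lower bound on $\int_{B_4}w_+$, smallness of $\int w_-(1+|y|)^{-n-s}$, and $Lw\ge -\theta$ on $\{w\le 0\}$; the last is obtained, via Lemma~\ref{approxsol}, only on the set where either $\bar u\ge 1-\kappa$ or $\phi^b(\,\cdot\,-he_n)\le -1+\kappa$, which is shown to exhaust $\{w\le 0\}\cap B_4$ using the one-sided hypothesis and the decay of $\phi_0$. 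One then argues by contradiction that the infimum $h_*$ of admissible $h$ satisfies $h_*<a(1-\eta)$, applying Lemma~\ref{maxpr} once more (here the tail estimate uses $\eps\le a^{p_0}$ and $2^{-sk_a}\ll a$). No graph regularity of any level set of $u$, and no lower control on $u$, is ever assumed.
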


For its use in the proof of Proposition~\ref{propimpliesharnack}, we recall the following maximum principle: 

\begin{lemma}\label{maxpr}
There exists~$\theta>0$, depending only on~$n$, $s$, 
$\lambda$ and~$\Lambda$, such that the following statement holds true. 

Let~$w\in C^2(B_4)$ satisfy 
$$ \begin{cases}
Lw\ge -\theta \quad {\mbox{in }}B_4\cap\{w\le0\},\\
\,\\
\displaystyle\int_{\R^n} w_-(y) (1+|y|)^{-n-s}\,dy \le\theta,\\ \,\\
\displaystyle\int_{B_4}w_+(y)\,dy\ge 1.
\end{cases} $$ 
Then~$w>0$ in~$B_2$. 
\end{lemma}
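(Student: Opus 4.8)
The plan is to prove Lemma~\ref{maxpr} by a barrier argument, constructing an explicit subsolution that is strictly negative in $B_2$ but which, if $w$ stayed nonpositive somewhere in $B_2$, would contradict the maximum principle together with the quantitative mass and smallness hypotheses. First I would fix a smooth radial function $\psi$ with $\psi>0$ on $B_2$, $\psi\le 0$ on $\R^n\setminus B_3$, $\psi\ge -M$ everywhere for some dimensional $M$, and $\psi\le 1$; a convenient choice is $\psi(x)=\eta(x)$ with $\eta\in C^\infty_0(B_3)$, $\eta\equiv 1$ on $B_2$, $0\le\eta\le 1$, so that $L\psi$ is bounded by a universal constant $C_\psi$ depending only on $n$, $s$, $\lambda$, $\Lambda$. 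The heart of the argument is the following claim: if $w$ satisfies the three displayed conditions with $\theta$ chosen small (to be fixed at the end), then $w\ge \sigma\psi$ in $\R^n$ for some universal $\sigma>0$, which immediately gives $w>0$ in $B_2$.

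To prove that claim I would run a sliding/maximum-principle scheme. Consider the largest $\sigma\ge 0$ such that $w\ge \sigma\psi$ in all of $\R^n$; such a finite $\sigma$ exists because $w$ is bounded below in $B_3$ (by continuity and the fact that $w_-$ has finite weighted mass) while $\psi>0$ there, and outside $B_3$ we have $\psi\le 0\le w_+$, so the inequality is automatic once we also control $w_-$ outside. The key point is to show $\sigma$ is bounded below by a universal constant. Suppose for contradiction $\sigma$ is small. At the optimal $\sigma$ there is a contact point $x_0\in\overline{B_3}$ with $w(x_0)=\sigma\psi(x_0)$; since outside $B_3$ we have $w\ge 0\ge\sigma\psi$ with the weighted $L^1$ bound on $w_-$ keeping things from touching at infinity, and since on $\partial B_3$ one can arrange $\psi\le 0$ so $w\ge 0>\sigma\psi$ there strictly unless $w$ itself vanishes, the contact point lies in the open set $B_3$, and in fact in $\{w\le 0\}$ is impossible by the first hypothesis combined with $L\psi$ bounded, so $x_0\in B_3\cap\{w>0\}\cap\{\sigma\psi>0\}\subset B_3$. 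Then $w-\sigma\psi$ has an interior minimum equal to $0$ at $x_0$, so
\[
L(w-\sigma\psi)(x_0) = \int_{\R^n}\frac{(w-\sigma\psi)(x_0)-(w-\sigma\psi)(y)}{|x_0-y|^{n+s}}\,\mu\!\left(\tfrac{x_0-y}{|x_0-y|}\right)dy \le 0,
\]
since the integrand is nonpositive. On the other hand $Lw(x_0)\ge -\theta$ (because $w(x_0)=\sigma\psi(x_0)$, and if $\sigma$ is small this is $\le 0$ on most of $B_3$; more carefully one splits into the cases $w(x_0)\le 0$, where the hypothesis applies directly, and $w(x_0)>0$, where one uses instead that the mass hypothesis $\int_{B_4}w_+\ge 1$ forces $w$ to be large somewhere, contradicting $w\le\sigma\psi\le\sigma$ on $B_3$ once $\sigma$ is small). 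Hence $L(w-\sigma\psi)(x_0)\ge -\theta - \sigma C_\psi$, and combining with the previous display gives a contradiction once we also exploit that the contact forces the nonlocal integral to be strictly positive by a definite universal amount.

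The quantitative gain comes from the mass conditions: because $\int_{B_4}w_+\ge 1$ while $\int_{\R^n}w_-(1+|y|)^{-n-s}\le\theta$, the function $w-\sigma\psi$ cannot be identically $0$ near $x_0$ and is in fact $\ge 1 - \sigma - (\text{tail})$ on a set of definite measure inside $B_4$; plugging this lower bound into the nonlocal integral at the contact point yields $L(w-\sigma\psi)(x_0)\ge c_0(1-\sigma) - C\theta$ for universal $c_0, C>0$. Choosing first $\sigma_0$ so that $c_0(1-\sigma_0)\ge 2C\theta_0$ and then $\theta=\theta_0$ small enough that $\theta_0 + \sigma_0 C_\psi < c_0(1-\sigma_0) - C\theta_0$, we contradict $L(w-\sigma\psi)(x_0)\le 0$ unless $\sigma\ge\sigma_0$. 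This establishes $w\ge\sigma_0\psi$ everywhere, hence $w\ge\sigma_0>0$ on $B_2$, completing the proof. The main obstacle I anticipate is handling the contact point carefully: one must rule out contact at infinity (this is exactly why the weighted $L^1$ bound on $w_-$ is imposed rather than mere nonnegativity of $w$ outside a ball) and rule out contact in $\{w>0\}$ where the sign condition $Lw\ge-\theta$ is not assumed — the latter is precisely where the mass normalization $\int_{B_4}w_+\ge 1$ is used to force $w$ to be large somewhere, so that a small $\sigma$ is impossible; making these two exclusions rigorous and uniform is the delicate part, whereas the barrier estimates for $L\psi$ and the nonlocal contact inequality are routine.
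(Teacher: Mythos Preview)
The paper does not actually prove this lemma: its entire proof reads ``See Lemma~6.2 in~\cite{caffa}.'' So there is no paper argument to compare against, only the cited reference (Caffarelli--Ros-Oton--Serra). Your attempt has the right architecture---touch $w$ from below with a barrier, evaluate the nonlocal operator at the contact point, and use the mass of $w_+$ in $B_4$ against the small tail of $w_-$---but as written it has two genuine gaps.

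\textbf{First gap: the sliding is ill-posed with your barrier.} You take $\psi=\eta\in C^\infty_0(B_3)$ with $0\le\eta\le1$, so $\psi\equiv 0$ on $\R^n\setminus B_3$. Then the condition $w\ge\sigma\psi$ on $\R^n$ forces $w\ge 0$ at every point where $\psi=0$. The hypotheses do \emph{not} give this: they only bound $\int w_-(y)(1+|y|)^{-n-s}\,dy$, and $w$ may well be strictly negative on a set of positive measure both inside and outside $B_3$. Hence the set $\{\sigma:w\ge\sigma\psi\text{ on }\R^n\}$ can be empty, and your ``largest $\sigma$'' need not exist. The sentence ``outside $B_3$ we have $\psi\le 0\le w_+$, so the inequality is automatic'' conflates $w$ with $w_+$. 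To make a sliding argument work you must either modify $w$ outside (e.g.\ replace it by $\max(w,0)$ there and absorb the resulting error in $Lw$ via the tail bound on $w_-$), or choose a barrier that tends to $-\infty$ outside a compact set so the inequality is genuinely automatic there.

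\textbf{Second gap: the mass hypothesis does not give a pointwise lower bound.} You claim that $\int_{B_4}w_+\ge 1$ forces $w-\sigma\psi\ge 1-\sigma-(\text{tail})$ on a set of definite measure. This is false: take $w_+=N\chi_A$ with $|A|=1/N$. What the hypothesis \emph{does} give---and what the correct argument uses---is an integral bound: for any $x_0\in B_3$ the kernel satisfies $\mu\,|x_0-y|^{-n-s}\ge c_0>0$ uniformly for $y\in B_4$, so
\[
\int_{B_4}w_+(y)\,\frac{\mu((x_0-y)/|x_0-y|)}{|x_0-y|^{n+s}}\,dy\ \ge\ c_0\int_{B_4}w_+\ \ge\ c_0.
\]
This is the quantity that enters $Lw(x_0)$, not any pointwise value of $w$.

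A repair along your lines: assume $w(z)\le 0$ for some $z\in B_2$; force an interior contact point $x_0\in B_3$ with $w(x_0)\le 0$ at which $w$ is touched from below by a $C^2$ function with Hessian of size $O(\epsilon)$ (for instance minimize $w+\epsilon\varphi$ over $\overline{B_3}$ with $\varphi$ vanishing on $B_2$ and blowing up near $\partial B_3$); then estimate
\[
Lw(x_0)\ \le\ C\epsilon\ +\ w(x_0)\!\!\int_{\R^n\setminus B_\rho(x_0)}\!\!K\ -\ \int_{B_4}w_+K\ +\ \int_{\R^n}w_-K\ \le\ C\epsilon - c_0 + C\theta,
\]
using $w(x_0)\le 0$ for the second term, the display above for the third, and the weighted tail bound for the fourth. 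For $\epsilon$ and $\theta$ small this is below $-\theta$, contradicting $Lw(x_0)\ge-\theta$.
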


\begin{proof} See Lemma 6.2 in~\cite{caffa}. 
\end{proof}

In order to prove Proposition~\ref{propimpliesharnack} (and so
Theorem \ref{harnack}),
we also need the following observation:

\begin{lemma}\label{approxsol}
Let $\phi := \phi_0(\,\cdot\,/\eps)$ 
and $\phi^b:= \phi\circ d_b$, where $d_b$ 
is defined in \eqref{defd} (see also \eqref{formulad}).

Then, 
\[ \big| L\phi^b - \eps^{-s} f(\phi^b) \big| \le C (b+ \eps^{\gamma_0})  \quad \mbox{in }B_4,\]
where $C>0$ is a universal constant
and~$\gamma_0>0$ is the constant given by Lemma~\ref{lemdecay}.
\end{lemma}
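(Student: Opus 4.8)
The plan is to compare $\phi^b$ with the auxiliary function $\tilde\phi$ constructed in the proof of Lemma~\ref{lemdeform}, exploiting the decay estimate \eqref{DEC} to absorb the error coming from the slow algebraic decay of $\phi_0$. Recall that $\phi = \phi_0(\cdot/\eps)$ solves $\mathcal L\phi = \eps^{-s}\mathcal L\phi_0(\cdot/\eps) = \eps^{-s} f(\phi)$ in $\R$ by scaling (since $\mathcal L$ has order $s$), so we are in the setting of Lemma~\ref{lemdeform} with $f_\phi = \eps^{-s} f$; the only subtlety is that Lemma~\ref{lemdeform} requires the image hypothesis \eqref{imageassumption}, namely $[-1+\delta,1-\delta]\subset \phi([-1/\rho',1/\rho'])$ for some small $\delta$. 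First I would use \eqref{DEC}: since $\phi(z) = \phi_0(z/\eps)$, we have $|\phi(z) - (\pm1)| \le C_f (|z|/\eps)^{-\gamma_0} = C_f \eps^{\gamma_0} |z|^{-\gamma_0}$ for $|z|\ge \eps$, so in particular $\phi(\pm 1/\rho') = \pm 1 + O(\eps^{\gamma_0} (\rho')^{\gamma_0})$. Hence the hypothesis \eqref{imageassumption} holds with $\delta := C\eps^{\gamma_0}$ for a universal $C$, provided $\eps$ is small enough.

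With that choice of $\delta$, Lemma~\ref{lemdeform} applies and gives directly
\[
0 \le L\phi^b - \eps^{-s} f(\phi^b) = L\phi^b - f_\phi(\phi^b) \le C_0(b + \delta) \le C_0 b + C\eps^{\gamma_0}
\quad\mbox{in } B_1,
\]
which is the desired two-sided bound on $B_1$. It remains to upgrade from $B_1$ to $B_4$. The cleanest way is to observe that the proof of Lemma~\ref{lemdeform} is purely local around the base point $z$ and only uses $z\in B_1$ through two facts: that $B_2 \subset \mathcal C_{3/\rho}(z)$ (used for \eqref{B32}–\eqref{77:0129837asdf4}) and that $z \in \mathcal C_{1/\rho}$ so that $\{d_b=t_\theta\}\cap \mathcal C_{3/\rho}(z)\ne\varnothing$ forces $\{d_b=t_\theta\}\cap\mathcal C_{4/\rho}\ne\varnothing$ (used to invoke Lemma~\ref{lemcurvbounds}). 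Both go through verbatim with $z \in B_4$ at the cost of replacing the radii $3/\rho$, $4/\rho$ by $C/\rho$ for a larger universal $C$, which only enlarges the universal constants $b_0$, $C_0$ and the constant $C$ in \eqref{77:0129837asdf4}; the convexity/curvature bounds from Lemma~\ref{lemcurvbounds} are stated on $\mathcal C_{4/\rho}$ but the same argument yields them on $\mathcal C_{C/\rho}$. Alternatively, and perhaps more economically to write, one can apply Lemma~\ref{lemdeform} after translating and rescaling: for fixed $z\in B_4$, set $v(x) := \phi^b(z + x)$ (no rescaling is needed since $B_4$ is a fixed compact set), note that $v$ is again of the form $\phi \circ (\text{distance to a translated convex graph})$, and the curvature and growth bounds on the translated graph are still controlled by $Cb$ and $Cb|x|^{1+\alpha}$ on the relevant balls because translating by a vector of length $\le 4$ changes these only by universal factors; then Lemma~\ref{lemdeform} applied at the origin gives the bound at $z$.

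The main obstacle I anticipate is bookkeeping the dependence on $\eps$ correctly: one must make sure that the $\delta$ produced by \eqref{DEC} is genuinely $C\eps^{\gamma_0}$ with $\gamma_0$ the \emph{same} exponent as in Lemma~\ref{lemdecay} (it is, since $\phi = \phi_0(\cdot/\eps)$ and $\gamma_0$ in \eqref{DEC} is scale-free), and that the smallness of $\eps$ required — both to have $\delta$ small enough for Lemma~\ref{lemdeform} and to have $\eps$ below the threshold where \eqref{DEC} is applicable at the scale $1/\rho'$ — is universal. A secondary point to check carefully is the lower bound $L\phi^b - \eps^{-s}f(\phi^b) \ge 0$: this comes for free from Lemma~\ref{lemdeform}, but it relies on $\phi^b \le \tilde\phi$ (from $d_b \le \tilde d$ and monotonicity of $\phi$) together with $\phi^b(z) = \tilde\phi(z)$, which forces $L\phi^b(z) \ge L\tilde\phi(z) = f_\phi(\tilde\phi(z)) = \eps^{-s}f(\phi^b(z))$ — exactly the computation \eqref{7:67987611}–\eqref{ST:FOND}, valid at every $z \in B_4$.
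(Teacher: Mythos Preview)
Your proposal is correct and follows essentially the same approach as the paper: verify \eqref{imageassumption} with $\delta = C\eps^{\gamma_0}$ via \eqref{DEC}, then apply Lemma~\ref{lemdeform} with $f_\phi = \eps^{-s}f$. The paper's own proof is two lines and simply says ``Lemma~\ref{lemdeform} (scaled to $B_4$)'', whereas you have spelled out in detail what that rescaling entails; your discussion of enlarging the radii $3/\rho$, $4/\rho$ in the proof of Lemma~\ref{lemdeform} is the right way to make that phrase precise.
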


\begin{proof}
By~\eqref{DEC}, we have that~\eqref{imageassumption}
is satisfied with~$\delta:=C \eps^{\gamma_0}$.
Hence, using Lemma \ref{lemdeform} (scaled to $B_4$ and with~$f_\phi:=\eps^{-s} f$),
we obtain that $\big| L\phi^b - \eps^{-s} f(\phi^b) \big|
\le C(b+\delta)$. The desired result now plainly follows.
\end{proof}

With this, we are in the position of proving Proposition \ref{propimpliesharnack}.

\begin{proof}[Proof of Proposition \ref{propimpliesharnack}]
In all the proof we denote
\[ C_{r} := B'_r\times(-2^{k_a}, 2^{k_a}).\]
Fix $z'\in B'_{1/2}$ and let 
\begin{equation}\label{translate}
 \bar u(x) := u(x'-z', x_n).
\end{equation}
By assumptions, we have
\begin{equation}\label{control1+a}
\{ \bar u\le 1-\kappa\} \subset
 \left\{x_n \le a + \frac 1 2\, b\, \,\xi(x') 
\right\} \quad \mbox{in }
C_{ 2^{k_a}}
\end{equation}
for
\begin{equation}\label{deb} 
b := Ca,\end{equation}
where $C>0$ depends only on $\alpha$ and~$\xi$ was defined in~\eqref{xi}.

Throughout the proof, we use the notations 
\begin{equation}\label{osidphi-0p}
\phi(t):= \phi_0\left(\frac{t}{\eps}\right)\quad\mbox{and} \quad\phi^b(x):=
\phi\circ d_b(x).
\end{equation}

The idea of the proof is to consider the infimum $h_*$ among all the $h\ge 0$ such that 
\begin{equation}\label{stop-or-go}
\min_{x\in\overline{B_1}}\left (  \bar u(x)-\phi^{b}(x- he_n)\right) \ge0.
\end{equation}
We will indeed observe that such~$h_*$ is well defined. Then,
we will show that
\begin{equation}\label{as01ls}
h_* < a(1-\eta)
\end{equation}
for a suitable and universal~$\eta\in(0,1)$.
The proof of~\eqref{as01ls} will be done by contradiction (namely, we will
show that the
inequality $h_*\ge a-\eta a$ leads to a contradiction). Then,
from the inequality in~\eqref{as01ls}, the claim in
Proposition~\ref{propimpliesharnack} will follow in a straightforward way.
\vspace{5pt}

\noindent{\em Step 1}. Let us show first 
that \emph{if $h\ge a+3$ then~\eqref{stop-or-go} 
holds true}.

First, we claim that 
\begin{equation}\label{decayphi}
\phi^{b}(x- he_n)  \le -1 + 
\frac{C \eps^{\gamma_0} }{\big(x_n-h-b\xi(x')\big)_-^{\gamma_0}}  \quad \mbox{for all }x\in C_{2^{k_a-1}}
\end{equation}
and
\begin{equation}\label{decayu}
\bar u (x)\ge 1 - \frac{C\eps^{\gamma_0}}{ \left(x_n-a-\frac 1 2\, b\,\xi(x')\right)_+^{\gamma_0}} \quad \mbox{for all }x\in C_{2^{k_a-1}}.
\end{equation}

To prove \eqref{decayphi} and \eqref{decayu}, it is important to observe that, by \eqref{a2ka}, 
\begin{equation}\label{Lip}
 |\nabla (b\xi)(z')|\le Ca (1+|z'|^2)^{\frac{\alpha-1}{2}}|z'|\le  C a 2^{k_a} \le C2^{-m_0} \quad \mbox{ for all }z'\in B'_{2^{k_a}}.
\end{equation}
Now, to show \eqref{decayphi}, we use the decay properties 
of~$\phi_0$ in Lemma~\ref{lemdecay}, which imply that,
for all $h\ge0$,
\begin{equation}\label{alm:01}
\phi^b(x-he_n) =\phi_0\left(\frac{d_b(x-he_n)}{\eps}\right) \le -1 + \frac{C \eps^{\gamma_0} }{ \big( d_b(x-he_n) \big)_-^{\gamma_0}}.
\end{equation}
Also, as a consequence of \eqref{Lip}, 
we see that, for all $y\in B'_{2^{k_a}}\times \R$, 
\begin{equation}\label{alm:02}
\big(d_b(y)\big)_-  \ge c\,\big(y_n-b\xi(y')\big)_-, 
\end{equation}
for some~$c>0$ depending only on~$\rho$ and~$\rho'$
(for more details see Lemma~8
in~\cite{torino}).

Now, making use of~\eqref{alm:01} and~\eqref{alm:02} 
(with $x\in B_{2^{k_a}}$
and $y:=x-he_n$),
we deduce \eqref{decayphi}.

Let us now prove \eqref{decayu}. 
To do it, given $x\in C_{2^{k_a-1}}$, define $R=R(x)$ to be the the largest radius for which 
\[
B_R(x)\subset C_{2^{k_a}}\cap\left\{ y_n>a+ \frac 1 2 \,b\,\xi(y')\right\}.
\]

By ~\eqref{control1+a}, we know that $u(y)\ge1-\kappa$
for any~$y\in B_{2^{k_a}}$ with~$y_n>a+ \frac 1 2 \,b\,\xi(y')$ and by assumption $u$ solves $Lu=\eps^{-s} u$ in $C_{2^{k_a}}$. Hence, 
using Lemma \ref{lemdecay} we obtain 
\begin{equation} \label{0idjIA}
u(x)\ge 1-\frac{C\eps^{\gamma_0}}{R^{\gamma_0}}.\end{equation}

Now we observe that, by \eqref{Lip}, for any~$x\in C_{2^{k_a}/2}$
with~$x_n>a+ \frac 1 2 \,b\,\xi(x')$ we have
$$ R(x) \ge  c\, \left(x_n-a-\frac 1 2\, b\,\xi(x')\right)_+,$$
as long as~$c>0$ is sufficiently small. Hence, \eqref{decayu} follows.

Now we remark that 
\begin{equation}\label{grghswew8657}
\big(x_n-a-  {\textstyle \frac 1 2}\,b \xi(x')\big)-\big(x_n-h- b\, \xi(x')\big) = h-a +\frac {b}{2} \, \xi(x').
\end{equation}
Hence, since we are now assuming that $h-a\ge 3> 2$,
we deduce from~\eqref{grghswew8657} that
$$ \big(x_n-a-  {\textstyle \frac 1 2}\,b \xi(x')\big)-\big(x_n-h- b\, \xi(x')\big)
\ge 1+\frac {b}{2} \, \xi(x')\ge 1.$$
Consequently,
\begin{eqnarray}
&& \label{kaos:1}
\mbox{either}\quad
\big(x_n-a-\frac 12 \,b \,\xi(x')\big)\ge 1\quad\\
&& \label{kaos:2}
\mbox{or}\quad \big(x_n-h-b\xi(x')\big) \le -1.\end{eqnarray}
Now we claim that
\begin{equation}\label{aggiunto}
\bar u(x)- \phi^{b}(x- he_n) \ge -C \eps^{\gamma_0}  \quad 
\mbox{for any }x\in C_{2^{k_a-1}}.\end{equation}
For this, we distinguish two cases, according to~\eqref{kaos:1}
and~\eqref{kaos:2}. If~\eqref{kaos:1} is satisfied, then 
we exploit~\eqref{decayu} and the fact that~$\phi^{b} \le 1$ to find that
$$ \bar u(x)-
\phi^{b}(x- he_n)  \ge \bar u(x)-1 \ge
- \frac{C\eps^{\gamma_0}}{ \left(x_n-a-\frac 1 2\, b\,\xi(x')\right)_+^{\gamma_0}} 
\ge -C\eps^{\gamma_0},$$
up to renaming~$C>0$, which gives~\eqref{aggiunto} in this case.

If instead
the inequality in~\eqref{kaos:2} holds true, we use~\eqref{decayphi} and the fact that $\bar u\ge-1$ to see that
$$ \bar u(x)-
\phi^{b}(x- he_n)  \ge \bar u(x)+1 -
\frac{C \eps^{\gamma_0} }{\big(x_n-h-b\xi(x')\big)_-^{\gamma_0}}
\ge -\frac{C \eps^{\gamma_0} }{\big(x_n-h-b\xi(x')\big)_-^{\gamma_0}}
\ge -C \eps^{\gamma_0}
,$$
up to renaming constants,
and this completes the proof of~\eqref{aggiunto}.

Furthermore, since $\xi$
is a nonnegative function with~$\xi(0)=0$, 
the affine function~$\ell(x):= x_n/\tilde c$, 
with~$\tilde c = h_L(e_n)>0$,
is admissible in~\eqref{defd}. As a consequence, 
we obtain that~$d_b(x)\le x_n/\tilde c$.
Accordingly, from the monotonicity of~$\phi$,
we have that 
\begin{equation}\label{KA0yrewf8idaa}
{\mbox{$\phi_b(x)= \phi( d_b(x)) \le \phi( x_n/\tilde c)$ for all $x\in \R^n$.}}\end{equation}
Now, since in this case~$h\ge a+3\ge 3$, we observe that, for any~$x\in B_2$,
$$ \frac{x_n-h}\eps \le \frac{2-3}{\eps}=-\frac1\eps$$
and so, if~$\eps$ is large enough, 
$$ \sup_{B_2} \phi_0 \left(\frac{x_n-h}{\tilde c\eps} \right) \le -\frac{1}{2}.$$
Therefore,
recalling the assumption~\eqref{signchoice} and~\eqref{KA0yrewf8idaa},
\begin{equation}\label{conseq-int}
\begin{split}
&\int_{B_2} \bar u(x)- \phi^{b}(x-he_n) \,dx 
\ge   \int_{B_2} \bar u(x)- \phi (\tilde c(x_n -h)) \,dx \\
&\qquad\qquad= \int_{B_2} \bar u(x)- \phi_0 \left(\tilde c\, \frac{x_n-h}{\tilde c\eps} \right) \,dx
\ge 0- \int_{B_2}  \phi_0 \left(\frac{x_n-h}{\tilde c\eps} \right) \,dx 
\ge c,\end{split}
\end{equation}
where $c>0$ is 
a universal constant.

We consider now the function~$w(x) := \bar u(x)- \phi^{b}(x-he_n)$. Let us show that  
\begin{equation}\label{eqdifference}
L w \ge -C(b+ \eps^{\gamma_0}) \quad \mbox{in } \{w\le0\}\cap B_4
.\end{equation}
Indeed, let 
\[
\Omega : = \{w\le0\} \cap \big( \{u\ge 1-\kappa \}\cup \{\phi^b (\cdot-he_n)\le -1+\kappa\}\big).
\]
To start with, we will show that
\begin{equation}\label{iosdjvk52364asd7}
\big( \{w\le0\}\cap B_4\big)\setminus \Omega=\varnothing.
\end{equation}
Indeed, suppose, by contradiction, that there exists a point~$
y\in \big( \{w\le0\}\cap B_4\big)\setminus \Omega$. Then, 
\begin{equation}
\bar u(y)<1-\kappa \;\;{\mbox{ and }} \; \;
\phi^{b}(y-he_n)>-1+\kappa.
\label{2pos}\end{equation}
Thus, by \eqref{control1+a}, we see that
$$ 0\ge y_n-a-\frac 12 b\xi(y') =
y_n-h+h-a-\frac 12 b\xi(y')\ge y_n-h+3-\frac 12 b\xi(y').$$
Therefore
$$ y_n-h-b\xi(y')=
y_n-h+3-\frac 12 b\xi(y')-3-\frac 12 b\xi(y')\le
0-3-\frac 12 b\xi(y')<0.$$
Hence, we can use~\eqref{decayphi}, which gives that
$$\phi^b (y_n-he_n)\le -1+C\eps^{-\gamma_0},$$ 
up to renaming~$C>0$.
Thus, for $\eps$ small,
we deduce that~$\phi^{b}(y-he_n)\le-1+\kappa$,
which gives that the second inequality in~\eqref{2pos} cannot occur.
This contradiction establishes~\eqref{iosdjvk52364asd7}.

Hence, in view of~\eqref{iosdjvk52364asd7},
to complete the proof of~\eqref{eqdifference},
we only need to show that~\eqref{eqdifference} holds true in~$\Omega\cap B_4$.
To this aim, we
take $y\in \Omega\cap B_4$. Then,  $w(y)\le0$ and so~$\bar u(y)\le\phi^b(y-he_n)$. Therefore, 
using Lemma~\ref{approxsol},
\begin{equation}\label{qs0ioja2kx}
\begin{split}
L w(y) =L\bar u(y)-L\phi^b(y-he_n) &\ge \eps^{-s}f\big(\bar u(y)\big)- 
\eps^{-s}f\big(\phi^b(y-he_n)\big) - C\,(b+ \eps^{\gamma_0})
\\
&\ge \eps^{-s} f'(\xi)\, w(y) - C\,(b+ \eps^{\gamma_0}),
\end{split}
\end{equation}
where $C>0$
and~$\xi = \xi(y)$ belongs to the real interval $\big[\bar u(y),\, \phi^b (y\cdot-he_n) \big]$.

We also recall that by \eqref{assumpf} we have that
$f'\le 0$ in $[-1, -1+\kappa ]\cup[1-\kappa, 1]$. 
Moreover, by the definition of $\Omega$, we have that either 
$1-\kappa \le \bar u(y)<\phi^b (y-he_n)\le 1$ or
$-1 \le \bar u(y)<\phi^b (y-he_n)\le -1+\kappa$. In any case, 
we have that $f'(\xi)\le 0$ and so~\eqref{eqdifference}
follows
from~\eqref{qs0ioja2kx}.

Now, putting together~\eqref{eqdifference}, \eqref{aggiunto}
and~\eqref{conseq-int},
we have proven that $w$ satisfies 
$$ \begin{cases}
Lw\ge -C(b+\eps^{\gamma_0}) \quad {\mbox{in }}B_4\cap\{w<0\},\\
w\ge-C\eps^{\gamma_0} \quad {\mbox{in }} C_{2^{k_a-1}},\\
w\ge -2 \quad {\mbox{in }} \R^n\setminus C_{2^{k_a-1}},\\
\displaystyle\int_{B_2} w(y)\,dy\ge c.
\end{cases} $$

Note that 
\[
\int_{\R^n} \frac{w^-(y)}{(1+|y|)^{n+s}}\,dy \le C \eps^{\gamma_0} + \int_{|y|\ge 2^{k_a-1}} \frac{2dy}{|y|^{n+s}} \le C \eps^{\gamma_0} + C2^{-sk_a}. 
\]
Then, choosing~$a_0$ small enough (that corresponds to~$k_a$ large in view of~\eqref{kdia}),
we fall under the assumptions of
Lemma \ref{maxpr}, which yields that $w>0$ in $B_2$.
This plainly implies the desired statement for~{\em Step 1}.

\vspace{5pt}

{\em Step 2}. Let  
 \[h_*:=  \inf \ \big\{\ h\ge 0 \ : \ \mbox{\eqref{stop-or-go} holds}\ \big\}. \]
Notice that the infimum is taken over a nonempty set, 
thanks to {\em Step 1}, and indeed
$h_*\le a+3 < +\infty$.
We next show that
\begin{equation}\label{maingoal}
h_*< a-\eta a \quad\mbox{as long as }\eta>0\mbox{ 
is sufficiently small}.
\end{equation}
The proof of~\eqref{maingoal}
will be by contradiction, namely we will show
that the two conditions~$h_*\ge a-\eta a$ and~$\eta$ 
small enough 
lead to a contradiction (for an appropriately small~$a_0$). 

To this aim, we define
\[\phi_* (x):= \phi^b(x-h_*e_n).\]
We observe that, by the definition of $h_*$, we have that $u-\phi_* \ge 0$ in $B_1$.

Under this assumption, we will prove that
\begin{equation}\label{iokdvfpodsgposdguqqq}
\bar u-\phi_* > 0\quad \mbox{in } B_2,
\end{equation}
which contradicts the definitions of~$h_*$ and~$\phi_*$.

Indeed, using the contradictory assumption 
that~$h_*\ge a-\eta a$, we have
\[
\big(x_n-a-{\textstyle \frac 1 2 } \, b\,\xi(x')\big)-\big(x_n-h_*-b\, \xi(x')\big) 
= h_*-a + \frac b 2\,  \xi(x') \ge  \frac b 2  \, \xi(x') -\eta a.
\]
Then, if $\eta$ is small enough we have, for all $x \in C_{2^{k_a-1}}\setminus B_1$, 
$$ \big(x_n-a-{\textstyle \frac 1 2 } \, b\,\xi(x')\big)-\big(x_n-h_*-b\, \xi(x')\big) 
\ge  \frac b 2  \, \xi(1/2) -\eta a\ge \frac{b}{8}$$
where we have used that $b=Ca$ (recall~\eqref{deb}).

Therefore, for all $x \in C_{2^{k_a-1}}\setminus B_1$, 
\[ {\mbox{either}}\quad
\big(x_n-a-{\textstyle \frac 12}\,b\,  g(x')\big) \ge  
\frac{b}{16}
\quad{\mbox{or}}\quad \big(x_n-b\, g(x')\big) \le -\frac{b}{16}.\] 
Thus, similarly as in {\em Step 1}, 
using either~\eqref{decayphi} and the fact 
that~$\bar u\ge-1$, or~\eqref{decayu} 
and~$\phi_{*} \le 1$, we obtain that 
\[  \bar u- \phi_* \ge -C (\eps/b)^{\gamma_0} \quad \mbox {in } C_{2^{k_a-1}},\]
for some~$C>0$.

Next, similarly as in {\em Step 1},  the function~$w:= u-\phi_*$ satisfies 
\begin{equation}\label{09disovy8reugifdbjbf} \begin{cases}
Lw\ge -C(b+\eps^{\gamma_0} )\quad {\mbox{in }}B_4\cap\{w\le0\},\\
w\ge-C(\eps/b)^{\gamma_0} \quad {\mbox{in }} B_{2^{k_a-1}}\setminus B_4,\\
w\ge -2 \quad {\mbox{in }} \R^n\setminus B_{2^{k_a-1}},\\
\displaystyle\int_{B_2} w(y)\,dy\ge c\, h_*\ge ca,
\end{cases} \end{equation}
up to renaming~$c>0$. 

Notice now that, recalling~\eqref{deb},
\[
\begin{split}
\frac 1 a \int_{\R^n} \frac{w^-(y)}{(1+|y|)^{n+s}}\,dy
& \le \frac{C}{a} \left(  \frac{\eps}{b}\right)^{\gamma_0} + \int_{|y|\ge 2^{k_a-1}} \frac{2dy}{|y|^{n+s}} \le 
\frac{C}{a}   \left(\frac{\eps}{a}\right)^{\gamma_0} + \frac{C}{a}2^{-sk_a}
\\
&\le \frac{C}{a}   \left(a^{p_0-1}\right)^{\gamma_0} + C2^{-(s-\alpha)k_a} \frac{2^{-\alpha k_a}}{a}
\\
&\le  C\big(a^{(p_0-1)\gamma_0-1} + C_{m_0} 2^{-(s-\alpha)k_a}\big) \  \rightarrow 0 
 \qquad \mbox{as }a\downarrow 0,
\end{split}
\]
where~$C_{m_0}>0$ depends on~$m_0$.
Similarly,
\begin{equation*}\label{epsb}
\frac{C}{a}\left(b+ \eps^{\gamma_0} +\left(\frac{\eps}{b}\right)^{\gamma_0}\right) \le Ca^{(p_0-1)\gamma_0-1}\  \rightarrow 0 \qquad \mbox{as }a\downarrow 0.
\end{equation*}
Then, choosing~$a_0$ small enough, we can apply  
Lemma~\ref{maxpr} to show that~$w> 0$ in~$B_2$, 
thus proving~\eqref{iokdvfpodsgposdguqqq},

Now, by the definition of~$h_*$, we know that there exists a point $x_*\in \overline{B_1}$ such that 
$ w(x_*) = \bar u(x_*)-\phi_*(x_*) =0$.
This is in contradiction with~\eqref{iokdvfpodsgposdguqqq}. Therefore, we have proved \eqref{maingoal} and completed the proof of {\em Step 2}. 
\vspace{5pt}

{\em Step 3.} We now complete the 
proof of Proposition~\ref{propimpliesharnack}.
For this, we recall the definition of~$\bar u$ in~\eqref{translate}
and we prove that
\begin{equation}\label{ZX0fue9 guic}
\{ \bar u\le 1-\kappa\}\subset 
\left\{x_n \le a\,\left(1-\frac\eta2
\right)  \right\}
\quad \mbox{on } \{0\}\times (-1,1).
\end{equation}
Indeed, 
by {\em Step 2}, we know that
$$\bar u(x)-\phi^b(x -a(1-\eta)e_n)\ge 0.$$
Moreover (see e.g. Lemma~7
in~\cite{torino}), we 
have that, on $\{x'=0\}\times (-1,1)$,
$$d_b(x -a(1-\eta)e_n)\ge\frac{x_n-
a(1-\eta)}{\tilde c}$$
for some~$\tilde c>0$,
and so
$$ \phi^b(x -a(1-\eta)e_n) \ge
\phi_0\left(\frac{x_n-
a(1-\eta)}{\tilde c\eps}\right)$$ 
on $\{x'=0\}\times (-1,1)$. Therefore,
we have that
\begin{equation*}\begin{split}&
 \{x_n\in(-1,1)\ : \ \bar u (0, x_n) \le 1-\kappa\} \subset  
\left\{ x_n\in(-1,1)\ : \
\phi_0\left(\frac{x_n-
a(1-\eta)}{\tilde c\eps}\right) \le1-\kappa\right\}\\&\qquad
\quad\subset \left\{
\frac{x_n-
a(1-\eta)}{\tilde c\eps}\in
\left(-\infty, l_\kappa\right)\right\}
\subset \left\{
\frac{x_n}{a}<\frac{\tilde c\eps l_k}{a}+
(1-\eta)
\right\}
\subset \left\{
\frac{x_n}{a}<1-\frac{\eta}{2}
\right\}, \end{split}
\end{equation*}
where $l_\kappa$ has been introduced in~\eqref{lkappa},
and the last inclusion holds since~$\eps/a$ is as 
small as desired.
This estimate establishes~\eqref{ZX0fue9 guic}, as desired.

Now, from~\eqref{translate} and~\eqref{ZX0fue9 guic}, we obtain that
\begin{equation}\label{78:013w84:PRE}
\{x_n\in(-1,1)\ : \  u (x', x_n) \le 1-\kappa\} 
\subset  \left\{x_n \le a\,\left(1-\frac\eta2
\right)  \right\}, 
\end{equation}
where $x'\in B'_{1/2}$ is arbitrary.

Now, to complete the proof of
Proposition \ref{propimpliesharnack},
let~$x=(x',x_n)\in\{u\le1-\kappa\}$, with~$|x'|<1/2$
and~$|x_n|<2^{k_a}$. Then, using~\eqref{01L:A}
with~$j=0$, we obtain that
\begin{equation}\label{78:013w84}
x_n\le a<1.\end{equation}
Now, if~$x_n\le0$, then~\eqref{conclusionPropHar}
is obviously true, so we may assume that~$x_n>0$.
Thanks to this and~\eqref{78:013w84}, we are in position
of using~\eqref{78:013w84:PRE},
which in turn implies~\eqref{conclusionPropHar},
as desired.\end{proof}

With this, we are now in the position of completing the proof of Theorem~\ref{harnack}.

\begin{proof}[Proof of Theorem~\ref{harnack}] If~\eqref{signchoice} holds true, \label{PP:p001}
the claim follows from Proposition~\ref{propimpliesharnack}.
If instead the opposite inequality in~\eqref{signchoice} holds,
we look at~$\tilde u:=-u$, which satisfies
$$ L\tilde u=-\epsilon^{-s}f(-\tilde u)=:\epsilon^{-s}\tilde f(\tilde u).$$
Since $\tilde f$ satisfies the same structural conditions as $f$ in~\eqref{assumpf}
and~\eqref{existslayer}, and now~$\tilde u$ satisfies~\eqref{signchoice},
we can apply Proposition~\ref{propimpliesharnack} to~$\tilde u$ and obtain the
desired result.
\end{proof}

Rescaling and iterating  Theorem \ref{harnack} we obtain the following result:

\begin{corollary} \label{cor:globalCalpha}
There exist constants $a_0>0$, $p_0>2$, $\sigma>0$
and~$C>0$, depending only on $\alpha$, $m_0$, and on universal constants, with $\sigma$ satisfying $\alpha(1+\sigma)<s$, such the the following statement holds. 

Let $a\in(0,a_0)$ and $\eps \in (0, a^{p_0})$. 
Let $k_a$ be given by \eqref{kdia}.
Assume that $u_a: \R^n \rightarrow (-1,1)$ is a
solution of $Lu=\epsilon^{-s}  f(u)$
in $B'_{2^{k_a}}\times (-2^{k_a}, 2^{k_a})$ such that
\begin{equation}\label{trapping}
\{x_n\le -a 2^{j(1+\alpha)}\}\, \subset\, \{u_a\le -1+\kappa\} \,\subset \,\{u_a\le 1-\kappa\}  \,\subset \,\{x_n \le a 2^{j(1+\alpha)}\} \quad \mbox{in }B'_{2^j}\times (-2^{k_a}, 2^{k_a})
\end{equation}
for $0\le j\le k_a$.

Then, there exist two functions $g_a=  g_a(x')$ and $ g^a=  g^a(x')$ belonging to $C^\sigma(B'_{2^{k_a-1}})$ and satisfying $g_a\le g^a$  such that, for all $R\in [1, 2^{k_a-1}]$,
we have
\begin{equation}\label{normcontrol}
\|g_a\|_{L^\infty(B_R)} +  R^\sigma[g_a]_{C^\sigma(B_R)} \le CR^{1+\alpha(1+\sigma)}, \quad 
\|g^a\|_{L^\infty(B_R)} +  R^\sigma[g^a]_{C^\sigma(B_R)} \le CR^{1+\alpha(1+\sigma)},
\end{equation}
\begin{equation}\label{separationcontrol}
\| g_a - g^a\|_{L^\infty(B_R)} \le C R^{1 +\alpha(1+\sigma)} a^{1+\sigma}   ,
\end{equation}
and
\[
\{x_n \le a g_a(x')\}\, \subset\, \{u_a\le -1+\kappa\} \,\subset \,\{u_a\le 1-\kappa\}  \,\subset \, \{x_n \le a  g^a(x')\}
\quad \mbox{in }B'_{2^{k_a-1}}\times(-2^{k_a}, 2^{k_a}). 
\]

In particular, the two functions $g_a$ and $g^a$ converge 
locally uniformly as~$a\to0$
to some H\"older continuous function $g$ satisfying the growth control $g(x')\le C(1+|x'|)^{1+\alpha(1+\sigma)}$. 
\end{corollary}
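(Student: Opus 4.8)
\emph{Strategy.} The statement about $g_a,g^a$ is a De Giorgi--Caffarelli type ``improvement of oscillation implies Hölder regularity'' argument, obtained by iterating Theorem~\ref{harnack} across the dyadic scales $2^0,2^1,\dots,2^{k_a}$. First I would symmetrize: by the $u\mapsto -u$ symmetry already exploited in passing from Proposition~\ref{propimpliesharnack} to Theorem~\ref{harnack}, the latter says precisely that, under its hypotheses, the transition set $\{-1+\kappa\le u\le 1-\kappa\}$ over $B'_{1/2}\times(-2^{k_a},2^{k_a})$ is contained in a horizontal slab of vertical width $\le (2-\eta_0)a$; equivalently, zooming from scale $1$ to scale $1/2$ the vertical oscillation of the transition set contracts by the factor $\gamma:=1-\eta_0/2\in(0,1)$. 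Shrinking $\eta_0$ if necessary (harmless, and it only moves $a_0$ down and $p_0$ up), I may also arrange that $\sigma:=\log_2(1/\gamma)$ is small enough that $\alpha(1+\sigma)<s$ and $p_0\sigma$ is as small as I want; $\sigma$ will be the Hölder exponent in the statement.

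\emph{The iteration.} Fix a base point $x_0'\in B'_{2^{k_a-1}}$. I would construct, by downward induction on the scale, intervals $I_j(x_0')=[\ell_j(x_0'),L_j(x_0')]$ with $I_j\subset I_{j+1}$, each containing the $x_n$-range of the transition set over $B'_{2^j}(x_0')\times(-2^{k_a},2^{k_a})$, and such that $|I_j(x_0')|\le \gamma\,|I_{j+1}(x_0')|$ once $2^{j}$ drops below $1+|x_0'|$. The induction step is: given the interval at scale $2^{j+1}$, apply Theorem~\ref{harnack} (in its symmetrized form) to the rescaled solution $v(y):=u\big(2^{j+1}y+(x_0',0)\big)$, which solves $Lv=(2^{j+1}\eps)^{-s}f(v)$; one checks that $v$ satisfies the trapping hypotheses of Theorem~\ref{harnack} with a flatness parameter $\hat a\lesssim a\,2^{\alpha\max(j,\,\log_2(1+|x_0'|))}$, and the conclusion of Theorem~\ref{harnack} (together with the recentering bookkeeping that turns the one–sided improvement into a shrinking interval) produces $I_{j}(x_0')$ with $|I_j|\le\gamma|I_{j+1}|$. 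Two hypotheses must be verified at every step: (i) $\hat a\le a_0$ --- this holds uniformly because at the top scale the flatness is $\sim 2^{-\alpha m_0}\le a_0$ by \eqref{a2ka}, and the exponents are arranged so that $\hat a$ never exceeds $a_0$ within the admissible range of scales; and (ii) the rescaled equation parameter $2^{j+1}\eps$ stays $\le \hat a^{\,p_0}$ --- this holds because $\eps\le a^{p_0}$ with $p_0$ taken $\ge 1/\alpha$ makes the ratio $(2^{j+1}\eps)/\hat a^{p_0}$ only decrease as one zooms in (one also loses a harmless $O(1)$ in the admissible $k$, absorbed in constants). Iterating gives $|I_j(x_0')|\lesssim a\,(1+|x_0'|)^{1+\alpha}\big(2^{j}/(1+|x_0'|)\big)^{\sigma}$ for all $j$ down to the mesoscale fixed by the thresholds in (i)--(ii).

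\emph{Reading off the conclusions.} Define $g_a(x')$ (respectively $g^a(x')$) as $1/a$ times the lower (respectively upper) endpoint of $\bigcap_j I_j(x')$, with the usual adjustment below the mesoscale (where, $u$ being a smooth bounded solution of $Lu=\eps^{-s}f(u)$, one uses interior regularity to keep $g_a,g^a$ continuous --- the generous bound \eqref{normcontrol} only requires continuity there). By construction $g_a\le g^a$ and the inclusion chain $\{x_n\le ag_a\}\subset\{u\le -1+\kappa\}\subset\{u\le 1-\kappa\}\subset\{x_n\le ag^a\}$ holds. Since both $ag_a(x_0')$ and $ag_a(y')$ lie in $I_j(x_0')$ whenever $y'\in B'_{2^j}(x_0')$, the slab decay above gives $|g_a(x_0')-g_a(y')|\lesssim (1+|x_0'|)^{1+\alpha-\sigma}|x_0'-y'|^{\sigma}$ and $|g_a(x_0')|\lesssim (1+|x_0'|)^{1+\alpha}$; hence $\|g_a\|_{L^\infty(B'_R)}+R^{\sigma}[g_a]_{C^\sigma(B'_R)}\lesssim R^{1+\alpha}\le R^{1+\alpha(1+\sigma)}$ (the extra $\alpha\sigma$ in the exponent is precisely the slack absorbing the base-point dependence), which is \eqref{normcontrol}; the same for $g^a$. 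Finally $g^a(x')-g_a(x')\le |I_j(x')|/a$ evaluated at the finest scale reached, which with the chosen $p_0,a_0$ is $\lesssim R^{1+\alpha(1+\sigma)}a^{1+\sigma}$, i.e.\ \eqref{separationcontrol}.

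\emph{Convergence as $a\to0$, and the main difficulty.} For a fixed compact set $K$, once $a$ is small enough we have $K\subset B'_{2^{k_a-1}}$, and \eqref{normcontrol} on a slightly larger ball gives a uniform-in-$a$ bound on $\|g_a\|_{C^\sigma(K)}$ and $\|g^a\|_{C^\sigma(K)}$, while \eqref{separationcontrol} gives $\|g_a-g^a\|_{L^\infty(K)}\to0$. Arzel\`a--Ascoli then forces $g_a,g^a$ to converge, along every subsequence, to the \emph{same} Hölder limit, hence $g_a,g^a\to g$ in $C^\sigma_{\mathrm{loc}}(\R^{n-1})$, and passing to the limit in \eqref{normcontrol} yields $g(x')\le C(1+|x'|)^{1+\alpha(1+\sigma)}$. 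I expect the main obstacle to be the quantitative bookkeeping in the iteration: verifying, uniformly over all scales and over all base points in $B'_{2^{k_a-1}}$, that the rescaled solutions keep satisfying \emph{both} hypotheses of Theorem~\ref{harnack} (flatness $\le a_0$ and $\eps'\le(\text{flatness})^{p_0}$), and that the accumulated growth of the slabs does not overwhelm the ``tail'' terms (of size $\sim 2^{-sk_a}$) on which Theorem~\ref{harnack} rests --- it is exactly this last compatibility that forces $\sigma$ small enough that $\alpha(1+\sigma)<s$.
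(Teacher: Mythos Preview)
Your proposal is correct and follows essentially the same approach as the paper: iterate Theorem~\ref{harnack} across dyadic scales, verify at each step that the rescaled solution still satisfies both the flatness bound $\bar a\le a_0$ and the compatibility $\bar\eps\le\bar a^{p_0}$, and read off the $C^\sigma$ control and the separation bound from the geometric decay of the slabs. The paper organizes the argument slightly differently---it first carries out the iteration at the origin (its Step~1, producing nested intervals with centers $c_l$ and widths $a2^{-\sigma l}$, stopping when $a2^{(1-\sigma)l}$ reaches $a_0$), and then in Step~2 translates and rescales to an arbitrary base point $z'\in B'_{2^{k_a-1}}$ before invoking Step~1---whereas you run the iteration directly at a general base point; but the content is the same, and the paper's explicit verification of the trapping hypothesis \eqref{induGOAL} at every intermediate scale is exactly the ``quantitative bookkeeping'' you flag as the main obstacle.
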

\begin{proof}
The proof of this result follows from iterating and rescaling the  Harnack inequality of  Theorem \ref{harnack}; see \cite{Savin, CRS} for similar arguments.
\medskip

{\em Step 1}. We first prove the following claim 
which states that the 
transition region
is trapped near the origin between two 
H\"older functions that are separated by a very small distance near the origin.

Throughout the proof we denote by~$C_r := B'_r\times (-2^{k_a}, 2^{k_a})$.

\hspace{6pt}
{\bf Claim.} {\it For some $(0,z_n)\in \{-1+\kappa \le u_a \le 1-\kappa \}$ we have 
\begin{equation}\label{csigmaat0}
\big\{x_n \le z_n- a C(|x'|^\sigma +r)\big\} \subset\{u_a\le -1+\kappa\} \subset \{u_a\le 1-\kappa\}  \subset  \big\{x_n \le z_n +a C(|x'|^\sigma+r) \big\}
\quad \mbox{in }C_{1}, 
\end{equation}
for 
 \[  r := 8\left(a_0\right)^{-\frac{1}{1-\sigma}} a^{\frac{1}{1-\sigma}-1} ,\] 
where $a_0>0$ is the small constant 
in Theorem \ref{harnack} and where $C>0$ and $\sigma\in (0,1)$ depend only on  $\alpha$, $m_0$, and on universal constants.}
\hspace{6pt}

Let us prove that for every integer~$l\ge0$,  satisfying
\begin{equation}\label{stop}
 a2^{(1-\sigma)l}  <a_0,
\end{equation}
we have that
\begin{equation}\label{indu}
\{  x_n\le c_l-a 2^{-\sigma l}\}\, \subset\, \{u_a\le -1+\kappa\} \,\subset \,\{u_a\le 1-\kappa\}  \,\subset \,\{x_n\le c_l + a 2^{-\sigma l}\} \quad \mbox{in }C_{2^{-l}}
\end{equation}
where $c_l\in \R$ satisfy
\begin{equation}\label{nested}
c_l-a2^{-\sigma l}\le c_{l+1}-
a2^{-\sigma (l+1)} \le c_{l+1}+a2^{-\sigma (l+1)} 
\le c_{l} +a2^{-\sigma l}.
\end{equation}
The proof is by induction over the integer~$l$. Indeed,
it follows from~\eqref{trapping}
that \eqref{indu} holds true for~$
l=0$, with 
\begin{equation}\label{chico}
c_0=0\end{equation}

Assume now that \eqref{indu} holds true
for $0\le l \le l_0$, 
and let us prove that~\eqref{indu}
is also satisfied for $l=l_0+1$.
For this,
let
\[ U(x) := u_a\big(2^{- l_0}x', 2^{-l_0}x_n +  c_{l_0}\big).\]
We have 
\begin{equation}\label{epsprime}
 LU= \left(\frac{\eps}{2^{-l_0}} \right)^{-s} f(U) \quad \mbox{in }C_1 
.\end{equation}
To abbreviate the notation we define
\[ \mathcal A:=   \{U\le -1+\kappa\} \quad \mbox{and}\quad \mathcal B:= \{U\le 1-\kappa\}.\]
We claim that 
\begin{equation}\label{induGOAL}
\{  x_n\le  -a 2^{(1-\sigma)l_0} 2^{j(1+\alpha)} \}\, 
\subset \mathcal A\subset \mathcal B\subset 
\,\{x_n\le   a 2^{(1-\sigma)l_0} 2^{j(1+\alpha)} \} \quad \mbox{in }C_{2^{j}},
\end{equation}
for $j=0,\dots, k_a$.
As a matter of fact, to prove~\eqref{induGOAL}, we
first show that it holds for~$j=0$, then for~$j=1,\dots,l_0$
and then we complete the argument by showing that~\eqref{induGOAL}
holds also for~$j=l_0+1,\dots,k_a$.

To this aim, we observe that,
since \eqref{indu} holds for $0\le l \le l_0$, we have 
\begin{equation}\label{induU}
\{  x_n\le 2^{l_0}(c_l-c_{l_0})-a 2^{l_0-\sigma l}\}\,
\subset \mathcal A\subset \mathcal B\subset 
\,\{x_n\le 2^{l_0}(c_l-c_{l_0}) + a 2^{l_0-\sigma l}\} 
\quad \mbox{in }C_{2^{l_0-l}},
\end{equation}
for any~$0\le l \le l_0$.
This, when~$l=l_0$, gives~\eqref{induGOAL} for~$j=0$.

Hence, we focus now on the proof of~\eqref{induGOAL}
when~$j=1,\dots,l_0$. For this, 
we can suppose that
\begin{equation}\label{l01+}
l_0 \ge 1,\end{equation}
otherwise this case is void, and
we will use~\eqref{induU}
with~$l=0,\dots,l_0-1$.
We remark that the inequalities
in~\eqref{nested} imply that, for any~$0\le l\le l_0-1$,
\[ c_{l} - a2^{-\sigma l}\le c_{l_0} - a2^{-\sigma l_0}\le
c_{l_0} + a2^{-\sigma l_0}\le c_{l} + a2^{-\sigma l}.\]
Therefore
\begin{eqnarray*}&& c_l\le c_{l_0} - a2^{-\sigma l_0}+a2^{-\sigma l}
\le c_{l_0} +a2^{-\sigma l}\\
{\mbox{and }}&&
c_{l_0}\le c_{l} + a2^{-\sigma l}-a2^{-\sigma l_0}\le
c_{l} + a2^{-\sigma l}.\end{eqnarray*}
Accordingly, 
we have that, for $0\le l<l_0-1$,
$$ |c_l-c_{l_0}|\le a2^{-\sigma l}$$
and so
\[
2^{l_0}|c_l-c_{l_0}| + a 2^{l_0-\sigma l}\le 
2a 2^{l_0-\sigma l} = a 2^{(1-\sigma)l_0} 2^{\sigma (l_0-l)+1}.
\]
{F}rom this and~\eqref{induU}, using the notation~$j:=l_0-l$,
we see that, for any~$j=1,\dots,l_0$,
\begin{equation}\label{lolo00}
\{  x_n\le -a 2^{(1-\sigma)l_0} 2^{\sigma j+1}\}\,
\subset \mathcal A\subset \mathcal B\subset 
\,\{x_n\le 
a 2^{(1-\sigma)l_0} 2^{\sigma j+1}\} 
\quad \mbox{in }C_{2^{j}},
\end{equation}
We also observe that, for any~$j=1,\dots,l_0$,
taking~$\sigma\le\alpha$, we have that
$$ (\sigma j +1)-(1+\alpha)j\le 
(\alpha j +1)-(1+\alpha)j = 1-j\le0$$
and thus
$$ 2^{\sigma j +1} 
\le 2^{(1+\alpha)j}.$$
So, we insert this into~\eqref{lolo00}
and we complete the proof of~\eqref{induGOAL}
for $j=1,\dots l_0$.

To complete the proof of~\eqref{induGOAL},
we have now to take into account the case~$j=l_0+1,\dots,
k_a$. For this,
we recall assumption \eqref{trapping} (used here with
the index~$i$)
and we obtain that
\begin{equation}\label{induU5}
\{  x_n\le  -2^{l_0}c_{l_0}-a 2^{l_0+i(1+\alpha)}\}\, 
\subset \mathcal A\subset\mathcal B\subset 
\,\{x_n\le   -2^{l_0}c_{l_0}+ a 2^{l_0+i(1+\alpha)}\} 
\quad \mbox{in }C_{2^{l_0+i}}
\end{equation}
for $i=0,\dots, k_a$ (in our setting,
we will then take~$j=l_0+i$,
with~$i=1,\dots,k_a-l_0$). Now, we point out that
\begin{eqnarray*}&&
c_{l_0}+a2^{-\sigma l_0}\le c_{0}+a2^{-\sigma \cdot 0}= a\\
{\mbox{and }}&& -a=c_{0}-a2^{-\sigma \cdot 0}\le c_{l_0}-a2^{-\sigma l_0}
,\end{eqnarray*}
thanks to~\eqref{nested} and~\eqref{chico}.
Consequently, we have that~$|c_{l_0}|\le a$ and so
\begin{equation}\label{0938}
2^{l_0}|c_{l_0}|+ a 2^{l_0+i(1+\alpha)}\le
a 2^{l_0}(1+ 2^{i(1+\alpha)})\le
a 2^{l_0+1+i(1+\alpha)}
.\end{equation}
We also observe that, taking~$\sigma\le\alpha$ and using~\eqref{l01+},
\begin{eqnarray*}&& l_0+1+i(1+\alpha) =1+
(\sigma-1-\alpha)l_0+(1-\sigma)l_0+(i+l_0)(1+\alpha)
\\&&\qquad\le1
-l_0+(1-\sigma)l_0+(i+l_0)(1+\alpha)\le
(1-\sigma)l_0+(i+l_0)(1+\alpha).
\end{eqnarray*}
This and~\eqref{0938} give that
$$ 2^{l_0}|c_{l_0}|+ a 2^{l_0+i(1+\alpha)}\le
a2^{(1-\sigma)l_0+(i+l_0)(1+\alpha)}.$$
Plugging this into~\eqref{induU5} with~$i=j-l_0$,
we obtain \eqref{induGOAL}
for $j= l_0+1,\dots, k_a$. 

These considerations complete the proof of~\eqref{induGOAL}.
Next, in view of \eqref{induGOAL}, we may apply Theorem~\ref{harnack} with $u$ replaced by $U$, with $a$ replaced by 
\[
\bar a := a 2^{(1-\sigma)l_0},
\]
and with $\eps$ replaced by 
\[
\bar\eps := \frac{\eps}{2^{-l_0}}. 
\]
Note that, since we assume that $\eps<a^{p_0}$,
the condition $\bar \eps< \bar a^{p_0}$ holds whenever
\[
 \frac{a^{p_0}}{2^{-l_0}} < \big(a 2^{(1-\sigma)l_0}\big)^{p_0}.
\]
This is equivalent to
\[ 
1 <  2^{((1-\sigma)p_0-1)l_0}
\]
which is always satisfied when $p_0>2$ and $\sigma$ is taken small. 
 
We recall however that, in order to 
apply Theorem~\ref{harnack}, 
we must have that~$\bar a$ is less than the
small universal constant~$a_0$. 
This is the reason why we need condition~\eqref{stop} 
to continue the iteration.

Thanks to these observations and~\eqref{induGOAL},
we can thus apply Theorem \ref{harnack}. In this way,
we have
proved that  \eqref{indu} holds whenever  \eqref{stop} holds, which immediately implies the statement of 
the claim. 
\medskip

{\em Step 2}. To complete the proof 
of Corollary~\ref{cor:globalCalpha}, let us fix a nonnegative integer $l\le k_a-1$ and $z'\in B'_{2^l}$.
Here, we define
\[ U(x) := u\big(z'+2^lx', 2^l x_n\big).\] 
Then, rescaling \eqref{trapping} we find
\begin{equation}\label{trappingresc:0}
\{x_n\le - 2^{-l}a 2^{(l+i+1)(1+\alpha)} \}\, \subset\, \{U\le -1+\kappa\} \,\subset \,\{ U\le 1-\kappa\}  \,\subset \,\{x_n \le  2^{-l} 2^{(l+i+1)(1+\alpha)}\} 
\end{equation}
in $B'_{2^{i}}\times (-2^{k_a-l}, 2^{k_a-l})$,  for $0 \le i\le k_a-l-1$.

Let us denote
\[ \bar a := 2^{-l} 2^{(l+1)(1+\alpha)} a = 2^{(l+1)\alpha+1} a.\]
Observe that, recalling the definition of~$k_a$ 
in~\eqref{kdia}, we have  
\[
k_{\bar a} < k_a-l-1.
\]
Thus, \eqref{trappingresc:0} implies that
\begin{equation}\label{trappingresc}
\{x_n\le -\bar a 2^{i(1+\alpha)}\}\, 
\subset\, \{U\le -1+\kappa\} \,\subset 
\,\{ U\le 1-\kappa\}  \,\subset 
\,\{x_n \le  \bar a 2^{i(1+\alpha)}\} 
\end{equation}
in~$B'_{2^i}\times (-2^{k_a-l}, 2^{k_a-l})$.
We note also that $U$ solves $LU =\bar \eps^{-s} f(U)$ for 
\[\bar \eps := 2^l \eps < 2^l a^{p_0} \le  
\frac{2^l}{(2^{\alpha l})^{p_0}} \bar a^{p_0}\]
and hence the inequality $\bar \eps< \bar a^{p_0}$ is satisfied provided that we choose $p_0$ large enough.

Thus, the claim in {\em Step 1} yields that, 
for a suitable~$\bar z_n\in \R$,
\begin{equation}\label{star123}
\big\{\bar x_n \le \bar z_n- \bar a C(|\bar x'|^\sigma+\bar r)\big\} \subset\{U\le -1+\kappa\} \subset \{U\le 1-\kappa\}  \subset  \big\{\bar x_n \le z_n +\bar a C(|\bar x'|^\sigma +\bar r) \big\}
\end{equation}
in $B'_{1}\times  (-2^{k_a-l}, 2^{k_a-l})$, 
for $\bar r = C (\bar a)^{\frac{1}{1-\sigma}-1}$.

After rescaling, and setting 
$x =2^l \bar x$, $z_n =2^l \bar z_n$ and 
$r:= 2^l \bar r$, we obtain
\begin{eqnarray*}&&
\left\{\frac{x_n}{2^l} \le \frac{z_n}{2^l} -  C2^{l\alpha}a \left(\frac{ |x'|^\sigma}{2^{l\sigma}}+\frac{r}{2^l}\right)\right\} \subset\{U\le -1+\kappa\} \\
&& \qquad\subset \{U\le 1-\kappa\}  \subset  \left\{\frac{x_n}{2^l} \le \frac{z_n}{2^l} + C2^{l\alpha} a\left(\frac{ |x'|^\sigma}{2^{l\sigma}}+\frac{r}{2^l}\right)  \right\}
\end{eqnarray*}
in $B'_{2^l}(z')\times  (-2^{k_a}, 2^{k_a})$,  for
\begin{equation}\label{r geq}
r = 2^l \bar r =  C2^l\bar a^{\left(\frac{1}{1-\sigma}-1\right)} 
=  C(2^l)^{1+ \alpha\left (\frac{1}{1-\sigma}-1\right)} 
a^{\left(\frac{1}{1-\sigma}-1\right)} 
\le  C(2^l)^{1+ \alpha(1+\sigma)} a^{\sigma} 
.\end{equation}
Now, given $z'\in B'_{2^{k_a-1}}$, 
let us denote~$R_{z'}:= 2^l$, 
where~$l:=\min \{l' \ :\ 2^{l'} \ge |z'|\}$.
In view of~\eqref{r geq}, we define also
\[ r_{z'} := C\, (R_{z'})^{1+\alpha(1+\sigma)} \,a^{\sigma}\]
and the function~$\Psi_{z'} : \R^{n-1}\rightarrow 
[0, +\infty]$, given by
\[
\Psi_{z'}(x') :=  
\begin{cases}
CR_{z'}^{1+\alpha} \left(R_{z'}^{\alpha\sigma} 
\frac{|x'-z'|^\sigma}{R_{z'}^\sigma} 
+\frac{r_{z'}}{R_{z'}}\right ) \quad& 
\mbox{for }|x'|\le R_{z'}, \\ 
+\infty & \mbox{for }|x'|> R_{z'}.
\end{cases}
\]
Hence, from~\eqref{star123}, we have that
\[
\left\{x_n \le z_n-  a\Psi_{z'}(x')\right\} \subset\{U\le -1+\kappa\} \subset \{U\le 1-\kappa\}  \subset  \left\{\frac{x_n}{2^l} \le z_n+ -  a\Psi{z'}(x')\right) 
\]
in $B'_{2^{k_a-1}}\times(-2^{k_a}, 2^{k_a})$.

Furthermore, we notice that  
\[
\Psi_{z'}(z') \le  CR_{z'}^{\alpha} R_{z'}^{1+\alpha(1+\sigma)} a^{\sigma}\]
and
\[
\|\Psi_{z'}\|_{L^{\infty}}(B_{R_{z'}}(z')) + R_{z'}^{\sigma} [\Psi_{z'}]_{C^{\sigma}}(B_{R_{z'}}(z')) = CR_{z'}^{1+\alpha(1+\sigma)}
.\]
We then define 
\[
g^a (x') := \min_{z' \in \overline{B'_{2^{k_a-1}}}}  \big( z_n(z') +\Psi_{z'}(x')\big) \quad \mbox{and} \quad  g_a (x') := \max_{z' \in \overline{B'_{2^{k_a-1}}}}   \big( z_n(z') -\Psi_{z'}(x')\big)
.\]
It is now straightforward to 
verify that these two functions satisfy the 
requirements in the statement of Corollary~\ref{cor:globalCalpha},
as desired.
\end{proof}

We state a further consequence of Corollary \ref{cor:globalCalpha} and Lemma \ref{lemdecay} for its use in the next section.

\begin{corollary} \label{cor:globalCalpha2}
With the same assumptions as in 
Corollary \ref{cor:globalCalpha}, the 
following statement holds true.

Given $\theta\in(-1,1)$, we have that
\[
\{x_n \le a g(x') - Ca^{1+\sigma}(1+|x|)^{
1+\alpha(1+\sigma)} - C (1+|x|)^{\alpha\sigma} d   \} 
\subset   \{u_a\le \theta\}
\]
and 
\[
 \{u_a\le \theta\} \subset \{x_n \le ag(x') + Ca^{1+\sigma}(1+|x|)^{1+\alpha(1+\sigma)} + C (1+|x|)^{\alpha\sigma} d   \}
\]
in $C_{2^{k_a-1}}$, for all $d>0$ satisfying
\[
\left(\frac{\eps}{d}\right)^{\gamma_0} \le 1-|\theta|. 
\]
\end{corollary}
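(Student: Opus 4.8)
The plan is to read the statement off from the $C^\sigma$-trapping of Corollary~\ref{cor:globalCalpha} together with the decay estimate of Lemma~\ref{lemdecay}: the first locates the level sets of $u_a$ inside a thin slab bounded by the two Hölder graphs $ag_a$ and $ag^a$, while the second upgrades the crude bound $u_a\le-1+\kappa$ (valid well below $ag_a$) to the sharp algebraic closeness of $u_a$ to $-1$, which is exactly what is needed to control $\{u_a\le\theta\}$ for $\theta$ near $-1$.

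First I would reduce. Corollary~\ref{cor:globalCalpha} gives $g_a\le g^a$ satisfying \eqref{normcontrol}, \eqref{separationcontrol} and $\{x_n\le ag_a(x')\}\subset\{u_a\le-1+\kappa\}\subset\{u_a\le1-\kappa\}\subset\{x_n\le ag^a(x')\}$ in $C_{2^{k_a-1}}$; by \eqref{separationcontrol} one may replace $g$ by $g_a$ in the first inclusion and by $g^a$ in the second, the change in height being $\le C(1+|x'|)^{1+\alpha(1+\sigma)}a^{1+\sigma}$ and hence absorbed by the term $Ca^{1+\sigma}(1+|x|)^{1+\alpha(1+\sigma)}$. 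Moreover, applying the statement to $-u_a$ (which solves the equation with $f$ replaced by $\tilde f(t):=-f(-t)$, a nonlinearity of the same type, and with $g_a,g^a,g$ replaced by $-g^a,-g_a,-g$) exchanges the two inclusions, so it is enough to prove the first with $g_a$ in place of $g$. If $\theta\ge-1+\kappa$ this is immediate: any $x\in C_{2^{k_a-1}}$ with $x_n\le ag_a(x')-Ca^{1+\sigma}(1+|x|)^{1+\alpha(1+\sigma)}-C(1+|x|)^{\alpha\sigma}d$ satisfies $x_n\le ag_a(x')$, hence $u_a(x)\le-1+\kappa\le\theta$, the term with $d$ being harmless. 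So I may assume $\theta\in(-1,-1+\kappa)$, i.e. $1-|\theta|=1+\theta\in(0,\kappa)$, where $u_a(x)\le-1+\kappa$ is no longer enough and the decay must be used.

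In this regime I would show that $x$ lies well inside $\{u_a\le-1+\kappa\}$ and then invoke the decay estimate. Set $\delta:=ag_a(x')-x_n$, so that after the reduction $\delta\ge\tfrac12Ca^{1+\sigma}(1+|x|)^{1+\alpha(1+\sigma)}+C(1+|x|)^{\alpha\sigma}d$, and let $R$ be the largest radius with $B_R(x)\subset\{y_n\le ag_a(y')\}\cap C_{2^{k_a-1}}$. For the graph condition: over $B'_R(x')$ the function $ag_a$ decreases by at most $a\,[g_a]_{C^\sigma(B'_{|x'|+R})}(2R)^\sigma\le Ca(1+|x|)^{1+\alpha(1+\sigma)-\sigma}R^\sigma$ by \eqref{normcontrol}, and a computation using the lower bound on $\delta$ — this is precisely where the weights $a^{1+\sigma}(1+|x|)^{1+\alpha(1+\sigma)}$ and $(1+|x|)^{\alpha\sigma}$ are calibrated — shows that this is less than $\delta$ once $R$ is a small multiple of $\delta$ (for $|x|$ moderate) or of $(\delta/(a[g_a]_{C^\sigma}))^{1/\sigma}$ (for $|x|$ comparable to $2^{k_a}$); in either case $R$ is bounded below by a fixed power $c\,a^{p_1}$ with $p_1<p_0$, and $R\gtrsim(1+|x|)^{\alpha\sigma}d$ whenever the $d$-part of $\delta$ dominates. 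The cylinder condition is automatic for $|x|$ moderate, and when $|x|$ is close to $2^{k_a}$ one checks that either the left-hand set of the inclusion is already empty (the correction term then exceeds the height $2^{k_a}$ of $C_{2^{k_a-1}}$) or $x$ is so deep that hypothesis \eqref{trapping} with $j=k_a$ puts a ball of radius $\sim2^{k_a}$ around $x$ into $\{u_a\le-1+\kappa\}$. Since $\eps<a^{p_0}$ while $R\gtrsim a^{p_1}$ with $p_1<p_0$, Lemma~\ref{lemdecay} — in the form rescaled by $\eps^{-1}$, valid once the radius is $\gtrsim\eps$ — gives $u_a(x)\le-1+C_{\mathrm{dec}}(\eps/R)^{\gamma_0}$; and since $R\ge c\,C(1+|x|)^{\alpha\sigma}d\ge cCd$, taking the universal constant $C$ of the statement with $cC\ge C_{\mathrm{dec}}^{1/\gamma_0}$ yields $C_{\mathrm{dec}}(\eps/R)^{\gamma_0}\le(\eps/d)^{\gamma_0}\le1-|\theta|$, whence $u_a(x)\le-1+(1-|\theta|)=\theta$. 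This proves the first inclusion.

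The hard part will be the verification that the ball $B_R(x)$, of radius comparable to the vertical gap $\delta$ (or, far from the origin, to $(\delta/(a[g_a]_{C^\sigma}))^{1/\sigma}$), really sits below the graph $\{x_n=ag_a(x')\}$: this graph is only $C^\sigma$ and not Lipschitz, so its dip over a ball of radius $R$ grows both in $R$ and in $|x'|$ according to \eqref{normcontrol}, and the weights in the statement must be tuned to dominate it while still keeping $R\gg\eps$. A secondary nuisance is the behaviour near the lateral and top boundaries of $C_{2^{k_a-1}}$, disposed of by noting that the left-hand set is empty there or by falling back on the raw trapping \eqref{trapping} at the largest scale; and a minor bookkeeping point is the passage from $g_a,g^a$ to $g$, handled by \eqref{separationcontrol}.
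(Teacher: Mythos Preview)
Your approach is correct and coincides with the paper's: the proof there is simply the one-line observation that the result is a direct consequence of Corollary~\ref{cor:globalCalpha} (the $C^\sigma$ trapping of the transition region between the graphs $ag_a$ and $ag^a$) together with the decay estimates of Lemma~\ref{lemdecay}. You have spelled out in more detail how these two ingredients combine---reducing via \eqref{separationcontrol} and the $u\mapsto -u$ symmetry, then fitting a ball of radius $\gtrsim d$ below the H\"older graph so that Lemma~\ref{lemdecay} yields $u_a\le\theta$---all of which the paper leaves implicit.
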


\begin{proof}
This is a direct consequence of 
Corollary~\ref{cor:globalCalpha} 
and the decay estimates of Lemma~\ref{lemdecay}.
\end{proof}

\section{Viscosity equation for the limit of vertical rescalings}\label{SECT5}

In this section we will prove that the limiting graph~$g$ 
given by Corollary \ref{cor:globalCalpha} satisfies the equation 
\begin{equation}\label{g eq p}
\bar L g = 0 \quad \mbox{in }\R^{n-1}
\end{equation}
where 
\begin{equation}\label{def:LBARRA}
\bar Lh(x'):=\int_{\R^{n-1}} \big( h(x') +\nabla h(x')\cdot(y'-x') -h(y') \big) \,
\mathcal K(x'-y',0) \,dy', \qquad  x'\in\R^{n-1},
\end{equation}
and
\begin{equation*}
\mathcal K(y):=\frac{\mu\left( y/|y|\right)}{|y|^{n+s}}.
\end{equation*}
We introduce $\mathcal K$ both to simplify the
notation and because the results of this part are also valid for more general kernels.
The definition of~$\bar Lh(x')$ is valid for functions~$h$ 
which are~$C^2$ in a neighborhood of~$x'$ and 
satisfying 
$$\int_{\R^{n-1}} |h(x')| \, (1+|x'|)^{-n-s}\,dx' <+\infty.$$
We also point out that~\eqref{g eq p} is a 
linear and translation invariant equation.

The strategy that we have in mind is the following:
once we have proved that $g$ is an entire solution 
of~\eqref{g eq p},
satisfying the growth control $g(x')\le C(1+|x'|)^{1+\alpha(1+\sigma)}$ 
(as given by Corollary \ref{cor:globalCalpha}),
we will deduce that $g$ is affine. 
This will be an immediate consequence of 
the interior regularity estimates for the 
equation~\eqref{g eq p}.

This set of ideas is indeed the content of the following result:

\begin{proposition}\label{prop:limiteqn}
The limit function $g:\R^{n-1}\to \R$ given by 
Corollary \ref{cor:globalCalpha} satisfies~\eqref{g eq p}
in the viscosity sense.
As a consequence, $g$ is affine.
\end{proposition}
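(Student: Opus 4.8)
The plan is to split the proof in two: first show that the limit function $g$ is a viscosity solution of $\bar Lg=0$, and then deduce that $g$ is affine by a Liouville-type argument for the linear operator $\bar L$. For the first part, suppose a $C^2$ function $\psi$ touches $g$ from above at a point, which we may take to be the origin; subtracting a small multiple of $\min\{|x'|^2,1\}$ we may assume the contact is strict, say $\psi-g\ge\rho_0\min\{|x'|^2,1\}$ for some $\rho_0>0$, and, as usual in the viscosity theory of nonlocal equations, we let $\psi$ coincide with $g$ outside a fixed small ball, so that $\bar L\psi(0)$ is exactly the quantity to be estimated. Arguing by contradiction, assume $\bar L\psi(0)>0$. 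For each small $a$, with $\eps=\eps(a)<a^{p_0}$, I would build the barrier $\Phi_a:=\phi_0\big(d_a/\eps\big)$, where $d_a$ is the anisotropic signed distance (with respect to $\|\cdot\|_{\mathcal C}$) to a $C^{1,1}$ hypersurface coinciding near the origin with the graph $\{x_n=a\psi(x')\}$ and suitably controlled far away --- this is precisely the rearrangement construction of Section~\ref{SECTION2} with the curving profile $a\psi$ replacing $b\xi$. The crucial ingredient is a refinement of Lemma~\ref{lemdeform} and Lemma~\ref{approxsol}, namely the asymptotic identity
\[
L\Phi_a-\eps^{-s}f(\Phi_a)\;=\;-\,c_L\,a\,\big(\bar L\psi(0)+o_a(1)\big)\qquad\text{near the origin},
\]
for a universal constant $c_L>0$, where the $o_a(1)$ absorbs the corrections from reparametrizing the graph by the level sets of $d_a$ and from the variation of $\mu$ across the transition layer (of width $O(\eps)$), the tail terms --- finite because, with $\psi$ replaced by $g$ far out, $|\psi(y')|\le C(1+|y'|)^{1+\alpha(1+\sigma)}$ with $1+\alpha(1+\sigma)<1+s$ --- and the far-field decay errors of size $O(\eps^{\gamma_0})$, which are $o(a)$ since $p_0\gamma_0>1$. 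As $\bar L\psi(0)>0$, this makes $\Phi_a$ a strict subsolution of $Lv=\eps^{-s}f(v)$ in a fixed neighbourhood of the origin, with a gain of order $a$.

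Next I would run the sliding scheme of Section~\ref{SECT4}: translating $\Phi_a$ vertically and sliding it against $u_a$ (from the side on which $\psi\ge g$ places its level set above that of $u_a$), one reaches a first contact point $p$ with $u_a$. By Corollary~\ref{cor:globalCalpha2}, the transition region $\{-1+\kappa\le u_a\le1-\kappa\}$ lies, on any fixed ball, within an $o(a)$-neighbourhood of the graph $\{x_n=ag_a(x')\}$, and $g_a,g^a\to g$ locally uniformly; together with the strict contact $\psi-g\ge\rho_0$ away from the origin --- which opens a gap of size $\gtrsim a$ between the two transition regions there, much larger than the $o(a)$ by which the slid barrier can move --- this forces $p$ to be interior, near the origin, and inside the transition set, i.e. precisely where $\Phi_a$ is the strict subsolution above and where the sign condition $f'\le0$ of \eqref{assumpf} holds. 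Applying the maximum principle of Lemma~\ref{maxpr} to $w:=u_a-\Phi_a$ --- whose negative part has small weighted integral by the decay estimates of Lemma~\ref{lemdecay} and the size of the domain, while its positive part carries definite mass because of the order-$a$ detachment of the curved barrier at the contact --- exactly as in Steps~1--2 of the proof of Proposition~\ref{propimpliesharnack}, yields $w>0$ near $p$, contradicting $w(p)=0$. Hence $\bar L\psi(0)\le0$ whenever $\psi$ touches $g$ from above; the case in which $\psi$ touches $g$ from below is treated symmetrically (considering $-u_a$, using that $\bar L$ is odd), giving the reverse inequality, so $g$ solves $\bar Lg=0$ in the viscosity sense.

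It remains to deduce that $g$ is affine. The operator $\bar L$ is linear, translation invariant and uniformly elliptic of order $1+s\in(1,2)$, with kernel $\mathcal K(\,\cdot\,,0)$ comparable to $|x'|^{-(n-1)-(1+s)}$, and by Corollary~\ref{cor:globalCalpha} one has the two-sided bound $|g(x')|\le C(1+|x'|)^{1+\alpha(1+\sigma)}$ with $1+\alpha(1+\sigma)<1+s$. Interior regularity estimates for $\bar L$ (solutions gain $1+s$ derivatives, and translation invariance of the kernel bootstraps this to $C^\infty$ in the interior), combined with the scaling relation $\bar L\big[h(R\,\cdot)\big]=R^{1+s}(\bar Lh)(R\,\cdot)$, show that the rescalings $g_R(x'):=R^{-1-\alpha(1+\sigma)}g(Rx')$ solve $\bar Lg_R=0$ and are bounded on $B'_1$ together with their tails, uniformly in $R\ge1$ --- the tail integral converging precisely because $\alpha(1+\sigma)<s$. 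The interior estimate applied to $g_R$ then gives $\|D^2g\|_{L^\infty(B'_R)}\le CR^{\alpha(1+\sigma)-1}$, and since $\alpha(1+\sigma)<s<1$ the exponent is negative, so letting $R\to\infty$ yields $D^2g\equiv0$; thus $g$ is affine. (Equivalently: $g$ is a tempered distribution annihilated by $\bar L$, so $\widehat g$ is supported at the origin and $g$ is a polynomial, of degree $<1+s<2$ by the growth bound.)

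The delicate point of this plan is the refined error estimate of the first paragraph: upgrading the one-sided bound $0\le L\phi^b-\eps^{-s}f(\phi^b)\le C(b+\eps^{\gamma_0})$ of Lemma~\ref{lemdeform} to a genuine asymptotic with leading term $-c_L\,a\,\bar L\psi(0)$ forces one to work with a \emph{non-convex} curving profile --- so that the region $S_\theta$ in the proof of Lemma~\ref{lemdeform} straddles the tangent plane and both layer-cake terms of Lemma~\ref{layer cake} contribute --- to compare carefully the level sets of $d_a$ with the graph of $a\psi$, and to verify that every remaining error is genuinely of lower order than $a\,|\bar L\psi(0)|$ under the standing assumption $\eps<a^{p_0}$, together with the bookkeeping that localizes the sliding contact point near the origin and keeps it in the transition set. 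The final step, that $g$ is affine, is comparatively routine.
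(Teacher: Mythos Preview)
Your overall strategy---argue by contradiction, build a barrier by curving the $1$D profile along the test graph, extract the asymptotic $\frac{1}{a}\big(L\cdot - \eps^{-s}f(\cdot)\big) \to -\bar L\psi(0)$, then slide to a contradiction---is exactly the paper's, and your deduction that $g$ is affine via scaling and interior estimates is also what the paper does (stated there in one sentence).

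There is, however, a concrete gap in your barrier construction that the paper resolves with a device you do not use. You propose a \emph{global} barrier $\Phi_a = \phi_0(d_a/\eps)$, with $d_a$ the signed distance to a $C^{1,1}$ hypersurface equal to $\{x_n = a\psi(x')\}$ near the origin and ``suitably controlled far away''. But the asymptotic you want, $L\Phi_a - \eps^{-s}f(\Phi_a) = -c_L a\,\bar L\psi(0) + o(a)$, comes from a layer-cake integral over the level sets of $\Phi_a$, and far from the origin those level sets are determined by whatever $C^{1,1}$ surface you chose there---\emph{not} by $g$. Since $\bar L\psi(0)$ is a nonlocal quantity depending on the values of $\psi=g$ at infinity, the layer cake for $\Phi_a$ will produce $\bar L$ of your $C^{1,1}$ profile, which need not have the assumed sign. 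You cannot simply take the surface equal to $ag$ far away, because $g$ is only H\"older (Corollary~\ref{cor:globalCalpha}) and then $d_a$ is not smooth and $\Phi_a$ is not a usable barrier.

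The paper avoids this by \emph{not} building a global barrier. It sets $\tilde u_{a,t}$ equal to $\phi_0(d_a/\eps)$ only inside a small cylinder $\QQQ_\delta$ and equal to $u_a$ itself outside (see~\eqref{ua} and~\eqref{e:qCX672}); hence $w=u_a-\tilde u_{a,t}$ vanishes identically outside $\QQQ_\delta$, which localizes the sliding automatically and removes any far-field comparison. To compute the asymptotic, the paper introduces the intermediate function $v_a$ (see~\eqref{va}), which replaces $u_a$ outside $\QQQ_\delta$ by ${\rm sign}(x_n-ag(x'))$; Lemma~\ref{l1} shows $L\tilde u_a\approx Lv_a$ up to $o(a)$ using Corollary~\ref{cor:globalCalpha2}, and the remaining chain~\eqref{thechain} (Lemmas~\ref{l2}--\ref{l6}) then lands exactly on $-\bar Lh(0)$ because the level sets of $v_a$ outside $\QQQ_\delta$ are \emph{precisely} the graph of $g$, not a smooth approximation of it. The final contradiction uses the elementary Lemma~\ref{lacosadisempre} in place of Lemma~\ref{maxpr}, but that difference is minor.
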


In all this section we assume that $u_a$ is a solution of~$Lu_a= \eps^{-s} f(u)$ in $B_{2^{k_a}}$, where $\eps \in (0, a^{p_0})$ with $p_0$ large enough.
We denote by  $g$ the limiting graph as~$a\to0$
of the vertical rescalings of the level set, see
Corollary \ref{cor:globalCalpha}. We recall that this graph 
satisfies the growth control 
\begin{equation}\label{growthcontrol}
\big| g(x') \big| \le C(1+|x'|)^{1+\alpha(1+\sigma)}. 
\end{equation}
Moreover, as a consequence
of Corollary \ref{cor:globalCalpha2} we may assume that,
for any given~$\theta\in(-1,1)$, 
\begin{equation}\label{strongcontrolbelow}
\{x_n \le ag(x') - C(a^{1+\sigma}+d)(1+|x|)^{1+\alpha(1+\sigma)}  \} \subset   \{u_a\le \theta\}
\end{equation}
and
\begin{equation}\label{strongcontrolabove}
\{u_a\le \theta\} \subset \{x_n \le ag(x') + C(a^{1+\sigma}+d)(1+|x|)^{1+\alpha(1+\sigma)}  \}
\end{equation}
for all $d>0$ satisfying
\begin{equation}\label{whichd11}
\left(\frac{\eps}{d}\right)^{\gamma_0} \le 1-|\theta|. 
\end{equation}

In all the section and in the rest of the paper we will fix constant $\alpha, \sigma>0$ satisfying
\[ 
\alpha(1+\sigma)<s \qquad \mbox{and}\qquad \alpha<
\sigma
\]
For concreteness we may take, here and in the rest of the paper, 
\[\alpha =\frac{s}{4}\qquad \mbox{and}\qquad  
\sigma =1.\]

To prove that~$g$ is a viscosity solution of~\eqref{g eq p},
we will argue by contradiction. Indeed,
we will assume that~$g$ is touched by above by a 
convex paraboloid at~$x_0$ 
and that the operator computed at a test function~$h$ that is built (from $g$) 
by replacing~$g$ with the paraboloid in a tiny neighborhood of $x_0$ gives the wrong sign. 
Using this contradictory assumption, we will be able 
to build a supersolution of~$Lu =\eps^{-s}f(u)$ 
touching~$u_a$ from above at some interior point near~$x_0$. 
This will give the desired contradiction.
 
In all the section,  we assume that $Q$ is a fixed convex 
quadratic polynomial and, up to a rigid motion,
we can take the touching point~$x_0$ to be the origin. We
also let $d_a$ be the anisotropic signed distance function 
to $\{x_n \ge aQ(x')\}$, i.e. we use the setting 
in~\eqref{dk defin}, with $K:=K_a:=\{x_n \ge aQ(x')\}$.
More explicitly
\begin{equation}\label{da}
d_a(x)\, :=\, \inf \big\{\ell(x)\ :\ \ell \mbox{ affine, }h_L(\nabla \ell)=1, \mbox{ and }\ell \ge 0  \mbox{ in }K_a\,  \big\}.
\end{equation}
Then,
we will consider the following functions:
\begin{equation}\label{ua}
\tilde u_a(x) := \phi_0\left(\frac{d_a(x)}{\eps}\right) \chi_{{\QQQ}_{\delta}} + u_a(x',ax_n) \chi_{\R^n\setminus {\QQQ}_\delta}
\end{equation}
and
\begin{equation}\label{va}
v_a(x) := \phi_0\left(\frac{d_a(x)}{\eps}\right) 
\chi_{{\QQQ}_\delta} + {\rm sign}(x_n-ag(x')) 
\chi_{\R^n \setminus {\QQQ}_\delta},
\end{equation}
where $\delta>0$, 
\begin{equation}\label{8hrQdelta}
{\QQQ}_\delta := B_\delta'\times (-\delta,\delta),
\end{equation}
and~$\phi_0$ is the $1$D profile in~\eqref{existslayer}.
In a sense, $u_a$ and~$v_a$ have ``very flat level sets''
and we will compute the action of the operator~$L$ on such functions.

By explicit computations and error estimates, 
we will prove that not only~$L\tilde u_a - 
\eps^{-s}f(\tilde u_a) \rightarrow 0$ and~$Lv_a - 
\eps^{-s}f(v_a) \rightarrow 0$ as~$a\to 0$ 
in a neighborhood of~$0$, but we also
provide
the behavior of the next order in an expansion in the variable $a$. 
Namely, for~$a$ small enough, we will show that
\[ \frac 1 a \big( L\tilde u_a- \eps^{-s} f(\tilde u_a) \big)\approx  \frac 1 a \big( Lv_a- \eps^{-s} f(v_a) \big) \approx \bar -Lh(0)\]  
in neighborhood of $0$ in  $\R^n$ (we recall that~$h$
is the test function built from the touching paraboloid before~\eqref{da}).

To prove this, we will use our previous idea of 
``subtracting the  tangent $1$D  profile'' 
\begin{equation}\label{tgprof}
\tilde\phi(x) =  \phi_0(\tilde d/\eps), 
\end{equation}
where $\tilde d$ will be the signed anisotropic 
distance function to some appropriate tangent 
plane to the zero level set of $u_a$.
 
More precisely, in order to compute $L v_a- \eps^{-s} f(v_a)$
at a point~$z\in B_{\delta/4}$, we introduce the ``tangent profile'' at $z$ defined as \eqref{tgprof} with
\begin{equation}\label{tilded}
\tilde d(x) := \frac{\omega}{h_L(\omega)}\cdot (x-z) +t_0, \
\quad \mbox{where }t_0 = d_a(z)
\end{equation}
and $\omega \in S^{n-1}$ is the unit normal vector to  $\{d_a=t_0\}$ pointing towards $\{d_a>t_0\}$. 

Using the layer cake decomposition in Lemma~\ref{layer cake}, we will compute the  difference $Lv_a- \eps^{-s} f(v_a)$ as the integral
\begin{equation}\label{LAYER}
\begin{split}
 Lv_a(z) - \eps^{-s} f(v_a(z)) &= Lv_a(z)- L\tilde\phi(z)
 \\
&=
\int_{-1}^1 d\theta \, \int_{\R^n} \big(\chi_{S_\theta}(y)-\chi_{T_\theta}(y) \big)
\mathcal K(z-y) \,dy 
\end{split}
\end{equation}
where
\begin{equation}\label{defStheta}
S_\theta := \big\{ v_a\le \theta\le \tilde \phi\big\} \quad \mbox{and}\quad T_{\theta} :=\big\{\tilde\phi  \le \theta\le  v_a\big\}.
\end{equation}

However, in this section we will obtain more information by introducing the vertical rescaling (or change of variables)
\[ (y', y_n) =(\bar y',a\bar y_n)\]
which allows us to compute
\[
\frac 1 a \big( Lv_a(z) -f(v_a(z)) \big) = \int_{-1}^1 d\theta \, \int_{\R^n} \big(\chi_{\bar S_\theta}(\bar y)-\chi_{\bar T_\theta}(\bar y) \big) \mathcal K( z'-\bar y', z_n-a\bar y_n) \,d\bar y
\]
where
\begin{equation}\label{defbarStheta}
\bar S_\theta := \big\{ (\bar x',\bar x_n) \ :\  
(\bar x', a\bar x_n)  \in  S_\theta \big\}  \quad \mbox{and}\quad \bar T_\theta := \big\{ (\bar x',\bar x_n) \ :\  (\bar x', a\bar x_n)  \in  T_\theta \big\} 
.\end{equation}
We will see that for all the level sets outside a
set of ``small'' measure $2a^2$, namely for 
\[ \theta\in \left(-1+a^2, 1-a^2 \right),\]
we have
\begin{equation}\label{htheta} 
\begin{split}
\bar S_\theta \ &=\  \big\{\, \bar y= (\bar y',\bar y_n) \ :\ \ h_\theta(\bar y')\le  \bar y_n \le   h_\theta(\bar z') + \nabla h_\theta(\bar z')\cdot ( \bar y'-\bar z') \, \big\}  
\\ {\mbox{and }}\quad
\bar T_\theta \ &=\  \big\{\, \bar y= (\bar y',\bar y_n) \ :\ \ h_\theta(\bar z') + \nabla h_\theta(\bar z')\cdot ( \bar y'-\bar z') \le  \bar y_n \le  h_\theta(\bar y') \, \big\}, 
\end{split}
\end{equation}
where, given $\beta\in(0,1)$, we have, for some $\eta>0$, 
\begin{equation}\label{htheta2}
 \|h_\theta-h\|_{C^{1,\beta}(B_\delta')} \le  Ca^{\eta}\quad \mbox{and}\quad h_\theta=h \quad\mbox{in }\R^n\setminus B_\delta'.
\end{equation}
This will imply that when $|z'|$, $|z_n|$
and $a$ are all converging to $0$, we have
\begin{equation}\label{thechain}
\begin{split}&
\frac 1 a \big( L\tilde u_a(z) - \eps^{-s} f(\tilde u_a(z)) \big) \\
\stackrel{1}{\approx} \;&\frac 1 a \big( Lv_a(z) - \eps^{-s} f(v_a(z)) \big) 
\\
\stackrel{2}{=} \;&\int_{-1}^1 d\theta  \int_{\R^n} \big(\chi_{\bar S_\theta}(\bar y)-\chi_{\bar T_\theta}(\bar y) \big)\mathcal K( z'-\bar y', z_n-a\bar y_n) \,d\bar y 
\\
 \stackrel{3}{\approx} \;&\int_{-1+a^2}^{1-a^2} d\theta  \int_{\R^n} \big(\chi_{\bar S_\theta}(\bar y)-\chi_{\bar T_\theta}(\bar y) \big)\mathcal K( z'-\bar y', z_n-a\bar y_n) \,d\bar y 
\\
\stackrel{4}{\approx} \;&\int_{-1+a^2}^{1-a^2} d\theta \int_{\R^n} \big(\chi_{\bar S_\theta}(\bar y)-\chi_{\bar T_\theta}(\bar y) \big)\mathcal K( z'-\bar y', 0) \,d\bar y 
\\
\stackrel{5}{=}\;&\int_{-1+a^2}^{1-a^2}
-\bar L  h_\theta(z') \,d\theta
\\
\stackrel{6}{\approx} \;& -\bar L  h(0).
\end{split}
\end{equation}

In the next six lemmas, corresponding to the numbers appearing in \eqref{thechain}, we  prove the claimed equalities and  we control the errors in the previous chain of approximations.
\begin{lemma}[Approximation 1]\label{l1}
We have
\[ \lim_{a\to0}
\sup_{z\in B_{\delta/4}} \left| \frac 1 a \big( L\tilde u_a(z) 
-f(\tilde u_a(z)) \big) - \frac 1 a \big( Lv_a(z) -f(v_a(z)) \big) 
\right| \ = \ 0
.\]
\end{lemma}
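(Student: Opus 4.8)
The plan is to compare the two functions $\tilde u_a$ and $v_a$ directly. By definitions \eqref{ua} and \eqref{va}, the two functions \emph{agree} on ${\QQQ}_\delta$, where both equal $\phi_0(d_a(\cdot)/\eps)$; they differ only on $\R^n\setminus{\QQQ}_\delta$, where $\tilde u_a(x)=u_a(x',ax_n)$ while $v_a(x)={\rm sign}(x_n-ag(x'))\in\{\pm1\}$. Now fix $z\in B_{\delta/4}$. Since $z\in{\QQQ}_\delta$, we have $\tilde u_a(z)=v_a(z)$, so the nonlinear terms cancel exactly: $f(\tilde u_a(z))=f(v_a(z))$. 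Hence it suffices to bound $\frac1a\,|L\tilde u_a(z)-Lv_a(z)|$. Because $\tilde u_a(z)=v_a(z)$, the cancellation form \eqref{RP:2} of the layer cake Lemma~\ref{layer cake} applies, giving
\[
L\tilde u_a(z)-Lv_a(z) = \int_{\R^n\setminus{\QQQ}_\delta}\big(\tilde u_a(z+y)-v_a(z+y)\big)\,\mathcal K(y)\,dy,
\]
where the integral runs only over $z+y\notin{\QQQ}_\delta$ since the integrand vanishes inside (there $\tilde u_a=v_a$). Because $z\in B_{\delta/4}$, the region $\{z+y\notin{\QQQ}_\delta\}$ is contained in $\{|y|\ge \delta/2\}$, so the kernel is bounded there by $\Lambda(\delta/2)^{-n-s}$, and $|\tilde u_a-v_a|\le 2$ everywhere; this already gives a finite bound, but not yet the $o(a)$ decay.

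To extract the extra smallness, split the domain: on $\{|y|\le R\}\setminus{\QQQ}_\delta$ (with $R$ fixed, say $R=1$) and on $\{|y|\ge R\}$. On the far region $\{|y|\ge R\}$, the contribution is at most $2\Lambda\int_{|y|\ge R}|y|^{-n-s}dy = CR^{-s}$, which is an $O(1)$ constant we must still beat — so instead we must quantify $|\tilde u_a(z+y)-v_a(z+y)|$ itself. The key point is that outside a thin slab around the graph $\{x_n=ag(x')\}$ (which, after the vertical factor $a$, is a genuinely thin set in $\R^n$), \emph{both} $\tilde u_a(z+y)=u_a((z+y)',a(z+y)_n)$ and $v_a(z+y)={\rm sign}((z+y)_n-ag((z+y)'))$ equal the same sign $\pm1$. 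Indeed, by the strong trapping estimates \eqref{strongcontrolbelow}--\eqref{strongcontrolabove} (via Corollary~\ref{cor:globalCalpha2}), the set $\{|u_a|<1-\kappa\}$ in the rescaled variables lies within $\{|x_n-ag(x')|\le C(a^{1+\sigma}+d)(1+|x|)^{1+\alpha(1+\sigma)}\}$ for suitable $d\downarrow0$, and by the decay Lemma~\ref{lemdecay} one gets $|u_a|\ge 1-C(\eps/{\rm dist})^{\gamma_0}$ further away; combined with the decay \eqref{DEC} of $\phi_0$ (which controls $\phi_0(d_a/\eps)$ away from $\{d_a=0\}$) this shows $|\tilde u_a(z+y)-v_a(z+y)|$ is small except on a set of $y$'s of measure $\lesssim a\,(1+|y|)^{1+\alpha(1+\sigma)}$ near the graph, and is $\le C(\eps/(a|y_n|))^{\gamma_0}$-type small off it. Integrating $\mathcal K(y)$ over the thin slab gives a bound $C a^{\eta}$ for some $\eta>0$ (the slab has thickness $\sim a$ in the $y_n$-direction and the kernel decays like $|y|^{-n-s}$ with $s>\alpha(1+\sigma)$ guaranteeing integrability of the growth factor), and the off-slab part contributes a power of $\eps/a\le a^{p_0-1}$. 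Dividing by $a$ and using $p_0$ large and $\eta$ (or $p_0-1$) chosen $>1$ makes everything $o(1)$ uniformly in $z\in B_{\delta/4}$.

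The main obstacle — and the step requiring the most care — is the quantitative control of the symmetric difference $\{\tilde u_a\ne v_a\}\cap\{|y|\le R\}$ and the convergence of $\int\mathcal K$ over it: one must check that the growth factor $(1+|y|)^{1+\alpha(1+\sigma)}$ coming from the graph's growth in \eqref{growthcontrol} is dominated by the kernel's decay $|y|^{-n-s}$, which is exactly where the standing assumption $\alpha(1+\sigma)<s$ is used, and that the slab's width in the genuine ($\R^n$) variable is $O(a)$ rather than $O(1)$ — this is the whole point of the vertical rescaling and of \eqref{strongcontrolbelow}--\eqref{strongcontrolabove}. Once that estimate is in hand, dividing by $a$ and letting $a\to0$ finishes the proof; I would present it as: (i) reduce to estimating $\frac1a|L\tilde u_a(z)-Lv_a(z)|$ via \eqref{RP:2} and the fact that $\tilde u_a=v_a$ on ${\QQQ}_\delta$; (ii) bound the integrand pointwise using Lemma~\ref{lemdecay}, \eqref{DEC}, and Corollary~\ref{cor:globalCalpha2}; (iii) integrate against $\mathcal K$ using $\alpha(1+\sigma)<s$ and $\eps\le a^{p_0}$; (iv) conclude uniformity in $z$ and pass to the limit.
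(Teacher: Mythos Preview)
Your approach is essentially the same as the paper's: both reduce to estimating $\frac{1}{a}\int|\tilde u_a - v_a|\,\mathcal K$ (after noting $\tilde u_a=v_a$ in $\QQQ_\delta$, so the $f$-terms cancel and the integrand is supported in $\{z+y\notin\QQQ_\delta\}$), then invoke Corollary~\ref{cor:globalCalpha2} to trap the support of the difference in a slab of width $\sim a^{1+\sigma}(1+|x|)^{1+\alpha(1+\sigma)}$ around $\{x_n=ag(x')\}$, and finally use $\alpha(1+\sigma)<s$ for the kernel integral to converge. The paper's only organizational difference is that it writes the difference via the layer cake in the level $\theta$ and splits $|\theta|\gtrless 1-a^2$: the range $|\theta|>1-a^2$ contributes at most $Ca^2$ trivially (measure of the $\theta$-interval times a bounded kernel integral away from $z$), while for $|\theta|\le 1-a^2$ the choice $d=a^{1+\sigma}$ in \eqref{whichd11} already places the symmetric difference in the slab. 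This $\theta$-splitting replaces your separate ``off-slab'' decay estimate via Lemma~\ref{lemdecay}, and gives directly the bound $C_\delta(a^{s/\alpha}+a^{1+\sigma})$, which is $o(a)$.

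One caution: you follow \eqref{ua} literally and write $\tilde u_a(z+y)=u_a((z+y)',a(z+y)_n)$. But the paper's own proof (see \eqref{when3}) applies \eqref{strongcontrolbelow}--\eqref{strongcontrolabove} to $u_a(y)$, consistent with the later definition \eqref{e:qCX672} where $\tilde u_{a,t}=u_a$ outside $\QQQ_\delta$. With the vertically rescaled version $u_a(y',ay_n)$ the transition of $\tilde u_a$ would occur near $y_n=g(y')$, not near $y_n=ag(y')$ where $v_a$ jumps, so the two functions would \emph{not} agree outside a thin slab and your argument would break. Use $\tilde u_a=u_a$ outside $\QQQ_\delta$; then your sketch goes through and matches the paper.
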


\begin{proof}
We observe that $\tilde u_a =v_a$ in ${\QQQ}_\delta$. Then, using
the layer cake formula in~\eqref{RP:2} of Lemma~\ref{layer cake}, 
\begin{equation}\label{AAAA}
|L\tilde u_a(z) -Lv_a(z)| \le Ca^2 +\int_{-1+a^2}^{1-a^2} 
d\theta \int_{\R^n\setminus {\QQQ}_\delta } 
\chi_{\{ \tilde u_a\le \theta\le v_a\}\cup \{ v_a\le \theta
\le \tilde u_a\}}   (y) \;|y-z|^{-n-s}\, dy
.\end{equation}
We also remark
that, by the definition of~$v_a$, we have that, 
for all~$\theta\in (-1,1)$, 
\begin{equation}\label{BAb71}
\{ v_a\ge \theta\} =\{x_n \ge ag(x')\}  \quad 
\mbox{in }\R^n \setminus {\QQQ}_\delta.\end{equation}
Hence, if~$\theta\in(-1+a^2, 1-a^2)$, 
we use~\eqref{strongcontrolbelow}, 
\eqref{strongcontrolabove} and~\eqref{whichd11} 
and we find that
\begin{equation}\label{when3}
\begin{split}
\{ \tilde u_a\le& \theta\le v_a\}\cup\{ v_a\le \theta\le \tilde u_a\} \\
&\subset  \big\{ ag(x') -C(a^{1+\sigma} +d) (1+|x|)^{1+\alpha(1+\sigma)}\le x_n\le ag(x') + C(a^{1+\sigma} +d) (1+|x|)^{1+\alpha(1+\sigma)} \big\} 
\end{split}
\end{equation}
in $B_{2^{k_a-1}}\setminus {\QQQ}_\delta$, whenever 
\begin{equation}\label{when}
(\eps/d)^{
\gamma_0}\le a^2 .\end{equation} For $p_0$ chosen large enough
(recall that we assume $\eps<a^{p_0}$), 
we may take
\begin{equation}\label{when2}
d:=a^{1+\sigma}\end{equation} and satisfy~\eqref{when}. 
Hence, with the setting in~\eqref{when2}, we get from~\eqref{when3} that
\begin{equation*}
\begin{split}&
\{ \tilde u_a\le  \theta\le v_a \} \cup \{ v_a\le \theta\le \tilde u_a\} 
 \\ &\qquad\qquad\subset
  \big\{ |x_n-ag(x') | \le Ca^{1+\sigma} 
(1+|x|)^{ 1+\alpha(1+\sigma) } \big\} \quad \mbox{ in }B_{ 2^{k_a-1} }.
\end{split}\end{equation*}
It then follows that, for all $\theta\in(-1+a^2, 1-a^2)$,
\begin{equation}\label{BBBB}
\begin{split}&
\int_{\R^n\setminus {\QQQ}_\delta } 
\chi_{\{ \tilde u_a\le \theta\le v_a\}\cup 
\{ v_a\le \theta\le \tilde u_a\}}  (y)  \;|y-z|^{-n-s}\, dy 
\\ \le\;& \int_{\R^n\setminus B_{2^{k_a-1} }}  
|y-z|^{-n-s}\, dy + C_\delta \int_1^{2^{k_a-1}}  
a^{1+\sigma} \;\frac{r^{1+\alpha(1+\sigma)+n-2}}{r^{n+s}}dr  
\\
\le\;&  C_\delta (a^{s/\alpha}+ a^{1+\sigma}),
\end{split}
\end{equation}
where we have used that $\sigma$ is chosen small 
so that $\alpha(1+\sigma)<s$ (recall the setting of
Corollary~\ref{cor:globalCalpha}).
The desired result then follows immediately from \eqref{AAAA} and \eqref{BBBB}.
\end{proof}

\begin{lemma}[Equality 2]\label{l2}
Let $z\in B_{\delta/4}$. Then
\[
\frac 1 a \big( Lv_a(z) -f(v_a(z)) \big)  =  \int_{-1}^1 d\theta  \int_{\R^n} \big(\chi_{\bar S_\theta}(\bar y)-\chi_{\bar T_\theta}(\bar y) \big)\mathcal K( z'-\bar y', z_n-a\bar y_n) \,d\bar y 
\]
where $\bar S_\theta$ and $\bar T_\theta$ are defined in \eqref{defbarStheta}.
\end{lemma}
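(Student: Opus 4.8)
The plan is to reduce the identity to two standard operations: the layer--cake representation of Lemma~\ref{layer cake}, and an anisotropic vertical dilation of the integration variable. The one genuinely algebraic input is that the ``tangent profile'' $\tilde\phi$ of \eqref{tgprof}--\eqref{tilded} is an \emph{exact} solution of the equation at the point $z$. First I would record this: writing $\psi:=\phi_0(\cdot/\eps)$, the scaling invariance of $\mathcal L$ gives $\mathcal L\psi=\eps^{-s}f(\psi)$ in $\R$; since $\tilde d$ in \eqref{tilded} is affine with $h_L(\nabla\tilde d)=h_L(\omega/h_L(\omega))=1$, one has $\tilde\phi=\bar\psi_{\omega,h}(\cdot-z+h t_0\omega)$ with $h:=h_L(\omega)$ (notation \eqref{NOoh}), so the defining property \eqref{oKAJ:11} of $h_L$ yields $L\tilde\phi(x)=\mathcal L\psi(\tilde d(x))=\eps^{-s}f(\tilde\phi(x))$ for every $x$, exactly as in \eqref{2.20}. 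Moreover $z\in B_{\delta/4}\subset\QQQ_\delta$, so by \eqref{va} and \eqref{tilded} we have $v_a(z)=\phi_0(d_a(z)/\eps)=\phi_0(t_0/\eps)=\tilde\phi(z)$; in particular $Lv_a(z)-\eps^{-s}f(v_a(z))=Lv_a(z)-L\tilde\phi(z)$.

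Next, since $v_a(z)=\tilde\phi(z)$, I would apply the layer--cake identity \eqref{RP:2} of Lemma~\ref{layer cake} with $v=v_a$, $w=\tilde\phi$, $x=z$. Because $v_a$ and $\tilde\phi$ take values in $[-1,1]$ the $\theta$--integral reduces to $[-1,1]$, and by \eqref{defStheta} one has $\chi_{[v_a(z+y),\tilde\phi(z+y)]}(\theta)=\chi_{S_\theta}(z+y)$ and $\chi_{[\tilde\phi(z+y),v_a(z+y)]}(\theta)=\chi_{T_\theta}(z+y)$; after the substitution $w=z+y$ and using that $\mathcal K$ is even (since $\mu$ is even) this is exactly \eqref{LAYER}:
\[
Lv_a(z)-\eps^{-s}f(v_a(z))=\int_{-1}^1 d\theta\int_{\R^n}\bigl(\chi_{S_\theta}(w)-\chi_{T_\theta}(w)\bigr)\mathcal K(w-z)\,dw .
\]
The exchange of the two integrals here is legitimate because $\int_{-1}^1(\chi_{S_\theta}(w)+\chi_{T_\theta}(w))\,d\theta=|v_a(w)-\tilde\phi(w)|$ is Lipschitz near $z$ (both $d_a$ and $\tilde d$ are Lipschitz, $\phi_0\in C^2$, and $v_a(z)=\tilde\phi(z)$), so $|v_a(w)-\tilde\phi(w)|\,\mathcal K(w-z)$ is integrable for $s\in(0,1)$, which also legitimizes $Lv_a(z)$ as an absolutely convergent integral.

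Finally I would perform the vertical dilation $w=(\bar y',a\bar y_n)$, whose Jacobian equals $a$. By \eqref{defbarStheta}, $\chi_{S_\theta}(\bar y',a\bar y_n)=\chi_{\bar S_\theta}(\bar y)$ and $\chi_{T_\theta}(\bar y',a\bar y_n)=\chi_{\bar T_\theta}(\bar y)$, while $\mathcal K(w-z)=\mathcal K(\bar y'-z',a\bar y_n-z_n)=\mathcal K(z'-\bar y',z_n-a\bar y_n)$, again by evenness of $\mathcal K$. Substituting into the displayed formula and dividing by $a$ gives precisely the assertion. The argument is essentially bookkeeping, with no serious obstacle; the only two points deserving care are the identity $L\tilde\phi(z)=\eps^{-s}f(v_a(z))$, which rests on the tangency $v_a(z)=\tilde\phi(z)$ together with \eqref{oKAJ:11}, and tracking the Jacobian factor $a$ and the evenness of $\mathcal K$ in the change of variables. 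The substantive analysis of the section is deferred to the remaining lemmas, where the kernel is frozen in the vertical direction and $h_\theta$ is shown to be a $C^{1,\beta}$--small perturbation of $h$.
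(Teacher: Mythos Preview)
Your proof is correct and follows essentially the same approach as the paper: establish $L\tilde\phi=\eps^{-s}f(\tilde\phi)$ via \eqref{oKAJ:11}, note $v_a(z)=\tilde\phi(z)$, apply the layer--cake identity \eqref{RP:2} to obtain \eqref{LAYER}, then perform the vertical change of variables $(y',y_n)=(\bar y',a\bar y_n)$. The paper's own proof is a terse three lines pointing to Lemma~\ref{lemdeform} and \eqref{RP:2}; you have simply spelled out the details (including the Fubini justification), which is fine.
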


\begin{proof}
From the layer cake formula in~\eqref{RP:2}
of Lemma~\ref{layer cake}
and the idea 
of ``subtracting the
tangent $1$D  profile'' at~$z$
(exactly as in the proof of Lemma \ref{lemdeform}) we obtain that
\eqref{LAYER} and \eqref{defStheta} hold,  
where $\tilde \phi$ is defined by~\eqref{tgprof}
and~\eqref{tilded}.
Then, the result simply follows by
performing the change of variables $(y',y_n)=(\bar y', a\bar y_n)$.
\end{proof}

\begin{lemma}[Approximation 3] \label{l3}
Let $z\in B_{\delta/4}$. If $a$ is small enough, then for all $\theta\in(-1,1)$ with $ |\theta|\ge 1-a^2$ we have
\[
\left|\int_{\R^n} \big(\chi_{\bar S_\theta}(\bar y)-\chi_{\bar T_\theta}(\bar y) \big)\mathcal K( z'-\bar y', z_n-a\bar y_n) \,d\bar y \right| \le \frac C a,
\]
for some~$C>0$.
\end{lemma}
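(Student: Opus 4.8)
The plan is to undo the vertical rescaling and reduce the statement to an $a$-independent bound for the ``layer density''. Indeed, performing the change of variables $(y',y_n)=(\bar y',a\bar y_n)$, whose Jacobian equals $a$, and recalling the definitions of $\bar S_\theta,\bar T_\theta$ in \eqref{defbarStheta} and of $S_\theta,T_\theta$ in \eqref{defStheta}, one has
\[
\int_{\R^n} \big(\chi_{\bar S_\theta}(\bar y)-\chi_{\bar T_\theta}(\bar y) \big)\,\mathcal K( z'-\bar y', z_n-a\bar y_n) \,d\bar y \;=\;\frac1a\, I_z(\theta),\qquad I_z(\theta):=\int_{\R^n}\big(\chi_{S_\theta}(y)-\chi_{T_\theta}(y)\big)\,\mathcal K(z-y)\,dy,
\]
so it suffices to prove that $|I_z(\theta)|\le C$ for every $z\in B_{\delta/4}$, with $C$ depending only on $n$, $s$, the ellipticity constants, $\rho$, $\rho'$, $\delta$ and $Q$. (The hypothesis $|\theta|\ge 1-a^2$ will in fact not be used in the argument: it is simply the range in which the estimate is invoked, and in which integrating $C/a$ over a $\theta$-interval of length $2a^2$ produces the small error needed in step~3 of \eqref{thechain}.)

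To bound $I_z(\theta)$ I would split it into the contributions of $\R^n\setminus B_\delta$ and of $B_\delta$. The first is harmless: since $\chi_{S_\theta},\chi_{T_\theta}\in\{0,1\}$ and $|z-y|\ge 3\delta/4$ whenever $z\in B_{\delta/4}$ and $y\notin B_\delta$, it is bounded, by the ellipticity bound \eqref{muL0}, by $\Lambda\int_{\R^n\setminus B_{3\delta/4}}|w|^{-n-s}\,dw=:C_\delta<\infty$. For the contribution of $B_\delta$, note that $B_\delta\subset{\QQQ}_\delta$, so there $v_a=\phi_0(d_a/\eps)$ by \eqref{va} and $\tilde\phi=\phi_0(\tilde d/\eps)$ by \eqref{tgprof}; hence, setting $t_\theta:=\eps\,\phi_0^{-1}(\theta)$ and using that $\phi_0$ is increasing,
\[
S_\theta\cap B_\delta=\{y\in B_\delta:\ d_a(y)\le t_\theta\le\tilde d(y)\},\qquad T_\theta\cap B_\delta=\{y\in B_\delta:\ \tilde d(y)\le t_\theta\le d_a(y)\}.
\]
Since $d_a\le\tilde d$ in $\R^n$ (as in \eqref{dconc}), the set $T_\theta\cap B_\delta$ is contained in the hyperplane $\{\tilde d=t_\theta\}$, so it is null, and the contribution of $B_\delta$ equals $\int_{B_\delta}\chi_{S_\theta}(y)\,\mathcal K(z-y)\,dy\ge 0$.

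The heart of the argument is a dichotomy on $t_\theta$. Using $0\in\partial K_a$ (recall the normalization $Q(0)=0$), so $d_a(0)=0$, together with the $\|\cdot\|_{\mathcal C}$-Lipschitz bound \eqref{lip lemma app} and $B_\rho\subset\mathcal C$, one gets $|d_a|\le\delta/\rho$ and $|\tilde d|\le 3\delta/(2\rho)$ on $B_\delta$. If $|t_\theta|>2\delta/\rho$, then either $\{\tilde d\ge t_\theta\}$ or $\{d_a\le t_\theta\}$ misses $B_\delta$, hence $S_\theta\cap B_\delta=\varnothing$ and the $B_\delta$-term vanishes. If instead $|t_\theta|\le 2\delta/\rho$, let $\omega$ be the inner normal to $\{d_a=t_0\}$ at $z$ (so that $\tilde d$ is as in \eqref{tilded}) and let $\bar z:=z+(t_\theta-t_0)\,h_L(\omega)\,\omega$ be the point at level $t_\theta$ on the ray from $z$ along which $d_a=\tilde d$; since $\rho\le h_L(\omega)\le\rho'$ we have $|\bar z|<4$ for $\delta$ small, so $\{d_a=t_\theta\}\cap\mathcal C_{4/\rho}\neq\varnothing$. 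Arguing exactly as in Lemma~\ref{lemcurvbounds} — now for $K_a=\{x_n\ge aQ(x')\}$, for which $|D^2(aQ)|\le Ca$ because $Q$ is a fixed quadratic polynomial — the level set is a convex graph $\{x_n=G_\theta(x')\}$ with $|D^2G_\theta|\le Ca$ on $B'_{4\rho'/\rho}$, and, as for \eqref{whoisStheta} (with $\{\tilde d=t_\theta\}$ the tangent plane to $\{d_a=t_\theta\}$ at $\bar z$), $S_\theta=\{G_\theta(\bar z')+\nabla G_\theta(\bar z')\cdot(x'-\bar z')\le x_n\le G_\theta(x')\}$. One then repeats the computation of Case~2 in the proof of Lemma~\ref{lemdeform}, but localized to $B_\delta$: from $\mathcal C_{\|z-\bar z\|_{\mathcal C}}(z)\subset\R^n\setminus S_\theta$ (as for \eqref{EL01}, a consequence of \eqref{123456dfghjk9210903:TRIS}) and Lemma~\ref{lemma:claim} one obtains $|z-y|\gtrsim|y'-\bar z'|$ for $y\in S_\theta$; the slice of $S_\theta$ above $y'$ has length at most $Ca|y'-\bar z'|^2$; and $y'$ and $\bar z'$ stay in a fixed bounded set when $y\in B_\delta$. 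Therefore
\[
\int_{B_\delta}\chi_{S_\theta}(y)\,\mathcal K(z-y)\,dy\ \le\ C\int_{B'_\delta}\frac{G_\theta(y')-G_\theta(\bar z')-\nabla G_\theta(\bar z')\cdot(y'-\bar z')}{|y'-\bar z'|^{n+s}}\,dy'\ \le\ Ca\int_{|w'|\le C\delta}|w'|^{2-n-s}\,dw'\ \le\ Ca,
\]
the last integral being finite precisely because $s<1$. Combining the two cases with the bound $C_\delta$ for the far part gives $|I_z(\theta)|\le C_\delta+Ca\le C$, whence the claimed estimate $\frac1a|I_z(\theta)|\le C/a$.

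The step I expect to be the main obstacle is this last one: correctly recognizing $S_\theta\cap B_\delta$ as either empty or an $O(a)$-thin region trapped between a convex $C^{1,1}$ graph of curvature $\le Ca$ and its tangent plane, and re-running the Case~2 estimate of Lemma~\ref{lemdeform} in this localized form. The key new point, compared with Lemma~\ref{lemdeform}, is that localizing the integral to the bounded ball $B_\delta$ makes the quadratic growth of $aQ$ (and hence of $G_\theta$) irrelevant, so the growth condition $\alpha<s$ used there is replaced by the plain requirement $s<1$ needed for integrability of $|w'|^{2-n-s}$ near the origin. Everything else — the change of variables, the far-field estimate, the nullity of $T_\theta\cap B_\delta$ — is routine.
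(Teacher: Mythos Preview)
Your proof is correct and follows essentially the same route as the paper: undo the vertical rescaling to reduce to showing $|I_z(\theta)|\le C$, observe that $T_\theta$ is null inside $\QQQ_\delta$ because $d_a\le\tilde d$, bound the far part trivially, and control the near part by recognizing $S_\theta$ as the region between a convex $C^{1,1}$ graph (a level set of $d_a$) with curvature $\le Ca$ and its tangent plane, exactly as in Case~2 of Lemma~\ref{lemdeform}. Your dichotomy on $|t_\theta|$ is the same in spirit as the paper's dichotomy on whether $S_\theta$ meets $\QQQ_{\delta/2}$, and your localized estimate even yields the sharper bound $Ca$ for the near contribution (the paper is content with $C$), though this refinement is not needed for the lemma.
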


\begin{proof}
To prove this result, it is convenient
to look at the statement with the integrals written 
with respect to the original variables~$(y',y_n)=
(\bar y', a\bar y_n)$. In this setting, we have to show that 
\begin{equation}\label{ST:pq}
I_1 := 
\left|\int_{\R^n} \big(\chi_{ S_\theta}(y)-\chi_{T_\theta}(y) 
\big)\mathcal K( z-y) \,dy \right| \le C.  
\end{equation}
To prove this, we actually do not need the condition $|\theta|\ge 1-a^2$,
although the result will be used only for these values of~$\theta$.  

Note that in~${\QQQ}_\delta$ we have 
that~$v_a =\phi_0(d/\eps)$ 
and~$\tilde \phi = \phi_0(\tilde d /\eps)$. 
Recalling the definition of~$T_\theta$ in~\eqref{defStheta} 
and the facts that, by construction, the level sets of~$d$ 
are convex, and the level sets of~$\tilde d$ 
are tangent hyperplanes to the level sets of~$d$, 
we obtain that 
\begin{equation}\label{ju0-UT}
T_\theta\cap {\QQQ}_\delta= \varnothing\end{equation}
for all $\theta$.

Now, to prove~\eqref{ST:pq},
we distinguish the
two cases $S_\theta\cap {\QQQ}_{\delta/2}= \varnothing$ and  $S_\theta\cap {\QQQ}_{\delta/2}\neq \varnothing$.

In the first case in which
\begin{equation}\label{yu:1ca0}
S_\theta\cap {\QQQ}_{\delta/2}= \varnothing,\end{equation} 
we claim that
\begin{equation}\label{yu:1ca}
{\mbox{$|z-y|\ge \displaystyle\frac\delta4$ 
for all $y\in S_\theta\cup T_\theta$.}}\end{equation}
To check this, let~$y\in S_\theta\cup T_\theta$.
Then, by~\eqref{ju0-UT}
and~\eqref{yu:1ca0}, we have that~$y\not\in {\QQQ}_{\delta/2}$.
This, together with the fact that $z\in {\QQQ}_{\delta/4}$, 
proves~\eqref{yu:1ca}. 

Therefore, in 
light of~\eqref{yu:1ca},
we have that
\[
I_1  \le 
C_\delta \int_{\R^n}\frac{dy}{(\delta +|y|)^{n+s}} \le C .
\]
This proves~\eqref{ST:pq} in this case.

In the second case in which
\begin{equation*}
S_\theta\cap {\QQQ}_{\delta/2}\ne \varnothing,\end{equation*} 
we use the fact
that $\{v_a = \theta\}\cap {\QQQ}_\delta$ is the level set 
of the anisotropic distance function to the parabola~$
x_n=Q_a(x'):=aQ(x')$. Hence,  exactly as in 
Lemma \ref{lemcurvbounds}, we have 
that  $\{v_a = \theta\}\cap {\QQQ}_\delta$ 
is a convex $C^{1,1}$ graph with $C^{1,1}$ 
norm bounded by $Ca$ (and thus by~$C$).
Therefore, recalling also~\eqref{ju0-UT},
$$ \left|\int_{B_{\delta/4}(z)} \big(\chi_{ S_\theta}(y)-\chi_{T_\theta}(y) 
\big)\mathcal K( z-y) \,dy \right| \le 
\int_{B_{\delta/4}(z)\cap S_\theta} 
\mathcal K( z-y) \,dy \le C
.$$
Consequently, we conclude that
\[ I_1 \le C + \int_{\R^n\setminus B_{\delta/4}(z)} \frac{dy}{|z-y|^{n+s}} \le C,\]
up to renaming~$C>0$, and so~\eqref{ST:pq}
follows also in this second case, as desired.
\end{proof}

\begin{lemma}[Approximation 4] \label{l4}
For all $\theta\in(-1,1)$ with $ |\theta|\le 1-a^2$ we have
\[
\left|\int_{\R^n} \big(\chi_{\bar S_\theta}(\bar y)-\chi_{\bar T_\theta}(\bar y) \big)\mathcal K( z'-\bar y', z_n-a\bar y_n) \,d\bar y- \int_{\R^n} \big(\chi_{\bar S_\theta}(\bar y)-\chi_{\bar T_\theta}(\bar y) \big)\mathcal K( z'-\bar y', 0) \,d\bar y \right| \rightarrow 0  
\]
as $(|a|+ |z_n|)\rightarrow 0$ whenever $|z'|\le \delta/4$.
\end{lemma}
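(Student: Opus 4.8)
The plan is to estimate the difference of the two integrals by splitting the integration domain in $\bar y$ into a near region, where the two kernels $\mathcal K(z'-\bar y', z_n - a\bar y_n)$ and $\mathcal K(z'-\bar y', 0)$ may differ substantially but the symmetric difference set is under control, and a far region, where the kernels are comparable but the set $\bar S_\theta \cup \bar T_\theta$ is small. The first observation I would record is that, by the same argument already used in Lemma \ref{lemcurvbounds} (applied to the anisotropic distance function $d_a$ from the convex set $\{x_n \geq aQ(x')\}$), the level set $\{v_a = \theta\}\cap {\QQQ}_\delta$ for $|\theta|\le 1-a^2$ is a $C^{1,1}$ convex graph with $C^{1,1}$ norm bounded by $Ca$, and $\{\tilde d = t_\theta\}$ is a tangent hyperplane to it at a point on the ray $\mathcal R$. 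Hence in the rescaled variables $\bar y = (\bar y', \bar y_n/a)$ the sets $\bar S_\theta$ and $\bar T_\theta$ have exactly the form described in \eqref{htheta}: they are the region between a $C^{1,1}$ graph $\bar y_n = h_\theta(\bar y')$ of bounded norm and its tangent plane at $\bar z'$. In particular, for $\bar y'$ in a fixed ball $B'_{\delta/8}$, the vertical extent of $\bar S_\theta \cup \bar T_\theta$ over $\bar y'$ is $O(|\bar y' - \bar z'|^2)$, so its measure in the slab is integrable against $|z'-\bar y'|^{-n-s}$; and for $|\bar y'|$ large the set is contained in $\R^n \setminus {\QQQ}_\delta$ rescaled, which after undoing the rescaling has controlled measure as in the proof of Lemma \ref{l3}.

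Next I would carry out the near–far decomposition. Fix a small parameter $\tau>0$ (to be sent to zero after $a$). In the far region $\{|z'-\bar y'| \geq \tau\}$, I would bound $|\mathcal K(z'-\bar y', z_n - a\bar y_n) - \mathcal K(z'-\bar y', 0)|$ using the fact that $\mathcal K(y) = \mu(y/|y|)|y|^{-n-s}$ with $\mu$ bounded and sufficiently regular on $S^{n-1}$: for $|z'-\bar y'|\geq \tau$ and $|z_n - a\bar y_n|$ small compared to $\tau$, one has $|\mathcal K(z'-\bar y', z_n - a\bar y_n) - \mathcal K(z'-\bar y', 0)| \leq C(|z_n| + a|\bar y_n|)\,|z'-\bar y'|^{-n-s-1}$. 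Integrating this against $\chi_{\bar S_\theta \cup \bar T_\theta}(\bar y)$, using that $\int |\bar y_n|\, \chi_{\bar S_\theta\cup\bar T_\theta}(\bar y)\, d\bar y_n$ is integrable in $\bar y'$ with the weight and that the contribution from $|\bar y'|$ large is handled exactly as in Lemma \ref{l3}, shows the far-region difference tends to $0$ as $(|a| + |z_n|)\to 0$ for each fixed $\tau$. In the near region $\{|z'-\bar y'| < \tau\}$, I would simply bound each of the two integrals separately by $\int_{|z'-\bar y'|<\tau} \chi_{\bar S_\theta\cup\bar T_\theta}(\bar y)\,\mathcal K(\,\cdot\,)\,d\bar y$; since $\bar S_\theta \cup \bar T_\theta$ lies between a $C^{1,1}$ graph and its tangent plane at $\bar z'$ (with norm $\leq C$), its vertical thickness over $\bar y'$ is $\leq C|\bar y' - \bar z'|^2 \leq C\tau^2$ there, and the resulting integral is $\leq C\int_{|w'|<\tau}|w'|^{2}|w'|^{-n-s}\,dw' = C\tau^{1-s} \to 0$ as $\tau \to 0$, uniformly in $a$ and $z$. (Here one also uses $z_n - a\bar y_n$ as the vertical separation for the kernel, which only helps.) Combining: given $\eta>0$, choose $\tau$ so the near-region terms are $<\eta/2$; then let $a, z_n \to 0$ to make the far-region term $<\eta/2$.

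The main obstacle I anticipate is making the geometric description of $\bar S_\theta, \bar T_\theta$ uniform in $\theta \in (-1+a^2, 1-a^2)$ and in $a$: one needs that for all such $\theta$ the graph $h_\theta$ has $C^{1,1}$ norm bounded independently of $\theta$ and $a$, which requires checking that the relevant level set $\{d_a = t_\theta\}$ stays inside ${\QQQ}_\delta$ near the origin (so the curvature bound of Lemma \ref{lemcurvbounds} applies) and that the radius of the touching anisotropic ball degenerates only like $1/a$, not worse; this is where the restriction $|\theta| \le 1 - a^2$, the decay estimate \eqref{DEC} for $\phi_0$, and the assumption $\eps < a^{p_0}$ with $p_0$ large enter, exactly as in the passage leading to \eqref{htheta2}. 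Once that uniform $C^{1,1}$ control is in hand, the kernel-difference estimate and the two-region split are routine. I would also note that the convergence need not be uniform in $\theta$ for this lemma alone — Lemma \ref{l4} is stated pointwise in $\theta$ — but in fact the bounds above are uniform over $|\theta| \le 1-a^2$, which is what will be needed when integrating in $\theta$ in step $\stackrel{4}{\approx}$ of \eqref{thechain}.
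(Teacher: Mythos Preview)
Your plan is correct but takes a more hands-on route than the paper. The paper first isolates the geometric input you identify---that $\bar S_\theta,\bar T_\theta$ lie between a $C^{1,1}$ graph $h_\theta$ and its tangent plane at $\bar z'=z'$, with bounds uniform in $a$ and $\theta$---as a separate auxiliary statement (Lemma~\ref{thehtheta}). It then proves Lemma~\ref{l4} itself in essentially two lines: it shows that $\bar y\mapsto(\chi_{\bar S_\theta}+\chi_{\bar T_\theta})\,|z'-\bar y'|^{-n-s}$ lies in $L^1(\R^n)$ with a uniform bound (quadratic thinness near $z'$ gives $\int_{|\bar y'-z'|<r}\le Cr^{1-s}$; the growth of $g$ controls the tail), and then concludes by dominated convergence, using only the pointwise convergence $\mathcal K(z'-\bar y',z_n-a\bar y_n)\to\mathcal K(z'-\bar y',0)$ together with the domination $\mathcal K\le\Lambda|z'-\bar y'|^{-n-s}$. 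No Lipschitz estimate on the kernel is used. What your explicit near--far split with the gradient bound on $\mathcal K$ buys is a quantitative rate; what the paper's soft argument buys is brevity.

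One point to tighten in your far region: with the paper's concrete choice $\alpha(1+\sigma)=s/2$, the term coming from $a|\bar y_n|$ against $|z'-\bar y'|^{-n-s-1}$ is not integrable over all of $\{|z'-\bar y'|\ge\tau\}$---the tail behaves like $a\int r^{-1}\,dr$. The fix is exactly what you gesture at with ``handled as in Lemma~\ref{l3}'': introduce a third scale $R$, bound each of the two integrals separately by $CR^{\alpha(1+\sigma)-s}$ on $\{|\bar y'-z'|>R\}$, and apply your Lipschitz estimate only on the compact annulus $\{\tau\le|\bar y'-z'|\le R\}$, where it gives a contribution $C_{\tau,R}(|z_n|+a)\to 0$. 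With that three-scale split your argument is complete.
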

To prove Lemma \ref{l4}, we need the following pivotal result:

\begin{lemma}\label{thehtheta}
For all $\theta \in (-1+a^2,1-a^2)$ there exists a 
function $h_\theta:\R^{n-1}\rightarrow\R$  such 
that 
\begin{equation}\label{h ug g}
{\mbox{$h_\theta =h=g$ outside~$B'_\delta$,}}\end{equation}
$h_\theta\in C^{1,1}(B'_\delta)$ and~\eqref{htheta} holds true. Namely, 
\begin{equation}\label{g6al}
\begin{split}
\bar S_\theta \ &=\  \big\{\, \bar y= (\bar y',\bar y_n) \ :\ \ h_\theta(\bar y')\le  \bar y_n \le   h_\theta(\bar z') + \nabla h_\theta(\bar z')\cdot ( \bar y'-\bar z') \, \big\}  
\\ {\mbox{and }}\quad
\bar T_\theta \ &=\  \big\{\, \bar y= (\bar y',\bar y_n) \ :\ \ h_\theta(\bar z') + \nabla h_\theta(\bar z')\cdot ( \bar y'-\bar z') \le  \bar y_n \le  h_\theta(\bar y') \, \big\}.\end{split}
\end{equation}
Moreover, 
\begin{equation}\label{inte892}
 \|h_\theta-h\|_{L^\infty(B_\delta')} \le  Ca
\quad \mbox{and}\quad  
\|h_\theta-h\|_{C^{1,1}(B_\delta')} \le  C
\end{equation}
for some~$C>0$.
In particular, \eqref{htheta2} holds true
for $\eta = \frac{1-\beta}{2}$.\end{lemma}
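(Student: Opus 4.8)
The plan is to produce $h_\theta$ as a graph representation of the level set $\{v_a=\theta\}$ after the vertical rescaling, and to identify where this rescaled level set agrees with the rescaled graph $g$. First I would unwind the definition of $\bar S_\theta$ and $\bar T_\theta$ in \eqref{defbarStheta}: they are the vertical rescalings (in the $x_n$ variable, by the factor $a$) of the sets $S_\theta=\{v_a\le\theta\le\tilde\phi\}$ and $T_\theta=\{\tilde\phi\le\theta\le v_a\}$ from \eqref{defStheta}. Inside $\QQQ_\delta$ the function $v_a$ equals $\phi_0(d_a/\eps)$, so $\{v_a=\theta\}\cap\QQQ_\delta$ is the level set $\{d_a=t_\theta\}$ with $t_\theta=\eps\phi_0^{-1}(\theta)$, and by the condition $|\theta|\le 1-a^2$ together with the algebraic decay estimate \eqref{DEC} of Lemma \ref{lemdecay} we have $|t_\theta|=|\eps\phi_0^{-1}(\theta)|\le C\eps (a^{-2})^{1/\gamma_0}$, which is much smaller than $\delta$ once $\eps\le a^{p_0}$ with $p_0$ large. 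Hence this level set passes very close to the origin. Exactly as in Lemma \ref{lemcurvbounds} (applied with $K:=K_a=\{x_n\ge aQ(x')\}$, whose boundary has curvature $\le Ca$), the set $\{d_a=t_\theta\}$ is a convex $C^{1,1}$ graph $\{x_n=G_\theta(x')\}$ over $B'_\delta$, with $|D^2 G_\theta|\le Ca$ near the origin and $|G_\theta(x')-G_\theta(0)|\le Ca(1+|x'|)^{1+\alpha}$ for the relevant range; moreover $|G_\theta(0)-aQ(0)|\le |t_\theta|\rho'$, which is $o(a)$. After the vertical rescaling $x_n\mapsto x_n/a$ this becomes a graph $x_n=\frac1a G_\theta(x')=:\tilde h_\theta(x')$ with $|D^2\tilde h_\theta|\le C$ and $\|\tilde h_\theta-Q\|_{L^\infty(B'_\delta)}\le C$ (since $\frac1aG_\theta=Q+O(1)$ coming from the $O(a)$ pointwise separation of $\{d_a=t_\theta\}$ from $\partial K_a$ combined with the $C^{1,1}$ bound), giving the interior bounds \eqref{inte892}.

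Next I would define $h_\theta$ globally by gluing: set $h_\theta:=\tilde h_\theta$ on $B'_\delta$ and $h_\theta:=h=g$ on $\R^{n-1}\setminus B'_\delta$, which is consistent because outside $\QQQ_\delta$ the set $\{v_a\ge\theta\}$ equals $\{x_n\ge ag(x')\}$ by construction (see \eqref{BAb71}), so after rescaling the boundary of $\{v_a\ge\theta\}$ outside the cylinder is exactly $\{x_n=g(x')\}$; this establishes \eqref{h ug g}. With $h_\theta$ so defined, the structure formulas \eqref{g6al} follow from the geometric meaning of $S_\theta$ and $T_\theta$: the condition $v_a(x)\le\theta$ becomes $x_n$ below the graph of $h_\theta$ after rescaling, while the condition $\tilde\phi(x)\ge\theta$ — since $\tilde\phi=\phi_0(\tilde d/\eps)$ and $\tilde d$ from \eqref{tilded} is the affine function $\tilde d(x)=\frac{\omega}{h_L(\omega)}\cdot(x-z)+t_0$ with $\{\tilde d=t_\theta\}$ the tangent plane to $\{d_a=t_\theta\}$ at the point of $\mathcal R$ — becomes $x_n$ above the affine tangent plane $h_\theta(\bar z')+\nabla h_\theta(\bar z')\cdot(\bar y'-\bar z')$ after rescaling (this uses that the tangency point lies over $\bar z'$, i.e. the point where $\omega$ is normal to $\{d_a=t_\theta\}$ projects to $\bar z'$). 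The convexity of $h_\theta$ on $B'_\delta$ then guarantees these two descriptions match those in \eqref{g6al} with the right orientations for $\bar S_\theta$ and $\bar T_\theta$.

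Finally, \eqref{htheta2}, i.e. $\|h_\theta-h\|_{C^{1,\beta}(B'_\delta)}\le Ca^\eta$ with $\eta=\frac{1-\beta}{2}$, I would obtain by interpolating the two bounds in \eqref{inte892}: from $\|h_\theta-h\|_{L^\infty}\le Ca$ and $\|h_\theta-h\|_{C^{1,1}}\le C$ one gets $\|h_\theta-h\|_{C^{1,\beta}}\le C\|h_\theta-h\|_{L^\infty}^{(1-\beta)/2}\|h_\theta-h\|_{C^{1,1}}^{(1+\beta)/2}\le Ca^{(1-\beta)/2}$ by the standard Gagliardo–Nirenberg-type interpolation inequality between Hölder norms (valid since the $L^\infty$ and $C^{1,1}=C^{2}$ endpoints straddle $C^{1,\beta}$). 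The main obstacle — the step requiring the most care — is the verification of the structure formulas \eqref{g6al}, and within that, proving that $h_\theta$ is genuinely $C^{1}$ (indeed Lipschitz) across the gluing sphere $\partial B'_\delta$: for this one must check that on $\partial B'_\delta$ the inner graph $\frac1a G_\theta$ matches $g$ to first order, which follows from the fact that $v_a$ and its level set are globally well-defined and $\{v_a=\theta\}\cap\QQQ_\delta$ extends continuously (with matching normal, up to the $O(a^\eta)$ error) to $\{x_n=ag(x')\}$ by \eqref{BAb71} and the $C^{1,1}$ regularity inside — together with the $a$-smallness of $t_\theta$ this pins the two pieces together with an error absorbed into the stated bounds.
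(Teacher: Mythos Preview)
Your approach is essentially the paper's: identify $\{v_a=\theta\}$ inside $\QQQ_\delta$ with the level set $\{d_a=t_\theta\}$, write it as a graph $G_\theta$ via Lemma~\ref{lemcurvbounds}, set $h_\theta=G_\theta/a$ inside and $h_\theta=g$ outside, read off \eqref{g6al} from the tangent-plane description of $\tilde d$, and interpolate for \eqref{htheta2}. Two points deserve correction.

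\textbf{The $L^\infty$ bound is misstated, and this matters.} You write $\|\tilde h_\theta-Q\|_{L^\infty(B'_\delta)}\le C$, arguing from an ``$O(a)$ pointwise separation'' of $\{d_a=t_\theta\}$ from $\partial K_a$. But \eqref{inte892} requires $\|h_\theta-h\|_{L^\infty(B'_\delta)}\le Ca$, not merely $\le C$; with only a bounded $L^\infty$ difference the interpolation would give $\|h_\theta-h\|_{C^{1,\beta}}\le C$, which does not vanish with $a$, and \eqref{htheta2} would fail. The fix is immediate from your own ingredients: since the anisotropic distance between $\{d_a=0\}=\{x_n=aQ(x')\}$ and $\{d_a=t_\theta\}=\{x_n=G_\theta(x')\}$ is exactly $|t_\theta|$, and these graphs have slope $O(a)$ on $B'_\delta$, the \emph{vertical} separation is uniformly $|G_\theta(x')-aQ(x')|\le C|t_\theta|\le Ca^2$ on $B'_\delta$ (you already established $|t_\theta|\le C\eps\,a^{-2/\gamma_0}\le a^2$ for $p_0$ large). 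Dividing by $a$ gives the required $\|h_\theta-Q\|_{L^\infty(B'_\delta)}\le Ca$, and the interpolation then yields $a^{(1-\beta)/2}$ as stated.

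\textbf{The gluing regularity at $\partial B'_\delta$ is a non-issue.} The lemma only asserts $h_\theta\in C^{1,1}(B'_\delta)$ in the \emph{open} ball, and the bounds in \eqref{inte892} are likewise stated there. In \eqref{g6al} the gradient $\nabla h_\theta(\bar z')$ is evaluated only at $\bar z'=z'\in B'_{\delta/4}$, well inside $B'_\delta$; for $\bar y'\notin B'_\delta$ one just needs $h_\theta(\bar y')=g(\bar y')$ as a value, which follows from \eqref{BAb71}. So no matching of derivatives across $\partial B'_\delta$ is required, and the paper indeed does not address it. Your ``main obstacle'' can be dropped.
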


\begin{proof} If~$\theta$ is as in the statement
of Lemma~\ref{thehtheta},
we take~$t_{\theta}:=\epsilon\phi_0^{-1}(
\theta)$.
Then, using~\eqref{DEC}, we have that
$$ a^2\le1-|\theta|=1-\left|\phi_0\left(\frac{t_\theta
}\epsilon\right)\right| 
\le \frac{C}{1+
\left(\frac{|t_\theta|}{\eps}\right)^{\gamma_0}}.$$ 
Hence (assuming~$\eps<a^{p_0}$ and~$p_0$ conveniently large),
we find that
\begin{equation}\label{tth} 
|t_\theta|\le \frac{C\eps}{a^{2/\gamma_0}}\le a^2.\end{equation}
Then, by the
definition of $v_a$, we have 
\[
\{v_a = \theta  \} = \{d_a =t_\theta\} \quad \mbox{in }{\QQQ}_\delta
.\]
Now, since  $\{d_a=0\}= \{x_n = aQ(x')\}$, by
exactly the same argument of Lemma \ref{lemcurvbounds}, we obtain that 
\[
\{d_a = t_\theta\} = \{x_n = G_\theta(x')\}
\]
for some $G_\theta$ satisfying 
\[ |D^2G_\theta|\le Ca \quad \mbox{in }B'_1.\]
Notice also that, by \eqref{tth}, the graph of~$G_\theta$
in~$B'_\delta$ lies in a $Ca^2$-neighborhood of
the graph of~$aQ$ (that is~$ah$, recall the construction
of the touching test function before~\eqref{da}).

We now recall that
the tangent profile at $z$, that we denoted by~$\tilde \phi$, is built in 
such a way that 
\[ \{\tilde \phi = \theta\} = \{\tilde d = t_\theta\}\]
is the tangent plane to $\{x_n =ag(x')\}$ at the point $z =(z',z_n)$.

These observations and~\eqref{BAb71} imply that 
\[
\begin{split}
S_\theta \ &=\  \big\{\,y= (y', y_n) \ :\ \ \tilde h_\theta(y')\le  y_n \le   \tilde h_\theta(\bar z') + \nabla \tilde h_\theta(z')\cdot ( y'-z') \, \big\}  
\\{\mbox{and }}\quad
T_\theta \ &=\  \big\{\,  y= (y',y_n) \ :\ \ \tilde h_\theta( z') + \nabla \tilde h_\theta(z')\cdot ( y'- z') \le   y_n \le  \tilde h_\theta(y') \, \big\}, 
\end{split}
\]
for a suitable function~$\tilde h_\theta$,
with
\begin{equation}\label{hjKA}
\sup_{y'\in B'_\delta}
|D^2\tilde h_\theta(y')|\le Ca\end{equation} and $\tilde h_\theta =ag$ outside $B'_\delta$.
In addition,
\begin{equation}\label{d-da}
{\mbox{the graph of~$\tilde h_\theta$
in~$B'_\delta$ lies in a $Ca^2$-neighborhood of
the graph of~$ah$.}}\end{equation}
Now, the desired result in~\eqref{g6al} follows from the change of 
variables $(y',y_n)=(\bar y', a\bar y_n)$, by taking 
$$ h_\theta := \tilde h_\theta /a.$$
To check~\eqref{inte892}, we observe that the estimate in~$C^{1,1}(B_\delta')$
follows from the bound in~\eqref{hjKA}
and the fact that $h$ is a given paraboloid in~$B'_\delta$.
Also, the uniform bound in~\eqref{inte892}
is a consequence of~\eqref{d-da}.

These observations establish~\eqref{inte892}.
We also remark that~\eqref{htheta2}
follows from~\eqref{inte892} by interpolation.
\end{proof}

\begin{proof}[Proof of Lemma \ref{l4}]
We claim that the map
\begin{equation}\label{INFO}
\R^n\ni\bar y=(\bar y',\bar y_n)
\mapsto {\mathcal{J}}(\bar y):=\frac{
\chi_{\bar S_\theta}(\bar y)+
\chi_{\bar T_\theta}(\bar y)}{|z'-\bar y'|^{n+s}} \;
{\mbox{ belongs to }} \;L^1(\R^n).
\end{equation}
For this, we use Lemma \ref{thehtheta} to see that
\begin{equation}\label{INFO:p1}
\begin{split}
& \int_{ B'_{\delta/4}(z)\times  \left(-\infty,\infty\right) }
{\mathcal{J}}(\bar y)\,d\bar y\\&\qquad\le
C\int_\R\,d\bar y_n\,\int_{S^{n-2}} d\omega \int_0^{\delta} \,dr\;
\frac{r^{n-2}\,
\big( \chi_{\bar S_\theta}(z'+r\omega,\bar y_n)+
\chi_{\bar T_\theta}(z'+r\omega,\bar y_n)\big)
}{r^{n+s}} \\
&\qquad\le
C \int_0^{\delta}
\frac{r^{n-2}\, r^2
}{r^{n+s}}\,dr \le C\delta^{1-s}\le C,
\end{split}\end{equation}
up to renaming~$C>0$.
On the other hand,
recalling~\eqref{growthcontrol} and~\eqref{h ug g},
we deduce from~\eqref{g6al} that~$\bar S_\theta$
and~$\bar T_\theta$ are controlled at infinity
by a function with growth~$C|\bar y'|^{1+\alpha}$.
Consequently,
\begin{equation*}\begin{split}&
\int_{ \R^n\setminus\left(
B'_{\delta/4}(z)\times  \left(-\infty,\infty\right) \right)}
{\mathcal{J}}(\bar y)\,d\bar y\\ \le\;&
C\int_\R\,d\bar y_n\,\int_{S^{n-2}} d\omega 
\int_{\delta/4}^{+\infty} \,dr\;
\frac{r^{n-2}\,
\big( \chi_{\bar S_\theta}(z'+r\omega,\bar y_n)+
\chi_{\bar T_\theta}(z'+r\omega,\bar y_n)\big)
}{r^{n+s}} \\ \le\;& C 
\int_{\delta/4}^{+\infty} 
\frac{r^{n-2}\,r^{1+\alpha}
}{r^{n+s}}\,dr\le C\delta^{\alpha-s}\le C .
\end{split}\end{equation*}
This and~\eqref{INFO:p1}
imply~\eqref{INFO}, as desired.

Then,
using~\eqref{INFO} and the fact
that $K( z'-\bar y', z_n-a\bar y_n) \rightarrow 
K( z'-\bar y', 0) $ almost everywhere 
in $\R^n$ as $(|a|+ |z_n|)\rightarrow 0$,
we see that the result in Lemma \ref{l4}
follows by the dominated convergence theorem.
\end{proof}

\begin{lemma}[Equality 5] \label{l5}
For all $\theta\in(-1+a^2,1-a^2)$ we have
\[
\int_{\R^n} \big(\chi_{\bar S_\theta}(\bar y)-\chi_{\bar T_\theta}(\bar y) \big)
\mathcal K( z'-\bar y', 0) \,d\bar y  = -\bar L  h_\theta(z') 
\]
where $h_\theta\in C^{1,1}(B_\delta')$ is given in Lemma \ref{thehtheta}.
\end{lemma}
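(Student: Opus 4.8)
The plan is to integrate out the vertical variable $\bar y_n$ first---which is legitimate, since the kernel $\mathcal K(z'-\bar y',0)$ appearing in the integrand does not depend on $\bar y_n$---and then to recognize the resulting $(n-1)$-dimensional integral as $-\bar L h_\theta(z')$. First I would record, from the definitions \eqref{defStheta} and \eqref{defbarStheta} together with the description of $\bar S_\theta$ and $\bar T_\theta$ provided by Lemma~\ref{thehtheta}, that for $\theta\in(-1+a^2,1-a^2)$ these are the two regions trapped between the graph $\{\bar y_n=h_\theta(\bar y')\}$ and its tangent affine function at $z'$,
\[
P(\bar y'):=h_\theta(z')+\nabla h_\theta(z')\cdot(\bar y'-z'),
\]
namely $\bar S_\theta=\{(\bar y',\bar y_n):P(\bar y')\le\bar y_n\le h_\theta(\bar y')\}$ and $\bar T_\theta=\{(\bar y',\bar y_n):h_\theta(\bar y')\le\bar y_n\le P(\bar y')\}$; for each $\bar y'$, only one of these two vertical slices is nonempty, according to the sign of $h_\theta(\bar y')-P(\bar y')$.

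Next I would carry out the one-dimensional slice computation: for every fixed $\bar y'$, writing $u:=h_\theta(\bar y')-P(\bar y')$ and using the elementary identity $(u)_+-(-u)_+=u$,
\[
\int_{\R}\bigl(\chi_{\bar S_\theta}(\bar y',\bar y_n)-\chi_{\bar T_\theta}(\bar y',\bar y_n)\bigr)\,d\bar y_n=(u)_+-(-u)_+=h_\theta(\bar y')-P(\bar y').
\]
Multiplying by $\mathcal K(z'-\bar y',0)$, integrating in $\bar y'\in\R^{n-1}$, and substituting the expression for $P$, the left-hand side of the claimed identity becomes
\[
\int_{\R^{n-1}}\bigl(h_\theta(\bar y')-h_\theta(z')-\nabla h_\theta(z')\cdot(\bar y'-z')\bigr)\,\mathcal K(z'-\bar y',0)\,d\bar y',
\]
which is exactly $-\bar L h_\theta(z')$ by the definition \eqref{def:LBARRA} of $\bar L$. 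This would conclude the proof.

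The only step that needs care---and the one I would regard as the (mild) obstacle---is the justification of Fubini's theorem, i.e.\ the absolute integrability on $\R^n$ of $\bar y\mapsto\bigl(\chi_{\bar S_\theta}(\bar y)+\chi_{\bar T_\theta}(\bar y)\bigr)\mathcal K(z'-\bar y',0)$. This is precisely the integrability statement \eqref{INFO} already established in the proof of Lemma~\ref{l4}: near $z'$ one uses $h_\theta\in C^{1,1}(B'_\delta)$ with $\|h_\theta\|_{C^{1,1}(B'_\delta)}\le C$ (Lemma~\ref{thehtheta}), so the slice length obeys $|h_\theta(\bar y')-P(\bar y')|\le C|\bar y'-z'|^2$, which controls the $|z'-\bar y'|^{-(n+s)}$ singularity of the kernel since $s<2$; far from $z'$ one uses $h_\theta=h=g$ outside $B'_\delta$ together with the growth bound \eqref{growthcontrol}, so the slice length is at most $C(1+|\bar y'|)^{1+\alpha(1+\sigma)}$ with $\alpha(1+\sigma)<s$, making the tail of the integral finite. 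Once absolute integrability is in hand the iterated integral above is unambiguous and the computation goes through verbatim; no further difficulty is expected, the remaining content being only the bookkeeping of the orientation of $\bar S_\theta$ relative to $\bar T_\theta$, fixed by \eqref{defStheta}, which is what produces the minus sign in $-\bar L h_\theta(z')$.
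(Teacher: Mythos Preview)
Your approach is exactly the paper's: integrate out the vertical variable $\bar y_n$ using the graph description of $\bar S_\theta,\bar T_\theta$ from Lemma~\ref{thehtheta}, obtain the second-order increment $h_\theta(\bar y')-P(\bar y')$, and match it with the definition \eqref{def:LBARRA} of $\bar L$. One bookkeeping slip: in \eqref{g6al} the labels are the other way around (there $\bar S_\theta=\{h_\theta(\bar y')\le \bar y_n\le P(\bar y')\}$ and $\bar T_\theta$ is the complementary slab), so your slice integral carries the opposite sign; as you yourself anticipated, this is purely an orientation check and once aligned with \eqref{g6al} the computation goes through verbatim.
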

\begin{proof} {F}rom~\eqref{g6al}, we see that
$$ \int_{\R^n} \big(\chi_{\bar S_\theta}(\bar y)-\chi_{\bar T_\theta}(\bar y) \big)
\mathcal K( z'-\bar y', 0) \,d\bar y 
=
\int_{\R^{n-1}} 
\big(h_\theta(\bar y') -\nabla h_\theta(z')(\bar y'-z')-h_\theta(z') \big)  \mathcal K( z'-\bar y', 0) \,d\bar y'
.$$
This and~\eqref{def:LBARRA} give the desired result.
\end{proof}

\begin{lemma}[Approximation 6] \label{l6}
For all $\theta\in(-1+a^2,1-a^2)$ we have
\[
\bigl| \bar L  h_\theta(z')  - \bar L  h(0) \bigr| \rightarrow 0
\]
as $(|a|+ |z'|)\rightarrow 0$.
\end{lemma}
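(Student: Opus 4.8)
The plan is to show that the map $x'\mapsto \bar L h_{\theta}(x')$ is continuous near $0$ (uniformly in $\theta$ and $a$) and that $h_\theta$ converges to $h$ in a strong enough topology as $a\to0$; combining these gives the claimed limit. Recall from Lemma~\ref{thehtheta} that $h_\theta = h$ outside $B'_\delta$, that $h_\theta\in C^{1,1}(B'_\delta)$ with $\|h_\theta-h\|_{C^{1,1}(B'_\delta)}\le C$ and $\|h_\theta-h\|_{L^\infty(B'_\delta)}\le Ca$, and hence by interpolation $\|h_\theta-h\|_{C^{1,\beta}(B'_\delta)}\le Ca^{\eta}$ for $\eta=\frac{1-\beta}{2}$ and any $\beta\in(0,1)$; moreover $h$ is a fixed convex paraboloid in $B'_\delta$ (restriction of the quadratic $Q$) and $h=g$ outside, so $h$ satisfies the growth bound $|h(x')|\le C(1+|x'|)^{1+\alpha(1+\sigma)}$ from \eqref{growthcontrol}. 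In particular $\bar L h(x')$ is well defined for $x'\in B'_{\delta/2}$ because $1+\alpha(1+\sigma)<s+1$ guarantees integrability of the tails of the kernel $\mathcal K(\cdot,0)$, which behaves like $|y'|^{-(n+s)}=|y'|^{-((n-1)+(1+s))}$ in $\R^{n-1}$.

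The key step is the triangle-inequality splitting
\[
\bigl|\bar L h_\theta(z') - \bar L h(0)\bigr| \le \bigl|\bar L h_\theta(z') - \bar L h(z')\bigr| + \bigl|\bar L h(z') - \bar L h(0)\bigr|,
\]
which is valid in math mode without a blank line. For the second term I would use that $\bar L h$ is a continuous function of the base point on $B'_{\delta/2}$: indeed writing $\bar L h(x') = \int_{\R^{n-1}}\bigl(h(x')+\nabla h(x')\cdot(y'-x')-h(y')\bigr)\mathcal K(x'-y',0)\,dy'$, split the integral into $|y'-x'|\le 1$ and $|y'-x'|>1$; on the inner region the integrand is controlled by $\|D^2 h\|_{L^\infty(B'_{\delta})}|y'-x'|^2\,|y'-x'|^{-(n+s)}$ which is integrable uniformly in $x'$, and the inner contribution depends continuously on $x'$ since $h$ and $\nabla h$ do; on the outer region one uses the growth bound on $h$ together with $\alpha(1+\sigma)<s$ and dominated convergence as $x'\to0$. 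Hence $\bar L h(z')\to\bar L h(0)$ as $|z'|\to0$.

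For the first term, estimate the difference $w:=h_\theta-h$, which vanishes outside $B'_\delta$ and satisfies $\|w\|_{C^{1,\beta}(B'_\delta)}\le Ca^\eta$ by the interpolation bound in Lemma~\ref{thehtheta}. Then
\[
\bigl|\bar L w(z')\bigr| \le \int_{|y'-z'|\le 1}\bigl|w(z')+\nabla w(z')\cdot(y'-z')-w(y')\bigr|\,|z'-y'|^{-(n+s)}\,dy' + \int_{|y'-z'|>1}(\cdots)\,dy'.
\]
On $|y'-z'|\le1$ the bracket is bounded by $[\nabla w]_{C^\beta}\,|y'-z'|^{1+\beta}\le Ca^\eta|y'-z'|^{1+\beta}$, integrable against $|z'-y'|^{-(n+s)}$ provided $\beta>s-1$, which can always be arranged by choosing $\beta$ close to $1$ (recall $s<1$, so in fact any $\beta\in(0,1)$ works for the inner piece once $1+\beta>s$); on $|y'-z'|>1$ one simply bounds $|w|\le Ca^\eta$ (since $\|w\|_{L^\infty}\le Ca^{\eta}$, actually $\le Ca$) plus the linear term, all supported in $B'_\delta$, against the integrable tail. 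This yields $|\bar L h_\theta(z')-\bar L h(z')|\le Ca^{\eta}\to0$ uniformly in $\theta\in(-1+a^2,1-a^2)$ and in $|z'|\le\delta/4$.

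The main obstacle is purely bookkeeping: making sure the inner-region estimate for $\bar L w$ is valid, i.e. that $1+\beta>s$ so that $|y'-z'|^{1+\beta-(n+s)}$ is integrable near $y'=z'$ in $\R^{n-1}$; since $s\in(0,1)$ this holds for every $\beta\in(0,1)$, so there is no real tension, but one must be careful that the $C^{1,\beta}$ bound on $h_\theta-h$ (with the small factor $a^\eta$) is the one being used, not merely the $C^{1,1}$ bound (which has no smallness). Apart from this, everything is dominated convergence plus the explicit decay of the kernel and the growth control \eqref{growthcontrol}. Putting the two pieces together: given $\varrho>0$, first pick $a$ small so that the first term is $<\varrho/2$ uniformly, then the second term is $<\varrho/2$ once $|z'|$ is small, which is exactly the assertion of Lemma~\ref{l6}.
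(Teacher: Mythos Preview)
Your approach is correct and is exactly what the paper means by ``standard'' (the paper gives no further detail beyond invoking \eqref{htheta2}). Two small corrections are worth making. First, the inner-region integrability condition is $\beta>s$, not $\beta>s-1$: the kernel $\mathcal K(\,\cdot\,,0)$ is homogeneous of degree $-(n+s)=-((n-1)+(1+s))$ on $\R^{n-1}$, so $\bar L$ has order $1+s$ and one needs $1+\beta>1+s$; in particular the parenthetical claim that ``any $\beta\in(0,1)$ works'' is false, but your suggestion to take $\beta$ close to $1$ is fine since $s<1$. Second, in the outer region the linear term $\nabla w(z')\cdot(y'-z')$ is not supported in $B'_\delta$ and its crude bound against the tail $|y'-z'|^{-(n+s)}$ diverges in $\R^{n-1}$; the remedy is to use that $\mathcal K$ is even (inherited from the evenness of $\mu$), so the principal-value integral of the odd linear term vanishes over any symmetric annulus, and one is left with $\int_{|y'-z'|>\delta/4}\bigl(w(z')-w(y')\bigr)\mathcal K(z'-y',0)\,dy'$, which is $O_\delta(a)$ because $w$ is supported in $B'_\delta$ with $\|w\|_{L^\infty}\le Ca$. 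With these fixes the argument goes through.
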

\begin{proof}
It is standard using that \eqref{htheta2} holds, as given by
Lemma~\ref{thehtheta}.
\end{proof}

Let us give now an elementary result 
that will be useful in the proof of 
Proposition~\ref{prop:limiteqn}.

\begin{lemma}\label{lacosadisempre}
Given $r>0$, there exists~$\delta>0$, 
depending only on~$n$, $s$, ellipticity constants 
and~$r$, such that the following holds. 

Assume that $Lw\ge a>0$  in  $B_r\cap \{w\le 0\}$ and $w\ge -\delta a$ in all of $\R^n$. 

Then, $w> 0$ in $\overline{B_{r/2}}$.
\end{lemma}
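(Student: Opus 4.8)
The plan is a comparison argument against an explicit radial barrier, exploiting that the differential inequality $Lw\ge a$ is available exactly on $\{w\le 0\}$. First, since $L$ is linear, dividing by $a$ reduces to the normalized statement: $Lv\ge 1$ in $B_r\cap\{v\le 0\}$, $v\ge -\delta$ on $\R^n$, and we want $v>0$ on $\overline{B_{r/2}}$. Now fix once and for all a cutoff $\psi\in C^\infty_c(B_{3r/4})$ with $0\le\psi\le 1$ and $\psi\equiv 1$ on $\overline{B_{r/2}}$. Splitting the integral defining $L\psi$ into $\{|y|\le1\}$ (where $|\psi(x)-\psi(x\pm y)|\le \|D^2\psi\|_{L^\infty}|y|^2$ and $\int_{|y|\le1}|y|^{2-n-s}\,dy<\infty$ since $s<2$) and $\{|y|>1\}$ (where $|\psi|\le1$), and using $\mu\le\Lambda$, gives $\|L\psi\|_{L^\infty(\R^n)}\le C_1$ with $C_1$ depending only on $n$, $s$, $\Lambda$ and $r$. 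I then set
\[
\delta:=\frac1{2(C_1+1)},\qquad \Phi:=\frac1{C_1+1}\,\psi-\delta,
\]
so that $\Phi\equiv\delta$ on $\overline{B_{r/2}}$, $\Phi\equiv-\delta$ on $\R^n\setminus B_{3r/4}$, $-\delta\le\Phi\le\delta$ everywhere, and $L\Phi=\frac1{C_1+1}L\psi\le\frac{C_1}{C_1+1}<1$ on $\R^n$. This $\delta$ is the constant claimed in the lemma.

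Suppose, for contradiction, that $v(x_0)\le0$ for some $x_0\in\overline{B_{r/2}}$ (we may take $v$ continuous, as it is in all our applications; otherwise lower semicontinuity suffices). Outside $B_{3r/4}$ one has $v-\Phi=v+\delta\ge0$, so $v-\Phi$ attains its infimum over $\R^n$ at a point $\bar x$ of the compact set $\overline{B_{3r/4}}$, and $(v-\Phi)(\bar x)\le(v-\Phi)(x_0)=v(x_0)-\delta\le-\delta<0$; since $v-\Phi\ge0$ on $\partial B_{3r/4}$, this forces $\bar x\in B_{3r/4}\subset B_r$. The key point is that $\bar x$ lands in the good set: because $\Phi\le\delta$ and $(v-\Phi)(\bar x)\le-\delta$, we get $v(\bar x)=\Phi(\bar x)+(v-\Phi)(\bar x)\le0$, hence $Lv(\bar x)\ge1$ by hypothesis. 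On the other hand, $\bar x$ is a global minimum of $v-\Phi$, so $2v(\bar x)-v(\bar x+y)-v(\bar x-y)\le2\Phi(\bar x)-\Phi(\bar x+y)-\Phi(\bar x-y)$ for every $y$; multiplying by the even nonnegative kernel $\mu(y/|y|)|y|^{-n-s}$ and integrating — this symmetrized form of $L$ is legitimate since $\mu$ is even, and the right-hand integrand is bounded above, so $Lv(\bar x)$ is well defined and $\le L\Phi(\bar x)$ — yields $1\le Lv(\bar x)\le L\Phi(\bar x)\le\frac{C_1}{C_1+1}<1$, a contradiction. Therefore $v>0$ on $\overline{B_{r/2}}$, i.e. $w>0$ on $\overline{B_{r/2}}$.

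The one delicate point — and the reason the barrier must be calibrated as above — is ensuring the contact point $\bar x$ falls inside $\{v\le0\}$, where the differential inequality can be used: this is exactly why $\delta$ has to be small compared with $1/(C_1+1)$, so that the total oscillation $2\delta$ of $\Phi$ between $\overline{B_{r/2}}$ and the complement of $B_{3r/4}$ is dominated by the depth $\ge\delta$ of the minimum of $v-\Phi$. The remaining ingredients — the $L^\infty$ bound on $L\psi$, the existence of the minimizer by compactness, and the integral comparison at an interior minimum of $v-\Phi$ — are all routine.
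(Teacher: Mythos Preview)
Your proof is correct and follows essentially the same approach as the paper's: both build a smooth bump barrier (you use $\Phi=\frac{1}{C_1+1}\psi-\delta$, the paper uses $\delta a(1-\eta(x/r))$ added to $w$), force a global interior minimum of the difference, verify that this minimum lies in $\{w\le 0\}\cap B_r$ so that the inequality $Lw\ge a$ is available there, and derive a contradiction from $L$ applied at that minimum. Your write-up is somewhat more explicit about why the contact point lands in $\{v\le0\}$ and about the choice of $\delta$, but the underlying argument is the same.
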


\begin{proof}
The proof is standard, we give the details for the convenience 
of the reader. 
We consider the function~$\tilde w := 
w + \delta a(1-\eta(x/r))$, 
where~$\eta\in C^2_0(B_1)$ is a smooth radial cutoff with~$
\eta=1$ in~$B_{1/2}$. If, by contradiction, 
$w \le 0$ at some point in~$B_{r/2}$, then~$\tilde w$ 
attains an absolute minimum at some point~$x_0$ 
in~$B_r$.
Thus,  
$$ 0\ge L \tilde w(x_0)\ge Lw - C\delta  a r^{-s}  
\ge a - C\delta  a r^{-s} \ge a/2 >0,$$  
which gives a contradiction if $\delta$ is taken 
small enough.
\end{proof}

With this preliminary work,
we can finally complete the 
proof of Proposition \ref{prop:limiteqn}, by arguing as follows.
 
\begin{proof}[Proof of Proposition \ref{prop:limiteqn}]
Up to a translation,
we can test
the definition of viscosity solution for a
smooth function touching $g$ by above
at the point $x_0= 0$ (the argument to take care
of the touching by below is similar).

Let~$U'\subset \R^{n-1}$
be a neighborhood of the origin and~$\psi \in C^2(U')$.
Assume that~$\psi$
touches by above $g$ in $U'$ at $0$. 
Assume by contradiction that~$\tilde \psi := \psi \chi_{U'} +g\chi_{\R^n\setminus U'}$ satisfies~$\bar L \tilde\psi(0)>0$.

Then
(see, for instance, Section~3 in~\cite{CaffSil}),
we know that there exist~$\delta>0$ small and
two concave polynomials, denoted by~$Q$ and~$\tilde Q$,
satisfying 
\begin{equation}\label{QandQtilde}
Q(0)= \tilde Q(0)= g(0) \quad {\mbox{ and }}
\quad Q > \tilde Q \ge g \quad \mbox{in }B'_{\delta}\setminus\{0\}
\end{equation}
and such that, if we define~$
Q^{t} :=  Q+t$ and~$
h:= Q^{t}\chi_{B'_{\delta}} +g\chi_{\R^n\setminus B'_{\delta}}$,
it holds that
\[ 
\bar L h(0) >0, 
\]
for all
$t\in(-\delta^3, \delta^3)$.

Let us now consider the function~$\tilde u_{a,t}$
defined as in~\eqref{ua},
with~$d_a$ replaced by the distance from~$a Q^{t}$,
namely, 
\begin{equation}\label{e:qCX672}
\tilde u_{a,t}(x) := \phi_0\left(\frac{d_a(x)}{\eps}\right) 
\chi_{{\QQQ}_\delta} + u_a(x) \chi_{\R^n\setminus {\QQQ}_\delta}
\end{equation}
where now~$d_a$ is the anisotropic signed distance
function to~$\{x_n \ge aQ^{t}(x')\}$
and~${\QQQ}_\delta$ was defined in~\eqref{8hrQdelta}.

By~\eqref{thechain} 
(which has been proved in
Lemmas~\ref{l1}, \ref{l2}, \ref{l3}, \ref{l4}, \ref{l5}
and~\ref{l6}), we obtain that 
\begin{equation}\label{eqdif22}
 L\tilde u_{a,t}- \eps^{-s} f(\tilde u_{a,t})\le- ca 
\quad {\mbox{ in }}\; B_r, 
\end{equation}  
for some $r>0$ and~$c>0$, whenever~$a$ is 
small enough and~$t\in[-\delta^3,\delta^3]$.
By possibly reducing~$r>0$, we will suppose that
\begin{equation}\label{RED}
r\in(0,\delta).\end{equation} 
We
note that, in this setting, $r$ and~$c$ 
depend on~$\bar L h(0)$.

Next we show that, for~$t=\delta^3$ and~$a$ 
small enough, we have
\begin{equation}\label{difpositive}
u_a -\tilde u_{a,t}>0  \quad{\mbox{ in }}\;  \overline{B_{r/2}}.
\end{equation}
To prove this, 
we recall that, by Corollary~\ref{cor:globalCalpha2}
(used here with~$d:=a^2$), 
we have 
\begin{equation}\label{levelsua}
 \{x_n\le ag(x')-Ca^{1+\sigma} \}  
\subset \{ u_a \le \theta\} \subset 
\{x_n\le ag(x')+Ca^{1+\sigma} \}
\end{equation}
in $B_1'\times(-1,1)$, provided 
that~$(\eps/a^2)^{\gamma_0}  \le 1-|\theta|$. 
On the other hand,
by definition~$\tilde u_{a,t}= \phi_0(d_a/\eps)$ 
in $\QQQ_\delta$. Therefore,
\begin{equation}\label{levelstildeu}
 \{x_n\le aQ^{t} (x')
-Ca^{2} \}  \subset \{ \tilde u_{a,t} \le \theta \} 
\subset \{x_n\le aQ^{t}(x')+Ca^{2} \}
\end{equation}
 in $\QQQ_\delta$, also provided that $(\eps/a^2)^{
\gamma_0}  \le 1-|\theta|$ (with~$\gamma_0$ given by~\eqref{DEC}).

We remark that,
roughly speaking, \eqref{levelsua} says that the ``transition level sets'' of $u_a$ lie essentially on the surface $\{x_n = ag(x')\}$,
while \eqref{levelstildeu} says  that the ``transition level sets'' of $\tilde u_{a,t}$ lie essentially on the surface $\{x_n = a Q^{t} (x')\}$,
up to small errors 
of size $a^{1+\sigma}$.

Then, since $Q\ge g$ in~$B_\delta'$
by~\eqref{QandQtilde},  for $t=\delta^3$
(or any other fixed positive number), 
if we assume that $\eps\le a^{p_0}$ with $p_0$ 
large enough, we can use~\eqref{levelsua}
with~$\theta:=1-a^2$ and~\eqref{levelstildeu}
with~$\theta:=-1+a^2$,
take $a$ small enough and conclude that 
\begin{equation}\label{inclusionX:Y}
\{u_{a} \le 1-a^2\} \subset 
\{\tilde u_{a,t} \le -1+a^2\} \quad \mbox{in } \QQQ_\delta
.\end{equation}
In particular, by~\eqref{RED}, we obtain that
\begin{equation}\label{inclusionX:0}
\{u_{a} \le 1-\kappa\} \subset 
\{\tilde u_{a,t} \le -1+\kappa\} \quad \mbox{in } B_r.
\end{equation}
Now we observe that
\begin{equation}\label{inclusionX}
u_a -\tilde u_{a,t} > - a^2 \quad \mbox{in all of }\R^n.
\end{equation}
Indeed, if~$x\in \QQQ_\delta$, 
we distinguish two cases: either~$u_a(x)>1-a^2$
or~$u_a(x)\le1-a^2$. In the first case, we have that
$$ u_a (x)-\tilde u_{a,t} (x)>(1-a^2)-1=-a^2.$$
In the second case,
we can use~\eqref{inclusionX:Y} and obtain that~$
\tilde u_{a,t} (x)\le -1+a^2$ and, consequently
$$ u_a (x)-\tilde u_{a,t} (x)>-1-(-1+a^2)=-a^2.$$
These observations prove~\eqref{inclusionX}
when~$x\in \QQQ_\delta$.
If instead~$x\in\R^n\setminus \QQQ_\delta$,
we recall~\eqref{e:qCX672}
and we have that~$\tilde u_{a,t}(x) = u_a(x)$,
and this implies~\eqref{inclusionX} also in this case.

Now, we observe that
\begin{equation}\label{CH:do}
f(u_a) \ge f(\tilde u_{a,t})\;{\mbox{ in }}
B_r\cap \{ u_a -\tilde u_{a,t}\le 0\}.
\end{equation}
To check this
we take~$x\in B_r\cap \{ u_a -\tilde u_{a,t}\le 0\}$
and we distinguish two cases, either~$u_a(x)\le1-\kappa$
or~$u_a(x)>1-\kappa$.
In the first case, we exploit~\eqref{inclusionX:0}
and we obtain that~$\tilde u_{a,t}(x) \le -1+\kappa$
and thus
$$ u_a(x)\le\tilde u_{a,t}(x)\le-1+\kappa.$$
This and the monotonicity of~$f$ in~\eqref{assumpf}
imply~\eqref{CH:do} in this case.

If instead~$u_a(x)>1-\kappa$, we have
$$ 1-\kappa<u_a(x)\le\tilde u_{a,t}(x),$$
and once again 
the monotonicity of~$f$ in~\eqref{assumpf}
implies~\eqref{CH:do}, as desired.

Now, from \eqref{eqdif22} and \eqref{CH:do}
it follows that 
\[
L(u_a -\tilde u_{a,t}) \ge \eps^{-s} \big(
f(u_a) - f(\tilde u_{a,t}) \big)+ ca \ge ca  \quad \mbox{in } 
B_r\cap \{ u_a -\tilde u_{a,t}\le 0\}.
\]
Then, Lemma \ref{lacosadisempre}
applied to $w:=u_a -\tilde u_{a,t}$ gives
that~\eqref{difpositive} holds for $t=\delta^3$.

Also, using~\eqref{levelsua} with~$\theta:=0$, we have that
$$ (0,\dots,0,\,ag(0)-Ca^{1+\sigma})\in \{ u_a\le0\}\quad
{\mbox{ and }}\quad
(0,\dots,0,\,ag(0)+Ca^{1+\sigma})\in\{ u_a\ge0\}
.$$
Therefore there exists~$\tau\in[g(0)-Ca^\sigma,\,
g(0)+Ca^\sigma]$
such that the point~$p_a=(p_{a}',p_{a,n})
:=(0,\dots,0, a\tau)$
satisfies
\begin{equation}\label{noholds:PRE}
u_a(p_a)=0.
\end{equation}
We claim that, for every fixed~$t<0$, taking $a$ small enough
(possibly in dependence of~$t$), 
we have
\begin{equation}\label{noholds}
u_a -\tilde u_{a,t}\le0 \quad \mbox{at the point }p_a.
\end{equation}
To this end, we recall~\eqref{QandQtilde} and
we observe that
\begin{eqnarray*}&& p_{a,n}-aQ^t(p_a')-Ca^2=
a\tau-aQ^t(0)-Ca^2
\ge a\big( g(0)-Ca^\sigma \big)-aQ(0)-at-Ca^2
\\ &&\qquad\qquad= -Ca^{1+\sigma}-at-Ca^2>0,\end{eqnarray*}
since~$t<0$,
as long as~$a$ is small enough (possibly depending on~$t$).
{F}rom this
and~\eqref{levelstildeu} (applied here with~$\theta:=0$),
we conclude that
$$p_a\in
\{x_n> aQ^{t}(x')-Ca^{2} \}\subset
\{ \tilde u_{a,t} \ge 0 \} .$$
This and~\eqref{noholds:PRE}
give that
$$ u_a (p_a)-\tilde u_{a,t} (p_a)\le u_a (p_a)=0,$$
which proves~\eqref{noholds}.

Now we let $t_*=t_*(a)$ be the infimum of 
the $t\in \R$ such that \eqref{difpositive} holds.  Notice that,
by~\eqref{difpositive} and~\eqref{noholds},
we know that 
\begin{equation}\label{tstara}
\liminf_{a\to0} t_*(a)=0.\end{equation}
Next, by \eqref{QandQtilde} we have 
\begin{equation}\label{FOA1}
Q-g \ge c_0 >0 \quad \mbox{for any $x'$ outside } B'_{r/8} 
,\end{equation}
where $c_0$ depends only on $Q$ and $\tilde Q$.

Also, in view of~\eqref{tstara},
if  $a$  is small enough, 
we may assume that $t_*> -c_0/2$.
Thus, by~\eqref{FOA1}, we have that
\[
Q^{t_*}-g =Q+t_*-g\ge c_0/2 >0 \quad \mbox{for any
$x'$ outside } B'_{r/8} .
\]
Hence, using again \eqref{levelsua} and  \eqref{levelstildeu}, we obtain that 
\[
\{u_{a} \le 1-\kappa\} \subset \{\tilde u_{a,t} \le -1+\kappa\} \quad \mbox{in } \QQQ_\delta\setminus B_{r/2}. 
\]
Hence, as before,
using that $\tilde u_{a,t} = u_a$ outside of $\QQQ_\delta$,
we conclude that
\[ 
u_a -\tilde u_{a,t_*} > - a^2 \quad \mbox{in }\R^n \setminus B_{r/2} 
.\]
Using again \eqref{eqdif22}
and assumption \eqref{assumpf}, it follows that, for $a$ small enough, 
\begin{equation}\label{thelast}
L(u_a -\tilde u_{a,t_*}) \ge \eps^{-s} \big( 
f(u_a) - f(\tilde u_{a,t_*}) \big)+ ca \ge ca  \quad \mbox{in } (B_r\setminus B_{r/2})\cap \{ u_a -\tilde u_{a,t_*}\le 0\}
.\end{equation}
On the other hand, by the
definition of $t_*$, we have that~$u_a -\tilde u_{a,t_*}
\ge 0$ in $B_{r/2}$ and hence formula~\eqref{thelast} 
holds true by replacing $(B_r\setminus B_{r/2})$ with $B_r$
(since the contribution in~$B_{r/2}$ is void).

Then, Lemma \ref{lacosadisempre},
applied to $w:=u_a -\tilde u_{a,t_*}$, yields that 
$u_a -\tilde u_{a,t_*}>0$ in $\overline{B_{r/2}}$,
which is a contradiction with the definition of $t_*$.
\end{proof}

\section{Completion of the proof of Theorem~\ref{thm:improvement}}\label{SECT6}

Using the techniques developed till now, we are in the position
to prove Theorem~\ref{thm:improvement}.

We need an auxiliary result, a geometric observation. It
says that if in a sequence of dyadic balls a set is trapped  in a sequence of slabs 
with possibly varying orientations,
then it is also trapped in a sequence of parallel slabs.

\begin{lemma}\label{geomseries}
Let $\alpha\in(0,1)$.  Assume that, for some~$a\in(0,1)$ and $X\subset \R^n$, we have  
\begin{equation}\label{induc11}
\big\{ x\cdot \omega_j \le  - \,a\,2^{j(1+\alpha)} \big\} \subset X \subset \big\{ x\cdot \omega_j \le \,a\,2^{j(1+\alpha)} \big\}  \quad \mbox{in }B_{2^j}
\end{equation}
for all
\[j= \left\{0,1,2,\dots, j_a := \left\lfloor  \frac{\log a}{\log(2^{-\alpha})}\right\rfloor \right\}\] 
 where $\omega_j\in S^{n-1}$.

Then, for some $m_0\in\N$, with~$m_0\le j_a$, and $C>0$, depending only on $\alpha$, we have\footnote{
We stress that~$\omega_0$ in~\eqref{0io7xba8} is 
simply~$\omega_j$ with~$j:=0$.} that 
\begin{equation}\label{0io7xba8}
\big\{ x\cdot \omega_0 \le  -C\theta\, a\, 2^{j(1+\alpha)} \big\} \subset X \subset \big\{ x\cdot \omega_0 \le  C\theta\, a\, 2^{j(1+\alpha)} \big\} \quad \mbox{in }B'_{\theta 2^j}\times(-2^{k_a}, 2^{k_a}) 
\end{equation}
for every~$j\in\N$, with~$0\le j\le j_a-m_0$.
\end{lemma}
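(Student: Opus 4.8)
The plan is to compare the normals $\omega_j$ across consecutive dyadic scales and show that they form a Cauchy-like sequence, so that all the slabs can be re-aligned to $\omega_0$ at the cost of a constant factor. First I would quantify how much $\omega_{j}$ and $\omega_{j+1}$ can differ. Apply \eqref{induc11} at scales $j$ and $j+1$: inside $B_{2^j}$ the set $X$ lies between the hyperplanes $\{x\cdot\omega_j = \pm a 2^{j(1+\alpha)}\}$ and also, since $B_{2^j}\subset B_{2^{j+1}}$, between $\{x\cdot\omega_{j+1}=\pm a 2^{(j+1)(1+\alpha)}\}$. Because $X$ contains the half-space-in-a-ball on the negative side and is contained in the other on the positive side, the two pairs of slabs must be ``compatible'' on $B_{2^j}$; a standard geometric argument (choosing points of $X$ and of its complement near $\partial B_{2^j}$ on a cross-section transverse to both normals) forces
\[
|\omega_j - \omega_{j+1}| \le C\,\frac{a\,2^{(j+1)(1+\alpha)}}{2^{j}} = C\,a\,2^{j\alpha},
\]
for a dimensional constant $C$. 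Here the key point is that $a\,2^{j\alpha}\le a\,2^{j_a\alpha}\le 1$ by the definition of $j_a$ in \eqref{ja}/\eqref{kdia}, so all the estimates stay in the regime where the slabs are genuinely thin relative to the ball and the linearization of the sphere is legitimate.

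Next I would sum the increments. For $0\le j\le j_a$,
\[
|\omega_j - \omega_0| \le \sum_{i=0}^{j-1} |\omega_{i+1}-\omega_i| \le C\,a\sum_{i=0}^{j-1} 2^{i\alpha} \le C\,a\,\frac{2^{j\alpha}}{2^\alpha - 1} =: C' a\,2^{j\alpha}.
\]
Again $C' a\, 2^{j\alpha}$ is controlled by a universal constant times $a\,2^{j_a\alpha}\le C'$, and by choosing $m_0\in\N$ large enough (depending only on $\alpha$) one arranges that for $0\le j\le j_a-m_0$ one has the sharper bound $|\omega_j-\omega_0|\le \frac{1}{100}\,2^{-m_0\alpha}$, say, i.e.\ it is as small as we like. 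Then I would convert the change of normal into a change of slab: on $B'_{\theta 2^j}\times(-2^{k_a},2^{k_a})$ (with $\theta$ the small fixed constant appearing in the statement, and noting $\theta 2^j\le 2^j$ so that the $j$-th inclusion of \eqref{induc11} applies on this cylinder), for any $x$ in this cylinder we have $|x\cdot\omega_j - x\cdot\omega_0| \le |x|\,|\omega_j-\omega_0| \le C\,2^j\cdot a\,2^{j\alpha}$. Wait — this is of order $a\,2^{j(1+\alpha)}$, exactly the slab thickness; since we are on the thinner cylinder of radius $\theta 2^j$ it is in fact $\le C\theta\,a\,2^{j(1+\alpha)}$. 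Substituting into \eqref{induc11} replaces the $\omega_j$-slab of half-width $a 2^{j(1+\alpha)}$ by an $\omega_0$-slab of half-width $a 2^{j(1+\alpha)} + C\theta a 2^{j(1+\alpha)} \le C\theta\, a\, 2^{j(1+\alpha)}$ (absorbing the $1$ into the constant via the $\theta$, or simply writing $(1+C\theta)$), which is \eqref{0io7xba8}.

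The main obstacle I expect is the first step: making precise the ``compatibility of slabs'' estimate $|\omega_j-\omega_{j+1}|\le C a 2^{j\alpha}$. One has to argue carefully that two thin slabs in $B_{2^j}$, each trapping the same set $X$ which separates the ball (containing one half-space side, omitting the other), cannot have very different normals — the cleanest way is: pick a unit vector $\tau\perp\omega_0$ (or $\perp\omega_j$) realizing the largest angular discrepancy, look at the two-dimensional section spanned by $\omega_j$ and $\tau$, and evaluate $X$ at the points $\pm \frac{2^j}{2}\,\tau$ shifted slightly in the $\mp\omega_j$ directions; membership of these points in $X$ and in $X^c$ respectively, read off from the $\omega_j$-slab, when fed into the $\omega_{j+1}$-slab inequality, yields the angle bound after dividing by $2^j$. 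Everything else — the geometric series, the choice of $m_0$, the final substitution — is routine. Since only $\omega_0$ through $\omega_{j_a}$ are given, and we only claim the conclusion for $j\le j_a-m_0$, there is enough room at the top scales for the $m_0$ shift, so no boundary issue arises.
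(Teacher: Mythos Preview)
Your approach is essentially the same as the paper's: compare consecutive normals to get $|\omega_{j+1}-\omega_j|\le C a\,2^{j\alpha}$, sum the geometric series to obtain $|\omega_j-\omega_0|\le C a\,2^{j\alpha}$, and then replace each $\omega_j$-slab by an $\omega_0$-slab at the cost of a multiplicative constant. The paper also treats the angle estimate as the one nontrivial step and handles the final conversion just as tersely, concluding the inclusions in $B_{2^j}$ and then remarking that this implies the cylinder statement for $m_0$ large.

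One small slip to fix: your parenthetical ``noting $\theta 2^j\le 2^j$ so that the $j$-th inclusion of \eqref{induc11} applies on this cylinder'' is not right, since for $j<k_a$ the cylinder $B'_{\theta 2^j}\times(-2^{k_a},2^{k_a})$ is \emph{not} contained in $B_{2^j}$ (the vertical extent is $2^{k_a}\gg 2^j$). The correct argument is: establish the $\omega_0$-slab inclusion in $B_{2^j}$ exactly as you do, and then observe that points of the cylinder with $|x_n|$ of order $2^{j'}$ for some $j'\in[j,k_a]$ lie in $B_{2^{j'+1}}$, where the already-established $\omega_0$-slab at scale $j'$ forces $|x_n|\le C a\,2^{j'(1+\alpha)}\ll 2^{j'}$ (since $a\,2^{j'\alpha}\le 2^{-m_0\alpha}$), a contradiction; hence within the cylinder the transition region is actually confined to $B_{2^j}$, and the conclusion follows. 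This is presumably what the paper means by ``which implies the desired result (if $m_0$ is sufficiently large)'', and once you patch this your proof matches the paper's.
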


\begin{proof}
We have, for all~$j\in \{0,1,\dots,j_a\}$,
\[
\big\{ x\cdot \omega_{j+1} \le  -a 2^{(j+1)(1+\alpha)} \big\} \subset X \subset \big\{ x\cdot \omega_j \le  a 2^{(j+1)(1+\alpha)} \big\}  \quad \mbox{in }B_{2^j}.
\]
Thus, rescaling by a factor $2^{-j}$, we obtain that
\begin{equation}\label{9udiyhj73chj:1}
\big\{ x\cdot \omega_{j+1} \le  -a 2^{j\alpha + 1+\alpha} \big\} \subset  \big\{ x\cdot \omega_j \le  a2^{j\alpha} \big\}  \quad \mbox{in }B_{1}.
\end{equation}
Also, 
for all~$j\in \{0,1,\dots,j_a-1\}$,
we have that \begin{equation}\label{9udiyhj73chj:2}
a2^{(j+1)\alpha} \le a2^{j_a\alpha}\le  1.\end{equation}
Hence, 
\begin{equation}\label{9udiyhj73chj:2bis} \delta_j:=
a2^{-j\alpha}\le 2^{-j-1-\alpha} <1.\end{equation}

Notice that, with this notation, \eqref{9udiyhj73chj:1} implies
that
\begin{equation}\label{9udiyhj73chj:1BIS}
\big\{ x\cdot \omega_{j+1} \le  -4\delta_j \big\} 
\subset  \big\{ x\cdot \omega_j \le  \delta_j\big\}  \quad \mbox{in }B_{1}.
\end{equation}

Observe now that 
\begin{equation}\label{9udiyhj73chj:3}
|\omega_{j+1} -\omega_j| \le 32 \delta_j.\end{equation}

Now, from~\eqref{9udiyhj73chj:3},
summing a geometric series, 
we deduce that
\[ |\omega_{j} -\omega_0| \le \sum_{i=0}^{j-1}
|\omega_{i+1} -\omega_i| \le C \sum_{i=0}^{j-1}\delta_i
=
Ca \sum_{i=0}^{j-1} 2^{i\alpha} = \frac{Ca \,2^{j\alpha}}{2^\alpha-1}\le
Ca \,2^{j\alpha},\]
up to renaming~$C>0$.

{F}rom this, and up to renaming~$C$ once again, we obtain that
\begin{eqnarray*} &&
\big\{ x\cdot \omega_0 \le  -Ca\, 2^{j(1+\alpha)} \big\} 
\subset 
\big\{ x\cdot \omega_j \le  - a\, 2^{j(1+\alpha)} \big\} 
\\
{\mbox{and }}&&
\big\{ x\cdot \omega_j \le  a\, 2^{j(1+\alpha)} \big\} 
\subset 
\big\{ x\cdot \omega_0 \le  C a\, 2^{j(1+\alpha)} \big\} 
\quad \mbox{in }B_{ 2^j} ,\end{eqnarray*}
which implies the desired result (if~$m_0$ is sufficiently large).\end{proof}
 
Now we are in the position of completing
the proof of Theorem~\ref{thm:improvement}.

\begin{proof}[Proof of Theorem~\ref{thm:improvement}]
Let us denote $u=u_a$ to emphasize  the dependence of the statement on $a$. 
By Lemma \ref{geomseries} we have that, 
in a suitable coordinate system such that the axis $x_n$ is parallel to $\omega_0$,
\begin{equation*}
\{x_n\le -a 2^{j(1+\alpha_0)}\}\, \subset\, 
\{u_a\le -1+\kappa\} \,\subset \,\{u_a\le 1-\kappa\}  
\,\subset \,\{x_n \le a 2^{j(1+\alpha_0)}\} \quad 
\mbox{in }B'_{2^j}\times (-2^{k_a}, 2^{k_a})
\end{equation*}
for $0\le j\le k_a$, where $k_a=j_a-m_0$ and 
where $m_0=m_0(\alpha_0)$ is the constant of Lemma~\ref{geomseries}.

Then, by Corollaries \ref{cor:globalCalpha} 
and \ref{cor:globalCalpha2}, combined with
Proposition \ref{prop:limiteqn}, we find that 
\[
\{x_n \le a g(x') -Ca^{1+\sigma}\}\, \subset\, \{u_a\le -1+\kappa\} \,\subset \,\{u_a\le 1-\kappa\}  \,\subset \, \{x_n \le a g(x') +Ca^{1+\sigma}\}
\]
in~$B'_{1}\times(-2^{k_a}, 2^{k_a})$,
where $g$ is affine. The assumption $0\in \{-1+\kappa\le u_a\le 1-\kappa\}$ guarantees that 
\[ g(0)=0.\]

Then, if~$a$ is small enough, this implies that 
\[ 
\left\{\omega\cdot x \le - 
\frac{a}{2^{1+\alpha_0}} \right\} \,\subset \,\{u_a\le -1+\kappa\} \,\subset\, \{u_a\le 1-\kappa\}  \,\subset\, \left\{\omega\cdot x\le  \frac{a}{2^{1+\alpha_0}} \right\} \quad \mbox{in }B_{1 /2},
\]
for some $\omega\in S^{n-1}$, and thus
Theorem~\ref{thm:improvement} follows.
\end{proof}

\section{Proof of Theorem~\ref{C:1}}\label{SECT7}

Now we give the proof of Theorem~\ref{C:1},
by applying a suitable iteration of Theorem~\ref{thm:improvement}
at any scale and the sliding method. 
For this, we point out two useful rescaled iterations
of Theorem~\ref{thm:improvement}.
The first, in Corollary~\ref{0NL:VV}, is a ``preservation of flatness'' iteration up to scale 1,
while the second, in Corollary~\ref{NL:VV},
is a ``improvement of flatness'' iteration up to a mesoscale.

We first give the 
\begin{corollary}[``preservation of flatness'']\label{0NL:VV} 
Assume that $L$ satisfies $\eqref{assumpL}$ and that
$f$ satisfies \eqref{assumpf} and \eqref{existslayer}.
Then there exist universal constants~$\alpha_0\in (0,s/2)$, $
p_0\in(2,\infty)$ and~$a_0\in(0,1/4)$ such that the following statement holds.
\smallskip

Let $u: \R^n \rightarrow (-1,1)$ be a solution 
of $Lu=f(u)$ in $\R^n$, such 
that $0\in \{-1+\kappa \le u \le  1-\kappa\}$.
Let~$k\ge j\in\N$ and suppose that
\begin{equation}\label{97f38jssd4d}
j\ge \frac{p_0\,|\log a_0|}{\log2}.
\end{equation}
Assume that
\begin{equation}\label{KOR:S1}
\{\omega_i \cdot x\le -a_0 2^{i}\}\, \subset\,
\{u\le -1+\kappa\} \,\subset \,\{u\le 1-\kappa\}
\,\subset \,\{\omega_i\cdot x\le a_0 2^{i}\} \quad \mbox{in }
B_{2^{i}}, 
\end{equation}
for every~$i\ge k$, where  $\omega_i\in S^{n-1}$.

Then, for every~$i\in\N$, with~$j\le i\le k$,
it holds that
\begin{equation}\label{KOR:S2}
\{\omega_i \cdot x\le -a_0 2^{i}\}\, \subset\,
\{u\le -1+\kappa\} \,\subset \,\{u\le 1-\kappa\}
\,\subset \,\{\omega_i\cdot x\le a_0 2^{i}\} \quad \mbox{in }
B_{2^{i}}, 
\end{equation}
for some $\omega_i\in S^{n-1}$.
\end{corollary}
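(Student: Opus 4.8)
The plan is to fix the flatness parameter once and for all at the value $a=a_0$ and to apply Theorem~\ref{thm:improvement} once at every dyadic scale, running a downward induction on $i$ from $i=k$ down to $i=j$. Keeping $a=a_0$ fixed is exactly what lets the iteration reach scale $1$: the constraint $\eps\le a^{p_0}$ of Theorem~\ref{thm:improvement} then reads $\eps\le a_0^{p_0}$, a fixed threshold, and at a dyadic scale this is precisely a lower bound on the exponent of the form~\eqref{97f38jssd4d}. First I would record the scaling identity: for $\lambda>0$ the function $v(x):=u(\lambda x)$ solves $Lv=\lambda^s f(v)=\eps^{-s}f(v)$ with $\eps:=1/\lambda$, and $v(0)=u(0)\in\{-1+\kappa\le v\le 1-\kappa\}$. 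The base case $i=k$ of the induction is just \eqref{KOR:S1}.

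For the inductive step I would fix $i$ with $j\le i<k$ and assume \eqref{KOR:S2} holds at every scale in $\{i+1,\dots,k\}$; combined with \eqref{KOR:S1}, the flatness estimate with constant $a_0$ is then available at every $u$-scale $2^{i'}$ with $i'\ge i+1$ (this is all that will be used, so it is irrelevant whether $i+1+j_{a_0}$ exceeds $k$). Setting $\lambda:=2^{i+1}$ and $v(x):=u(2^{i+1}x)$, one has $\eps=2^{-(i+1)}$, and \eqref{97f38jssd4d} together with $i\ge j$ gives $2^{-(i+1)}\le a_0^{p_0}$, so the hypothesis $\eps\le a_0^{p_0}$ of Theorem~\ref{thm:improvement} is met. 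Rescaling the flatness valid at the $u$-scale $2^{i+1+m}$ by the factor $2^{i+1}$ provides, for each $m\in\{0,1,\dots,j_{a_0}\}$, some $\omega_m\in S^{n-1}$ with
\[
\{\omega_m\cdot x\le -a_0 2^m\}\subset\{v\le -1+\kappa\}\subset\{v\le 1-\kappa\}\subset\{\omega_m\cdot x\le a_0 2^m\}\quad\mbox{in }B_{2^m};
\]
since $2^m\le 2^{m(1+\alpha_0)}$ for $m\ge 0$, the same inclusions hold a fortiori after replacing $a_0 2^m$ by $a_0 2^{m(1+\alpha_0)}$, so $v$ satisfies the full hypothesis of Theorem~\ref{thm:improvement} with flatness parameter $a=a_0$.

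Theorem~\ref{thm:improvement} then yields $\omega\in S^{n-1}$ with the transition region of $v$ trapped in the slab $\{|\omega\cdot x|\le a_0 2^{-(1+\alpha_0)}\}$ inside $B_{1/2}$; undoing the scaling (multiplying lengths by $2^{i+1}$) this becomes the slab $\{|\omega\cdot x|\le a_0 2^{i-\alpha_0}\}$ inside $B_{2^i}$ for $u$, and since $a_0 2^{i-\alpha_0}\le a_0 2^{i}$ this gives \eqref{KOR:S2} at scale $2^i$ with $\omega_i:=\omega$, closing the induction. There is no deep obstacle here: the only point requiring care is the arithmetic of the rescaling, specifically that $\eps=2^{-(i+1)}\le a_0^{p_0}$ reduces exactly to the lower bound~\eqref{97f38jssd4d} on $j$; the analytic content is entirely contained in Theorem~\ref{thm:improvement}. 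I would also remark that the output flatness $a_0 2^{i-\alpha_0}$ is in fact strictly better than $a_0 2^{i}$, but this gain is deliberately discarded: each step consumes flatness $a_0$ on a whole band of scales, and trying to compound the improvement would force the threshold in~\eqref{97f38jssd4d} to grow — this is precisely the mesoscale limitation behind Corollary~\ref{NL:VV}.
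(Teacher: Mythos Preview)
Your proof is correct and follows essentially the same approach as the paper: a downward induction on the dyadic scale, rescaling at each step by the current radius and applying Theorem~\ref{thm:improvement} with the fixed flatness parameter $a=a_0$, using the inductive hypothesis together with~\eqref{KOR:S1} to supply the trapping at all larger scales. The only cosmetic difference is that the paper indexes the induction by $\ell=k-i$ rather than by $i$ directly; your closing remark about deliberately discarding the $2^{-\alpha_0}$ gain is a nice clarification that the paper leaves implicit.
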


\begin{proof} We prove~\eqref{KOR:S2} for all indices~$i$
of the form~$i=k-\ell$, with~$\ell\in\{0,\dots, k-j\}$.
The argument is by induction over~$\ell$. Indeed, when~$\ell=0$,
then~\eqref{KOR:S2} is a consequence of~\eqref{KOR:S1}.
Hence, recursively, we assume that
the interface of~$u$ in~$B_{2^{k-q}}$
is contained in a slab of size~$a_0 2^{k-q}$,
with~$q\in\{0,\dots,\ell-1\}$,
and we prove that the same holds for~$q=\ell$.
To this aim, we set~$\tilde u(x):=u(2^{k-\ell+1}x)$
and~$\eps:=\frac{1}{2^{k-\ell+1}}$. Notice that~$L\tilde u=\eps^{-s}f(\tilde u)$
and
\begin{equation}\label{CH:81}
\frac{\eps}{a_0^{p_0}}=
\frac{1}{a_0^{p_0}\,2^{k-\ell+1}}\le
\frac{1}{a_0^{p_0}\,2^{j+1}}\le1,
\end{equation}
thanks to~\eqref{97f38jssd4d}.
In addition,
we claim that
\begin{equation}\label{CH:82}
{\mbox{for any~$i\in\N$,
the interface of~$\tilde u$ in~$B_{2^i}$
is trapped in a slab of size~$a_0\, 2^{i(1+\alpha_0)}$.}}
\end{equation}
For this, we distinguish the cases~$i\ge\ell$
and~$i\in\{0,\dots,\ell-1\}$.
First, suppose that~$i\ge\ell$. Then,
if~$x$ lies in the interface of~$\tilde u$
in~$B_{2^i}$, then~$y:=2^{k-\ell+1}x$ lies
in the interface of~$u$
in~$B_{2^{k-\ell+1+i}}$. Accordingly, by~\eqref{KOR:S1},
we know that~$y$ is trapped in a slab of size~$a_0\,2^{k-\ell+1+i}$.
As a consequence, $x$
is trapped in 
a slab of size~$a_0\,2^{i}\le a_0\, 2^{i(1+\alpha_0)}$.

This is~\eqref{CH:82} in this case, so we can now focus on the
case in which~$i\in\{0,\dots,\ell-1\}$.
For this, we take~$x$ in the interface of~$\tilde u$
in~$B_{2^i}$, and we observe that~$y:=2^{k-\ell+1}x$ lies
in the interface of~$u$
in~$B_{2^{k-\ell+1+i}}=B_{2^{k-(\ell-1-i)}}$.
Then, from the inductive assumption, we know that~$y$
is trapped in a slab of size~$a_0\,2^{k-(\ell-1-i)}=a_0\,2^{k-\ell+1+i}$.
Scaling back, it follows that~$x$
is trapped in a slab of size~$a_0\,2^{i}$,
which implies~\eqref{CH:82} also in this case.

So, in light of~\eqref{CH:81}
and~\eqref{CH:82},
we can apply Theorem~\ref{thm:improvement} to~$\tilde u$
and find that the interface of~$\tilde u$ in~$B_{1/2}$
is trapped in a slab of size~$\frac{a_0}{2^{1+\alpha_0}}$.

That is, scaling back,
the interface of~$u$ in~$B_{2^{k-\ell}}$
is trapped in a slab of size~$\frac{a_0\,2^{k-\ell+1}}{2^{1+\alpha_0}}
\le a_0 2^{k-\ell}$, which gives the desired step of the induction.
\end{proof}

We next give the 
\begin{corollary}[``improvement of flatness'']\label{NL:VV}
Assume that $L$ satisfies $\eqref{assumpL}$ and that
$f$ satisfies \eqref{assumpf} and \eqref{existslayer}.
Then there exist universal constants~$\alpha_0\in (0,s/2)$, $
p_0\in(2,\infty)$ and~$a_0\in(0,1/4)$ such that the following statement holds.
\smallskip

Let $u: \R^n \rightarrow (-1,1)$ be a solution 
of $Lu=f(u)$ in $\R^n$, such 
that $0\in \{-1+\kappa \le u \le  1-\kappa\}$.
Let~$k$, $l\in\N$ be such that
\begin{equation}\label{CO:ASS}
l\le \frac{k}{\alpha_0 p_0+1}+1 +\frac{p_0 \log a_0}{(\alpha_0 p_0+1)\log2}.
\end{equation}
Assume that
\begin{equation}\label{COR:S1}
\{\omega_j \cdot x\le -a_0 2^{j}\}\, \subset\,
\{u\le -1+\kappa\} \,\subset \,\{u\le 1-\kappa\}
\,\subset \,\{\omega_j\cdot x\le a_0 2^{j}\} \quad \mbox{in }B_{2^j}, 
\end{equation}
for every~$j\ge k$, where  $\omega_j\in S^{n-1}$.

Then, for every~$i\in\{0,\dots,l\}$, it holds that
\begin{equation}\label{COR:S2} 
\left\{\omega_i\cdot x \le - 
\frac{a_0\,2^{k-i}}{2^{\alpha_0\, i}} \right\} \,\subset \,\{u\le -1+\kappa\}
\,\subset\, \{u\le 1-\kappa\}  \,\subset\,
\left\{\omega_i\cdot x\le  \frac{a_0\,2^{k-i}}{2^{\alpha_0 i}} \right\} 
\quad \mbox{in }B_{2^{k-i}},
\end{equation}
for some $\omega_i\in S^{n-1}$.
\end{corollary}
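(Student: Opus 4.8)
The plan is to prove~\eqref{COR:S2} by induction on~$i\in\{0,\dots,l\}$, each inductive step being a single rescaled application of Theorem~\ref{thm:improvement}. Throughout I set~$a_i:=a_0\,2^{-\alpha_0 i}$, so that~\eqref{COR:S2} at step~$i$ says that the transition region~$\{-1+\kappa\le u\le1-\kappa\}$ is trapped, in~$B_{2^{k-i}}$, inside a slab~$\{|\omega_i\cdot x|\le a_i\,2^{k-i}\}$ for some~$\omega_i\in S^{n-1}$. The base case~$i=0$ is precisely~\eqref{COR:S1} read at~$j=k$, with~$\omega_0:=\omega_k$. For the inductive step I assume~\eqref{COR:S2} holds for all indices~$\le i$ with~$i\le l-1$, set~$\tilde u(x):=u(2^{k-i}x)$ ---which solves~$L\tilde u=\eps^{-s}f(\tilde u)$ with~$\eps:=2^{-(k-i)}$ and still has~$0\in\{-1+\kappa\le\tilde u\le1-\kappa\}$--- and apply Theorem~\ref{thm:improvement} to~$\tilde u$ with flatness~$a:=a_i$.

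To do so, three things must be verified. First, that~$\eps\le a_i^{p_0}$: taking base~$2$ logarithms this reads~$i\,(\alpha_0 p_0+1)-k\le \frac{p_0\log a_0}{\log 2}$, i.e.~$i\le\frac{k}{\alpha_0 p_0+1}+\frac{p_0\log a_0}{(\alpha_0 p_0+1)\log 2}$, which is exactly what~\eqref{CO:ASS} gives once~$i\le l-1$; note in particular that~$\eps<1$, so~$i<k$ automatically and all balls~$B_{2^{k-i}}$ below are genuine. Second, that the trapping hypothesis of Theorem~\ref{thm:improvement} holds for~$\tilde u$ with flatness~$a_i$. Since~$\frac{\log a_i}{\log(2^{-\alpha_0})}=i+\frac{\log a_0}{\log(2^{-\alpha_0})}$ and~$i$ is an integer, we have~$j_{a_i}=i+j_{a_0}$, so I must trap the transition region of~$\tilde u$ in~$B_{2^j}$ inside a slab of width~$a_i\,2^{j(1+\alpha_0)}$ for~$0\le j\le i+j_{a_0}$; undoing the scaling, this amounts to trapping the transition region of~$u$ in~$B_{2^{k-i+j}}$ inside a slab of width~$a_i\,2^{k-i}\,2^{j(1+\alpha_0)}$. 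For~$0\le j\le i$ I use the inductive hypothesis at step~$i-j$, which supplies a slab of width~$a_{i-j}\,2^{k-i+j}$ in~$B_{2^{k-i+j}}$, and a one-line exponent identity gives~$a_{i-j}\,2^{k-i+j}=a_i\,2^{k-i}\,2^{j(1+\alpha_0)}$. For~$i\le j\le i+j_{a_0}$ one has~$k-i+j\ge k$, so~\eqref{COR:S1} supplies a slab of width~$a_0\,2^{k-i+j}\le a_i\,2^{k-i}\,2^{j(1+\alpha_0)}$ (here using~$j\ge i$). Third ---and this is free--- Theorem~\ref{thm:improvement} allows the slab orientation to vary with~$j$, so combining the orientations coming from~\eqref{COR:S1} with those from the various earlier steps of the induction is harmless.

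Granting these, Theorem~\ref{thm:improvement} produces~$\omega\in S^{n-1}$ with~$\{|\omega\cdot x|\le a_i/2^{1+\alpha_0}\}$ trapping the transition region of~$\tilde u$ in~$B_{1/2}$; rescaling back, the transition region of~$u$ is trapped in~$B_{2^{k-i-1}}=B_{2^{k-(i+1)}}$ inside~$\{|\omega\cdot x|\le a_i\,2^{k-i}/2^{1+\alpha_0}\}$, and since~$a_i\,2^{k-i}/2^{1+\alpha_0}=a_0\,2^{k-(i+1)}/2^{\alpha_0(i+1)}$ this is precisely~\eqref{COR:S2} at step~$i+1$ with~$\omega_{i+1}:=\omega$. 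This closes the induction.

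The only delicate point is the exponent bookkeeping: one must check that the geometric decay rate~$2^{-\alpha_0}$ gained in flatness, the geometric growth~$2^{j(1+\alpha_0)}$ across dyadic scales permitted by Theorem~\ref{thm:improvement}, and the super-polynomial smallness~$\eps\le a^{p_0}$ it requires are mutually consistent, and that the resulting threshold on~$i$ is exactly the bound~\eqref{CO:ASS} imposed on~$l$. I expect the cleanest presentation comes from fixing the normalization~$a_i=a_0\,2^{-\alpha_0 i}$ once and for all and phrasing every inclusion in terms of the width of the trapping slab at scale~$2^m$, which turns all the needed comparisons into the elementary exponent identities used above.
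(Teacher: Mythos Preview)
Your proof is correct and follows essentially the same route as the paper's: induction on~$i$, rescaling $u$ by the factor~$2^{k-i}$ (the paper writes this as~$2^{k-i_0+1}$ with~$i_0=i+1$), verifying the hypotheses of Theorem~\ref{thm:improvement} by splitting the dyadic scales into those covered by the inductive hypothesis and those covered by~\eqref{COR:S1}, and then scaling back. Your introduction of the notation~$a_i=a_0\,2^{-\alpha_0 i}$ and the explicit computation~$j_{a_i}=i+j_{a_0}$ make the exponent bookkeeping slightly cleaner than the paper's presentation, but the argument is the same.
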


\begin{proof} The proof is by induction over~$i$.
When~$i=0$, we have that~\eqref{COR:S2} follows
from~\eqref{COR:S1}
with~$j=k$.

Now, we assume that~\eqref{COR:S2} holds true for all~$i\in
\{0,\dots,i_0-1\}$, with~$1\le i_0\le l$, and we prove it for~$i_0$.
To this aim, we set
$$ \tilde u(x):= u(2^{k-i_0+1}x),\qquad
\tilde\eps:=\frac{1}{2^{k-i_0+1}},\qquad \tilde a:=
\frac{a_0}{2^{\alpha_0(i_0-1)}}.$$
Our goal is to use
Theorem~\ref{thm:improvement} in this setting
(namely, the triple~$(u,\eps,a)$ in the statement of
Theorem~\ref{thm:improvement} becomes here~$(\tilde u,\tilde\eps,\tilde a)$).
For this, we need to check that~$(\tilde u,\tilde\eps,\tilde a)$
satisfy the assumptions of Theorem~\ref{thm:improvement}.
First of all, we notice that~$\tilde a\le a_0$ and
\begin{equation}\label{CO:INTERFACE:0} \frac{\tilde\eps}{\tilde a^{p_0}}
=\frac{2^{\alpha_0p_0(i_0-1)}}{a_0^{p_0}\, 2^{k-i_0+1}}
=\frac{2^{(\alpha_0p_0+1) i_0}}{a_0^{p_0}\, 2^{\alpha_0p_0+k+1}}\le
\frac{2^{(\alpha_0p_0+1) l}}{a_0^{p_0}\, 2^{\alpha_0p_0+k+1}}\le 1,
\end{equation}
thanks to~\eqref{CO:ASS}.

Now we claim that, for any~$j\ge0$,
\begin{equation}\label{CO:INTERFACE}
{\mbox{the interface of~$\tilde u$ in~$B_{2^j}$ is trapped
in a slab of width~$\tilde a 2^{j(1+\alpha_0)} $.}}
\end{equation}
For this, we distinguish two cases, either~$j\ge i_0$ or~$j\in\{0,\dots,i_0-1\}$.
In the first case,
we take~$x\in B_{2^j}$ belonging to the interface of~$\tilde u$, and we observe
that~$y:=2^{k-i_0+1}x\in B_{
2^{j+k-i_0+1}}$ belongs to the interface of~$u$: then, we can use~\eqref{COR:S1}
and find that~$y$ is trapped in a slab of size
$$ a_0 2^{j+k-i_0+1}=
\tilde a 2^{\alpha_0(i_0-1)+j+k-i_0+1}.
$$
Scaling back, this says that~$x$ is trapped in a slab of size
$$ \tilde a 2^{\alpha_0(i_0-1)+j}\le 
\tilde a 2^{\alpha_0(j-1)+j}\le \tilde a 2^{j(1+\alpha_0)}.$$
This proves~\eqref{CO:INTERFACE} in this case, and now we focus
on the case in which~$j\in\{0,\dots,i_0-1\}$.
For this, let us take~$x\in B_{2^j}$ in the interface of~$\tilde u$. Then,
we have that~$y:=2^{k-i_0+1}x\in B_{
2^{j+k-i_0+1}}= B_{2^{k-(i_0- j-1)}}$ belongs to the interface of~$u$
and hence, in view of the inductive assumption, is trapped
in a slab of width
$$\frac{a_0 \, 2^{k-(i_0- j-1)}}{ 2^{\alpha_0(i_0- j-1)} }
= \tilde a2^{\alpha_0 j+k-i_0+1+ j} .
$$
Thus, scaling back, we find that~$x$ is trapped
in a slab of width~$\tilde a2^{\alpha_0 j+ j}$, which establishes~\eqref{CO:INTERFACE}.

In light of~\eqref{CO:INTERFACE:0}
and~\eqref{CO:INTERFACE}, we can apply 
Theorem~\ref{thm:improvement} (with~$(u,\eps,a)$
replaced here by~$(\tilde u,\tilde\eps,\tilde a)$): in this way, we conclude that
the interface of~$\tilde u$ in~$B_{1/2}$ is trapped in a slab of
width~$\frac{\tilde a}{2^{1+\alpha_0}}$.
That is, scaling back,
the interface of~$u$ in~$B_{2^{k-i_0}}$ is trapped in a slab of width
$$\frac{\tilde a\,2^{k-i_0+1}}{2^{1+\alpha_0}}= 
\frac{a_0}{2^{\alpha_0(i_0-1)}}\cdot\frac{2^{k-i_0+1}}{2^{1+\alpha_0}}
=a_0\,2^{k-i_0-\alpha_0 i_0},$$
which is~\eqref{COR:S2} for~$i_0$. This completes the inductive step.
\end{proof}

For the proof of Theorem~\ref{C:1},
it is also useful to have the following maximum principle:

\begin{lemma}\label{VA5RP}
Assume that~$w$ is continuous
and bounded from below,
and satisfies, in the viscosity sense,~$Lw\ge -cw$ in~$\{w<0\}$, for some~$c>0$.
Then~$w\ge0$ in~$\R^n$.
\end{lemma}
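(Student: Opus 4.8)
The plan is to argue by contradiction, manufacturing an honest interior minimum by adding a slowly growing barrier. Suppose $m:=\inf_{\R^n}w<0$; this infimum is finite since $w$ is bounded from below.

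First I would fix the barrier. Let $\beta\in(0,s/2)$ and set $\Gamma(x):=(1+|x|^2)^\beta$, so that $\Gamma\in C^\infty(\R^n)$, $\Gamma\ge1$, $\Gamma(x)\to+\infty$ as $|x|\to+\infty$, and $L$ annihilates constants. The key property to check is that $L\Gamma$ is well defined and \emph{uniformly} bounded, $|L\Gamma(x)|\le C_0$ for all $x\in\R^n$, with $C_0$ depending only on $n$, $s$, $\beta$ and the ellipticity constants. This is a routine splitting: the part $\{|y|<1\}$ is controlled by $\|D^2\Gamma\|_{L^\infty(\R^n)}\int_{|y|<1}|y|^{2-n-s}\,dy<\infty$ (here $s<2$), and the tail $\{|y|\ge1\}$ is split into $\{1\le|y|\le|x|/2\}$, $\{|x|/2<|y|<2|x|\}$ and $\{|y|\ge2|x|\}$, using the decay $|D^2\Gamma(z)|\lesssim(1+|z|)^{2\beta-2}$ on the first, the growth $\Gamma(z)\lesssim(1+|z|)^{2\beta}$ on all three, and $2\beta<s$; each piece stays bounded as $|x|\to\infty$. (A logarithmic barrier $\log(2+|x|^2)$ would work just as well.)

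Next I would localise the minimum. Fix $\epsilon\in(0,-m/4)$ and a point $x^*$ with $w(x^*)<m+\epsilon$. For $0<\delta<\epsilon/\Gamma(x^*)$ the function $w_\delta:=w+\delta\Gamma$ is continuous and coercive, hence attains its minimum at some $x_\delta\in\R^n$, and
\[
w_\delta(x_\delta)\le w_\delta(x^*)=w(x^*)+\delta\Gamma(x^*)<m+2\epsilon<\tfrac m2<0,
\]
so $w(x_\delta)\le w_\delta(x_\delta)<0$; thus $x_\delta$ lies in the open set $\{w<0\}$ where the equation holds. The function $\varphi:=w_\delta(x_\delta)-\delta\Gamma$ is $C^2$ and touches $w$ from below at $x_\delta$ (it equals $w(x_\delta)$ there, and $\varphi=w_\delta(x_\delta)-\delta\Gamma\le w_\delta-\delta\Gamma=w$ on all of $\R^n$). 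Choosing $r>0$ with $B_r(x_\delta)\subset\{w<0\}$ and setting $v:=\varphi$ on $B_r(x_\delta)$, $v:=w$ outside, we have $v\le w$ with equality at $x_\delta$, and (since $w$ is bounded below) $Lv(x_\delta)$ is defined in $\R\cup\{-\infty\}$; the viscosity supersolution property gives $Lv(x_\delta)\ge-c\,w(x_\delta)$. On the other hand $v\ge\varphi$ on $\R^n$ with equality at $x_\delta$, so $Lv(x_\delta)\le L\varphi(x_\delta)=-\delta L\Gamma(x_\delta)\le\delta C_0$. Combining, $0<-\tfrac{cm}2<-c\,w(x_\delta)\le\delta C_0$, and letting $\delta\downarrow0$ (with $\epsilon$ and $x^*$ fixed) yields the contradiction $-cm/2\le0$. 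Hence $w\ge0$ in $\R^n$.

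The only point requiring genuine care is the uniform bound $|L\Gamma|\le C_0$ on the whole space (not merely on a bounded region): one must take the growth exponent of the barrier strictly below $s/2$, precisely so that the far tail $\int_{|y|>|x|}|y|^{2\beta-n-s}\,dy$ and the intermediate-range contribution $(1+|x|)^{2\beta-s}$ stay bounded as $|x|\to\infty$. Everything else is the standard barrier argument for maximum principles in the whole space. If one prefers to avoid any discussion of $w$ being possibly unbounded above, one may observe that in every application of this lemma $w$ is in fact bounded, so $Lv(x_\delta)$ is automatically finite; but this is not needed, since the argument only uses the one-sided bound $Lv(x_\delta)\le\delta C_0$.
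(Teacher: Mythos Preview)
Your proof is correct and follows the same overall scheme as the paper's: perturb $w$ by a small barrier to manufacture a global minimum in $\{w<0\}$, evaluate the equation there, and send the barrier parameter to zero. The genuine difference is in the choice of barrier. The paper adds the \emph{bounded} corrector $C_o\,\eta(\delta x)$ with $\eta\in C^\infty(\R^n,[0,1])$, $\eta=0$ in $B_{1/2}$, $\eta=1$ outside $B_1$; this forces $w_\delta\ge0$ outside $B_{1/\delta}$ (since $w\ge -C_o$), and the scaling identity $L\big(\eta(\delta\,\cdot)\big)(x)=\delta^s(L\eta)(\delta x)$ yields an $O(\delta^s)$ error with no tail analysis whatsoever. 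Your route uses the \emph{coercive} corrector $\delta(1+|x|^2)^\beta$, which makes the existence of a minimiser automatic but obliges you to prove the uniform bound $|L\Gamma|\le C_0$; as you note, this is exactly where the restriction $2\beta<s$ enters, and your sketch of the splitting is accurate. Each choice buys something: the paper's barrier avoids any growth computation and gets the sharper $\delta^s$ rate for free, while yours gives the more textbook coercivity picture and handles the viscosity touching more explicitly via the auxiliary function $v$.
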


\begin{proof} Assume, by contradiction, that~$\{w<0\}\ne\varnothing$.
Then, up to a translation, we may assume that~$w(0)<0$.
Let also~$C_o\ge0$ be such that~$w\ge -C_o$ in~$\R^n$.
Fix~$\eta\in C^\infty(\R^n,\,[0,1])$ with~$\eta=0$ in~$B_{1/2}$
and~$\eta=1$ in~$\R^n\setminus B_1$. For any~$\delta>0$,
we define
$$ w_\delta(x):=w(x)+C_o\eta(\delta x).$$
Notice that
\begin{equation}\label{w d a}
\inf_{\R^n} w_\delta \le w(0)+C_o\eta(0)=w(0)<0.\end{equation}
Moreover, if~$x\in \R^n\setminus B_1$, then
$$ w_\delta(x)=w(x)+C_o\ge0.$$
This and~\eqref{w d a} imply that
$$ \inf_{\R^n} w_\delta=\min_{\overline{B_1}} w_\delta=
w_\delta(x_\delta),$$
for a suitable~$x_\delta\in{\overline{B_1}}$.

We remark that~$w_\delta(x_\delta)\le w_\delta(0)=w(0)<0$,
and so~$w(x_\delta)=w_\delta(x_\delta)-C_o\eta(\delta x_\delta)<0$.
Hence
$$ 0\ge Lw_\delta(x_\delta)=Lw(x_\delta)
+C_o\,L\big(\eta(\delta x_\delta)\big)\ge -c w(x_\delta)-C\delta^{s},$$
for some~$C>0$.
Consequently,
$$ \inf_{\R^n} w_\delta=
w(x_\delta)+ C_o\eta(\delta x_\delta)
\ge-\frac{C\delta^{s}}{c}+ C_o\eta(\delta x_\delta).$$
That is, for any~$x\in\R^n$,
$$ w(x)+C_o\eta(\delta x) \ge -\frac{C\delta^{s}}{c}+ C_o\eta(\delta x_\delta).$$
Taking limit in~$\delta$, we thus conclude that,
for any~$x\in\R^n$,
$$ w(x)=w(x)+C_o\eta(0) \ge0,$$
against our initial assumption.
\end{proof}

With this, we can now complete the 
proof of Theorem~\ref{C:1}, with the following argument:

\begin{proof}[Proof of Theorem~\ref{C:1}]
{\em Step 1.} We prove that
in an appropriate orthonormal coordinate system we have
\begin{equation}\label{trap4xn}
 \{ x_n \le z_n- C 2^{j(1-\delta)} \}\subset \{u\le -1+\kappa\} \subset \{u\le 1-\kappa\} \subset  \{ x_n \le z_n + C 2^{j(1-\delta)} \}\quad \mbox{in }B_{2^j}(z)
\end{equation}
for all $z\in \{-1+\kappa\le u\le 1-\kappa\}$
and~$j\in\N$, for a suitable~$\delta\in(0,1)$.

Let $a_0>0$ be the constant in 
Theorem~\ref{thm:improvement}.
First we claim that
there exists $k_0\ge 1$ universal 
such that,
for any~$z\in \{-1+\kappa\le u\le 1-\kappa\}$ 
and $k\ge k_0$, we have 
\begin{equation}\label{trap1}
 \left\{ \omega\cdot (x-z) \le - 
a_0\,2^k \right\}\subset 
\{u\le -1+\kappa\} \subset \{u\le 1-\kappa\}
\subset \left\{ \omega\cdot (x-z) \le  a_0\,2^k
\right\}\quad \mbox{in }B_{2^{k}}(z),
\end{equation}
where $
\omega\in S^{n-1}$ may depend on $z$ and $k$.

To prove~\eqref{trap1}, we use~\eqref{ASS-R}, 
to see that, if~$k$ is sufficiently large
(depending on~$a_0$), 
\begin{equation}\label{o0pia}
 \left\{ \omega\cdot x \le - a_02^{k-1} \right\}
\subset \{u\le -1+\kappa\} \subset \{u\le 1-\kappa\}
\subset \left\{ \omega\cdot x \le  a_02^{k-1} \right\}\quad 
\mbox{in } B_{2^{k+1}}
,\end{equation}
for some~$\omega\in S^{n-1}$ possibly depending on~$k$.
Then, if~$k$
is also large enough (depending on $z$) in such a
way that~$|z|\le k$, we can suppose that~$B_{2^{k}}(z)
\subset B_{ 2^{k+1} }$ and
$$ a_02^{k-1}+|z|\le a_02^{k-1}+k \le a_02^{k}.$$
These observations and~\eqref{o0pia} give that,
if~$k$ is sufficiently large, possibly
depending on~$a_0$
and~$z$, then
\begin{equation*}
 \left\{ \omega\cdot (x-z) \le - a_02^{k} \right\}
\subset \{u\le -1+\kappa\} \subset \{u\le 1-\kappa\}
\subset \left\{ \omega\cdot (x-z) \le  a_02^{k} \right\}\quad 
\mbox{in } B_{2^{k}}(z).
\end{equation*}
Hence, in light of Corollary~\ref{0NL:VV}
(centered here at the point~$z$), we can conclude that~\eqref{trap1}
holds true (we stress indeed that
condition~\eqref{97f38jssd4d} gives a universal lower threshold
for the validity of~\eqref{trap1}).

Our goal is now to use~\eqref{trap1} to prove~\eqref{trap4xn}.
For this, we need to pick up
the exponent~$\delta$ in~\eqref{trap4xn}
which will imply the 
``stabilization'' of the direction~$\omega$
from one scale to another.
To this aim, fixed~$j$ large enough,
we take
$$k:= \left\lfloor
\frac{\alpha_0p_0 +1}{\alpha_0 p_0}j+\frac{\log a_0}{\alpha_0\log 2} 
\right\rfloor$$
and~$l:=k-j$. 
We observe that
\begin{equation} \label{m piu}
l\ge
\frac{\alpha_0p_0 +1}{\alpha_0 p_0}j
+\frac{\log a_0}{\alpha_0\log 2} 
-1-j= \frac{j}{\alpha_0p_0}+\frac{\log a_0}{\alpha_0\log 2} 
-1.\end{equation}
In this setting, we have that
$$ l-\frac{k}{\alpha_0 p_0+1}=
\frac{\alpha_0 p_0 k}{\alpha_0 p_0+1}-j\le
\frac{\alpha_0 p_0 }{\alpha_0 p_0+1}\left(
\frac{\alpha_0p_0 +1}{\alpha_0 p_0}j+
\frac{\log a_0}{\alpha_0\log 2} \right)-j
=\frac{p_0\log a_0 }{(\alpha_0 p_0+1)\,\log2}
.$$
This says that~\eqref{CO:ASS} is satisfied.
Also, condition~\eqref{COR:S1} (here, centered at the point~$z$)
follows from~\eqref{trap1}.
Consequently, in view of~\eqref{COR:S2} (centered here at
the point~$z$), we conclude that
the interface of~$u$ in~$B_{2^j}=B_{2^{k-m}}$ is trapped
in a slab of size
$$ \frac{a_0 \, 2^{k-l} }{2^{\alpha_0 m}}=\frac{
a_0\,2^j}{ 2^{\alpha_0 l}} \le
\frac{
a_1\,2^j}{ 2^{\frac{j}{p_0}}}=a_1\,2^{j(1-\delta)},
$$
for some~$a_1>0$,
where~$\delta:=\frac{1}{p_0}$,
and~\eqref{m piu} has been exploited.

In formulas, this says that
\begin{equation}\label{trap4}\begin{split}&
 \{ \omega_{z,j} \cdot  (x-z) \le - a_12^{j(1-\delta)} \}
\subset \{u\le -1+\kappa\} \\&\qquad
\subset \{u\le 1-\kappa\} \subset  
\{ \omega_{z,j} \cdot  (x-z) \le 
a_1 2^{j(1-\delta)} \}\quad \mbox{in }B_{2^j}(z),
\end{split}\end{equation}
for any~$j\ge j_0$ large enough,
for suitable~$\omega_{z,j}\in S^{n-1}$.

Next we improve~\eqref{trap4}
by finding a direction which is
independent of~$j$ and~$z$. For this, we start to
get rid of the dependence of~$j$: namely,
we use~\eqref{trap4}
in two consecutive dyadic scales (say, $j$ and $j+1$) 
and we obtain, similarly as in the proof of 
Lemma \ref{geomseries}, that 
\[
|\omega_{z,j+1}-\omega_{z,j}|\le C 2^{-j\delta} 
.\]
This implies that 
\begin{equation}\label{om:stab:z}
\lim_{j\to+\infty}
\omega_{z,j}=
\omega_{z,\infty},\end{equation}
for each fixed $z$.

We will make this statement
more precise, by showing that the limit is independent of~$z$,
namely we claim that
\begin{equation}\label{om:stab}
\lim_{j\to+\infty}\omega_{z,j}=\omega_\infty,\end{equation}
for some $\omega_\infty\in S^{n-1}$. For this,
we observe that, for any $z$,
$\bar z \in \{-1+\kappa\le u\le 1-\kappa\}$, 
\[
 \{ \omega_{z,j} \cdot  (x-z) \le - a_12^{j(1-\delta)} \} \subset \{u\le -1+\kappa\} \subset \{u\le 1-\kappa\}\subset  \{ \omega_{\bar z,j} \cdot  (x-\bar z) \le  a_12^{j(1-\delta)} \} 
\]
in $B_{2^j}(z)\cap B_{2^j}(\bar z)$, thanks to~\eqref{trap4}.
This implies that 
\[ 
|\omega_{z,j}-\omega_{\bar z,j}| \rightarrow 0\quad \mbox{as }j{\to} \infty.
\]
{F}rom this and~\eqref{om:stab:z}, we deduce~\eqref{om:stab},
as desired.

Let us choose now an orthonormal coordinate system
in which $\omega_\infty=(0,0,\dots,0,1) $. Then, 
\eqref{trap4} and~\eqref{om:stab} imply
that \eqref{trap4xn} holds true
for all $j\ge j_0$ universal.
Also, for~$j< j_0$, \eqref{trap4xn} holds true
simply by choosing~$C$ large enough,
hence we have proved the desired claim in~\eqref{trap4xn} for all~$j\in\N$.
\smallskip

In addition, for our purposes, it is interesting to observe that, as
as consequence of~\eqref{trap4xn}, we have
\begin{equation}\label{pre-sliding}
\{x_n \le G(x')-C\}\subset \{u\le -1+\kappa\}
\subset \{u\le 1-\kappa\}
\subset \{x_n \le G(x')+C\}
\end{equation}
in all of $\R^n$, for some~$G\in {\rm Lip}(\R^{n-1})$ 
with Lipschitz seminorm universally bounded and such that
\begin{equation}\label{pre-sliding:2} 
|G(x')-G(y')|\le \bar C\,(|x'-y'|^{1-\delta}+1),\end{equation}
for a suitable~$\bar C>0$.\medskip

{\em Step 2.} We now use \eqref{pre-sliding}
and a sliding method (which is somehow related to the one in~\cite{soave})
to conclude that $u$ has $1$D symmetry.
Indeed,  given $(e'_o,0) \in S^{n-1}\cap\{x_n=0\}$ and $\epsilon>0$ we consider 
\[u^t(x ):= u(x-et)\]
where
\begin{equation}\label{thee}
  e=(e',e_n) := \frac{ (e'_o,\epsilon)  }{\sqrt{1+\epsilon^2}} .
\end{equation}
Our goal is to prove that 
\begin{equation}\label{goalsliding}
u^t\le u \quad \mbox{in all of }\R^n\quad\mbox{and for all }t>0.
\end{equation}
From the fact that $e'_o$ and $\epsilon$ are arbitrary it will follow immediately that $u=u(x_n)$ is a
$1$D function.

%

To prove~\eqref{goalsliding}, we first observe that,
if we take $t$ large enough (depending on $\epsilon$), 
we have that 
\begin{equation}\label{TRAPPA} 
\{u\le 1-\kappa\}\subset \{u^t\le -1+\kappa\} .\end{equation}
To check this, let~$x\in\{u\le 1-\kappa\}$. Then, by~\eqref{pre-sliding}, 
we know that~$x_n\le G(x')+C$. Hence, in view of~\eqref{pre-sliding:2},
we have that
\begin{eqnarray*}&& (x-et)_n-G((x-et)') +C
= x_n - \frac{\epsilon t}{\sqrt{1+\epsilon^2}}
-G\left(x'-\frac{ e'_o\,t  }{\sqrt{1+\epsilon^2}}\right)
+C\\ &&\qquad\le
G(x')- \frac{\epsilon t}{\sqrt{1+\epsilon^2}}
-G\left(x'-\frac{ e'_o\,t  }{\sqrt{1+\epsilon^2}}\right)
+2C\\&&\qquad
\le \bar C \left[
\left(\frac{ t  }{\sqrt{1+\epsilon^2}}\right)^{1-\delta}+1\right]
- \frac{\epsilon t}{\sqrt{1+\epsilon^2}}
+2C
\le
0 ,\end{eqnarray*}
as long as~$t$ is large enough (possibly in dependence of~$\epsilon$).
Hence, by~\eqref{pre-sliding},
$$ u^t(x)=u(x-et)\le-1+\kappa,$$
that proves~\eqref{TRAPPA}.

Now we define~$I_-:=(-1,-1+\kappa]$ and~$I_+:=[1-\kappa,1)$
and we observe that, for large~$t$,
\begin{equation}\label{TR:2}
{\mbox{if $x\in\R^n$,
and~$u^t(x)\ge u(x)$, then either~$u^t(x)$, $u(x)\in I_-$
or $u^t(x)$, $u(x)\in I_+$.}}
\end{equation}
To prove it, let~$x$ be such that
\begin{equation}\label{POOS:0}
u^t(x)\ge u(x).\end{equation}
We distinguish two cases,
\begin{eqnarray}
\label{POOS:5}
&&{\mbox{either $u(x)\in I_+$,}}\\
\label{POOS:6}
&&{\mbox{or $u(x)\in(-1,1)\setminus I_+$.}}
\end{eqnarray}
If~\eqref{POOS:5} holds, then~\eqref{POOS:0} gives that~$u^t(x)\in I_+$,
and we are done. If instead~\eqref{POOS:6} holds, then~\eqref{TRAPPA}
gives that~$u^t(x)\in I_-$. This and~\eqref{POOS:0}
imply that~$u(x)\in I_-$, and this concludes the proof of~\eqref{TR:2}.

Now we claim that
 \begin{equation}\label{conclusion-sliding:PRIMO}
  u^t\le u\quad \mbox{ for all $t$ large enough 
(possibly in dependence of~$\epsilon$)}. 
 \end{equation}
To prove this, let~$w:=u-u^t$. We claim that
\begin{equation}\label{EQUAZIONE}
Lw \ge -c_\kappa w \quad{\mbox{ in }}\quad\{w\le0\}.
\end{equation}
Indeed, from~\eqref{TR:2} and the monotonicity of~$f$ in~$I_-\cup I_+$
given in~\eqref{assumpf},
we have that, if~$x\in\{w\le0\}=\{u^t\ge u\}$,
$$ -Lw(x)=Lu^t(x)-Lu(x)=f(u^t(x))-f(u(x))
=\int^{u^t(x)}_{u(x)} f'(\tau)\,d\tau\le -c_\kappa\,(u^t(x)-u(x))=c_\kappa w(x),$$
thus establishing~\eqref{EQUAZIONE}.

Then, from~\eqref{EQUAZIONE} and Lemma~\ref{VA5RP},
we deduce that~$w\ge0$.
This concludes the proof of~\eqref{conclusion-sliding:PRIMO}.
 
Now, to complete the proof of~\eqref{goalsliding},
we perform a sliding method to check that~$u^t\le u$
also when~$t$ decreases, up to~$t=0$. To this aim, 
we first check the touching points inside the
tubular neighborhood described by the function~$G$
in~\eqref{pre-sliding}. Namely, we let~$G$ and~$C$
be as in~\eqref{pre-sliding},
we let~$t_0>0$ be a fixed, suitably large,~$t$ for which~\eqref{TR:2}
holds true, and 
we define
\begin{equation}\label{89:12-1} 
C':= C+t_0\,\| \nabla G\|_{L^\infty(\R^{n-1})}.\end{equation}
Let also
\begin{equation}\label{89:12-536475} 
{\mathcal{G}}:=\{x=(x',x_n)\in\R^n
{\mbox{ s.t. }} |x_n-G(x')|\le C'\}.\end{equation} and
the set~${\mathcal{G}}$ is somehow the
cornerstone of the sliding strategy that we follow here,
since
\begin{equation}\label{CORN}
{\mbox{if~$t>0$ and~$u^t\le u$ in~${\mathcal{G}}$,
then~$u^t\le u$ in the whole of~$\R^n$.}}
\end{equation}
Notice that, from the discussion
before~\eqref{89:12-1}, we already know that~$u^t\le u$ in the whole of~$\R^n$
for~$t\ge t_0$, so, to establish~\eqref{CORN},
we can focus on the case~$t\in[0,t_0)$. To this objective,
we claim that~\eqref{TR:2} holds true also in this setting
(we stress that the original
statement in~\eqref{TR:2} was proved only for large~$t$).
To prove it, let~$x$ be such that
\begin{equation}\label{POOS:0BIS}
u^t(x)> u(x).\end{equation}
We distinguish two cases, namely
\begin{eqnarray}
\label{POOS:5BIS}
&&{\mbox{either $u(x)\in I_+$,}}\\
\label{POOS:6BIS}
&&{\mbox{or $u(x)\in(-1,1)\setminus I_+$.}}
\end{eqnarray}
If~\eqref{POOS:5BIS} is satisfied, then~\eqref{POOS:0BIS}
implies that~$u^t(x)$ also lies in~$I_+$, which gives~\eqref{TR:2}.
So, we can focus on the case in which~\eqref{POOS:6BIS}
holds true. Then, from
the assumption in~\eqref{CORN}, we know that~$u^t\le u$
in~${\mathcal{G}}$. This and~\eqref{POOS:0BIS} imply that~$x$
lies outside~${\mathcal{G}}$. This and~\eqref{POOS:6BIS}
give that~$x$ lies below~${\mathcal{G}}$, that is, recalling~\eqref{89:12-536475},
$$ x_n \le G(x')-C'.$$
Hence, in light of~\eqref{89:12-1},
\begin{eqnarray*}&& (x-e t)_n - G((x-et)')
\le x_n -G(x')+t\,\| \nabla G\|_{L^\infty(\R^{n-1})}\\&&\qquad
\le x_n -G(x')+t_0\,\| \nabla G\|_{L^\infty(\R^{n-1})}=
x_n -G(x')+C'-C\le -C.\end{eqnarray*}
This and~\eqref{pre-sliding} imply that~$x-te\in \{ u\le -1+\kappa\}$.
That is~$u^t(x)\in I_-$.
This proves that~\eqref{TR:2}
holds true also in this setting.
{F}rom this and the assumption in~\eqref{CORN},
it follows that~$u^t\le u$, by arguing exactly as
in the proof of~\eqref{conclusion-sliding:PRIMO}.
This completes the proof of~\eqref{CORN}.

Now, in view of~\eqref{CORN}, to complete the proof of~\eqref{goalsliding},
it is enough to show that
\begin{equation}\label{CORN2}
{\mbox{for any~$t>0$, it holds that~$u^t\le u$ in~${\mathcal{G}}$.}}
\end{equation}
To this aim, we let
$$ \bar t:=
\inf\{ t\ge 0 {\mbox{ s.t. }} u^t\le u {\mbox{ in }}{\mathcal{G}}\}.$$
Notice that~$\bar t\le t_0$, thanks to
the discussion
before~\eqref{89:12-1}.
We claim that, in fact,
\begin{equation}\label{bar t 0}
\bar t=0.
\end{equation} To this aim, we
assume, by contradiction, that $\bar t>0$. 
Then, we have that $u^{\bar t}\le u$ in~${\mathcal{G}}$, and
there exists a sequence of
points \begin{equation}\label{78:01:12:a0}
x_j\in{\mathcal{G}}\end{equation} such that~$u(x_j)-u^{\bar t}(x_j)\le 1/j$.
So, we set~$u_j(x):=u(x+x_j)$, $u^{\bar t}_j(x):=
u^{\bar t}(x+x_j)$ and~$w_j(x):=u_j^{\bar t}(x)-u_j(x)$,
and we see that~$w_j(0)\ge- 1/j$, 
$w_j(x)\le0$ for any~$x\in\R^n$ with~$x+x_j\in{\mathcal{G}}$, and
$$ Lw_j(x)= f(u^{\bar t}_j(x))-f(u_j(x))
\qquad{\mbox{in }}\;\R^n.$$
That is, from the Theorem of Ascoli,
passing to the limit as~$j\to+\infty$, we find that
there exist~$\bar u$, $\bar u^{\bar t}$ and~$\bar w$ (which are the locally
uniform limits
of~$u_j$, $u^{\bar t}_j$ and~$w_j$, respectively)
and~$\bar{\mathcal{G}}$ (which is a tubular neighborhood
obtained as the limit of~${\mathcal{G}}-x_j$)
such that~$\bar w(0)=0$ and
$$ \bar u(x-\bar t e)-\bar u(x)=
\bar u^{\bar t}(x)-\bar u(x)=
\bar w(x)\le0$$ for any~$x\in\bar{\mathcal{G}}$.
Consequently, we infer that
\begin{equation}\label{yh782hf}
{\mbox{$\bar w(x)\le0$ for any~$x\in
\R^n$,}}\end{equation} thanks to~\eqref{CORN}
(applied here to~$\bar u$, which solves the equation~$L\bar u=f(\bar u)$).

Notice that
$$ L\bar w=f(\bar u^{\bar t})-f(\bar u)\qquad{\mbox{ in }}\;\R^n$$
and so
$$ L\bar w(0)=f(\bar u^{\bar t}(0))-f(\bar u(0))=0
.$$
This and~\eqref{yh782hf}
imply that~$\bar w$ vanishes identically in~$\R^n$.
As a consequence, for any~$x\in\R^n$,
\begin{equation}\label{0oqppq:Y0} \bar u(x)=\bar u^{\bar t}(x)=\lim_{j\to+\infty}
u^{\bar t}(x+x_j)=\lim_{j\to+\infty} u(x+x_j-e\bar t )
=\lim_{j\to+\infty} u_j(x-e\bar t )=\bar u(x-e\bar t ),\end{equation}
which means that~$\bar u$ is periodic (of period~$\bar t$ in direction~$e$).
Also, from~\eqref{pre-sliding}
and~\eqref{78:01:12:a0}, moving in the vertical
direction, we know that there exists~$\tilde x_j$
that is at distance at most~$2C'$ from~$x_j$ and such that~$u(\tilde x_j)=0$.
So we write~$\tilde x_j=x_j+\hat x_j$, with~$|\hat x_j|\le2C'$,
and we find, up to a subsequence, that~$\hat x_j$ converges to
some~$\hat x$ and
\begin{equation}\label{0oqppq:X0}
0=\lim_{j\to+\infty} u(\tilde x_j)=\lim_{j\to+\infty}
u(x_j+\hat x_j)=\lim_{j\to+\infty}u_j(\hat x_j)=\bar u(\hat x).\end{equation}
We also claim that 
\begin{equation}\label{0oqppq:2}
\{\bar u=0\}\subset \{ x_n \ge -C_o\,(|x'|^{1-\delta}+1)\},
\end{equation}
for some~$C_o>0$,
where~$\delta\in(0,1)$
is as in~\eqref{pre-sliding:2}. To check this,
we use the notation~$x_j=(x'_j,x_{j,n})\in\R^{n-1}\times\R$,
we set~$G_j(x'):=G(x'+x'_j)-x_{j,n}$ and we see
that if~$p\in\{\bar u=0\}$, then, for~$j$ large enough,
we have that~$p\in \{ |u_j|<1-\kappa\}$,
that is~$p+x_j\in \{ |u|<1-\kappa\}
\subset \{x_n\ge G(x')-C\}$, thanks to~\eqref{pre-sliding}.
This gives that~$p_n+x_{j,n}\ge G(p'+x_j')-C$.
Since~$x_j\in{\mathcal{G}}$, we have that~$x_{j,n}-G(x_j')\le C'$.
Hence, recalling~\eqref{pre-sliding:2}, we find that
$$ p_n\ge G(p'+x_j')-x_{j,n}-C\ge 
G(p'+x_j')-G(x_j')-C-C'\ge -\bar C(|p'|^{1-\delta}+1)-
C-C'.$$
This completes the proof of~\eqref{0oqppq:2}.

Now, from~\eqref{0oqppq:Y0}
and~\eqref{0oqppq:X0}, we know that~$\hat x-\ell e\bar t\in\{\bar u=0\}$
for any~$\ell\in\N$. This and~\eqref{0oqppq:2}
imply that~$\hat x-\ell e\bar t\in\{ x_n \ge -C_o\,(|x'|^{1-\delta}+1)\}$,
for any~$\ell\in\N$. That is, recalling~\eqref{thee},
\begin{eqnarray*}
0&\le&\lim_{\ell\to+\infty}
(\hat x-\ell e\bar t)_n + C_o\,\big(|(\hat x-\ell e\bar t)'|^{
1-\delta}+1\big)\\&=&\lim_{\ell\to+\infty}
\hat x_n -\frac{\ell\eps \bar t}{\sqrt{1+\eps^2}}
+C_o\,\left(
\left|\hat x'-\frac{\ell e'_o\,\bar t}{\sqrt{1+\eps^2}}\right|^{
1-\delta}+1
\right)\\ &=&-\infty.
\end{eqnarray*}
This is a contradiction and so~\eqref{bar t 0} is proved.
Notice that~\eqref{bar t 0} implies~\eqref{CORN2},
which in turn implies~\eqref{goalsliding}, thanks to~\eqref{CORN}
 
Finally, from \eqref{goalsliding} we obtain that 
$D_e u \ge 0$ in all of $\R^n$ for all $e$ of the 
form \eqref{thee} where $\epsilon>0$ is arbitrary.

Accordingly, we have that~$D_{(e'_o,0)} u\ge0$
for any~$e'_o\in S^{n-1}\cap\{x_n=0\}$.
Hence, exchanging~$e'_o$ with~$-e'_o$, we obtain that~$
D_{(e'_o,0)} u$ vanishes identically.
It thus follows that $u(x)= u(x_n)$, that is $u$ has $1$D symmetry.
 \end{proof}

\section{Proof of Theorems~\ref{C:2}, \ref{C:3bis}, \ref{C:3} and~\ref{C:3tris}}\label{SECT8}

As a first step towards the proof of Theorems~\ref{C:2}, \ref{C:3bis}, \ref{C:3} and~\ref{C:3tris},
we recall that the limit interface of the minimizers
is a nonlocal minimizing surface.

In the rest of the section, we say that $u$ is a minimizing solution of~$(-\Delta)^{s/2} u=u-u^3$ in~$\R^n$ if $u$ minimizes the energy $\mathcal E$ ---see \eqref{energyfunctional}--- for every bounded domain $\Omega\subset \R^n$

Also, we say that~$E\subset\R^n$
is a $s$-perimeter minimizer in~$\R^n$ if its
characteristic function
is a minimizer for the
functional in~\eqref{KIN} among characteristic functions, that is if~${\mathcal{E}^{\rm Dir}}(\chi_E,B)<+\infty$ and
$$ {\mathcal{E}^{\rm Dir}}(\chi_E,B)\le {\mathcal{E}^{\rm Dir}}(\chi_F,B),$$
for any ball~$B\subset\R^n$ and any~$F\subset\R^n$ such that~$F\setminus B=E\setminus B$.

These nonlocal minimal surfaces have been introduced 
in~\cite{CRS}
and widely studied in the recent literature.
In this setting, we have 
\begin{lemma}[Corollary~1.7 in~\cite{SaV-DE}]\label{TEN}
Let~$u$ be a minimizing solution of~$(-\Delta)^{s/2} u=u-u^3$ in~$\R^n$ with $|u|<1$.
For any $\varepsilon>0$, let~$u_\varepsilon(x):=u(x/\varepsilon)$. Then
there exists a nontrivial set ($E\neq \varnothing, \R^n$) $E\subset\R^n$ which is a minimizer of the $s$ perimeter in~$\R^n$ and, up to a subsequence,
$u_\varepsilon\to \chi_E-\chi_{\R^n\setminus E}$ a.e. in~$\R^n$.
Also, $\{u_\varepsilon \le1-\kappa\}$ and $\{u_\varepsilon \le -1+\kappa\}$  converge locally uniformly to~$\R^n\setminus E$ (in the sense of the Hausdorff distance).
\end{lemma}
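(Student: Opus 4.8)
The plan is to obtain Lemma~\ref{TEN} by combining, for minimizing solutions, three classical ingredients: uniform energy estimates, the $\Gamma$--convergence of the rescaled Allen--Cahn energies to the fractional $s$--perimeter, and clean-ball density estimates. Write $W(t):=\frac{1}{4}(1-t^2)^2$, so that (with the normalization $C_{n,s}$ making the Euler--Lagrange equation $(-\Delta)^{s/2}u=u-u^3$) a minimizing solution $u$ minimizes $\mathcal{E}^{\rm Dir}(\cdot,\Omega)+\int_\Omega W$ on every bounded $\Omega$; note that \eqref{assumpf} and \eqref{existslayer} hold for this $f$, so in particular $\phi_0$ is available. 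Using $\mathcal{E}^{\rm Dir}(u_\varepsilon,\Omega)=\varepsilon^{n-s}\mathcal{E}^{\rm Dir}(u,\Omega/\varepsilon)$ and $\int_\Omega W(u_\varepsilon)=\varepsilon^n\int_{\Omega/\varepsilon}W(u)$ one checks that $u_\varepsilon$ minimizes the rescaled functional $J_\varepsilon(v,\Omega):=\mathcal{E}^{\rm Dir}(v,\Omega)+\varepsilon^{-s}\int_\Omega W(v)$.

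\emph{Step 1 (uniform energy bound).} First I would prove $J_\varepsilon(u_\varepsilon,B_R)\le CR^{n-s}$ with $C$ independent of $\varepsilon$ (equivalently $\mathcal{E}(u,B_\rho)\le C\rho^{n-s}$, rescaled). This is the usual competitor argument: one compares $u_\varepsilon$ with a function equal to $\pm1$ in $B_{R-1}$ and equal to $u_\varepsilon$ outside $B_R$; the potential term costs nothing on $\{\pm1\}$, and the Gagliardo energy of such a gluing is $O(R^{n-s})$ since a point at distance $\delta$ from $\partial B_R$ interacts with $B_R^c$ at cost $\sim\delta^{-s}$ and $R^{n-1}\int_0^R\delta^{-s}\,d\delta\sim R^{n-s}$ precisely because $s<1$. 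This is where the genuinely nonlocal regime enters.

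\emph{Step 2 (compactness and $\Gamma$--limit).} From the energy bound $\{u_\varepsilon\}$ has locally bounded Gagliardo seminorm, hence is precompact in $L^1_{\rm loc}(\R^n)$; along a subsequence $u_\varepsilon\to u_\infty$ in $L^1_{\rm loc}$ and a.e. Since $\varepsilon^{-s}\int_{B_R}W(u_\varepsilon)\le CR^{n-s}$ forces $\int_{B_R}W(u_\varepsilon)\to0$, Fatou gives $W(u_\infty)=0$ a.e., so $u_\infty=\chi_E-\chi_{E^c}$ for some measurable $E$. Lower semicontinuity of the Gagliardo seminorm under $L^1_{\rm loc}$ convergence gives $\liminf_\varepsilon\mathcal{E}^{\rm Dir}(u_\varepsilon,B_R)\ge\mathcal{E}^{\rm Dir}(\chi_E-\chi_{E^c},B_R)$, which is a fixed constant times $\mathrm{Per}_s(E,B_R)$. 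For the matching upper bound, given any competitor $F$ with $F\triangle E\Subset B_R$ and $\mathrm{Per}_s(F,B_R)<\infty$, I would use $v_\varepsilon:=\phi_0(d_F/\varepsilon)$, with $d_F$ the signed distance to $\partial F$, reducing first to $\partial F$ smooth and gluing $v_\varepsilon$ to $u_\varepsilon$ on an annulus near $\partial B_R$ by the standard cutoff lemma: the Dirichlet part of $J_\varepsilon(v_\varepsilon,B_R)$ converges to the $s$--perimeter of $F$ (again using $s<1$), and, by the decay of $\phi_0$ from Lemma~\ref{lemdecay}, the potential part $\varepsilon^{-s}\int_{B_R}W(v_\varepsilon)\to0$. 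Minimality of $u_\varepsilon$ then yields $\mathrm{Per}_s(E,B_R)\le\mathrm{Per}_s(F,B_R)$, so $E$ is an $s$--perimeter minimizer in $\R^n$; this step is essentially \cite{SaVgamma}.

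\emph{Step 3 (nontriviality and Hausdorff convergence).} The hard part is excluding $E\in\{\varnothing,\R^n\}$ and upgrading $L^1_{\rm loc}$ convergence to locally uniform Hausdorff convergence of $\{u_\varepsilon\le1-\kappa\}$ and $\{u_\varepsilon\le-1+\kappa\}$; both rest on the clean-ball density estimates for minimizing solutions, which are the technical heart of \cite{SaV-DE} and precisely where full minimality (not just criticality) is used. Since the only constant solutions are $0$ and $\pm1$, with $\pm1$ excluded by $|u|<1$ and $u\equiv0$ excluded by the bound of Step~1 (as $\mathcal{E}(0,B_R)\sim R^n$), $u$ is non-constant, and a Liouville/strong-maximum-principle argument (see \cite{SaV-DE}) rules out the one-sided solutions and gives that the transition set $\{-1+\kappa\le u\le1-\kappa\}$ is nonempty. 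The density estimates then assert that for every $x_0$ in this set and every $R\ge1$ one has $|\{u\ge1-\kappa\}\cap B_R(x_0)|\ge c_0R^n$ and $|\{u\le-1+\kappa\}\cap B_R(x_0)|\ge c_0R^n$; rescaling, $|E\cap B_R(y_0)|\ge c_0R^n$ and $|E^c\cap B_R(y_0)|\ge c_0R^n$ about any limit $y_0$ of rescaled transition points, so $E$ and $E^c$ are nontrivial. Finally, $L^1_{\rm loc}$ convergence gives $\{u_\varepsilon\ge\theta\}\to E$ and $\{u_\varepsilon\le\theta\}\to\R^n\setminus E$ in local measure for every $\theta\in(-1,1)$; combining this with the uniform density estimates (which forbid thin filaments of either phase, so no interface point of $u_\varepsilon$ can lie far from $\partial E$ and no point of $\partial E$ can lie far from the interface of $u_\varepsilon$) and the density/regularity of $\partial E$ upgrades the convergence in measure to locally uniform Hausdorff convergence, exactly as in the analogous argument of \cite{CRS}. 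This yields the assertion for $\{u_\varepsilon\le1-\kappa\}$ and $\{u_\varepsilon\le-1+\kappa\}$ and completes the proof.
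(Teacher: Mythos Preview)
Your sketch is essentially correct and follows the standard route (uniform energy bounds, $\Gamma$--convergence/compactness, density estimates for nontriviality and Hausdorff convergence), but there is nothing to compare against: the paper does not prove Lemma~\ref{TEN}. It is stated as a citation---``Corollary~1.7 in~\cite{SaV-DE}''---and used as a black box in the proofs of Theorems~\ref{C:2}--\ref{C:3tris}. The argument you outline is precisely the content of~\cite{SaV-DE} (together with the $\Gamma$--convergence of~\cite{SaVgamma}), so in effect you have reproduced the skeleton of the referenced proof rather than an alternative to anything in the present paper.

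One minor point: your nontriviality argument in Step~3 is a bit loose. Ruling out $u\equiv0$ via ``$\mathcal{E}(0,B_R)\sim R^n$ versus the bound $CR^{n-s}$'' is not quite right, since $\mathcal{E}^{\rm Dir}(0,B_R)=0$ and $\int_{B_R}W(0)=|B_R|/4$, which indeed grows like $R^n$, but the competitor bound you proved is for the \emph{minimizer}, not for $0$; the correct statement is that $u\equiv0$ is not a minimizer (compare with a competitor that is $+1$ on $B_{R-1}$). More to the point, the nontriviality of $E$ in~\cite{SaV-DE} comes directly from the density estimates applied at a point of the transition set of $u$, which is nonempty because $|u|<1$ and $u$ is continuous; your sketch gets there, but the detour through Liouville-type arguments is unnecessary.
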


By a standard foliation argument, one also sees that monotone
solutions with limits $\pm1 $ are minimizing:
\begin{lemma}[see e.g. Lemma 9.1
in~\cite{VSS}]\label{MIN:MO}
Let~$u$ be a solution of~$(-\Delta)^{s/2} u=u-u^3$
in~$\R^n$.
Suppose that
$$ \frac{\partial u}{\partial x_n}(x)>0\quad{\mbox{ for any }}x\in\R^n$$
and
$$ \lim_{x_n\to\pm \infty} u(x',x_n)=\pm1.$$
Then, $u$ is a minimizing solution.\end{lemma}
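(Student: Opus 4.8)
The plan is to show that $u$ is a local minimizer of $\mathcal E$ by a foliation (sliding) argument: the family of vertical translates $u^t(x):=u(x+te_n)$ of the given monotone solution furnishes a foliation of $\{-1<\,\cdot\,<1\}$ by solutions, and one exploits this together with the submodularity of the functional~$\mathcal E$. Fix a ball $B\subset\R^n$ and a competitor $v=u+\varphi$ with $\varphi\in C^\infty_0(B)$; the goal is $\mathcal E(u,B)\le\mathcal E(v,B)$.

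First I would make two reductions. Since $-1<u<1$ in $\R^n$ (strong maximum principle) and truncation to $[-1,1]$ is $1$-Lipschitz and does not increase the potential $W(r)=(1-r^2)^2$ pointwise, replacing $v$ by $\max(-1,\min(v,1))$ does not increase $\mathcal E(\cdot,B)$; hence we may assume $-1\le v\le1$. Next, from the elementary pointwise inequalities $|\max(f,g)(x)-\max(f,g)(y)|^2+|\min(f,g)(x)-\min(f,g)(y)|^2\le|f(x)-f(y)|^2+|g(x)-g(y)|^2$ and $W(\max(f,g))+W(\min(f,g))=W(f)+W(g)$, the functional $\mathcal E(\cdot,B)$ is submodular, so $\mathcal E(\max(v,u),B)+\mathcal E(\min(v,u),B)\le\mathcal E(v,B)+\mathcal E(u,B)$. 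Thus it is enough to prove that $u$ minimizes $\mathcal E(\cdot,B)$ in the obstacle class $\mathcal K^+:=\{w:\ w=u\ \text{on}\ \R^n\setminus B,\ u\le w\le1\}$ (then apply this to $w=\max(v,u)$), together with the symmetric statement for $\mathcal K^-:=\{w:\ w=u\ \text{on}\ \R^n\setminus B,\ -1\le w\le u\}$ (apply it to $w=\min(v,u)$); adding the two inequalities and using submodularity yields $\mathcal E(u,B)\le\mathcal E(v,B)$. The statement for $\mathcal K^-$ follows from the one for $\mathcal K^+$ applied to the reflected solution $x\mapsto-u(x',-x_n)$, which solves the same equation (because $r\mapsto r-r^3$ is odd), is again increasing in $x_n$ with the same limits, and under which $\mathcal E$ is invariant.

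The core of the proof is the minimality of $u$ in $\mathcal K^+$. I would let $u_1$ minimize $\mathcal E(\cdot,B)$ over $\mathcal K^+$, which exists by the direct method ($\mathcal K^+$ closed and convex, $\mathcal E(\cdot,B)$ coercive and weakly lower semicontinuous). By the regularity theory for the nonlocal obstacle problem, $u_1$ is continuous, satisfies $Lu_1\ge f(u_1)$ in $\R^n$, and is a smooth solution of $Lu_1=f(u_1)$ in the open set $\{u_1>u\}$; moreover $u_1<1$ everywhere, since otherwise $u_1$ would attain the value $1$ at a point $x_0$ and the integral representation of $L$ together with $Lu_1(x_0)\ge f(1)=0$ would force $u_1\equiv1$, contradicting $u_1=u<1$ outside $B$. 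It remains to show $u_1\equiv u$. For this I would slide the leaves $u^t$, $t\ge0$, downward. Since $u^t\to1$ uniformly on $\overline B$ as $t\to+\infty$, while $u^t>u=u_1$ on $\R^n\setminus B$ for every $t>0$, and since $\sup_{\overline B}u_1<1$, the infimal $\tau\ge0$ with $u^\tau\ge u_1$ in $\R^n$ is well defined; if $u_1\not\equiv u$ then $\tau>0$, and the map $t\mapsto\min_{\overline B}(u^t-u_1)$ vanishes at $t=\tau$, producing a contact point $x_0\in\overline B$ with $u^\tau(x_0)=u_1(x_0)$. If $u_1(x_0)>u(x_0)$, then $x_0\in\{u_1>u\}$, where $u_1$ solves the equation, so $w:=u^\tau-u_1\ge0$ satisfies $w(x_0)=0$ and $Lw(x_0)=f(u^\tau(x_0))-f(u_1(x_0))=0$; but the integral representation of $L$ gives $Lw(x_0)<0$ unless $w\equiv0$, so $u^\tau\equiv u_1$, impossible since they differ outside $B$ (as $\tau>0$). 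If instead $u_1(x_0)=u(x_0)$, then $u^\tau(x_0)=u(x_0)=u^0(x_0)$, contradicting the strict monotonicity $u^\tau>u^0$. Either way one obtains a contradiction, whence $u_1\equiv u$. This reproduces the argument of~\cite{VSS}.

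The step I expect to be the main obstacle is making this sliding argument rigorous. Two points deserve care. First, one must invoke nonlocal obstacle regularity to know that $u_1$ is a genuine pointwise solution off the coincidence set, so that the pointwise strong maximum principle may be applied at the contact point. Second — and more delicate — one must guarantee that the contact point stays in the compact set $\overline B$ rather than escaping to infinity: indeed $u^\tau-u_1=u^\tau-u$ also tends to $0$ along the vertical direction (where $u\to\pm1$), so the infimum of $u^\tau-u_1$ over all of $\R^n$ is $0$ but is in general only approached at infinity. This is circumvented by working with the minimum over $\overline B$ only, which cannot be positive at $t=\tau$: otherwise $u^t\ge u_1$ would persist for slightly smaller $t$ (the difference being already positive on $\R^n\setminus B$), against the minimality of $\tau$. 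The remaining ingredients — submodularity, truncation, the reflection symmetry, the existence of $u_1$, and the interior regularity of $u$ — are routine.
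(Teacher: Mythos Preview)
The paper does not actually give a proof of this lemma: it only states it with a citation to~\cite{VSS} and the remark that it follows ``by a standard foliation argument.'' Your proposal is a correct and detailed reconstruction of precisely that standard argument (truncation, submodularity reduction to one-sided obstacle classes, existence of an obstacle minimizer by the direct method, and a sliding comparison with the translates~$u^t$), so it matches the intended approach.

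One small clarification: in your Case~2 you should also note that any contact point on~$\partial B$ automatically falls into this case (since $u_1=u$ on $\R^n\setminus B$), so the dichotomy is exhaustive on~$\overline B$; otherwise the argument is complete as written.
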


We also need a lemma on flatness of nonlocal minimizing surfaces that are known to be contained in a halfspace.
\begin{lemma}\label{lemhalfspace}
Assume that $E$ is a minimizer of the $s$-perimeter that is contained in some halfspace. 
Then, either $E= \varnothing$ or $E$ is a parallel halfspace. 
\end{lemma}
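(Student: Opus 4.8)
The plan is to derive the rigidity from the standard machinery of nonlocal minimal surfaces: $L^1_{\mathrm{loc}}$-compactness and density estimates for $s$-minimizers, the strong maximum principle, and the monotonicity formula. After a rotation we may assume $E\subset H:=\{x_n\le 0\}$, and we may assume $E\ne\varnothing$, since otherwise the first alternative in the statement holds; note also $E\ne\R^n$ because $E\subset H$, and $\partial E\subset\overline E\subset\overline H$, so every $p\in\partial E$ satisfies $p_n\le 0$.

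The first step is to compute the blow-down of $E$. Fix $p\in\partial E$. By compactness of $s$-minimizers and the homogeneity coming from the monotonicity formula, along any sequence $R_k\to\infty$ a subsequence of $R_k^{-1}(E-p)$ converges in $L^1_{\mathrm{loc}}$ to an $s$-minimal cone $C_p$ with vertex $0$, and the uniform density estimates force $C_p$ to be nontrivial, so $0\in\partial C_p$. Since $E-p\subset\{x_n\le -p_n\}$ with $-p_n\ge 0$, we have $R_k^{-1}(E-p)\subset\{x_n\le -p_n/R_k\}$, and letting $k\to\infty$ yields $C_p\subset\{x_n\le 0\}$. Thus $C_p$ is an $s$-minimal set lying in the halfspace $\{x_n\le 0\}$ and touching the hyperplane $\{x_n=0\}$ at the origin, which is a smooth point of that halfspace. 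I would then invoke the strong maximum principle for $s$-minimal sets (the nonlocal mean curvature of a halfspace vanishes at each of its boundary points by symmetry, whereas $C_p\subsetneq\{x_n\le 0\}$ would make the nonlocal mean curvature of $C_p$ at $0$ strictly positive, contradicting minimality) to conclude $C_p=\{x_n\le 0\}$. As this holds along every subsequence, the blow-down of $E$ at $p$ is unique and equal to $\{x_n\le 0\}$; in particular the monotone density $\Theta(E,p,r)$ appearing in the monotonicity formula tends to $\Theta_0$, the density of a halfspace, as $r\to\infty$.

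The second step promotes this to rigidity. Since $E$ is neither empty nor the whole space, the reduced boundary $\partial^*E$ is nonempty; pick $p_0\in\partial^*E$. At a reduced-boundary point the blow-up of $E$ is a halfspace, so $\Theta(E,p_0,r)\to\Theta_0$ as $r\to 0^+$ as well. Hence the monotone function $r\mapsto\Theta(E,p_0,r)$ has equal limits $\Theta_0$ at $0^+$ and at $+\infty$, so it is constant; by the equality case of the monotonicity formula, $E-p_0$ is invariant under positive dilations, i.e. $E$ is a cone with vertex $p_0$. A cone coincides with its own blow-down at its vertex, so $E-p_0=C_{p_0}=\{x_n\le 0\}$; that is, $E=\{x_n\le p_{0,n}\}$, a halfspace parallel to $\{x_n=0\}$, which is the asserted conclusion.

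The delicate point, and the one I would handle most carefully, is the application of the strong maximum principle to rule out a nontrivial gap between the minimal cone $C_p$ and the halfspace: $C_p$ may be singular at the origin, so the comparison against the smooth competitor $\{x_n\le 0\}$ must be phrased in the viscosity sense, and I would cite the strong maximum principle for nonlocal minimal surfaces from the literature rather than reprove it here. All the remaining ingredients — compactness and density estimates for $s$-minimizers, the monotonicity formula together with its rigidity case, and the fact that blow-ups at reduced-boundary points are halfspaces — belong to the regularity theory of nonlocal minimal surfaces developed in \cite{CRS} and subsequent works.
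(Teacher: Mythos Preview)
Your approach via the monotonicity formula and blow-down/blow-up analysis is genuinely different from the paper's direct touching-by-balls argument, and the rigidity step at a regular point (your ``step 2'') is elegant. However, there is a real gap in your step 1. The heuristic ``$C_p\subsetneq\{x_n\le0\}$ makes the nonlocal mean curvature of $C_p$ at $0$ strictly positive, contradicting minimality'' has the wrong sign. When $C_p\subset\{x_n\le0\}$ is touched at the vertex $0$ from the \emph{complement} side (any ball in $\{x_n>0\}\subset C_p^c$), the viscosity Euler--Lagrange inequality of \cite{CRS} gives only
\[
H_s[C_p](0)=\mathrm{P.V.}\int_{\R^n}\frac{\chi_{C_p^c}(y)-\chi_{C_p}(y)}{|y|^{n+s}}\,dy\;\ge\;0,
\]
and since $\chi_{C_p^c}-\chi_{C_p}=(\chi_{\{x_n>0\}}-\chi_{\{x_n\le0\}})+2\chi_{\{x_n\le0\}\setminus C_p}$, the right-hand side equals $2\int_{\{x_n\le0\}\setminus C_p}|y|^{-n-s}\,dy\ge0$. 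This is \emph{consistent} with a strictly positive value, not a contradiction. The inequality that would yield a contradiction ($H_s\le0$) requires an \emph{interior} tangent ball at $0$, but a cone with vertex $0$ admits one only if it already contains a halfspace through $0$ --- i.e.\ only if $C_p=\{x_n\le0\}$, which is what you are trying to prove.

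The conclusion that the blow-down cone must be the halfspace is correct, but establishing it is essentially the content of the lemma itself (restricted to cones) and requires tools of the same depth as the paper's proof --- touching by balls of diverging radius, the precise viscosity inequality, and in the unbounded case the improvement of flatness from \cite{CRS}. Your monotonicity framework would cleanly replace the paper's use of improvement of flatness in its second case, \emph{provided} you supply an independent proof that a nonlocal minimal cone contained in a halfspace and with vertex on the bounding hyperplane is the halfspace; the one-line strong maximum principle you invoke does not do this.
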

\begin{proof}
Assume that $E $
is contained in  $\{e\cdot x>0\}$, for some direction~$e\in S^{n-1}$. 
After a translation we may assume that $E$ is {\em not} contained in $\{e\cdot x>t\}$ for any $t>0$
Hence, there exists
a sequence of points $x_k \in \partial E$  such that $t_k := e\cdot x_{k}\downarrow 0$.

If the sequence $x_k$ was bounded, say contained in $B_1$ after some dilation, then we may touch $\partial E$ with
huge balls $B_{R_k}((-R_k+t_k') e )\subset \R^n\setminus E$ with $R_k =
\frac {1}{100}(t_k)^{-1/2}$ and~$t'_k$ infinitesimal,
at some point $y_k \in B_2$. Using the viscosity equation for $\partial E$ (see \cite{CRS}), we obtain that
\[
\int_{\R^n} (\chi_E -\chi_{\R^n\setminus E})(x) |x-y_k|^{-n-s}\, dx  
=0.\]
Then we find that, for any fixed~$R_o>0$,
it holds that~$|(\{e\cdot x>0\}\setminus E) \cap B_{R_o}(y_k) | \downarrow 0$ as $k\to \infty$.
This implies  in the limit that $E$ is a halfspace. 

If the sequence $x_k$  (or a subsequence of
it) is divergent we may always rescale $E$ and consider $E_k :=|x_k|^{-1}E$. Note that now $E_k\subset \{e\cdot x>0\}$ and by construction there exists $\tilde x_k := x_k / |x_k| \in \partial B_1$ such that $\tilde t_k := e\cdot \tilde x_{k} = t_k/ |x_k| \downarrow 0$.
Hence, repeating the previous argument of touching $\partial E_k$ with huge balls $B_{R_k}((-R_k+t_k') e )\subset \R^n\setminus E$ with
$R_k= \frac {1}{100}(t_k)^{-1/2}$ at some point $y_k \in B_2$ and using the viscosity equation we obtain $|(\{e\cdot x>0\}\setminus E_k) \cap B_{R_o} (y_k)| \downarrow 0$ as $k\to \infty$.

We have therefore proven that  the blow downs $E_k =|x_k|^{-1}E$
converges to a halfspace.
Then, the improvement of flatness theorem from \cite{CRS}
(see e.g. Lemma~3.1 in~\cite{FIG})
implies that $E$ must be a halfspace.
\end{proof}

With these preliminary results,
we can now complete the proofs of Theorems~\ref{C:2},
\ref{C:3bis}, \ref{C:3} and~\ref{C:3tris}.

\begin{proof}[Proof of Theorems~\ref{C:2} and~\ref{C:3}]
This proof is rather standard and it is not substantially different for the one of the local case (see \cite{Savin}).  
{F}rom Lemma~\ref{TEN},
we know that the level sets of~$u_\varepsilon$
approach locally uniformly~$\partial E$, and~$E$ is $s$-minimal in~$\R^n$.
Then we use either~\cite{SaV-R2}
(in case we are in~$\R^2$ and we want to prove Theorem~\ref{C:2})
or~\cite{CafVal}
(in case we are in~$\R^n$ with~$n\le7$, $s$ is close to~$1$
and we want to prove Theorem~\ref{C:3}) and we see that~$\partial E$ is a hyperplane.

Hence, we are in the setting of Theorem~\ref{C:1}, which implies that~$u$ is~$1$D.
\end{proof}

\begin{proof}[Proof of Theorems~\ref{C:3bis} and~\ref{C:3tris}]
This proof is rather standard and it is not substantially different for the one of the local case (see \cite{Savin}). 
By Lemma~\ref{MIN:MO}
we know that~$u$ is a minimizing solution and 
the level set of~$u_\varepsilon$ approach an $s$-minimal
set~$E$ satisfying $E \subset E-te_n$ for all $t>0$. 
Let us prove that $E$ is a halfspace. 

To do this, we consider the two limit sets $$
E_{+\infty} := \bigcup_{t\in \R} (E-te_n)\;{\mbox{ and }}\;
E_{-\infty} := \bigcap_{t\in \R} (E-te_n)$$ 
which by compactness of $s$-minimizing sets (see \cite{CRS}) are also minimizers.
Note that $E_{-\infty}  \subset E \subset E_{+\infty}$. Let us prove now that $E_{+\infty} = \R^n$ and $E_{-\infty} = \varnothing$. 

Indeed, if one of the two sets, say,  $E_{+\infty}$ is nontrivial then it is a $s$-minimizer that is by construction invariant
under translations in the direction $e_n$.
Thus, its trace in $\R^{n-1}$ is a $s$-minimizer in one dimension less.
Now, when $n=3$ we use the classification of entire minimizers in $\R^2$ of \cite{SaV-R2} to conclude that $E_{+\infty}$ must be a halfspace. 
Similarly, if $n\le8$
and~$s$ is close to~$1$, then the asymptotic results from \cite{CafVal} give that entire minimizers in  $\R^{n-1}$ must be halfspaces. 

We have thus shown that $E_{+\infty}$ is a halfspace (if it is nontrivial) and $E \subset E_{+\infty}$. But then Lemma \ref{lemhalfspace} gives that $E$ must be also a halfspace. 

Similarly, if~$E_{-\infty}$ is nontrivial,
then we conclude that $E$ is a halfspace exactly in the same way.

Thus, it only remains to consider the case in which both $E_{+\infty}$ and $E_{-\infty}$ are trivial. Since $E$ is nontrivial and $E_{-\infty}  \subset E \subset E_{+\infty}$, it follows that~$E_{+\infty}= \R^n$ and  $E_{-\infty}= \emptyset$. 

This implies that $\partial E$ is an entire minimal graph in the direction $x_n$. 
Then, when $n=3$ and we want to prove
Theorem~\ref{C:3bis}, we make use of Corollary 1.3 in~\cite{FIG}.
Similarly, when~$n\le8$, $s$ is close to~$1$ and
we want to prove
Theorem~\ref{C:3tris}, we make use of Theorem~1.2 in~\cite{FIG}
combined with~\cite{CafVal}. In any case, we conclude that~$E$ is a halfspace.

Finally, once we have proven that the $E$
is a halfspace, it follows from Theorem~\ref{C:1}  that $u$ must be $1$D.
\end{proof}

\vspace{2mm}

\vfill


\begin{thebibliography}{00}

\bibitem{CAT2}
O. Agudelo, M. del Pino, J. Wei,
\emph{Solutions with multiple catenoidal ends to the Allen-Cahn equation in~$\R^3$},
J. Math. Pures Appl. (9) 103 (2015), no. 1, 142--218. 

\bibitem{AAC} 
G. Alberti, L. Ambrosio, X. Cabr\'e,
\emph{On a long-standing conjecture of E. De Giorgi: symmetry in $3$D
for general nonlinearities and a local minimality property},
Acta Appl. Math. 65 (2001), no. 1-3, 9--33.

\bibitem{AC} L. Ambrosio, X. Cabr\'e,
\emph{Entire solutions of semilinear elliptic equations in $\R^3$ and a conjecture of De Giorgi},
J. Amer. Math. Soc. 13 (2000), no. 4, 725--739.

\bibitem{Berck} G. Berck,
\emph{Convexity of the $L_p$ intersection bodies},
Adv. Math. 222 (2009), 920--936.

\bibitem{BCN}
H. Berestycki, L. Caffarelli, L. Nirenberg,
\emph{Further qualitative properties for elliptic equations in unbounded domains},
Ann. Scuola Norm. Sup. Pisa Cl. Sci. (4) 25 (1997), no. 1-2, 69--94 (1998). 

\bibitem{BCN2} H. Berestycki, L. A. Caffarelli, L. Nirenberg, 
\emph{Monotonicity for elliptic equations in unbounded 
Lipschitz domains}, {Comm. Pure Appl. Math.} 50 
(1997), no. 11, 1089--1111.

\bibitem{BOMB}
E. Bombieri, E. De Giorgi, E. Giusti,
\emph{Minimal cones and the Bernstein problem},
Invent. Math. 7 (1969), 243--268.

\bibitem{bucur} C. Bucur, E. Valdinoci,
\emph{Nonlocal Diffusion and Applications},
Lecture Notes of the Unione Matematica Italiana
20 (2016), Springer International Publishing,\\
{\tt http://link.springer.com/book/10.1007\%2F978-3-319-28739-3}

\bibitem{CCi1} X. Cabr\'e, E. Cinti,
\emph{Energy estimates and $1$-D symmetry for nonlinear equations involving the half-Laplacian},
Discrete Contin. Dyn. Syst. 28 (2010), no. 3, 1179--1206. 

\bibitem{CCi2} X. Cabr\'e, E. Cinti,
\emph{Sharp energy estimates for nonlinear fractional diffusion equations},
Calc. Var. Partial Differential Equations 49 (2014), no. 1-2, 233--269.

\bibitem{cab} X. Cabr\'e, Y. Sire,
\emph{Nonlinear equations for fractional Laplacians.
II: Existence, uniqueness, and qualitative properties
of solutions},
Trans. Am. Math. Soc. 367 (2015), no. 2, 911--941.

\bibitem{CSM} X. Cabr\'e, J. Sol{\`a}-Morales,
\emph{Layer solutions in a half-space for boundary reactions},
Comm. Pure Appl. Math. 58 (2005), no. 12, 1678--1732.

\bibitem{BIO} L. Caffarelli, S. Dipierro, E. Valdinoci,
\emph{A logistic equation with nonlocal interactions},
Special issue dedicated to Prof. Peter Markovich on 
the occasion of his 60th Birthday,
Kinet. Relat. Models 
10 (2017), no.~1,
141--170.

\bibitem{CRS} L. Caffarelli, J.-M. Roquejoffre, O. Savin
\emph{Nonlocal minimal surfaces},
Comm. Pure Appl. Math. 63 (2010), no. 9, 1111--1144. 

\bibitem{caffa} L. Caffarelli, X. Ros-Oton, J. Serra,
\emph{Obstacle problems for integro-differential 
operators: regularity of solutions and free boundaries},
Invent. Math. 208 (2017), no. 3, 1155--1211.

\bibitem{MR3393314} L. A. Caffarelli, H. Shahgholian,
\emph{Regularity of free boundaries a heuristic retro},
Philos. Trans. A 373 (2015), no.~2050,
20150209, 18.

\bibitem{CaffSil} L. Caffarelli, L. Silvestre,
\emph{Regularity theory for fully nonlinear integro-differential equations},
Comm. Pure Appl. Math. 62 (2009), no. 5, 597--638.

\bibitem{CafVal}
L. Caffarelli, E. Valdinoci,
\emph{Regularity properties of nonlocal minimal surfaces via limiting arguments},
Adv. Math. 248 (2013), 843--871.

\bibitem{CSV}
E. Cinti, J. Serra, E. Valdinoci,
\emph{Quantitative flatness results and $BV$-estimates for stable nonlocal minimal surfaces},
preprint (2016),
{\tt https://arxiv.org/abs/1602.00540}

\bibitem{cozzi} M. Cozzi, T. Passalacqua,
\emph{One-dimensional solutions of non-local
Allen-Cahn-type equations with rough kernels},
J. Differential Equations 260 (2016), no. 8, 6638--6696.

\bibitem{DDDV}
J. D\'avila, M. del Pino, S. Dipierro, E. Valdinoci,
\emph{Concentration phenomena for the nonlocal Schr\"odinger equation with Dirichlet datum}, Anal. PDE 8 (2015), no. 5, 1165--1235.

\bibitem{DG}
E. De Giorgi, \emph{Convergence problems for functionals and operators},
Proceedings of the International Meeting on Recent Methods in Nonlinear Analysis (Rome, 1978). Pitagora, Bologna, 1979, pp. 131-188.

\bibitem{DKW}
M. del Pino, M. Kowalczyk, J. Wei,
\emph{On De Giorgi's conjecture in dimension $N\ge9$},
Ann. of Math. (2) 174 (2011), no. 3, 1485--1569.

\bibitem{CAT1}
M. del Pino, M. Kowalczyk, J. Wei,
\emph{Entire solutions of the Allen-Cahn equation and complete embedded minimal surfaces of finite total
curvature in~$\R^3$}, 
J. Differential Geom. 93 (2013), no. 1, 67--131. 

\bibitem{ESSO}
S. Dipierro, A. Farina, E. Valdinoci,
\emph{A three-dimensional symmetry result for a phase
transition equation in the genuinely nonlocal regime},
preprint (2017),
{\tt https://arxiv.org/abs/1705.00320}

\bibitem{giampi} S. Dipierro, G. Palatucci, E. Valdinoci, \emph{Dislocation dynamics in crystals: a macroscopic theory in a fractional Laplace setting}, 
Comm. Math. Phys. 333 (2015), no. 2, 1061--1105. 

\bibitem{graph} S. Dipierro, O. Savin, E. Valdinoci, \emph{Graph
properties for nonlocal minimal surfaces}, 
Calc. Var. Partial Differential Equations 55 (2016), no. 4, Paper no. 86, 
25 pp. 

\bibitem{torino} S. Dipierro, J. Serra, E. Valdinoci,
\emph{Nonlocal phase transitions: rigidity results
and anisotropic geometry}, to appear in Rend. Semin. Mat. Univ. Politec. Torino.
\bibitem{soave} S. Dipierro, N. Soave, E. Valdinoci,
\emph{On fractional elliptic equations in Lipschitz sets 
and epigraphs: regularity, monotonicity and rigidity results},
Math. Ann.,
DOI: 10.1007/s00208-016-1487-x

\bibitem{FV} A. Farina, E. Valdinoci,
\emph{$1$D symmetry for solutions of semilinear and quasilinear elliptic equations},
Trans. Amer. Math. Soc. 363 (2011), no. 2, 579--609. 

\bibitem{FVr} A. Farina, E. Valdinoci,
\emph{$1$D symmetry for semilinear PDEs from the limit interface of the solution},
Comm. Partial Differential Equations 41 (2016), no. 4, 665-682.


\bibitem{FIG} A. Figalli, E. Valdinoci,
\emph{Regularity and Bernstein-type results for nonlocal minimal surfaces},
J. Reine Angew. Math., DOI: 10.1515/crelle-2015-0006

\bibitem{GG} 
N. Ghoussoub, C. Gui,
\emph{On a conjecture of De Giorgi and some related problems},
Math. Ann. 311 (1998), no. 3, 481--491.

\bibitem{Ludwig}
M. Ludwig, \emph{Anisotropic fractional perimeters},
J. Differential Geom. 96 (2014), no. 1, 77--93.

\bibitem{MM} L. Modica and S. Mortola, 
\emph{Un esempio di $\Gamma$-convergenza}, 
Boll. Un. Mat. Ital. B (5) 14 (1977), no. 1, 285--299.

\bibitem{PSV} G. Palatucci, O. Savin, E. Valdinoci, 
\emph{Local and global minimizers for a variational energy involving a
fractional norm},
Ann. Mat. Pura Appl. (4) 192 (2013), no. 4, 673--718.

\bibitem{Savin} O. Savin, \emph{Regularity of flat level sets
in phase transitions},
Ann. Math. (2) 169 (2009), no. 1, 41--78.

\bibitem{Savin-Survey} O. Savin, \emph{Phase transitions, minimal surfaces and a conjecture of De Giorgi},
Current developments in mathematics, 2009, 59--113, Int. Press, Somerville, MA, 2010.

\bibitem{Onew1} O. Savin, \emph{Some remarks on the classification of global solutions with asymptotically
flat level sets}, preprint (2016),
{\tt https://arxiv.org/abs/1610.03448}

\bibitem{Onew2} O. Savin, \emph{Rigidity of minimizers in nonlocal phase transitions}, preprint (2016), {\tt https://arxiv.org/abs/1610.09295}

\bibitem{SaVgamma}
O. Savin, E. Valdinoci,
\emph{$\Gamma$-convergence 
for nonlocal phase transitions},
Ann. Inst. H. Poincar\'e Anal. Non Lin\'eaire
29 (2012), no. 4, 479--500.

\bibitem{SaV-R2}
O. Savin, E. Valdinoci,
\emph{Regularity of nonlocal minimal cones in dimension~$2$},
Calc. Var. Partial Differential Equations 48 (2013), no. 1-2, 33--39.

\bibitem{SaV-DE}
O. Savin, E. Valdinoci,
\emph{Density estimates for a variational model driven by the Gagliardo norm},
J. Math. Pures Appl. (9) 101 (2014), no. 1, 1--26.

\bibitem{MONOO}
O. Savin, E. Valdinoci,
\emph{Some monotonicity results for minimizers in the calculus of variations},
J. Funct. Anal. 264 (2013), no. 10, 2469--2496.

\bibitem{SIMO}
J. Simons, \emph{Minimal varieties in riemannian manifolds},
Ann. of Math. (2) 88 (1968), 62--105. 

\bibitem{SirV}
Y. Sire, E. Valdinoci,
\emph{Fractional Laplacian phase transitions and boundary reactions: a geometric inequality and a symmetry result},
J. Funct. Anal. 256 (2009), no. 6, 1842--1864. 

\bibitem{VSS}
E. Valdinoci, B. Sciunzi, V.~O. Savin,
\emph{Flat level set regularity of $p$-Laplace phase transitions},
Mem. Amer. Math. Soc. 182 (2006), no. 858, vi+144 pp. 

\end{thebibliography}
\end{document}